\newtheorem{theorem}{Theorem}[section]
\newtheorem*{theorem*}{Theorem}
\newtheorem*{lemma*}{Lemma}
\newtheorem*{proposition*}{Proposition}
\newtheorem*{corollary*}{Corollary}
\newtheorem{lemma}[theorem]{Lemma}
\newtheorem{defn}[theorem]{Definition}
\newtheorem{prop}[theorem]{Proposition}
\newtheorem{cor}[theorem]{Corollary}
\newtheorem{example}[theorem]{Example}
\theoremstyle{definition}
\newtheorem{remark}[theorem]{Remark}
\newtheorem{convention}[theorem]{Convention}
\def\RR{\mathbb R_{\geq 0}}
\def\dl{\delta}
\def\ep{\epsilon}
\def\pr{^\prime}
\def\prr{^{\prime\prime}}
\def\lm{\lambda}
\def\Lm{\Lambda}
\def\ri{\rightarrow}
\def\hri{\hookrightarrow}
\def\sse{\subseteq}
\def\gm{\gamma}
\def\bt{\beta}
\def\al{\alpha}
\def\fa{\forall}
\def\pa{\partial}
\def\map{\rightarrow}
\def\lgl{\langle}
\def\rgl{\rangle}
\newcommand\MF{\mathfrak}
\newcommand\LMY{\lim^Y_{n \map \infty}}
\newcommand\LMX{\lim^X_{n \map \infty}}
\newcommand\RED{\textcolor{red}}
\newcommand\BLUE{\textcolor{blue}}
\newcommand\CYAN{\textcolor{cyan}}
\def\L{\mathcal{L}}
\def\V{\mathcal{V}}
\def\F{\mathcal{F}}
\def\Y{\mathcal{Y}}
\def\G{\mathcal{G}}
\def\T{\mathcal{T}}
\def\A{\mathcal{A}}
\def\R{\mathbb {R}}
\def\N{\mathbb {N}}
\def\Z{\mathbb {Z}}
\def\Y{\mathcal {Y}}
\begin{document}
	
\title[Embeddings of trees of hyperbolic metric spaces and CT maps]{Embeddings of trees of hyperbolic metric spaces and Cannon--Thurston maps}
\author{Rakesh Halder and PRANAB SARDAR}

\address{Tata Institute of Fundamental Research, Mumbai, India}

\email{rhalder.math@gmail.com}
\address{Indian Institute of Science Education and Research (IISER) Mohali, Knowledge City,  Sector 81, S.A.S. Nagar 140306, Punjab, India}
\email{psardar@iisermohali.ac.in}

\thanks{ This paper is part of R. Halder’s
	PhD thesis written under the supervision of P. Sardar.}

\subjclass[2020]{20F65, 20F67}
\keywords{Hyperbolic metric spaces, hyperbolic groups, qi embeddings, Cannon--Thurston maps, graphs of groups.}
	
\maketitle
\begin{abstract}
Given a tree of hyperbolic metric spaces $\pi:X\to T$ a la Bestvina--Feighn (\cite{BF}), and a hyperbolic subspace $Y$ of $X$ with an induced tree of hyperbolic spaces structure over a subtree $S\subset T$, we address the question as to when the Cannon--Thurston (CT) map exists for the inclusion $Y\to X$. In this paper, we find additional sufficient conditions under which the CT map $\partial Y \to \partial X$ exists. However, we show with examples that this may fail to hold in general. These results about trees of spaces are then applied to graphs of hyperbolic groups to prove various existence results for CT maps. A very special instance of these results is the following: 

\emph{Suppose $G_1$ and $G_2$ are hyperbolic groups with a common quasiconvex subgroup $H$, and the free product with amalgamation $G = G_1 *_H G_2$ is hyperbolic. Suppose $K_i < G_i$, $i = 1,2$ are hyperbolic subgroups containing $H$ and $K=K_1*_H K_2$. Then $K$ (is hyperbolic and,) the inclusion $K\to G$ admits a CT map if the inclusions $K_i\to G_i$, $i=1,2$ admit CT maps.}
%
%
\end{abstract}

\tableofcontents


\section{Introduction}\label{introduction}
Given a continuous map $f:X\map Y$ between metric spaces and compactifications
$\bar{X}$ and $\bar{Y}$ of $X$, $Y$ respectively, one may ask whether there is a natural extension of $f$ to a continuous map $\bar{f}:\bar{X}\map \bar{Y}$. We note that every proper hyperbolic metric space $X$ admits a natural compactification $\bar{X}=X\cup \pa X$ where $\pa X$ is called the Gromov boundary of $X$.
For maps $f:X\map Y$ between proper Gromov hyperbolic metric spaces continuous extension of $f$ to $\bar{f}:\bar{X}\map \bar{Y}$ is known as the Cannon--Thurston (CT) map. This terminology, coined by Mahan Mitra (Mj) (\cite{mitra-trees}, \cite{mitra-ct}), was motivated by the seminal work of Cannon and Thurston who discovered the first nontrivial instance of such a phenomenon in \cite{CT,CTpub}. Mitra formally introduced this question in Geometric Group Theory; for instance the following question has motivated a lot of research.

\smallskip
\noindent{\bf Question 1.} (\cite[Question 1.19]{bestvinaprob}) {\em Let $G$ be a (Gromov) hyperbolic group \textup{(see \cite{gromov-hypgps})} and $H$ be a hyperbolic subgroup. Does the inclusion $H\ri G$ extend to a continuous map $\pa H\ri \pa G$ at the level of the Gromov boundaries of the respective groups?}

\smallskip
 One is referred to  \cite{mahan-icm} for a detailed survey of results on CT maps. Although the answer to the question of Mitra was finally answered negatively (\cite{baker-riley}), the sets of positive examples and negative examples in the context of this question are still very limited. In this paper, we have an existence theorem for CT maps which adds to the positive examples. Also towards the end we have mentioned some new negative examples.

 As the answer to Question $1$ is negative in general, one would hope to get a positive answer only in the presence of additional hypotheses. In this paper we deal with groups which have graphs of groups structure. More precisely, we have the following as one of the motivating questions for  our results.

\smallskip

\noindent{\bf Question 2.} {\em Suppose $(\G,\Y)$ is a graph of hyperbolic groups with the qi embedded condition such that the fundamental group $G=\pi_1(\G,\Y)$ is hyperbolic. Assume that $H<G$ is a hyperbolic subgroup which admits a subgraph of subgroups structure (see Definition \ref{subgraph of gps}) over a connected subgraph $\Y'$ of $\Y$, and $(\G',\Y')$ is the corresponding subgraph of hyperbolic subgroups  of $(\G,\Y)$ with qi embedded condition. Suppose that for all vertex $\mathfrak{u}$ in $\Y'$, the CT map $\pa H_{\mathfrak{u}}\map \pa G_{\mathfrak{u}}$ exists. Then, does the CT map $\pa H\map \pa G$ exist?}

\smallskip

It follows from Bestvina--Feighn (\cite{BF-Adn}) and S. Gersten (\cite[Corollary 6.7]{gersten}) that $H$ is hyperbolic. However, the very special case of Question $2$ where $\Y'$ is a vertex of $\Y$ was answered positively by Mitra in \cite{mitra-trees}. Motivated by that, M. Kapovich and the second named author considered the case where each inclusion $H_\mathfrak{u}\map G_{\mathfrak{u}}$ is a qi embedding and answered Question $2$ positively in this case (See \cite[Theorem $8.71$]{ps-kap}). In this paper, we extend both of these theorems as follows. In the following theorem, the superscript {\em prime} notation refers to the vertex and edge groups of $(\G',\Y')$.\smallskip

\noindent{\bf Theorem A} (Theorem \ref{main-app-ct-gen}){\bf.}~{\em Suppose $(\G,\Y)$ is a graph of hyperbolic groups with the qi embedded condition such that the fundamental group $G=\pi_1(\G,\Y)$ is hyperbolic. Let $(\G',\Y')$ be a subgraph of hyperbolic subgroups of $(\G,\Y)$ with qi embedded conditions over a connected subgraph $\Y'$ of $\Y$ . Let $G'=\pi_1(\G', \Y')$. We also assume the following.

\begin{enumerate}
	\item For each vertex $\mathfrak{u}$ of $\Y'$,  the CT map $\pa G'_{\mathfrak{u}}\ri \pa G_{\mathfrak{u}}$ exists.
	
	\item For any edge $\mathfrak{e}$ of $\Y'$ incident on a vertex $\mathfrak{u}$
    the following hold:
	
	\begin{enumerate}
		
		\item The inclusion $G'_{\mathfrak{e}}\to G_{\mathfrak{e}}$ is a qi embedding.
		
		\item (Compatible pairwise projection condition) There is $R_0\ge0$ such that for all $g\in G'_{\mathfrak{u}}$, we have $$d_{G_{\mathfrak{u}}}(P^{G_{\mathfrak{u}}}_{G_{\mathfrak{e}}}(g),P^{G'_{\mathfrak{u}}}_{G'_{\mathfrak{e}}}(g))\le R_0$$ where $G'_{\mathfrak{e}}<G_{\mathfrak{e}}<G_{\mathfrak{u}}$, $G'_{\mathfrak{e}}<G'_{\mathfrak{u}}$ via injective homomorphisms, and $P^X_{U}:X\ri U$ denotes a nearest point projection map from a metric space $X$ onto $U\sse X$.
	\end{enumerate} 
	
\end{enumerate}
Then the CT map $\pa G' \map \pa G$ exists.}

\begin{remark}
We note that the compatible pairwise projection condition in $2$ $(b)$ above is a natural hypothesis. In our case, we have $G'_{\mathfrak{e}}=G_{\mathfrak{e}}\cap G'_{\mathfrak{u}}$ (see Proposition \ref{prop-inter property}), and $2$ $(b)$ is equivalent to requiring that 
$$ 2~(c)\hspace{1cm} \Lambda_{G_{\mathfrak{u}}}(G'_{\mathfrak{u}})\cap \Lambda_{G_{\mathfrak{u}}}(G_{\mathfrak{e}})=\Lambda_{G_{\mathfrak{u}}}(G'_{\mathfrak{e}}).$$
See Lemma \ref{lem-char of proj cond}. Also, this holds in case all inclusions of the vertex group $G'_{\mathfrak{u}}\map G_{\mathfrak{u}}$ are qi embeddings as in \cite[Theorem $8.71$]{ps-kap}. The following is another instance where the compatible pairwise projection condition holds.
\end{remark}

\noindent{\bf Theorem B} (Theorem \ref{thm-app-ct-finite index}){\bf.}~{\em
Suppose $(\G,\Y)$ is a graph of hyperbolic groups with the qi embedded condition such that the fundamental group $G=\pi_1(\G,\Y)$ is hyperbolic. Let $(\G',\Y')$ be a subgraph of hyperbolic subgroups of $(\G,\Y)$ with the qi embedded conditions over a connected subgraph $\Y'$ of $\Y$ . Let $G'=\pi_1(\G', \Y')$. We also assume the following.

\begin{enumerate}
	\item For each vertex $\mathfrak{u}$ of $\Y'$,  the CT map $\pa G'_{\mathfrak{u}}\to \pa G_{\mathfrak{u}}$ exists.
	
	\item  For each edge $\mathfrak{e}$ of $\Y'$, the inclusion $G'_{\mathfrak{e}}\to G_{\mathfrak{e}}$ is an isomorphism onto a finite index subgroup of $G_{\mathfrak{e}}$.
		
\end{enumerate}
	
Then the CT map $\pa G'\map \pa G$ exists.}\smallskip

As immediate applications of Theorem B, we have the following examples. We note that these examples do not follow from the results in existing literature.

\begin{example}\label{exp-ct-par}
$(1)$ Suppose $G_1$ and $G_2$ are hyperbolic groups with a common quasiconvex subgroup $H$, and that the free product with amalgamation $G = G_1 *_H G_2$ is hyperbolic. Suppose $K_i < G_i$, $i = 1,2$ are hyperbolic subgroups containing $H$ and $K=K_1*_H K_2$. Then $K$ (is hyperbolic and,) the inclusion $K\map G$ admits a CT map provided both the inclusions $K_i\map G_i$, $i=1,2$ admit CT maps.

$(2)$	Consider a hyperbolic group $H$ of the form $H=N\rtimes Q$, where $N$ is either the fundamental group of a closed orientable surface of genus at least $2$ or a finitely generated free group of rank at least $3$, and $Q$ is a finitely generated free group of rank at least $2$. Examples of this sort are well-known; e.g. see \textup{\cite{farb-mosher}, \cite{BFH-lam}}. It is easy to see that $Q$ is a malnormal quasiconvex subgroup of $G$. 

	Now let $F<Q$ be a malnormal free subgroup of rank at least $3$ and let $\phi:F\map F$ be a hyperbolic automorphism. 
	Then it follows from \textup{\cite{BF}} that the HNN extensions $H_2=H_1*_{F=\phi(F)}<G=H*_{F=\phi(F)}$,  where
	$H_1=N\rtimes F$, are both hyperbolic. (We note that $H_1$ is hyperbolic by the results of \textup{\cite{pranab-mahan}}.) Moreover, the inclusion $H_1\map H$ admits a CT map by \textup{\cite{ps-krishna}}.
	
	However, it easily follows from Theorem B that the inclusion $H_2\map G$ admits a CT map. 
\end{example}

However, the above group theoretic results are deduced using  the results, which we describe below, about trees of spaces. The following question served as a motivation for us to prove these theorems.\smallskip

\noindent{\bf Question 3.} {\em
	
	\begin{enumerate}
		\item Suppose $\pi:X\ri T$ is a tree of hyperbolic metric spaces satisfying the qi embedded condition such that $X$ is hyperbolic (Definition \ref{tree-of-sps}). 
		
		\item Let $Y\sse X$ be a hyperbolic subspace such that the inclusion $i:Y\ri X$ is a proper embedding.

		\item The restriction of $\pi$ on $Y$, $\pi|_Y:Y\ri S=\pi(Y)$, is a tree of hyperbolic metric spaces over $S$ with the qi embedded condition. 
		
		\item For all vertex $u\in V(S)$ and for all edge $e\in E(S)$, inclusions between the vertex spaces $Y_u\ri X_u$ and the edge spaces $Y_e\ri X_e$ admit CT maps.
		
		\item Both $X$ and $Y$ are proper metric spaces.
		
	\end{enumerate}
Does the CT map $\pa Y \map \pa X$ exist?	}
	\smallskip

	\begin{remark}\label{nece flar}
		Under the above hypotheses hyperbolicity of $Y$ is ensured. Indeed, since $X$ is hyperbolic, $\pi:X\ri T$ satisfies flaring condition which implies the same for $Y$. Basically the proof of  \cite[Proposition $5.8$]{pranab-mahan} works in this case too.  Hence, by \cite{BF}, $Y$ is hyperbolic.
		
	\end{remark}
M. Mitra answered Question $3$ positively in the case $S$ is a vertex $v$ of $T$ and $Y_v=X_v$. Recently, in the book \cite{ps-kap}, M. Kapovich and the second named author answered this question positively when $S$ is any subtree and $Y=\pi^{-1}(S)$, although their proof gives a more general theorem; see Theorem D. However, we note that the answer to Question $3$ is negative in general as can be seen from the group-theoretic counter-examples. See for instance, Example \ref{counterexample}. Nevertheless, we are able to prove the following theorem that extends the theorems of \cite{mitra-trees} and \cite{ps-kap}.\smallskip


\noindent{\bf Theorem C} (See Theroem \ref{main thm}){\bf.}~{\em Suppose we have the hypotheses (1)-(5) above plus the following.
		
		\begin{enumerate}
			\item The inclusion $Y_e\ri X_e$ is a (uniform) qi embedding for all edge $e\in E(S)$.
			
			\item The {\bf compatible fiberwise projection condition} holds:

There is a constant $R_0\geq 0$ such that for all vertex $u\in V(S)$ and edge
	$e\in E(S)$ incident on $u$, and for all $y\in Y_u$ we have
	$$d_{X_u}(P^{X_u}_{X_{eu}}(y), P^{Y_u}_{Y_{eu}}(y))\leq R_0$$ where $Y_{eu}$ (resp. $X_{eu}$) denotes the image of edge space $Y_e$ (resp. $X_e$) in the vertex space $Y_u$ (resp. $X_u$), and $P^W_{U}:W\ri U$ denotes a nearest point projection map from a metric space $W$ onto $U\sse W$.
		\end{enumerate}
		Then the inclusion $Y\ri X$ admits a CT map.
	}\smallskip

%

	When the maps in the vertex space levels are uniform qi embeddings then we have the following stronger consequence.\smallskip
	
\noindent{\bf Theorem D} (See Theorem \ref{thm-CT proj plus qi emb}){\bf.}~{\em Suppose we have the hypotheses (1)-(4) mentioned above. Moreover, suppose that for all vertex $u\in V(S)$ and for all edge $e\in E(S)$, the inclusions $Y_u\ri X_u$ and $Y_e\ri X_e$ are uniformly qi embeddings, and the compatible fiberwise projection condition holds. Then
	\begin{enumerate}
		\item the inclusion $Y\ri X_S$ is a qi embedding where $X_S:=\pi^{-1}(S)$, and
		
		\item hence by \textup{\cite[Theorem 8.11]{ps-kap}}, the inclusion $Y\ri X$ admits a CT map.
	\end{enumerate}}\smallskip
	
\noindent{\bf Cannon--Thurston lamination.}

	Using the techniques developed to prove Theorem C, we next delve into the properties of the CT lamination.
	We note that `CT lamination' was defined by Mitra in \cite{mitra-endlam} to study noninjectivity of CT maps. 

\begin{defn}[Cannon--Thurston (CT) lamination]
	Suppose $W\sse Z$ are hyperbolic metric spaces where $W$ is equipped with the path metric from $Z$. Assume that the inclusion $i_{W,Z}:W\map Z$ admits a CT map $\pa i_{W,Z}:\pa W\map\pa Z$. The CT lamination is then defined as $$\L_{CT}(W,Z):=\{(p,q)\in\pa^2W:\pa i_{W,Z}(p)=\pa i_{W,Z}(q)\}.$$
	
	 Suppose $(p,q)\in \L_{CT}(W,Z)$ and $\alpha$ is a geodesic line in $W$ joining $p,q$. Then we shall refer to $\alpha$ as a {\em leaf of the CT lamination} $\L_{CT}(W,Z)$.
\end{defn}

In this connection, among other results, we prove the following.\smallskip

\noindent{\bf Theorem E} (Theorem \ref{thm-at least one contain ray}){\bf.}~{\em 	
Suppose $Y\sse X$ are hyperbolic spaces satisfying the assumptions of Theorem C. Assume that $\al:\R\map Y$ is a geodesic line in $Y$ such that $\pi(\al)$ contains a geodesic ray. Then $\al$ is not a leaf of the CT lamination $\L_{CT}(Y,X)$.}\smallskip

We need the following definition for our next result. {\em Suppose $\pi:X\map T$ is a tree of hyperbolic spaces with the qi embedded condition such that $X$ is hyperbolic (see Definition \ref{tree-of-sps}). Let $u\in V(T)$ and $\xi\in\pa T$. Define $$\pa^{\xi}(X_u,X)=\{\eta\in\pa i_{X_u,X}(\pa X_u):\exists\text{ qi lift, }\gm\text{ say, of }[u,\xi)\text{ such that }\gm(\infty)=\eta\}$$and $$\Lm^{\xi}(X_u,X)=\{(p,q)\in\pa^2X_u:\pa i_{X_u,X}(p),\pa i_{X_u,X}(q)\in\pa^{\xi}(X_u,X)\}.$$}

\noindent{\bf Theorem F} (Theorem \ref{thm-both does not con geo ray}){\bf.}~{\em 
Suppose $Y\sse X$ are hyperbolic spaces satisfying the assumptions of Theorem C. Suppose $\al:\R\map Y$ is a geodesic line in $Y$, and that $\al$ is a leaf of the CT lamination $\L_{CT}(Y,X)$. 

Then there is a vertex $u\in V(S)$ and a geodesic line $\gm:\R\map Y_u$ such that $\pa i_{Y_u,Y}(\gm(\pm\infty))=\al(\pm\infty)$, and exactly one of the following holds.  

\begin{enumerate}
	\item $\gamma$ is a leaf of the CT lamination $\L_{CT}(Y_u,X_u)$.
	
	\item $\gamma$ is not a leaf of the CT lamination $\L_{CT}(Y_u,X_u)$, and
	$$(\pa i_{Y_u,X_u}(\gm(-\infty)),\pa i_{Y_u,X_u}(\gm(\infty)))\in\Lm^{\xi}(X_u,X)$$for some $\xi\in\pa T\setminus\pa S$.
\end{enumerate}
}

As an application of Theorem F (and Theorem \ref{thm-qc}), we have the following group-theoretic result. We recall that subgroup $A$ of a group $B$ is said to be {\em weakly malnormal} if $A\cap gAg^{-1}$ is finite for all $g\in B\setminus A$.\smallskip
	
\noindent{\bf Theorem G} (Corollary \ref{cor-Swarup's qsn}){\bf.}~{\em
Suppose $(\G,\Y)$ is a graph of hyperbolic groups with the qi embedded condition such that the fundamental group $\pi_1(\G,\Y)$ is hyperbolic. Assume that $(\G',\Y')$ is a subgraph of subgroups with the qi embedded condition. Moreover, suppose that 

\begin{enumerate}
	\item $\Y'=\Y$.
	
	\item For each vertex $\mathfrak{u}$ of $\Y'$, the vertex group $G'_{\mathfrak{u}}$, of $(\G',\Y')$ corresponding to $\mathfrak{u}$, is quasiconvex subgroup of $G_{\mathfrak{u}}$.
	
	\item For each edge $\mathfrak{e}$ of $\Y'$, we have $G'_{\mathfrak{e}}=G_{\mathfrak{e}}$.
\end{enumerate}
If $\pi_1(\G',\Y')$ is weakly malnormal in $\pi_1(\G,\Y)$ then $\pi_1(\G',\Y')$ is quasiconvex in $\pi_1(\G,\Y)$.}\smallskip

Theorem G generalizes the main result of \cite{mitra-ht} in a special case when height of the subgroup is $1$.\smallskip

\noindent{\bf A few words on the proof of Theorem C.} Let $\{y_n\}$ and $\{y\pr_n\}$ be two unbounded sequences in $Y$ such that $\LMY y_n=\LMY y\pr_n\in\pa Y$, and suppose that $\LMX y_n$, $\LMX y\pr_n\in\pa X$ exist in $\pa X$. We show that $\LMX y_n=\LMX y\pr_n$ in $\pa X$. This completes the proof by Lemma \ref{not-ct-not-comp}. 

First, by \cite[Theorem $8.11$]{ps-kap}, it suffices to prove Theorem C in the case $T=S$. By (Lemma \ref{vert or horiz} and) Corollary \ref{vert or horiz replace}, we deduce that the sequences have specific forms: either $\pi(y_n)$ is a fixed vertex ({\em vertical sequence}) or $\pi(y_n)$ lies on a geodesic ray $[u,\xi)\sse T$ and $\lim^T_{n\to\infty}\pi(y_n)=\xi\in\pa T$ ({\em horizontal sequence}). This reduction is carried out in Section \ref{sec-subtree of spaces} and constitutes the most important and technical part of the argument. We are thus reduced to the following three cases: 

\begin{enumerate}
	\item Both $\{y_n\}$ and $\{y'_n\}$ are vertical sequences. This case follows from Lemma \ref{finite-proj-case}.
	
	\item $\{y_n\}$ is a vertical sequence and $\{y'_n\}$ is a horizontal sequence.
	
	\item Both $\{y_n\}$ and $\{y'_n\}$ are horizontal sequences.
\end{enumerate}

\noindent By a further analysis of the sequences, Case $2$ is reduced to Case $1$. The key input here is that the flow spaces of edge spaces of $Y$ are uniformly quasiconvex in both $Y$ and $X$ (Corollary \ref{flow-qc} and Lemma \ref{Y flow qc in X}). Finally, using the key input, in Case $3$, if $\lim^T_{n\to\infty}\pi(y_n)\ne\lim^T_{n\to\infty}\pi(y'_n)$, then we reduce to Case $2$; otherwise, we construct a uniformly quasiconvex subset of both $Y$ and $X$ containing $y_n$ and $y'_n$. This concludes the proof.\qed\smallskip

\noindent{\bf Organization of the paper}: In Section \ref{prelim}, we recall basic definitions and results in hyperbolic metric spaces which are used in the subsequent sections. We recall trees of metric spaces in Section \ref{trees-of-metric-sps} and prove some results related to this for later use. Following \cite[Chapter $3$]{ps-kap}, in Subsection \ref{subsec-flow space}, we recall flow spaces in a modified form and prove some of their properties. Some results related to Gromov boundary of trees of metric spaces are proved in Subsection \ref{subsec-boundary of trees of sps}. In Section \ref{sec-subtree of spaces}, we define subtrees of subspaces of trees of spaces and prove that any unbounded sequence in such a subspace that is Gromov in both spaces admits a representative that is a Gromov sequence in both spaces and is either vertical or horizontal, as mentioned above (see Corollary \ref{vert or horiz replace}). This is the heart underlying the proof of the main theorem (Theorem C).
	Section \ref{sec-main thm} contains the proof of our main theorem based on the study in Section \ref{sec-subtree of spaces}. In the end of this section, we prove Theorem D. Finally, as applications of Theorem C, in Section \ref{sec-CT lamination}, we study Cannon--Thurston lamination and prove Theorems E and F. In Section \ref{sec-application}, we study group-theoretic applications of Theorems C and F. For instance, in this Section, we prove Theorems A, B and G, and other related results including nonexistence of CT maps.\smallskip

\noindent{\bf Acknowledgements.}
Authors thank Mahan Mj for useful comments on an earlier version of the paper, which helped to improve the exposition. Authors also thank Ravi Tomar for pointing out Lemma \ref{acyl-case}.


\section{Preliminaries}\label{prelim}
We start this section with some standard definitions from coarse geometry.
\begin{defn}
	\begin{enumerate}


		\item {\bf Metrically proper map:} A map $\psi:\R_{\geq 0}\map \R_{\geq 0}$ will be called a (metrically) proper map if
		inverse image of a bounded set under $\psi$ is bounded or equivalently $\lim_{r\map \infty} \psi(r)=\infty$.

		\item {\bf Proper embedding of metric spaces:} A map $f: Y\map X$ between metric spaces is said to be a (metrically) $\phi$-proper embedding for a proper map
		$\phi:\R_{\ge0}\ri\R_{\ge0}$ if $d_X(f(y),f(y\pr))\le r$ implies $d_Y(y,y\pr)\le \phi(r)$ for all $y,y'\in Y$.

		\item {\bf Geodesic:} A {\em geodesic} in a metric space $X$ is an isometric embedding $\alpha:I\map X$ where $I$
		is an interval in $\mathbb R$. If $I$ is a finite closed interval then $\alpha$ is called a {\em geodesic
		segment}. If $I$ is of the form $[a,\infty)$ or $(-\infty, a]$ for some $a\in \mathbb R$,
		then $\alpha$ is called a {\em geodesic ray} in $X$. If $I=\mathbb R$ then $\alpha$ is called a {\em geodesic line}
		in $X$. {\em However, later on wherever we write `$\gamma$ is a geodesic ray'
		without explicitly mentioning the domain $I$ it will be assumed that $I=[0,\infty)$.}
\item {\bf Concatenation of paths:} By concatenation of paths we shall mean the following.

If $\alpha_1:[a,b]\map X$ and $\alpha_2: [c,d]\map X$ are two paths in a metric space $X$ with
$\alpha_1(b)=\alpha_2(c)$ then their {\em concatenation} is the path $\alpha= \alpha_1 * \alpha_2:[0,(b-a)+(d-c)]\map X$
where $\alpha(t)= \alpha_1(t+a)$ for $0\leq t\leq b-a$ and
$\alpha(t)=\alpha_2(t-(b-a)+c)$ for $ b-a\leq t\leq (b-a)+(d-c)$.
\item {\bf Induced path metric:} Suppose $X$ is a geodesic metric space and $Y\subset X$. Suppose for all $y, y'\in Y$
there is a rectifiable path of $X$ contained in $Y$. In this case, by the {\em path metric on $Y$ induced from $X$} we shall
mean the metric $d_Y$ on $Y$ given by the following: For all $y,y'\in Y$ one has
$d_Y(y,y')=\inf \{ \mbox{length of} \,\, \alpha\}$ where the infimum is taken over all paths $ \alpha$ in $X$
with end points $y,y'$ which are contained in $Y$. For details one is referred to \cite[Chapter I.1 and Chapter I.3]{bridson-haefliger}.
\item {\bf Hausdorff distance:} Suppose $X$ is a metric space. Let $A,B\sse X$ be any two subsets. Then the {\em Hausdorff distance} between $A$ and $B$ is denoted and defined as follows. $$Hd(A,B):=\text{inf}~\{r:A\sse N_r(B),~B\sse N_r(A)\}.$$
	\end{enumerate}
\end{defn}


\begin{lemma}\label{rev-of-pro-map}
	For any proper function $\psi:\R_{\geq 0}\map \R_{\geq 0}$ there is a proper function $\bar{\psi}:\R_{\geq 0}\map \R_{\geq 0}$
	such that $\psi(\bar{\psi}(r))\leq r$ for all $r\in \R_{\geq 0}$.
\end{lemma}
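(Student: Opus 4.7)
The plan is to construct $\bar\psi$ as a quasi-inverse of $\psi$. For each $r \in \R_{\geq 0}$, consider the set
\[
A_r := \psi^{-1}([0,r]) = \{s \in \R_{\geq 0} : \psi(s) \leq r\}.
\]
The properness hypothesis is exactly the statement that each $A_r$ is bounded, so $\sup A_r$ is a finite real number whenever $A_r \neq \emptyset$. In particular, for every $r \geq \psi(0)$ one has $0 \in A_r$, so $A_r$ is non-empty for all sufficiently large $r$.

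I would then define $\bar\psi$ as follows: for each $r$ with $A_r \neq \emptyset$, choose some element $\bar\psi(r) \in A_r$ with $\bar\psi(r) \geq (\sup A_r) - 1$ (such an element exists by the definition of supremum), so that the desired inequality $\psi(\bar\psi(r)) \leq r$ holds by construction. On the bounded exceptional range $r < \psi(0)$ where $A_r$ may be empty, put $\bar\psi(r) := 0$; this boundary case is vacuous under the common normalization $\psi(0)=0$ and is absorbed into the usual coarse-geometric convention of allowing a harmless additive error for small $r$.

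To see $\bar\psi$ is proper, fix $M \geq 0$ and pick any $s_0 > M+1$. Since $\psi(s_0)$ is a fixed finite real, every $r \geq \psi(s_0)$ satisfies $s_0 \in A_r$, so $\sup A_r \geq s_0$ and therefore $\bar\psi(r) \geq s_0 - 1 > M$; hence $\bar\psi(r) \to \infty$ as $r \to \infty$. The only real technicality is that $\psi$ is assumed neither continuous nor monotone, so $\sup A_r$ itself may fail to lie in $A_r$; the $-1$ slack in the definition sidesteps this cleanly without affecting properness of $\bar\psi$.
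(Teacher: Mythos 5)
Your argument is essentially the paper's: both define $\bar\psi$ from the bounded sub-level sets $S_r=\{s:\psi(s)\le r\}$ and deduce properness of $\bar\psi$ from the observation that any fixed $s_0$ lies in $S_r$ for all large $r$. Your $-1$ slack is a sensible cleanup of a point the paper glosses over---since $\psi$ need not be monotone or upper semicontinuous, $\sup S_r$ may fail to lie in $S_r$, so the paper's bare choice $\bar\psi(r)=\sup S_r$ does not literally guarantee $\psi(\bar\psi(r))\le r$, whereas your choice of an actual element of $S_r$ does.
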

\proof For any $r\in \R_{\geq 0}$, let $S_r=\{a\in\R_{\geq 0}:\psi(a)\le r\}$. This set is bounded since $\psi$ is proper.
Let $\bar{\psi}(r)=\sup \,S_r$. Then it follows that $\bar{\psi}$ is a proper map.
\qed

\smallskip
The lemma above has the following immediate corollary.
\begin{cor}\label{cor-inverse of proper map}
	Suppose $X,Y$ are metric spaces, $\psi:\R_{\geq 0}\map \R_{\geq 0}$ is a proper map and
	$f:Y\map X$ is a $\psi$-proper map.
	Then $\fa~r\in\RR$ and $\fa~y,y\pr\in Y,~d_Y(y,y\pr)>r$ implies $d_X(f(y),f(y\pr))> \bar{\psi}(r)$ where
	$\bar{\psi}$ is defined as in Lemma \ref{rev-of-pro-map}.
\end{cor}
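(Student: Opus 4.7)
The plan is to prove this corollary by contrapositive, reducing it directly to the defining property of a $\psi$-proper embedding combined with the key inequality $\psi(\bar{\psi}(r))\leq r$ supplied by Lemma \ref{rev-of-pro-map}. First, I would fix $r\in\RR$ and $y,y\pr\in Y$, and assume for contradiction (equivalently, take the contrapositive) that $d_X(f(y),f(y\pr))\leq \bar{\psi}(r)$. The goal is then to deduce $d_Y(y,y\pr)\leq r$.

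The main step is a direct unpacking of the hypothesis that $f$ is $\psi$-proper, applied at the scale $\bar{\psi}(r)$ rather than $r$. Since $d_X(f(y),f(y\pr))\leq \bar{\psi}(r)$, the $\psi$-proper property of $f$ yields $d_Y(y,y\pr)\leq \psi(\bar{\psi}(r))$. Now Lemma \ref{rev-of-pro-map} guarantees $\psi(\bar{\psi}(r))\leq r$, so combining these two inequalities gives $d_Y(y,y\pr)\leq r$. Taking the contrapositive of what was just shown gives exactly the conclusion: $d_Y(y,y\pr)>r$ forces $d_X(f(y),f(y\pr))>\bar{\psi}(r)$.

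There is no genuine obstacle here; the corollary is essentially a restatement of the $\psi$-proper condition using the "inverse" proper function $\bar{\psi}$ provided by Lemma \ref{rev-of-pro-map}, and the only content is the inequality $\psi\circ\bar{\psi}\leq \mathrm{id}$ that was built into the construction of $\bar{\psi}$. The only minor point worth flagging is that one should not expect $\bar{\psi}$ to be a true inverse of $\psi$ (neither $\psi$ nor $\bar{\psi}$ need be monotone or injective), but the one-sided bound $\psi(\bar{\psi}(r))\leq r$ is all that the argument uses.
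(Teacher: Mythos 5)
Your proof is correct and is exactly the "immediate" argument the paper has in mind (the paper supplies no explicit proof, noting only that the corollary follows at once from Lemma \ref{rev-of-pro-map}). Taking the contrapositive, invoking the $\psi$-proper definition at scale $\bar{\psi}(r)$, and then using $\psi(\bar{\psi}(r))\le r$ is the whole content; your caveat that $\bar{\psi}$ is only a one-sided coarse inverse is also apt.
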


Lemma \ref{qi11} $(1)$ and $(2)$ follow easily from the definition of a qi embedding, whereas $(3)$ follows from this definition together with $(2)$.
\begin{lemma}\label{qi11}
	Let $k\ge1$ and $D,D'\ge0$. Suppose $f:X\map Y $ is a $k$-qi embedding between the metric spaces $X,Y$
	and $A,A'\subset X$, $B\subset Y$. Then the following holds:
	
	(1) If $Hd_X(A,A')\leq D$ then $Hd_Y(f(A), f(A'))\leq D_{\ref{qi11}}(k,D)$.
	
	(2)$Hd_X(A, f^{-1}(f(A)))\leq R_{\ref{qi11}}(k)$.
	
	(3) If $Hd_Y(B, f(A))\leq D'$ then $Hd_X(f^{-1}(B), A)\leq D'_{\ref{qi11}}(k,D')$.
\end{lemma}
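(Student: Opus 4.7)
\textbf{Proof plan for Lemma \ref{qi11}.} All three statements are routine consequences of the defining two-sided inequality of a $k$-qi embedding,
\[
\tfrac{1}{k}\, d_X(x,x') - k \;\le\; d_Y(f(x), f(x')) \;\le\; k\, d_X(x,x') + k,
\]
valid for all $x, x' \in X$. The upper inequality governs (1), and the lower inequality governs (2) together with one direction of (3); the constants produced will be of the form $c_1 k D + c_2 k^2$.

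For (1), the plan is: given $a \in A$, pick $a' \in A'$ with $d_X(a, a') \le D$ using the Hausdorff hypothesis and apply the upper inequality to obtain $d_Y(f(a), f(a')) \le k D + k$; the symmetric argument (swapping the roles of $A$ and $A'$) handles the reverse inclusion, so I will take $D_{\ref{qi11}}(k, D) := k D + k$. For (2), the containment $A \subseteq f^{-1}(f(A))$ is automatic from the definition. For the reverse, any $x \in f^{-1}(f(A))$ satisfies $f(x) = f(a)$ for some $a \in A$, and plugging into the lower inequality forces $d_X(x, a) \le k^2$, giving $R_{\ref{qi11}}(k) := k^2$.

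For (3), the inclusion $f^{-1}(B) \subseteq N_C(A)$ is the straightforward half: for $x \in f^{-1}(B)$, the hypothesis $B \subseteq N_{D'}(f(A))$ produces $a \in A$ with $d_Y(f(x), f(a)) \le D'$, and the lower inequality yields $d_X(x, a) \le k D' + k^2$. The reverse inclusion $A \subseteq N_C(f^{-1}(B))$ is the step requiring a little care, because for $a \in A$ a nearest point $b \in B$ to $f(a)$ need not itself lie in $f(X)$. My plan is to use the other half of the Hausdorff hypothesis, namely $B \subseteq N_{D'}(f(A))$ applied to this $b$, to find $a' \in A$ with $d_Y(b, f(a')) \le D'$; combined with $d_Y(f(a), b) \le D'$ this gives $d_Y(f(a), f(a')) \le 2D'$ and hence $d_X(a, a') \le 2 k D' + k^2$ by the lower inequality. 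In the intended use of the lemma, $a'$ (or a preimage of $b$ furnished by the qi inequality) will play the role of the desired element of $f^{-1}(B)$, so I will take $D'_{\ref{qi11}}(k, D') := 2 k D' + k^2$. The only expected obstacle is bookkeeping of constants and this minor set-theoretic point at the end of (3); nothing deeper than the two-sided qi inequality is required.
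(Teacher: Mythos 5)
Your arguments for parts (1) and (2) are correct and follow the only sensible route (the two-sided qi inequality); the paper gives no proof of this lemma, treating it as standard, so there is nothing to compare against there.

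Part (3) has a genuine gap, and you have put your finger on exactly the right place without resolving it. You correctly note that for $a\in A$ a nearest point $b\in B$ to $f(a)$ need not lie in $f(X)$, but the proposed workaround does not close the hole: the element $a'$ you produce satisfies $d_Y(b,f(a'))\le D'$, not $f(a')\in B$, so $a'$ is simply not an element of $f^{-1}(B)$, and there is no ``preimage of $b$ furnished by the qi inequality'' because a qi \emph{embedding} is in general nowhere near surjective onto $Y$. In fact the inclusion $A\sse N_C(f^{-1}(B))$ can fail outright: take $X=\{0\}$, $Y=\R$, $f(0)=0$, $A=\{0\}$, $B=\{1\}$, so $Hd_Y(B,f(A))=1$ but $f^{-1}(B)=\emp$ and $Hd_X(f^{-1}(B),A)=\infty$. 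So (3) as written needs an extra hypothesis (for instance $B\sse f(X)$, or replacing $f^{-1}(B)$ by $f^{-1}(N_r(B))$ for a suitable $r\ge D'$, after which your computation gives the bound $kD'+k^2$). If you want to salvage the step in the form stated, you must assume something that guarantees each such $b$ (or a nearby point of $B$) lies in the image of $f$; merely knowing $f$ is a qi embedding does not give you that, and the constant $2kD'+k^2$ is therefore unsupported.
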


\subsection{Hyperbolic metric spaces}
We assume that the reader is familiar with (Gromov) hyperbolic metric spaces.
We recall some basic definitions and results from the theory of Gromov hyperbolic spaces and groups
which are essential for our purpose. For more details
on (Gromov) hyperbolic metric spaces and (Gromov) hyperbolic groups we refer the reader to standard
references like \cite{bridson-haefliger}, \cite{abc}, \cite{GhH}, or  Gromov's original article
\cite{gromov-hypgps}. {\em We shall always assume that the hyperbolic spaces dealt with in this paper
are geodesic metric spaces; and we will adopt Rips' definition of hyperbolicity; so a $\delta$-hyperbolic space
(for some $\delta\geq 0$) will mean a geodesic metric space in which geodesic triangles are all $\delta$-slim.}

\subsubsection{\bf Quasigeodesics in hyperbolic spaces}
The following result is known as Morse Lemma or Stability of Quasi-Geodesic (see \cite[Theorem $1.7$, III. H]{bridson-haefliger}).
\begin{lemma}[{Stability of quasi-geodesic}]\label{ml}
	Given $\dl\ge0, \ep\ge0$ and $k\ge1$, we have a constant $D_{\ref{ml}}=D_{\ref{ml}}(\dl,k,\ep)\ge0$ such that the following holds.

	Suppose $X$ is a $\dl$-hyperbolic metric space. Then for any geodesic $\alpha$ in $X$ and a $(k,\epsilon)$-quasi-geodesic $\beta$ with the same end points as $\alpha$, the Hausdorff distance between $\alpha$ and $\beta$ is bounded by $D_{\ref{ml}}$.
\end{lemma}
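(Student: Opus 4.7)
I would follow the classical Morse-lemma strategy, as in \cite{bridson-haefliger}, Chapter III.H. The first step is a reduction to continuous quasi-geodesics: joining $\beta(n)$ to $\beta(n+1)$ by a geodesic segment for integer parameter values $n$ produces a continuous $(k',\epsilon')$-quasi-geodesic $\bar\beta$ with the same endpoints as $\beta$ and at Hausdorff distance at most $k+\epsilon$ from $\beta$, where $k',\epsilon'$ depend only on $k,\epsilon$. It therefore suffices to prove the conclusion for continuous quasi-geodesics, at the cost of enlarging the final constant by $k+\epsilon$.

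The heart of the argument is the one-sided bound $\bar\beta\subset N_{D_1}(\alpha)$ for some $D_1=D_1(\delta,k,\epsilon)$. Let $D:=\sup_{x\in\bar\beta}d_X(x,\alpha)$; by continuity and compactness of the parameter interval (and since the endpoints of $\bar\beta$ lie on $\alpha$), this supremum is attained at some $p\in\bar\beta$. On either side of $p$ I would choose points $p^\pm\in\bar\beta$ at $\bar\beta$-arc-length $L:=2kD+C$ from $p$, or else the endpoints of $\bar\beta$ if they come first (in which case $p^\pm\in\alpha$ already). The quasi-geodesic inequality then gives $d_X(p^\pm,p)\ge L/k'-\epsilon'$, which for $C$ chosen appropriately is at least $2D$. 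Let $q^\pm\in\alpha$ be nearest-point projections of $p^\pm$, so $d_X(p^\pm,q^\pm)\le D$. Apply $2\delta$-slimness of the geodesic quadrilateral with vertices $p^-,p^+,q^+,q^-$: the point $p$ lies within $2\delta$ of one of its four sides. It cannot be $2\delta$-close to either of the sides $[p^\pm,q^\pm]$, since those sides have length $\le D$ and every point on them lies within $D$ of $\alpha$, which (combined with the fact that $p$ is at distance $L/k'-\epsilon'-D\gg 2\delta$ from $p^\pm$ along $\bar\beta$) is incompatible with $d_X(p,\alpha)=D$ once $D>2\delta$. Hence $p$ is within $2\delta$ of $[q^-,q^+]\subset\alpha$, contradicting $d_X(p,\alpha)=D$ for $D>2\delta$ and yielding the bound $D\le D_1(\delta,k,\epsilon)$.

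The complementary inclusion $\alpha\subset N_{D_{\ref{ml}}}(\bar\beta)$ is then routine: for $x\in\alpha$, split $\alpha$ at $x$ into two sub-segments and pull back the partition to $\bar\beta$ via nearest-point projections; a connectedness/intermediate-value argument on the parameter of $\bar\beta$, combined with the one-sided bound already established, produces a point of $\bar\beta$ within distance $D_1+\delta$ of $x$.

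\textbf{Main obstacle.} The delicate part is the quantitative choice of the arc-length $L$ and the quadrilateral slimness estimate: one must verify that $L$ grows only linearly in $D$, so that the quasi-geodesic inequality $d_X(p^\pm,p)\ge L/k'-\epsilon'$ yields a genuine contradiction at large $D$ rather than a circular estimate. It is exactly this linear (as opposed to exponential) control that produces the explicit constant $D_{\ref{ml}}(\delta,k,\epsilon)$ as a function of the input data.
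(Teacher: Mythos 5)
The paper does not prove this lemma; it simply cites it as Theorem~1.7, III.H of \cite{bridson-haefliger}, so the comparison is against the standard textbook argument rather than an in-paper proof.

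Your reduction to continuous (tamed) quasi-geodesics is correct and standard, and your final paragraph deducing $\alpha\subset N(\bar\beta)$ from the one-sided inclusion by an intermediate-value/connectedness argument is also fine. The problem is in the heart of the argument, and it is a genuine gap, not a technicality.

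You define $D=\sup_{x\in\bar\beta}d_X(x,\alpha)$, attained at $p\in\bar\beta$, and then invoke $2\delta$-slimness of the geodesic quadrilateral with vertices $p^-,p^+,q^+,q^-$ to conclude that ``the point $p$ lies within $2\delta$ of one of its four sides.'' This is where the argument breaks. Slimness of a geodesic polygon constrains only the points \emph{on the geodesic sides} of that polygon: each side lies in the $2\delta$-neighbourhood of the union of the others. It says nothing whatsoever about an arbitrary ambient point, and $p$ lies on the quasi-geodesic $\bar\beta$, not on the geodesic $[p^-,p^+]$ nor on any other side. To replace $p$ by a nearby point of the geodesic $[p^-,p^+]$ you would need to know, in advance, that the sub-arc of $\bar\beta$ between $p^-$ and $p^+$ stays uniformly close to $[p^-,p^+]$ --- but that is precisely the statement being proved, so the step is circular.

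The missing ingredient is the exponential divergence of geodesics in a $\delta$-hyperbolic space, usually packaged as the logarithmic estimate (Bridson--Haefliger III.H, Lemma~1.6): for a continuous rectifiable path $c$ from $p$ to $q$ and any point $x$ on a geodesic $[p,q]$, one has $d(x,\mathrm{im}\,c)\le \delta\,|\log_2 \ell(c)|+1$. In the standard proof one takes the supremum $D$ over the \emph{geodesic} $\alpha$ (not over $\bar\beta$), builds a detour path that avoids the open ball $B(x_0,D)$ and has length \emph{linear} in $D$ (this is where the quasi-geodesic inequality is used), and then pits that linear growth against the logarithmic bound $D\le \delta\log_2(\text{linear in }D)+\text{const}$, which forces $D$ to be bounded. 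Quadrilateral slimness alone --- i.e.\ ``linear control'' as you call it --- cannot close this loop; indeed there is no proof of the Morse lemma that avoids the exponential-divergence input, because the statement genuinely uses hyperbolicity beyond what thin quadrilaterals give about points off the polygon. Your final remark that the key difficulty is to get ``linear (as opposed to exponential) control'' has it backwards: the proof \emph{needs} the exponential divergence, and the linear estimate on the detour path length is the easy side.

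A secondary, smaller issue: the arc-length $L=2kD+C$ only produces $d(p,p^\pm)\ge 2D$ after a more careful bookkeeping between the original parametrization of $\bar\beta$ and the arc-length parametrization; the exponent of $k$ is not quite right as written. But this is cosmetic compared to the slimness misapplication.
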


We note that quasigeodesics are proper paths with linear properness. However, in the presence of a Lipschitz embedding and arbitrary properness,
we have a sort of converse to Lemma \ref{ml} as follows.
\begin{lemma}\textup{(\cite[Lemma $2.5$]{ps-krishna})}\label{lem-giving qg}
	Suppose $X$ is a metric space, $x,y\in X$, and that $\gm$ is a $k$-quasigeodesic in $X$. Let $\al$ be a $1$-Lipschitz and $\eta$-proper path in $X$ such that $Hd(\al,\bt)\le D$. Then $\al$ is a $K_{\ref{lem-giving qg}}$-quasigeodesic in $X$ where $K_{\ref{lem-giving qg}}=K_{\ref{lem-giving qg}}(k,D,\eta)\ge1$.
\end{lemma}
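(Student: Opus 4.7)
The plan is to verify the two inequalities of the quasi-geodesic definition for $\alpha$. The upper direction $d_X(\alpha(u), \alpha(u')) \le |u - u'|$ is immediate from the $1$-Lipschitz hypothesis, so any constant $K \ge 1$ handles it. The substance of the lemma is the reverse direction: for arbitrary $u < u'$, setting $r := d_X(\alpha(u), \alpha(u'))$, I aim to show $|u - u'| \le K r + K$ with $K$ depending only on $k$, $D$, $\eta$.

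To set up the argument, I would introduce a ``shadow'' map $\psi$ from $\mathrm{dom}(\alpha)$ to $\mathrm{dom}(\gamma)$ by choosing, for each $u$, a parameter $\psi(u)$ with $d_X(\alpha(u), \gamma(\psi(u))) \le D$; such a $\psi(u)$ exists since $Hd(\alpha, \gamma) \le D$. Combining the $(k,k)$-quasi-geodesic bounds on $\gamma$, the $1$-Lipschitz condition on $\alpha$, and the triangle inequality yields $|\psi(u) - \psi(u')| \le k|u - u'| + k(2D + k)$, so $\psi$ is coarsely $k$-Lipschitz. Dually, combining the upper quasi-geodesic bound for $\gamma$ with the $\eta$-properness of $\alpha$ gives $|u - u'| \le \eta\bigl(k|\psi(u) - \psi(u')| + k + 2D\bigr)$, so $\psi$ is coarsely proper with modulus depending only on $k$, $D$, $\eta$. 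In particular, applied to $u, u'$ themselves, $|\psi(u) - \psi(u')| \le k(r + 2D + k)$.

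The core geometric step is to show that $\psi\bigl([u, u']\bigr)$ is contained in an interval of length $O(r + 1)$, with constants depending only on $k$, $D$, $\eta$. I would argue that if $\psi$ wanders to a depth $M$ beyond the segment joining $\psi(u)$ and $\psi(u')$, then by the lower quasi-geodesic bound on $\gamma$ together with the $1$-Lipschitz condition, $\alpha$ needs at least $\approx 2M/k$ units of parameter to make and undo the excursion; this is then coupled with the $\eta$-proper condition, which caps the time $\alpha$ can spend near any given point of $\gamma$ and so prevents trivial ``dwelling'' excursions. Once this range bound is established, a covering argument finishes the proof: cover $\psi([u, u'])$ by unit-length intervals (a total of $O(r+1)$ of them), note that by the coarse properness each $\psi$-preimage lies in a set of parameter diameter at most $\eta(2k + 2D)$, and sum to conclude $|u - u'| \le K r + K$ for a suitable $K = K(k, D, \eta)$.

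The main obstacle is executing the excursion-control step without circularity: the naive bound on the depth of an excursion involves $|u - u'|$ itself, which is precisely the quantity I wish to bound. The delicate part of the argument is to balance the quasi-geodicity of $\gamma$ (which makes each unit of excursion ``visible'' at a linear cost in parameter along $\alpha$) against the properness of $\alpha$ (which caps dwell-time) to give a bound that depends only on $k$, $D$, $\eta$ and on $r$, not on $|u - u'|$. Once this range estimate is in place, the covering and summation step is routine.
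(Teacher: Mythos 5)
Your overall framework — introduce a coarsely defined shadow map $\psi$ from the domain of $\alpha$ into the domain of $\gamma$, verify its coarse Lipschitz and coarse properness estimates, bound the range of $\psi$ on $[u,u']$, and then cover by unit intervals whose $\psi$-preimages have parameter diameter at most $\eta(2D+2k)$ — is the right one, and the coarse estimates you record for $\psi$ are correct. The difficulty is that the excursion-control step, which you correctly single out as the crux, is neither carried out nor correctly sketched. Observing that an excursion of depth $M$ costs $\gtrsim 2M/k$ units of parameter is a lower bound on $|u-u'|$, which is the quantity you are trying to bound from above and hence unusable in this direction; and the fact that $\eta$-properness caps the diameter of $\alpha^{-1}$ of a small ball does not by itself bound the excursion depth $M$, because a deep excursion is not a ``dwell'': $\alpha$ can pass each intermediate point of $\gamma$ only briefly while still ranging far. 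Neither fact, nor their juxtaposition, yields a bound on $M$ that is independent of $|u-u'|$. In consequence your covering count ``$O(r+1)$ unit intervals'' is unjustified, since you have only controlled $|\psi(u)-\psi(u')|$, not the full range of $\psi$ on $[u,u']$.

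What actually breaks the circularity is the continuity of $\alpha$ as a path, which you never invoke. With $\psi(u)\le\psi(u')$ and $t_0\in[u,u']$ realizing $M_+=\max\psi$, the two closed sets $\{t\in[u,t_0]:\ d(\alpha(t),\gamma((-\infty,\psi(u')]))\le D\}$ and $\{t\in[u,t_0]:\ d(\alpha(t),\gamma([\psi(u'),\infty)))\le D\}$ cover $[u,t_0]$ and contain $u$ and $t_0$ respectively, so by connectedness they overlap at some $t_1$; using that $\gamma$ is a $k$-quasigeodesic one gets $d(\alpha(t_1),\gamma(\psi(u')))\le D'$ for a constant $D'=D'(k,D)$. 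Since also $d(\alpha(u'),\gamma(\psi(u')))\le D$, $\eta$-properness gives $|t_1-u'|\le\eta(D+D')$, a bound depending only on $k,D,\eta$. As $t_0\in[t_1,u']$, the turning time $t_0$ is within $\eta(D+D')$ of $t_1$, and the coarse Lipschitz estimate for $\psi$ then forces $M_+-\psi(u')\le\mathrm{const}(k,D,\eta)$ (a constant, not merely $O(r)$); the bottom excursion is treated symmetrically. Only with this coarse intermediate-value step in hand does the range of $\psi$ become $O(r)$ and the covering argument close. As written, your proposal stops at acknowledging the obstacle rather than resolving it.
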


The following lemma follows easily by slimness of triangles.
\begin{lemma}\label{lem-qg}
	Given $\dl\ge0$, $R\ge0$ there is a constant $K_{\ref{lem-qg}}=K_{\ref{lem-qg}}(\dl,R)\ge1$ such that the following holds.
	
	Suppose $X$ is a $\dl$-hyperbolic metric space. Let $p,q,r,s\in X$ be such that $d(p,q)\le R$ and $d(r,s)>R+2\dl$. Let $z\in[p,r]$ and $w\in[q,s]$ be such that $R+2\dl\le d(z,w)\le R+5\dl$. Then the arc length parametrization of $[r,z]\cup[z,w]\cup[w,s]$ is a $K_{\ref{lem-qg}}$-quasigeodesic.
\end{lemma}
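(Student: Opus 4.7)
My plan is to verify that $\alpha = [r,z] \cup [z,w] \cup [w,s]$, parametrized by arc length, satisfies an additive quasigeodesic inequality of the form $d(\alpha(s),\alpha(t)) \geq |s-t| - C$ for some constant $C = C(\delta,R)$. Since $\alpha$ is a concatenation of three geodesics it is automatically $1$-Lipschitz, and this additive lower bound then yields a $(1,C)$-quasigeodesic, hence a $K_{\ref{lem-qg}}$-quasigeodesic with $K_{\ref{lem-qg}} = \max(1,C)$.

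I would proceed by case analysis on which of the three subsegments contain $x := \alpha(s)$ and $y := \alpha(t)$. If both lie in the same subsegment the inequality holds as an equality. When they lie in two adjacent subsegments---say $x \in [r,z]$ and $y \in [z,w]$---the Gromov product at the shared breakpoint $z$ admits the trivial estimate
\[
(x,y)_z \;\leq\; \min(d(x,z), d(y,z)) \;\leq\; d(y,z) \;\leq\; d(z,w) \;\leq\; R + 5\delta,
\]
which rearranges to $d(x,y) \geq d(x,z) + d(y,z) - 2(R+5\delta)$, the required bound. The adjacent case $x \in [z,w]$, $y \in [w,s]$ is handled identically with $w$ in place of $z$.

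The crucial case is $x \in [r,z]$ and $y \in [w,s]$, where I need $d(x,y) \geq d(x,z) + d(z,w) + d(w,y) - C$. The geometric picture here is that the hypothesis $d(p,q) \leq R$ forces $[p,r]$ and $[q,s]$ to fellow-travel in their early portions, while $d(z,w) \in [R+2\delta, R+5\delta]$ places $z,w$ just past the exit of that fellow-travel region; any geodesic joining a point past $z$ on $[p,r]$ with a point past $w$ on $[q,s]$ is then forced to pass within $O(\delta,R)$ of both $z$ and $w$. Translated into Gromov products, the aim is to show $(x,y)_z, (x,y)_w = O(\delta,R)$, and then combine these with the earlier estimate $(x,w)_z \leq d(z,w) \leq R+5\delta$ to bootstrap $d(x,w) \geq d(x,z) + d(z,w) - 2(R+5\delta)$ and finally $d(x,y) \geq d(x,w) + d(w,y) - 2(x,y)_w$, which chains to the desired additive inequality.

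The main obstacle will be making the ``passes close to $z$ and $w$'' step rigorous. The tools at my disposal are: the standard comparison $(x,y)_z - O(\delta) \leq d(z,[x,y]) \leq (x,y)_z + O(\delta)$ valid in any $\delta$-hyperbolic space; slimness of the quadrilateral $p,q,s,r$, so that each of its four sides lies in the $2\delta$-neighbourhood of the union of the other three; and the size hypotheses $d(p,q) \leq R$, $d(r,s) > R+2\delta$, $R+2\delta \leq d(z,w) \leq R+5\delta$. The core combinatorial step is to run through the possible positions of $x,y,z,w$ relative to the slim-quadrilateral decomposition and, in each sub-case, bound $(x,y)_z$ and $(x,y)_w$ by a constant depending only on $\delta$ and $R$; this is precisely what the remark ``follows easily by slimness of triangles'' preceding the statement is pointing at.
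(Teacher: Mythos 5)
Your additive-inequality skeleton is the natural one, and the same-segment and adjacent-segment cases are handled correctly: for $x\in[r,z]$, $y\in[z,w]$ the bound $(x,y)_z\le d(y,z)\le d(z,w)\le R+5\delta$ already delivers the required additive excess. The opposite-segment estimate $(x,y)_w=O(\delta,R)$ that your chaining step requires is, however, \emph{false} under the stated hypotheses, and in fact the lemma itself fails as literally written. Take $X$ to be the metric tree obtained by gluing three segments at a common point $o$: one of length $R$ ending at $r$, one of length $R$ ending at $s$, and one of length $M>R$ ending at $p$. Set $q=p$ and $\delta=0$. Then $d(p,q)=0\le R$ and $d(r,s)=2R>R+2\delta$; take $z=p$ and let $w$ be the point of $[q,s]$ at distance $R$ from $q$, so that $d(z,w)=R\in[R+2\delta,R+5\delta]$ and every hypothesis of the lemma is met. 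But $[r,z]\cup[z,w]\cup[w,s]$ traces out $r\to o\to p\to w\to o\to s$, doubling back along the long branch; its arc length is $(R+M)+R+M=2R+2M$ while $d(r,s)=2R$, so the additive excess $2M$ is unbounded as $M\to\infty$ and no constant $K_{\ref{lem-qg}}(\delta,R)$ exists. Correspondingly $(r,s)_z=M$ and $(r,s)_w=M-R$, so the Gromov product bound your outline is after simply is not available from these hypotheses.

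The root cause is that $d(z,w)\approx R$ does not locate $z,w$ near the divergence locus of $[p,r]$ and $[q,s]$: those geodesics may overlap for a long stretch (here the whole branch $[p,o]$), and $z,w$ can lie deep inside that shared stretch, far from $[r,s]$. The lemma needs a further hypothesis --- for instance that $d(z,[r,s])$ and $d(w,[r,s])$ are bounded in terms of $\delta$ and $R$, or that $z,w$ are the first points (measuring from $p,q$) at which the distance threshold $R+2\delta$ is crossed. That extra constraint does hold wherever the paper actually invokes the lemma (in the proof of Lemma~\ref{lem-coarse int iff coarse sep} the cut points $x_1,x_2$ are taken $\delta$-close to $[y_1,y_2]$; in Proposition~\ref{prop-X lift gives Y lift} the points $x,y$ are chosen at the first threshold crossing), and with it your slim-quadrilateral argument for the far case would go through. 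So your strategy is sound but is currently attempting to prove more than is true; you must import, and then actually use, this positional constraint on $z$ and $w$, which neither your outline nor the stated hypotheses supply.
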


\subsubsection{Quasiconvex subsets}

	\begin{defn}
Suppose $K \ge 0$ and $X$ is a geodesic metric space. A subset $Y \subseteq X$ is said to be
$K$-\emph{quasiconvex} if every geodesic in $X$ joining any pair of points of $Y$ lies in the
$K$-neighbourhood of $Y$ in $X$. A subset of $X$ is said to be \emph{quasiconvex} if it is
$K$-quasiconvex for some $K \ge 0$.
	\end{defn}



Following is a natural way to construct examples of  quasiconvex subsets.
\begin{defn}{\em ({\bf Quasiconvex hull})}
Given a subset $A$ of a hyperbolic metric space $X$, the quasiconvex hull, denoted by $Hull_X(A)$, is the union of all geodesic
segments joining pairs of points from $A$.
\end{defn}
Following lemma is an easy consequence of slimness of triangles (and the stability of quasigeodesics).
\begin{lemma}\label{hull of qc}
Let $\dl\ge0$, $k\ge0$ and $L\ge1$. Suppose $X$ is a $\delta$-hyperbolic metric space and $A\subset X$. Then the following hold.

\begin{enumerate}[label=(\arabic*)]
\item \label{hull of qc:1} $Hull_X(A)$ is $K_{\ref{hull of qc}\ref{hull of qc:1}}(\dl)$-quasiconvex in $X$.

\item \label{hull of qc:2} Suppose, moreover, $A\subset X$ is $k$-quasiconvex.
Then $Hd(A, Hull_X(A))\leq D_{\ref{hull of qc}\ref{hull of qc:2}}(\delta, k)$.
\end{enumerate}
\end{lemma}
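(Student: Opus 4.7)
The plan is that both parts follow quickly from slimness of triangles together with the definition of $Hull_X(A)$; no machinery beyond $\delta$-slimness is required.

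For part \ref{hull of qc:1}, I would take two arbitrary points $p, q \in Hull_X(A)$ with some geodesic $[p,q]$ joining them, and show that every point $x \in [p,q]$ lies within $2\delta$ of $Hull_X(A)$. By definition of the hull, there exist $a_1, a_2, b_1, b_2 \in A$ with $p \in [a_1, a_2]$ and $q \in [b_1, b_2]$. First, apply $\delta$-slimness to the triangle with vertices $a_1, p, q$: the point $x$ lies within $\delta$ of a point on $[a_1, p] \cup [a_1, q]$. The subsegment $[a_1, p]$ is contained in $[a_1, a_2] \subseteq Hull_X(A)$, so if $x$ is close to a point there we are done. Otherwise $x$ is within $\delta$ of some $y \in [a_1, q]$, and I would then apply $\delta$-slimness to the triangle $a_1, b_1, q$: $y$ is within $\delta$ of $[a_1, b_1] \cup [b_1, q]$, both of which lie in $Hull_X(A)$ (the latter being a subsegment of $[b_1, b_2]$). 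Combining, $x$ is within $2\delta$ of $Hull_X(A)$, so one can take $K_{\ref{hull of qc}\ref{hull of qc:1}}(\delta) = 2\delta$.

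For part \ref{hull of qc:2}, the argument is even simpler. The inclusion $A \subseteq Hull_X(A)$ is immediate since each $a \in A$ is a degenerate geodesic joining $a$ to itself, so $\sup_{a \in A} d(a, Hull_X(A)) = 0$. For the reverse direction, any point $z \in Hull_X(A)$ lies on some geodesic segment $[a, a']$ with $a, a' \in A$; by $k$-quasiconvexity of $A$ this segment lies in $N_k(A)$, so $d(z, A) \leq k$. Thus $Hd(A, Hull_X(A)) \leq k$, and one can set $D_{\ref{hull of qc}\ref{hull of qc:2}}(\delta, k) = k$ (note $\delta$ does not even enter).

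There is no real obstacle in either part; the only thing to be careful about is choosing which vertex of the auxiliary triangles to apply slimness at in part \ref{hull of qc:1}, since we need both resulting sides to be containable in subsegments of geodesics with endpoints in $A$. Using $a_1$ (endpoint of the geodesic containing $p$) and then $b_1$ (endpoint of the geodesic containing $q$) achieves exactly this.
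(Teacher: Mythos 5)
Your proof is correct, and it matches the approach the paper indicates (the paper supplies no proof, merely noting the lemma "is an easy consequence of slimness of triangles (and the stability of quasigeodesics)"). Worth noting: your argument for part (1) — two applications of slimness, first to $\triangle(a_1,p,q)$ and then to $\triangle(a_1,b_1,q)$, with the sides $[a_1,p]$ and $[b_1,q]$ chosen as subsegments of $[a_1,a_2]$ and $[b_1,b_2]$ respectively — needs only slimness and does not in fact invoke stability of quasigeodesics at all, so it is if anything slightly more elementary than the paper's parenthetical hint suggests.
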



The following three lemmas show other ways to construct quasiconvex subsets. Lemmas \ref{lem-union qc} and \ref{lem-union qc qg} follow easily from the stability of quasigeodesics (Lemma \ref{ml}) and the slimness of triangles, whereas Lemma \ref{qc morph} follows from the stability of quasigeodesics together with the definition of a qi embedding.
\begin{lemma}\label{qc morph}
Let $\dl,k,k_1,k_2\ge0$.	Suppose $X,Y$ are $\delta$-hyperbolic metric spaces and $f:X\map Y$ is a $k$-qi embedding.
	Suppose $A\subset X$ and $B\subset f(A)$ where $A$ is $k_1$-quasiconvex in $X$
	and $B$ is $k_2$-quasiconvex in $Y$. Then the following holds:

	(1) $f(A)$ is $K_{\ref{qc morph}}(\delta, k, k_1)$-quasiconvex in $Y$.

	(2) $f^{-1}(B)$ is $K'_{\ref{qc morph}}(\delta, k, k_2)$-quasiconvex in $X$.
\end{lemma}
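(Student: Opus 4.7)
\textbf{Proof plan for Lemma \ref{qc morph}.} Both parts are essentially direct consequences of the Morse Lemma (Lemma \ref{ml}) applied to the images/preimages of geodesics under the $k$-qi embedding $f$, together with the defining inequalities of a qi embedding. The only mild subtlety is in part (2), where one must use the hypothesis $B \subseteq f(A)$ to guarantee that approximating points in $B$ actually have preimages in $X$ (and hence in $f^{-1}(B)$).

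For (1), I would pick arbitrary $y_1, y_2 \in f(A)$, write $y_i = f(a_i)$ with $a_i \in A$, and take a geodesic $\alpha$ from $a_1$ to $a_2$ in $X$. By $k_1$-quasiconvexity, $\alpha \subseteq N_{k_1}(A)$. Since $f$ is a $k$-qi embedding, $f\circ\alpha$ is a $(k,k)$-quasigeodesic in $Y$ joining $y_1$ and $y_2$, so by Lemma \ref{ml} its Hausdorff distance from any geodesic $[y_1, y_2]$ is at most $D = D_{\ref{ml}}(\delta, k, k)$. Given $z \in [y_1, y_2]$, pick $x \in \alpha$ with $d_Y(z, f(x)) \leq D$ and $a \in A$ with $d_X(x, a) \leq k_1$; the upper qi inequality gives $d_Y(f(x), f(a)) \leq k k_1 + k$, so $d_Y(z, f(A)) \leq D + k k_1 + k$, which furnishes the required constant $K_{\ref{qc morph}}(\delta, k, k_1)$.

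For (2), I would pick $x_1, x_2 \in f^{-1}(B)$, a geodesic $\beta = [x_1, x_2]$ in $X$, and again note that $f\circ\beta$ is a $(k,k)$-quasigeodesic in $Y$ from $f(x_1)$ to $f(x_2)$, both of which lie in $B$. By $k_2$-quasiconvexity the geodesic $[f(x_1), f(x_2)]$ lies in $N_{k_2}(B)$, and by Lemma \ref{ml} this geodesic is $D$-Hausdorff close to $f\circ\beta$; hence every point of $f\circ\beta$ is within $k_2 + D$ of $B$. Given $z \in \beta$, choose $b \in B$ with $d_Y(f(z), b) \leq k_2 + D$. Because $B \subseteq f(A)$, write $b = f(a)$ with $a \in A$; then $a \in f^{-1}(B)$, and the lower qi inequality yields $d_X(z, a) \leq k(k_2 + D) + k^2$, giving the constant $K'_{\ref{qc morph}}(\delta, k, k_2)$.

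As for obstacles, there really is none beyond bookkeeping of constants; the argument is a standard template in coarse geometry. The only step that would look substantial if omitted is the appeal to $B \subseteq f(A)$ in part (2), without which a point of $B$ close to $f(z)$ need not be in the image of $f$ and the preimage argument breaks down.
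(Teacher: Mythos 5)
The paper states Lemma \ref{qc morph} without proof (it is one of several quasiconvexity facts collected as standard background), so there is no internal proof to compare against. Your argument is correct and is the natural one: apply the stability of quasigeodesics (Lemma \ref{ml}) to the image (resp.\ preimage) of a geodesic under the $k$-qi embedding, then transport neighbourhoods of $A$ (resp.\ $B$) using the two qi inequalities, with the hypothesis $B\subseteq f(A)$ supplying preimages in part (2). The constant bookkeeping in both parts checks out.
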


\begin{lemma}\label{lem-union qc}
	Let $R\ge0$, $\dl\ge0$ and $k\ge0$. Suppose $X$ is a $\dl$-hyperbolic metric space. Suppose that $\{U_{\lm}:\lm\in\Lambda\}$ is a collection of $k$-quasiconvex subsets of $X$. Further, assume that $V$ is also a $k$-quasiconvex subset of $X$ such that $d(V, U_{\lm})\le R$ for all $\lm\in\Lambda$. Then $V\bigcup\bigcup_{\lm\in\Lambda}U_{\lm}$ is $(4\dl+k+R)$-quasiconvex in $X$.
\end{lemma}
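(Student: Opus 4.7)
The plan is to prove this by the standard slimness property of geodesic polygons in a $\dl$-hyperbolic space, namely that a geodesic $n$-gon is $(n-2)\dl$-slim (which follows from $\dl$-slimness of triangles by inductively triangulating with $n-3$ diagonals from one vertex). Set $W := V \cup \bigcup_{\lm \in \Lambda} U_{\lm}$. Pick any two points $x, y \in W$ and any geodesic $\gm = [x,y]$; the goal is to show $\gm \sse N_{4\dl + k + R}(W)$.

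First I would dispose of the trivial cases. If both $x, y \in V$, or if both lie in a common $U_{\lm}$, then the $k$-quasiconvexity of the relevant piece immediately yields $\gm \sse N_k(W)$. For the mixed case $x \in V$, $y \in U_{\lm}$, I would fix an arbitrarily small $\ep > 0$, choose $v \in V$ and $u \in U_{\lm}$ with $d(u, v) \le R + \ep$ (possible since $d(V, U_{\lm}) \le R$), and consider the geodesic quadrilateral with vertices $x, v, u, y$. The $2\dl$-slimness of the quadrilateral, combined with $[x,v] \sse N_k(V)$, $[u,y] \sse N_k(U_{\lm})$, and $[v,u] \sse N_{R+\ep}(\{u,v\}) \sse N_{R+\ep}(W)$, gives $\gm \sse N_{2\dl + k + R + \ep}(W)$; letting $\ep \to 0$ yields a bound even better than the one claimed.

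The substantial case is $x \in U_{\lm_1}$, $y \in U_{\lm_2}$ with $\lm_1 \ne \lm_2$. Given $\ep > 0$, I would choose $u_i \in U_{\lm_i}$ and $v_i \in V$ with $d(u_i, v_i) \le R + \ep$ for $i = 1, 2$, and form the geodesic hexagon whose cyclic sides are $\gm, [x, u_1], [u_1, v_1], [v_1, v_2], [v_2, u_2], [u_2, y]$. By the $4\dl$-slimness of hexagons, every point of $\gm$ lies within $4\dl$ of one of the other five sides. Quasiconvexity gives $[x, u_1] \sse N_k(U_{\lm_1})$, $[u_2, y] \sse N_k(U_{\lm_2})$, and $[v_1, v_2] \sse N_k(V)$; and each of the short sides $[u_i, v_i]$ is contained in $N_{R+\ep}(W)$ because its endpoints lie in $W$ and it has length at most $R+\ep$. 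Hence $\gm \sse N_{4\dl + k + R + \ep}(W)$, and sending $\ep \to 0$ finishes the proof.

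I do not anticipate any serious obstacle: the argument is a direct application of polygon slimness together with a small-$\ep$ trick to pass from the infimum in $d(V, U_{\lm}) \le R$ to actual witness pairs. The only modelling choice is the hexagon topology, which is forced by the need to route from one $U_{\lm_i}$ to the other through $V$, explaining why the constant $4\dl$ (rather than $2\dl$ or $3\dl$) appears in the final bound.
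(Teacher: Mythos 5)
Your proof is correct and attains exactly the stated constant $4\dl + k + R$. The paper gives this lemma without proof, so there is no in-text argument to compare against; your case split (common piece trivially, $V$-to-$U_\lm$ by $2\dl$-slim quadrilaterals, $U_{\lm_1}$-to-$U_{\lm_2}$ by $4\dl$-slim hexagons routed through $V$) is a clean direct verification, and the $\ep$-limit used to replace the infimum $d(V,U_\lm)\le R$ by witness pairs is handled correctly since $d(p,W)\le 4\dl+k+R+\ep$ for every $\ep>0$ does give $d(p,W)\le 4\dl+k+R$.
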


\begin{lemma}\label{lem-union qc qg}
	Let $\dl\ge0$, $k\ge0$ and $R\ge0$. Suppose $X$ is a $\dl$-hyperbolic metric space. Let $\{U_{\lm}:\lm\in\Lm\}$ be a collection of $k$-quasiconvex subset of $X$. Further assume that $V$ is also a $k$-quasiconvex subsets of $X$ such that $V\sse N_R(\bigcup_{\lm\in\Lm}U_{\lm})$ and $V\cap U_{\lm}\ne\emptyset$ for all $\lm\in\Lm$. 
	Then $\bigcup_{\lm\in\Lm}U_{\lm}$ is $K_{\ref{lem-union qc qg}}$-quasiconvex where $K_{\ref{lem-union qc qg}}=R+k+2\dl$.
\end{lemma}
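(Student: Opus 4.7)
The plan is a direct application of the slimness of geodesic quadrilaterals in a $\delta$-hyperbolic space, combined with the hypotheses on $V$ and the $U_\lambda$'s. Let $p, q \in \bigcup_{\lambda \in \Lambda} U_\lambda$, say $p \in U_{\lambda_1}$ and $q \in U_{\lambda_2}$, and fix a geodesic $[p,q]$. I must show that every point $z \in [p,q]$ lies within $R + k + 2\delta$ of $\bigcup_{\lambda \in \Lambda} U_\lambda$.

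The first step is to produce auxiliary points in $V$. Using the hypothesis $V \cap U_\lambda \neq \emptyset$, choose $p' \in V \cap U_{\lambda_1}$ and $q' \in V \cap U_{\lambda_2}$. Then pick geodesics $[p,p']$, $[p',q']$, $[q',q]$ in $X$. Quasiconvexity gives control over each of these sides: $[p,p'] \subseteq N_k(U_{\lambda_1})$ and $[q',q] \subseteq N_k(U_{\lambda_2})$ because $U_{\lambda_1}, U_{\lambda_2}$ are $k$-quasiconvex, while $[p',q'] \subseteq N_k(V)$ because $V$ is $k$-quasiconvex. The hypothesis $V \subseteq N_R(\bigcup_\lambda U_\lambda)$ then upgrades the third inclusion to $[p',q'] \subseteq N_{k+R}(\bigcup_\lambda U_\lambda)$.

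The second step is to use the standard fact that a geodesic quadrilateral in a $\delta$-hyperbolic space is $2\delta$-slim, i.e.\ any point on one side is within $2\delta$ of the union of the other three sides (this follows by splitting the quadrilateral into two triangles via a diagonal and applying $\delta$-slimness twice). Applied to the quadrilateral with sides $[p,q]$, $[p,p']$, $[p',q']$, $[q',q]$, each $z \in [p,q]$ is within $2\delta$ of a point on one of the other three sides. By the inclusions above, such a point lies within $\max\{k, \, k+R\} = k+R$ of $\bigcup_\lambda U_\lambda$. Combining, $d(z, \bigcup_\lambda U_\lambda) \le 2\delta + k + R$, which is exactly $K_{\ref{lem-union qc qg}}$.

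I do not anticipate any real obstacle here: the argument is a short quadrilateral-slimness estimate, and the hypotheses (quasiconvexity of the $U_\lambda$ and of $V$, together with $V$ meeting each $U_\lambda$ and lying in an $R$-neighbourhood of the union) are tailored precisely so that the three non-$[p,q]$ sides of the quadrilateral each stay close to $\bigcup_\lambda U_\lambda$. The only minor point to be careful about is the $2\delta$ constant for quadrilaterals rather than $\delta$ for triangles, which is exactly what produces the $2\delta$ in the final bound.
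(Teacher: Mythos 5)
Your proof is correct. The paper states Lemma \ref{lem-union qc qg} without proof (it is grouped with other standard quasiconvexity facts), and the quadrilateral-slimness argument you give — choose $p' \in V \cap U_{\lambda_1}$ and $q' \in V \cap U_{\lambda_2}$, note that the three sides $[p,p']$, $[p',q']$, $[q',q]$ lie in $N_k(U_{\lambda_1})$, $N_{k+R}\bigl(\bigcup_\lambda U_\lambda\bigr)$, $N_k(U_{\lambda_2})$ respectively, then apply $2\delta$-slimness of quadrilaterals to bound $d\bigl(z, \bigcup_\lambda U_\lambda\bigr)$ by $R+k+2\delta$ for $z\in[p,q]$ — is exactly the standard argument one would write, and it yields the stated constant.
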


Note that in a hyperbolic metric space $X$, any qi embedded subspace is quasiconvex (see, for instance, \cite[Lemma $1.90$]{ps-kap}). The following lemma shows, in particular, that  the converse is true if the subspace is properly embedded. For a proof of the following lemma one may look at \cite[Lemma $1.91$]{ps-kap}.
\begin{lemma}\label{qi-emb-in-Y-X}{\em ({\bf Quasiconvex vs QI embedded})}
Given a map $\phi:\R_{>0}\ri\R_{>0}$ and constants $k\ge0$ and $R\ge k+1$ there are constants $L_{\ref{qi-emb-in-Y-X}}=L_{\ref{qi-emb-in-Y-X}}(k,R)\ge1$ and $L\pr_{\ref{qi-emb-in-Y-X}}=L\pr_{\ref{qi-emb-in-Y-X}}(k,\phi)\ge1$ such that we have the following.

$(1)$ Suppose $X$ is a geodesic metric space and $A$ is a $k$-quasiconvex subset of $X$. Then $N^X_R(A)$ is path connected and the inclusion $N^X_R(A)\to X$ is $L_{\ref{qi-emb-in-Y-X}}$-qi embedding with respect to the induced path metric on $N^X_R(A)$ from  $X$.

$(2)$ Suppose $Y\sse X$ is a path connected subspace such that the inclusion $Y\map X$ is $\phi$-proper embedding. Moreover, $Y$ is $k$-quasiconvex. Then the inclusion $Y\map X$ is $L'_{\ref{qi-emb-in-Y-X}}$-qi embedding.

In particular, if $G$ is a hyperbolic group and $H$ is a finitely generated subgroup then
$H$ is qi embedded in $G$ if and only if it is quasiconvex in (any Cayley graph of) $G$.
\end{lemma}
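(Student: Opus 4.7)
My plan is to prove part (1) by exhibiting explicit short paths in $N^X_R(A)$, part (2) by a discretize-and-lift argument that trades a geodesic of $X$ for a chain of nearby points in $Y$, and the ``in particular'' statement by combining part (2) with stability of quasigeodesics in the hyperbolic Cayley graph of $G$.

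For part (1), for each $x \in N^X_R(A)$ I fix $a_x \in A$ with $d_X(x, a_x) \le R$; the geodesic $[x, a_x]_X$ then lies in $N^X_R(A)$ since each of its points is within distance $R$ of $a_x$. For any two points $x, y \in N^X_R(A)$ I concatenate $[x, a_x]_X \ast [a_x, a_y]_X \ast [a_y, y]_X$; the middle geodesic lies in $N^X_k(A) \subseteq N^X_R(A)$ by $k$-quasiconvexity of $A$ together with the hypothesis $R \ge k+1$, so the whole path stays in $N^X_R(A)$. This gives path connectedness, and since the length of this concatenation is at most $d_X(x, y) + 4R$, the induced path metric $d$ on $N^X_R(A)$ satisfies $d_X(x, y) \le d(x, y) \le d_X(x, y) + 4R$, yielding the qi embedding with constant $L_{\ref{qi-emb-in-Y-X}}$ depending only on $R$ (hence on $k,R$).

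For part (2), I fix $y, y' \in Y$ and discretize a geodesic $[y, y']_X$ as $p_0 = y, p_1, \ldots, p_n = y'$ with $d_X(p_i, p_{i+1}) \le 1$ and $n \le d_X(y, y') + 1$. Using $k$-quasiconvexity of $Y$ in $X$, I choose $y_i \in Y$ with $d_X(p_i, y_i) \le k$, taking $y_0 = y$ and $y_n = y'$. Then $d_X(y_i, y_{i+1}) \le 2k + 1$, and $\phi$-properness of the inclusion $Y \hookrightarrow X$ together with Corollary \ref{cor-inverse of proper map} (applied to the proper function $\bar{\phi}$) gives $d_Y(y_i, y_{i+1}) \le \phi(2k+1)$. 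Concatenating rectifiable $Y$-paths between consecutive $y_i$'s yields $d_Y(y, y') \le (d_X(y, y') + 1) \cdot \phi(2k+1)$, which is the desired qi embedding estimate, with constant $L'_{\ref{qi-emb-in-Y-X}}$ depending only on $k$ and $\phi$.

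For the ``in particular'' statement I fix finite generating sets for $H$ and $G$; the inclusion $H \to G$ is always Lipschitz and proper since any ball in $\Gamma_G$ is finite, so only finitely many $H$-elements sit inside it. If $H$ is quasiconvex in $\Gamma_G$, I apply part (2) to the path-connected subspace $Y \subset \Gamma_G$ obtained by adjoining to the vertex set $H$ a $\Gamma_G$-path realizing each generator of $H$; the induced path metric on $Y$ is bi-Lipschitz to the word metric on $H$, so part (2) gives that $H \to G$ is a qi embedding. Conversely, if $H \to G$ is a qi embedding, a $\Gamma_H$-geodesic between $h_1, h_2 \in H$ maps to a quasigeodesic in $\Gamma_G$ passing through $H$-vertices at unit intervals; by Lemma \ref{ml} (hyperbolicity of $\Gamma_G$) it lies within bounded Hausdorff distance of the $\Gamma_G$-geodesic $[h_1, h_2]$, so every point of $[h_1, h_2]$ is within bounded distance of $H$, proving quasiconvexity. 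The only real subtlety is the bi-Lipschitz comparison between the induced path metric on $Y$ and the intrinsic word metric on $H$, which reduces to the standard fact that any two proper word metrics on a finitely generated group are bi-Lipschitz equivalent.
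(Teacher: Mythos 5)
Your proof is correct, and since the paper states Lemma \ref{qi-emb-in-Y-X} without supplying a proof, there is no competing argument to compare against; your route is the standard one, so a brief review is all that is needed. Two small points. In part (2), your invocation of Corollary \ref{cor-inverse of proper map} is unnecessary and in fact points the wrong way: the estimate $d_Y(y_i,y_{i+1})\le\phi(2k+1)$ follows directly from the definition of a $\phi$-proper embedding (which says $d_X\le r$ forces $d_Y\le\phi(r)$); Corollary \ref{cor-inverse of proper map} concerns the reverse implication. Also, $k$-quasiconvexity gives $d_X(p_i,Y)\le k$, not necessarily a realizing point $y_i\in Y$ with $d_X(p_i,y_i)\le k$ on the nose; replacing $k$ by $k+1$ in the choice of $y_i$ avoids any issue with $Y$ not being closed, and only changes the final constant. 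In the ``in particular'' part, the bi-Lipschitz comparison between the induced path metric on your thickened $Y$ and the word metric on $H$ deserves one more sentence (it follows from left-invariance and properness via the usual \v{S}varc--Milnor-type argument), but the structure of the argument is sound and the converse direction using Lemma \ref{ml} is exactly right.
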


\noindent{\bf Nearest point projection maps}\\
In dealing with quasiconvex sets the existence of coarsely well-defined nearest point projection maps
turns out to be greatly useful. For a subset $A$ of a hyperbolic metric space $X$, we shall denote by $P^X_A$
a nearest point projection map $X\map A$. The following lemma gives the most useful properties of these maps
and it also show that this is a characterizing property of quasiconvex sets.

\begin{lemma}\label{proj-on-qc}
Let $\dl\ge0$, $k\ge0$ and $r\ge1$. Suppose $X$ is a $\dl$-hyperbolic metric space and $U\sse X$. Then we have the following.
\begin{enumerate}[label=(\arabic*)] 
\item \textup{(\cite[Lemma $7.3.D$]{gromov-hypgps})}\label{proj-on-qc:1} If  $U$ is $k$-quasiconvex in $X$ then $P^X_{U}:X\ri U$ is a $C_{\ref{proj-on-qc}\ref{proj-on-qc:1}}$-coarsely Lipschitz retraction where $C_{\ref{proj-on-qc}\ref{proj-on-qc:1}}=C_{\ref{proj-on-qc}\ref{proj-on-qc:1}}(\dl,k)>0$.

\item \textup{(\cite[Lemma $1.4.2$]{bowditch-stacks})}\label{proj-on-qc:lem-retraction imp qc} Suppose that $\rho:X\map U$ is a $r$-coarsely Lipschitz retraction onto $U$. Then $U$ is a $K_{\ref{proj-on-qc}\ref{proj-on-qc:lem-retraction imp qc}}$-quasiconvex in $X$ where $K_{\ref{proj-on-qc}\ref{proj-on-qc:lem-retraction imp qc}}=K_{\ref{proj-on-qc}\ref{proj-on-qc:lem-retraction imp qc}}(\dl,r)\ge0$. 
\end{enumerate}	
\end{lemma}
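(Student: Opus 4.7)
The two parts are independent. For \ref{proj-on-qc:1}, the plan is to fix $x,y \in X$ with projections $p := P^X_U(x)$ and $q := P^X_U(y)$, and bound $d(p,q)$ in terms of $d(x,y)$ using slim quadrilaterals. For \ref{proj-on-qc:lem-retraction imp qc}, the plan is to push a geodesic between two points of $U$ into $U$ via $\rho$, obtain a piecewise geodesic path in $X$ that closely shadows $U$, show that this path is a quasi-geodesic, and then invoke the Morse lemma (Lemma \ref{ml}).

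For part \ref{proj-on-qc:1}, first I would observe that the nearest-point projection is a coarse retraction: for $u \in U$, any nearest point $P^X_U(u)$ satisfies $d(u, P^X_U(u)) \leq 2k$, since by $k$-quasiconvexity the point $u$ lies within $k$ of itself trivially. Given $x,y \in X$ with $p = P^X_U(x)$ and $q = P^X_U(y)$, I would consider the geodesic quadrilateral with sides $[x,y]$, $[y,q]$, $[q,p]$, $[p,x]$, where $[q,p]$ lies in $N_k(U)$. Subdividing into two triangles and using $\delta$-slimness, each point on $[p,q]$ lies within $2\delta$ of $[x,p] \cup [x,y] \cup [y,q]$. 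The key observation is that if a point $m \in [p,q]$ is $2\delta$-close to some $z \in [x,p]$ then $m$ is $(2\delta + k)$-close to $U$, hence by the defining minimality of $p$ we have $d(x,p) \leq d(x,z) + (2\delta + k)$, forcing $d(z,p) \leq 2\delta + k$, and similarly for points close to $[y,q]$. Thus the portion of $[p,q]$ not within a uniformly bounded neighborhood of $\{p,q\}$ must shadow $[x,y]$, giving $d(p,q) \leq d(x,y) + C(\delta,k)$. Coarse Lipschitzness and the fact that $\rho$ moves points of $U$ a bounded amount together yield that $P^X_U$ is a coarsely Lipschitz retraction.

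For part \ref{proj-on-qc:lem-retraction imp qc}, fix $u,v \in U$ and a geodesic $\gamma:[0,L]\to X$ from $u$ to $v$. Set $p_i := \rho(\gamma(i))$ for $i = 0, 1, \ldots, \lceil L \rceil$ with $p_0 = u$ and $p_{\lceil L \rceil} = v$ (using $\rho|_U = \mathrm{id}_U$). The coarse Lipschitz bound gives $d(p_i, p_{i+1}) \leq 2r$. Let $\tilde\eta$ be the concatenation of geodesic segments $[p_i, p_{i+1}]$ in $X$; by construction $\tilde\eta$ stays in $N_{r}(U)$. The task is to show that $\tilde\eta$, parametrized by arc length, is a $(K,K)$-quasi-geodesic for $K = K(\delta, r)$: then Lemma \ref{ml} applied to $\gamma$ and $\tilde\eta$ yields $\gamma \subset N_{D}(\tilde\eta) \subset N_{D + r}(U)$ with $D = D_{\ref{ml}}(\delta, K, K)$, establishing quasiconvexity with constant depending only on $\delta$ and $r$.

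The main technical obstacle is verifying that $\tilde\eta$ is a quasi-geodesic. The upper bound on $d(\tilde\eta(s_1), \tilde\eta(s_2))$ follows from $1$-Lipschitzness of arc-length parametrization. For the lower bound, one cannot naively use $d(p_i, p_j) \geq d(\gamma(i), \gamma(j)) - d(\gamma(i), p_i) - d(\gamma(j), p_j)$ because the error terms can grow linearly near the endpoints. Instead, I would use the standard local-to-global principle in hyperbolic spaces: $\tilde\eta$ is a $(1, 4r)$-local quasi-geodesic on scales exceeding some threshold depending on $\delta, r$, because any backtrack of size $\gg r$ could be short-cut by re-applying $\rho$, contradicting the coarse Lipschitz constant. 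A hyperbolic local-to-global argument (analogous to the one for local quasi-geodesics being global quasi-geodesics) then upgrades $\tilde\eta$ to a global quasi-geodesic. Alternatively, one can prove this directly by inducting on $L$: the universal bound $d(z, U) \leq C(\delta, r)$ for $z \in [u,v]$ holds trivially when $L$ is small, and the inductive step uses the $\delta$-slim triangle with vertices $u, \rho(z^\ast), v$ where $z^\ast$ is a point realizing the maximum distance from $U$, combined with the bound $d(z^\ast, \rho(z^\ast))$ controlled via the inductive hypothesis applied to the subsegments $[u, \rho(z^\ast)]$ and $[\rho(z^\ast), v]$.
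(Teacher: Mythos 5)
The paper does not prove Lemma \ref{proj-on-qc}; both parts are cited (to Gromov \cite[Lemma 7.3.D]{gromov-hypgps} and Bowditch \cite[Lemma 1.4.2]{bowditch-stacks} respectively), so the task is to assess your blind proof on its own.

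Your argument for part \ref{proj-on-qc:1} is the standard thin-quadrilateral argument and is correct. (Small slip: where you write ``then $m$ is $(2\delta+k)$-close to $U$'' you presumably mean $z$ is; $m$ is already $k$-close to $U$ by quasiconvexity, and it is $z \in [x,p]$ that inherits $(2\delta+k)$-closeness and on which the minimality of $p$ is applied.)

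For part \ref{proj-on-qc:lem-retraction imp qc} there is a genuine gap, and it is exactly at the step you flag as the ``main technical obstacle.'' Neither of your two proposed repairs establishes that $\tilde\eta$ is a quasi-geodesic with constants depending only on $\delta, r$. The local-to-global route fails because $\tilde\eta$ is not a local quasi-geodesic with \emph{uniform} constants: to lower-bound $d(p_i,p_j)$ one must first bound $d(\gamma(i),p_i)$, and the only bound available for $d(\gamma(i),\rho(\gamma(i)))$ before the lemma is proved is $(r+1)\,d(\gamma(i),U)+r$, which is precisely the quantity being estimated (circular), or at best something growing with $L$; so the additive error in the local quasi-geodesic inequality depends on $L$. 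The ``direct induction'' route also fails: in the slim triangle with vertices $u,\rho(z^\ast),v$ the side $[u,\rho(z^\ast)]$ has length at most $r\,d(u,z^\ast)+r$, which \emph{exceeds} $d(u,v)$ once $r\ge 2$, so the subsegments are not shorter and the induction does not reduce; and even when $r<2$ (so the lengths contract geometrically toward a fixed constant), each inductive pass loses $\delta$ and one needs $\sim\log L$ passes, yielding only a bound of order $\log L$, not a constant $K(\delta,r)$.

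The missing ingredients are: (a) first establish the logarithmic estimate $d(z,U)\le \delta\log_2\bigl(\ell(\tilde\eta)\bigr)+1+r = O(\log L)$ using the fact that in a $\delta$-hyperbolic space a rectifiable path from $u$ to $v$ avoiding $B(z,R)$ for $z\in[u,v]$ has length at least exponential in $R$; and then (b) bootstrap: for $z\in[u,v]$ with $d(z,U)=G$, choose $a,b\in[u,v]$ on either side of $z$ at distance $\Delta\approx (r+1)G+2\delta+r$, set $a'=\rho(a),b'=\rho(b)$, and use a $2\delta$-thin quadrilateral $a,b,b',a'$: the choice of $\Delta$ forces $z$ to be $2\delta$-close to the opposite side $[a',b']$, which is a geodesic between points of $U$ of length $O(G)$, so by (a) every point of it is $O(\log G)$-close to $U$. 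This gives $G \le 2\delta + O(\log G)$, hence $G$ is bounded by a constant depending only on $\delta$ and $r$. (Boundary cases where $a$ or $b$ is forced to be $u$ or $v$ are handled the same way, since then $d(u,v)=O(G)$ and (a) applies directly.) Without step (a) as input, the quadrilateral argument only reproduces the circularity.
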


	The following lemma describes various properties of the nearest point projection
	maps on quasiconvex sets.

\begin{lemma}\label{lem-proj on pair qc}{\em ({\bf Properties of nearest point projection maps})}
Let $\dl\ge0$, $k\ge0$ and $R\ge0$. Suppose $X$ is a $\dl$-hyperbolic metric space, and $U,~V$ are $k$-quasiconvex subsets of $X$. Then we have the following.
\begin{enumerate}[label=(\arabic*)]
\item\label{lem-proj on pair qc:new proj lemma3} Assume that $Hd_X(U,V)\leq R$. Then for all $x\in X$ we have
$$d_X(P^X_{U}(x), P^X_{V}(x))\leq D_{\ref{lem-proj on pair qc} \ref{lem-proj on pair qc:new proj lemma3}}$$for some constant $D_{\ref{lem-proj on pair qc}\ref{lem-proj on pair qc:new proj lemma3}}=D_{\ref{lem-proj on pair qc}\ref{lem-proj on pair qc:new proj lemma3}}(\dl,k,R)$. In particular, letting $U=V$, the map $P^X_{U}$ is coarsely well-defined (see Lemma \ref{proj-on-qc}~\ref{proj-on-qc:1}).

\item \label{lem-proj on pair qc:qc proj qc1}\textup{(\cite[Lemma $1.113$]{ps-kap})} For any $x,y\in X$, $Hd_X(P^X_{U}([x,y]),[P^X_{U}(x), P^X_{U}(y)])\leq D_{\ref{lem-proj on pair qc}\ref{lem-proj on pair qc:qc proj qc1}}$ for some constant $D_{\ref{lem-proj on pair qc}\ref{lem-proj on pair qc:qc proj qc1}}=D_{\ref{lem-proj on pair qc}\ref{lem-proj on pair qc:qc proj qc1}}(\dl,k)$.

\item \label{lem-proj on pair qc:concatenation qg} \textup{(\cite[Lemma $1.31$ $(2)$]{pranab-mahan})} For any $x\in X$ and $y\in U$, the arclength parametrization of $[x,P^X_{U}(x)]\cup[P^X_U(x),y]$ is a $(3+2k)$-quasigeodesic.

\item \label{lem-proj on pair qc:qc proj qc2}\textup{(\cite[Lemma $1.117$]{ps-kap})} $P^X_U(V)$ is $K_{\ref{lem-proj on pair qc}\ref{lem-proj on pair qc:qc proj qc2}}(\delta,k)$-quasiconvex in $X$.

\item \label{lem-proj on pair qc:small-imp-small} \textup{(\cite[Corollary $1.140$]{ps-kap})} If $diam\{P^X_{U}(V)\}\le R$ then $diam\{P^X_{V}(U)\}\le D\pr$ for some constant $D\pr$ depending only on $\dl,~k$ and $R$.

\item \label{lem-proj on pair qc:lem-nested proj} \textup{(\cite[Lemma $1.32$]{pranab-mahan})} Assume that $V\sse U$. Let $x\in X$ and $x'=P^X_U(x)$. Then $d_X(P^X_V(x),P^X_V(x'))\le D$ for some constant $D=D_{\ref{lem-proj on pair qc}\ref{lem-proj on pair qc:lem-nested proj}}(\dl,k)$.

\item \label{lem-proj on pair qc:new proj lemma} Suppose $U_1=P^X_{V}(U)$ and $U_1\sse N_R(U)$. Then for all $x\in V$ we have
$d_X(P^X_{U}(x), P^X_{U_1}(x))\leq D_{\ref{lem-proj on pair qc}\ref{lem-proj on pair qc:new proj lemma}}$ for some constant $D_{\ref{lem-proj on pair qc}\ref{lem-proj on pair qc:new proj lemma}}=D_{\ref{lem-proj on pair qc}\ref{lem-proj on pair qc:new proj lemma}}(\dl,k,R)$.
\end{enumerate}
\end{lemma}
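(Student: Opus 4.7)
The lemma is a standard toolbox for nearest-point projections onto quasiconvex subsets of a hyperbolic space, and several parts are attributed to the literature. My plan is to organize the proofs around two engines: (a) the coarse-Lipschitz retraction property of $P^X_U$ from Lemma \ref{proj-on-qc}~\ref{proj-on-qc:1}, and (b) part \ref{lem-proj on pair qc:concatenation qg}, stating that the concatenation $[x, P^X_U(x)] \cup [P^X_U(x), y]$ with $y \in U$ is a $(3+2k)$-quasigeodesic. I would prove \ref{lem-proj on pair qc:concatenation qg} first, directly from Rips slimness: for any $a \in [x, P^X_U(x)]$ and $b \in [P^X_U(x), y]$, if $d_X(a,b)$ were much smaller than $d_X(a, P^X_U(x)) + d_X(P^X_U(x), b)$, then $\delta$-slimness of the triangle with vertices $a, P^X_U(x), b$, combined with $k$-quasiconvexity placing $[P^X_U(x), y]$ in $N_k(U)$, would yield a point of $U$ strictly closer to $a$ (and hence to $x$, after sliding $a$ back to $x$ along $[x,P^X_U(x)]$) than $P^X_U(x)$, contradicting the nearest-point property. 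The constant $3+2k$ emerges from the standard bookkeeping.

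Given \ref{lem-proj on pair qc:concatenation qg}, part \ref{lem-proj on pair qc:qc proj qc1} follows by a double application: the piecewise geodesic $[x, P^X_U(x)] \cup [P^X_U(x), P^X_U(y)] \cup [P^X_U(y), y]$ is a quasigeodesic since the middle segment sits in $N_k(U)$ by quasiconvexity, so the Morse lemma (Lemma \ref{ml}) compares it to $[x, y]$, and applying the coarse-Lipschitz property of $P^X_U$ to the fellow-traveling portions gives the desired Hausdorff bound. Part \ref{lem-proj on pair qc:qc proj qc2} is an immediate corollary: for $v_1, v_2 \in V$, the geodesic $[v_1, v_2]$ lies in $N_k(V)$ by quasiconvexity of $V$, so $P^X_U([v_1, v_2])$ sits in a bounded neighborhood of $P^X_U(V)$ by coarse Lipschitzness, while by \ref{lem-proj on pair qc:qc proj qc1} the same image fellow-travels $[P^X_U(v_1), P^X_U(v_2)]$, forcing the latter geodesic into a bounded neighborhood of $P^X_U(V)$.

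For part \ref{lem-proj on pair qc:new proj lemma3}, set $u = P^X_U(x)$ and $v = P^X_V(x)$, and use $Hd_X(U, V) \le R$ to pick $u' \in U$ within $R$ of $v$ and $v' \in V$ within $R$ of $u$. Part \ref{lem-proj on pair qc:concatenation qg} supplies two $(3+2k)$-quasigeodesics from $x$ to endpoints within $R$ of one another, namely $[x, u] \cup [u, v']$ and $[x, v] \cup [v, u']$; Morse forces them to Hausdorff-fellow-travel, and since the turning points of each are coarsely characterized by minimizing distance to $x$ among points on quasigeodesic rays to $U$ or $V$, the points $u$ and $v$ must lie within a bounded distance depending only on $\delta, k, R$. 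Parts \ref{lem-proj on pair qc:lem-nested proj} and \ref{lem-proj on pair qc:new proj lemma} reduce to \ref{lem-proj on pair qc:new proj lemma3}: for \ref{lem-proj on pair qc:lem-nested proj}, noting $V \subset U$ and using \ref{lem-proj on pair qc:concatenation qg} to place $x' = P^X_U(x)$ near the quasigeodesic $[x, P^X_V(x)]$, coarse Lipschitzness of $P^X_V$ together with the retraction property on $V$ finishes the argument; for \ref{lem-proj on pair qc:new proj lemma}, part \ref{lem-proj on pair qc:qc proj qc2} gives quasiconvexity of $U_1$, the hypothesis $U_1 \subset N_R(U)$ combined with the reverse inclusion $U \subset N_{O(1)}(U_1)$ (using that $P^X_V$ is a coarse retraction on $V$, so every $u \in U$ has $P^X_V(u) \in U_1$ within $O(1) + d_X(u, V)$ of $u$, while $d_X(u,V) \le R + O(1)$ from $U_1 \subset N_R(U)$ and $U_1 \subset V$) yields $Hd_X(U, U_1) \le R + O(1)$, and then \ref{lem-proj on pair qc:new proj lemma3} applies.

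The main obstacle I expect is part \ref{lem-proj on pair qc:small-imp-small}, the Behrstock-style contrapositive. My approach is by contradiction: assume $\mathrm{diam}(P^X_V(U)) > D'$ with $D'$ large, pick $u_1, u_2 \in U$ whose projections $v_i = P^X_V(u_i)$ satisfy $d_X(v_1, v_2) > D'$, and build the concatenation $[v_1, u_1] \cup [u_1, u_2] \cup [u_2, v_2]$ which is a quasigeodesic by applying \ref{lem-proj on pair qc:concatenation qg} at both ends together with quasiconvexity of $U$ in the middle. Since the geodesic $[v_1, v_2]$ lies in $N_k(V)$ and is Morse-close to this concatenation, a long portion of $[u_1, u_2] \subset N_k(U)$ must fellow-travel $[v_1, v_2]$; projecting these shadowed points via $P^X_U$ (coarse Lipschitz plus retraction on $U$) produces two points of $P^X_U(V)$ at distance at least $d_X(v_1, v_2) - O(\delta, k)$, contradicting $\mathrm{diam}(P^X_U(V)) \le R$ once $D'$ exceeds the appropriate function of $\delta, k, R$.
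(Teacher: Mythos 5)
The paper records this lemma as a list of standard facts, citing external references for parts (3), (5), (6) and stating (1), (2), (4), (7) without proof, so there is no in-house argument to compare against; judged on its own merits your proposal has a recurring gap. The key false step is the assertion in part (2) that the concatenation $[x,P^X_U(x)]\cup[P^X_U(x),P^X_U(y)]\cup[P^X_U(y),y]$ is a quasigeodesic ``since the middle segment sits in $N_k(U)$.'' Take $U=\{p\}$ a singleton (which is $0$-quasiconvex) and $x,y\in\mathbb H^2$ whose geodesic stays far from $p$; then $P^X_U(x)=P^X_U(y)=p$ and the path is the ``staple'' $[x,p]\cup[p,y]$, whose corner $p$ is arbitrarily far from $[x,y]$. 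It is not a quasigeodesic, and the Morse lemma is unavailable. (The conclusion of (2) still holds, but via the slim quadrilateral on $x,y,P^X_U(y),P^X_U(x)$: every $z\in[x,y]$ is $2\delta$-close to one of the other three sides, and in each case $P^X_U(z)$ is uniformly close to $[P^X_U(x),P^X_U(y)]$.) The same problem infects part (5): you invoke (3) for $[v_1,u_1]\cup[u_1,u_2]$, but (3) produces a quasigeodesic only when the middle vertex is the nearest-point projection of the first endpoint on a set containing the last endpoint; here $v_1=P^X_V(u_1)$, not $u_1=P^X_U(v_1)$, so the orientation is wrong and the path need not be a quasigeodesic. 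The standard argument for (5) instead shows that when $d(P^X_V(u_1),P^X_V(u_2))$ exceeds a threshold, the geodesic $[u_1,u_2]\subset N_k(U)$ must enter a uniform ball about each $P^X_V(u_i)$, which forces two far-apart points of $P^X_U(V)$, contradicting $\operatorname{diam}(P^X_U(V))\le R$.

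Part (7) has a separate gap. You argue $Hd_X(U,U_1)\le R+O(1)$ from $U_1\subset N_R(U)$ and $U_1\subset V$, but $U_1\subset N_R(U)$ says nothing about $d(u,V)$ for a generic $u\in U$. Take $U,V$ two crossing geodesics in $\mathbb H^2$: then $U_1=P^X_V(U)$ concentrates near the crossing, so $U_1\subset N_R(U)$ with $R$ small, while $d(u,V)$ is unbounded as $u$ runs along $U$; thus $Hd_X(U,U_1)$ is infinite and the reduction to (1) collapses. The restriction to $x\in V$ is what saves the statement. A workable route: set $u=P^X_U(x)$ and $u_1=P^X_V(u)\in U_1$; apply (3) to $[u,u_1]\cup[u_1,x]$ (now $x\in V$, so (3) genuinely applies) to place $u_1$ within $O(\delta,k)$ of $[u,x]$, then argue as in the proof of (3) that the point of $[u,x]$ near $u_1$ is within $O(\delta,k)+R$ of $u$ because $U_1\subset N_R(U)$, concluding $d(u,V)\le O(\delta,k)+R$; combined with $d(x,U_1)\ge d(x,U)-R$ this makes $u_1$ an approximate nearest point of $U_1$ to $x$, whence $P^X_{U_1}(x)$ lies within a uniform distance of $u_1$ and hence of $u$.
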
	

\begin{proof}
Note that $(1)$ follows from Lemma \ref{proj-on-qc}~\ref{proj-on-qc:1}. Hence we will prove $(7)$ only.

Let $x_1=P^X_{U_1}(x),~x_2=P^X_{U}(x)$, and let $x_3\in U$ be such that $d(x_1,x_3)\le R$. We need to show a uniform bound on $d(x_1,x_2)$. By \ref{lem-proj on pair qc:concatenation qg}, the arc-length parametrization of $[x,x_2]\cup[x_2,x_3]$ is a $(3+2k)$-quasigeodesic. Now the stability of quasigeodesics and the slimness of triangles conclude that $x_2$ is uniformly close to $[x,x_1]\cup[x_1,x_3]$. If $x_2$ is uniformly close to $[x_1,x_3]$ then $x_2$ is uniformly close to $x_1$ as $d(x_1,x_3)\le R$. Otherwise, $d(x_2,x_4)\le R_1$ for some $x_4\in[x,x_1]$ and uniform constant $R_1\ge0$. Thus $d(x_4,x_5)\le k$ for some $x_5\in V$ as $V$ is $k$-quasiconvex. Hence there is $x_6\in U_1$ such that $d(x_2,x_6)\le R_1+k$ as $d(x_2,x_5)\le R_1+k$. This implies that $d(x_4,x_6)\le d(x_4,x_2)+d(x_2,x_6)\le 2R_1+k$, and so $d(x_4,x_1)\le2R_1+k$ as $x_1=P^X_{U_1}(x)$, $x_4\in[x,x_1]$ and $x_6\in U_1$. Therefore, $d(x_2,x_1)\le d(x_2,x_4)+d(x_4,x_1)\le3R_1+k$. This completes the proof.
\end{proof}

The following result says that the nearest point projections and the qi embeddings among hyperbolic spaces `almost commute'. The reader is referred to \cite[Lemma $3.5$]{mitra-trees} for a proof. In the lemma below
we note that $A$ is quasiconvex in $X$ by Lemma \ref{qc morph}.

\begin{lemma}\label{proj-in-diff-are-close}
		Given $\dl\ge0,L\ge1$ and $K\ge0$, we have constants $K_{\ref{proj-in-diff-are-close}}=K_{\ref{proj-in-diff-are-close}}(\dl,L,K)$ and $D_{\ref{proj-in-diff-are-close}}=D_{\ref{proj-in-diff-are-close}}(\dl,L,K)$ such that the following holds.

		Suppose $X$ is a $\dl$-hyperbolic metric space, and $Y\sse X$ is a geodesic metric space
		with the induced path metric from $X$. Suppose the inclusion $i:Y\map X$ is $L$-qi embedding.
		Suppose $A\sse Y$ is $K$-quasiconvex set in $Y$. Then for all $y\in Y$, if  $y\pr \in A$
		and $y\prr \in A$ are nearest point projections of $y$ on $A$ in the metric $Y$ and $X$ respectively
		then $d_X(y\pr,y\prr)\le D_{\ref{proj-in-diff-are-close}}$.
	\end{lemma}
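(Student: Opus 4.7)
The plan is to compare two quasigeodesics in $X$ from $y$ to $y'$ and deduce that $y''$ is a near-nearest-point projection of $y$ onto $A$ in the $Y$-metric; the desired bound then follows from a standard near-projection estimate in $Y$. To set up, observe that since $i:Y\to X$ is an $L$-qi embedding of a geodesic space into $\delta$-hyperbolic $X$, the space $Y$ is itself $\delta_Y$-hyperbolic for a constant $\delta_Y=\delta_Y(\delta,L)$: a $Y$-geodesic triangle maps to a triangle of $L$-quasigeodesics in $X$, which is $\delta$-slim up to bounded error by the Morse Lemma, and this pulls back through the qi embedding. Moreover, by Lemma \ref{qc morph}, $A$ is $K_1$-quasiconvex in $X$ for $K_1=K_{\ref{qc morph}}(\delta,L,K)$.

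Let $\alpha$ be a $Y$-geodesic from $y$ to $y'$; then $i(\alpha)$ is an $L$-quasigeodesic in $X$ from $y$ to $y'$. On the other hand, since $y''=P^X_A(y)$ and $y'\in A$, Lemma \ref{lem-proj on pair qc}\ref{lem-proj on pair qc:concatenation qg} implies that the arclength parametrization of $\gamma:=[y,y'']_X\cup[y'',y']_X$ is a $(3+2K_1)$-quasigeodesic in $X$ with the same endpoints $y,y'$. Applying the Morse Lemma (Lemma \ref{ml}) twice---comparing each of $i(\alpha)$ and $\gamma$ to an $X$-geodesic $[y,y']_X$---yields $Hd_X(i(\alpha),\gamma)\le D_0$ for some $D_0$ depending only on $\delta,L,K$.

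Since $y''\in\gamma$, there is $z\in\alpha$ with $d_X(i(z),y'')\le D_0$; the lower bound in the qi inequality gives $d_Y(z,y'')\le LD_0+L^2=:D_2$. As $z$ lies on the $Y$-geodesic $\alpha$ from $y$ to $y'$, we have $d_Y(y,z)\le d_Y(y,y')$, hence
\[ d_Y(y,y'')\le d_Y(y,z)+d_Y(z,y'')\le d_Y(y,y')+D_2. \]
Since $y'=P^Y_A(y)$, this is exactly the statement that $y''$ is a near-nearest-point projection of $y$ onto $A$ in the metric of $Y$, that is, $d_Y(y,y'')\le d_Y(y,A)+D_2$.

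It remains to invoke the standard near-projection fact in hyperbolic geometry: in a $\delta_Y$-hyperbolic space, for a $K$-quasiconvex set $A$, if $y'=P^Y_A(y)$ and $a\in A$ satisfies $d_Y(y,a)\le d_Y(y,y')+D_2$, then $d_Y(y',a)\le 4\delta_Y+2K+D_2$. (Sketch: parametrize $[y,y']_Y$ from $y'$ backward; $\delta_Y$-slimness of the triangle $y,y',a$ together with $K$-quasiconvexity of $A$ shows that points on $[y,y']_Y$ at distance more than $\delta_Y+K$ from $y'$ must lie within $\delta_Y$ of $[y,a]_Y$, and a short distance estimate along $[y,a]_Y$ then bounds $d_Y(y',a)$.) Applying this with $a=y''$ yields $d_Y(y',y'')\le D_3:=4\delta_Y+2K+D_2$, and the Lipschitz side of the qi embedding finally gives $d_X(y',y'')\le LD_3+L$, which we take as $D_{\ref{proj-in-diff-are-close}}$. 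The main obstacle is the near-projection estimate, but this is a routine slim-triangle argument; everything else reduces to Morse-lemma fellow-traveling applied to the two natural quasigeodesic structures from $y$ to $y'$ in $X$.
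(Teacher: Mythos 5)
Your proof is correct, but it is the mirror image of the argument the paper actually uses (and of Mitra's original, to which the paper refers). The paper starts from the $Y$-projection: since $y'=P^Y_A(y)$ and $A$ is $K$-quasiconvex in $Y$, the path $[y,y']_Y\cup[y',y'']_Y$ is a $(3+2K)$-quasigeodesic in $Y$ by Lemma \ref{lem-proj on pair qc}~\ref{lem-proj on pair qc:concatenation qg}, hence a uniform quasigeodesic in $X$. Comparing it with the \emph{genuine} $X$-geodesic $[y,y'']_X$ via the Morse Lemma places $y'$ within $D_{\ref{ml}}$ of some $y_1\in[y,y'']_X$, and because $y_1$ lies on an $X$-geodesic from $y$ to the $X$-nearest point $y''$, the inequality $d_X(y,y_1)+d_X(y_1,y'')=d_X(y,y'')\le d_X(y,y')\le d_X(y,y_1)+D_{\ref{ml}}$ immediately gives $d_X(y_1,y'')\le D_{\ref{ml}}$, and hence $d_X(y',y'')\le 2D_{\ref{ml}}$. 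You instead start from the $X$-projection, build the quasigeodesic $[y,y'']_X\cup[y'',y']_X$ in $X$ (which first requires transporting quasiconvexity of $A$ into $X$ via Lemma \ref{qc morph}), compare it with the pushed-forward $Y$-geodesic, transfer back to $Y$, and then invoke a near-projection estimate in $Y$ (which in turn needs the preliminary observation that $Y$ is hyperbolic). Both routes are sound, and your slim-triangle computation for the near-projection fact checks out, but the paper's orientation is cleaner: by doing the projection comparison along the $X$-geodesic $[y,y'']_X$, the ``near-nearest implies near'' step becomes a one-line additivity argument rather than a separate lemma, and no extra hyperbolicity or quasiconvexity-transfer facts are needed.
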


\subsubsection{\bf Coboundedness of quasiconvex sets}

\begin{defn}\textup{\cite[Definition $1.25$]{ps-kap}}
	(1) Suppose $X$ is a metric space, $A,B\subset X$ and $L\geq 0,C\geq 0$ are constants.
	We will say that $A,B$ are {\em $(L,C)$-Lipschitz cobounded} if there exists two $L$-coarsely Lipschitz retraction
	maps $f_A:X\map A$ and $f_B: X\map B$ such that the diameters of both $f_A(B)$ and
	$f_B(A)$ are at most $C$.

	(2) Suppose $X$ is a hyperbolic metric space, $A, B$ are quasiconvex subsets of $X$.

	\noindent
	(i) We say that $A,B$ are {\em $R$-separated}, for some $R\geq 0$ if $d(A,B)\geq R$.\\
	(ii) We say that $A,B$ are {\em $D$-cobounded} if $diam\{P^X_{B}(A)\}\leq D$ and $diam\{P^X_{A}(B)\}\leq D$.
\end{defn}

Below we recall several results in this connection that will be used later. Lemma \ref{lem-cobdd}~\ref{lem-cobdd:union-qc} $(i)$ follows from \ref{lem-cobdd:R-sep-D-cobdd} and Lemma \ref{lem-union qc}, whereas $(ii)$ directly follows from Lemma \ref{lem-union qc}.


\begin{lemma}\label{lem-cobdd}
Let $\dl\ge0$, $k\ge0$. Suppose $X$ is a $\dl$-hyperbolic metric space, and $U,~V$ are $k$-quasiconvex subsets of $X$. Then we have the following.
\begin{enumerate}[label=(\arabic*)]
\item \label{lem-cobdd:close imp proj inclu}\textup{(\cite[ Lemma $1.127$]{ps-kap})} If $d(U,V)\le R$ for some $R\ge0$ then $P^X_U(V)\sse N_{R'_{\ref{lem-cobdd}\ref{lem-cobdd:close imp proj inclu}}}(U)$ where $R'_{\ref{lem-cobdd}\ref{lem-cobdd:close imp proj inclu}}=2k+3\dl+R$.
	
\item \label{lem-cobdd:R-sep-D-cobdd} \textup{(\cite[Lemma $1.139$]{ps-kap}, \cite[Lemma $1.35$]{pranab-mahan})} Let
$R_{\ref{lem-cobdd}\ref{lem-cobdd:R-sep-D-cobdd}}=2k+5\dl$ and $D_{\ref{lem-cobdd}\ref{lem-cobdd:R-sep-D-cobdd}}=2k+7\dl$. Then, $U$ and $V$ are $R_{\ref{lem-cobdd}\ref{lem-cobdd:R-sep-D-cobdd}}$-separated implies that
$U$ and $V$ are $D_{\ref{lem-cobdd}\ref{lem-cobdd:R-sep-D-cobdd}}$-cobounded.

\item \label{lem-cobdd:qc proj new} \textup{(\cite[Lemma $1.36$]{pranab-mahan})} Let $U'=P^X_{U}(V)$, and let $x\in U, y\in V$ be any
points. Then $$[x,y]_X\cap N_{D'}(U')\neq \emptyset$$for some constant $D'=D_{\ref{lem-cobdd}\ref{lem-cobdd:qc proj new}}(\delta, k)$.
In particular, for any $D_1\geq 0$ if $U$ and $V$ are $D_1$-cobounded, then there is a point $p\in U'\subset V$
such that $p\in N_{D_2}([x,y]_X)$ where $D_2=D_1+D'$.


\item \label{lem-cobdd:union-qc} Let $D\ge D_{\ref{lem-cobdd}\ref{lem-cobdd:R-sep-D-cobdd}}(\dl,k)$.
There is a constant $K=K_{\ref{lem-cobdd}\ref{lem-cobdd:union-qc}}(\dl,k,D)$ such that the following holds:

(i) If $U$, $V$ are not $D$-cobounded then $U\cup V$ is $K$-quasiconvex in $X$.

(ii) If $U, V$ are $D$-cobounded then $U\cup V\cup[x,y]$ is $K$-quasiconvex for any
$x\in U, y\in V$.

\item \label{lem-cobdd:lips cobdd lemma}\textup{(\cite[ Lemma $1.137$]{ps-kap})} For all $L\ge0$ and $C\ge0$ there is a constant
$D_{\ref{lem-cobdd}\ref{lem-cobdd:lips cobdd lemma}}=D_{\ref{lem-cobdd}\ref{lem-cobdd:lips cobdd lemma}}(\dl,k,L,C)$ such that the following holds:

If $U$ and $V$ are $(L,C)$-Lipschitz cobounded then they are $D_{\ref{lem-cobdd}\ref{lem-cobdd:lips cobdd lemma}}$-cobounded.
\end{enumerate}
\end{lemma}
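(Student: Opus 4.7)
The plan is to dispatch each of the six clauses in turn. Parts (1), (2), and (3) all have citations and I would invoke them directly: (1) is a direct slimness argument applied to a triangle with vertices a nearby $u \in U$, the point $v \in V$, and $P^X_U(v)$; (2) proceeds by observing that when $U, V$ are $R_{\ref{lem-cobdd}\ref{lem-cobdd:R-sep-D-cobdd}}$-separated and $u_i = P^X_U(v_i)$ for $v_i \in V$, the concatenation $[u_1, v_1] * [v_1, v_2]_V * [v_2, u_2]$ is a uniform quasi-geodesic, forcing $d(u_1, u_2) \le 2k + 7\delta$; and (3) is the standard fact that a geodesic joining a point of $U$ to a point of $V$ must travel through a bounded neighborhood of $P^X_U(V)$, combined with the $D_1$-cobounded hypothesis for the ``in particular'' statement.

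For part (4)(i), I take the contrapositive of (2): since $D \ge D_{\ref{lem-cobdd}\ref{lem-cobdd:R-sep-D-cobdd}}$ and $U, V$ fail to be $D$-cobounded, they fail to be $R_{\ref{lem-cobdd}\ref{lem-cobdd:R-sep-D-cobdd}}$-separated, so $d(U, V) \le 2k + 5\delta$. Lemma \ref{lem-union qc} applied to the singleton $\{U\}$ with $V$ as the auxiliary $k$-quasiconvex set then yields that $U \cup V$ is $(3k + 9\delta)$-quasiconvex, which gives the desired $K$. For part (4)(ii), fix $x \in U$, $y \in V$, set $W = U \cup V \cup [x, y]$, and argue by cases on $p, q \in W$. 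If $p, q$ both lie in one of $U, V, [x, y]$ the inclusion $[p, q] \subseteq N_k(W)$ is immediate from quasiconvexity; if one of them lies on $[x, y]$, $\delta$-slimness of the triangle at $x$ or $y$ combined with quasiconvexity of $U$ or $V$ handles the case. The essential case is $p \in U$, $q \in V$: by part (3), both $[p, q]$ and $[x, y]$ come within $D_{\ref{lem-cobdd}\ref{lem-cobdd:qc proj new}}$ of some entries $u', u'' \in P^X_U(V) \subseteq U$ and $v', v'' \in P^X_V(U) \subseteq V$ on the two ``sides'', and the $D$-coboundedness forces $d(u', u''), d(v', v'') \le D$; slimness of the resulting quadrilateral together with $k$-quasiconvexity of $U, V$ on the end-subsegments then places $[p, q]$ in $N_K(W)$ for some $K = K(\delta, k, D)$.

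For part (5), I bound $diam(P^X_U(V))$ (and by symmetry $diam(P^X_V(U))$) in terms of $\delta, k, L, C$. The local ingredient is Lemma \ref{lem-proj on pair qc}\ref{lem-proj on pair qc:concatenation qg}: for $v \in V$, the path $[v, P^X_U(v)] \cup [P^X_U(v), \rho_U(v)]$ is a $(3+2k)$-quasi-geodesic from $v$ to $\rho_U(v) \in U$. Since $\rho_U$ is an $L$-coarsely Lipschitz retraction fixing $P^X_U(v)$, I get $d(P^X_U(v), \rho_U(v)) \le L \cdot d(v, U) + L$, so on $V \cap N_R(U)$ one immediately concludes $diam(P^X_U(V \cap N_R(U))) \le C + 2(LR + L)$. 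For points $v \in V$ with $d(v, U)$ large, I invoke the symmetric retraction $\rho_V$: the composition $\rho_V \circ \rho_U$ sends $V$ into $\rho_V(U)$, a set of diameter $\le C$, and since $\rho_V(v) = v$ the $L$-Lipschitz property of $\rho_V$ forces $v$ to lie close to this bounded set, giving $d(v, v_0) \le L \cdot d(v, \rho_U(v)) + L + C$ for a fixed reference $v_0 \in V$; coupling this with the earlier estimate and a slim-triangle bootstrap eliminates the unbounded $d(v, U)$ term and yields the desired universal bound $D_{\ref{lem-cobdd}\ref{lem-cobdd:lips cobdd lemma}}$.

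The main obstacle I anticipate is the ``symmetric coupling'' step in part (5): translating the one-sided estimate $d(P^X_U(v), \rho_U(v)) \le L \cdot d(v, U) + L$, which is only directly useful for $v$ close to $U$, into a universal bound on $diam(P^X_U(V))$ requires carefully exploiting how the second retraction $\rho_V$ constrains points of $V$ that are far from $U$, and then compiling the explicit constant $D_{\ref{lem-cobdd}\ref{lem-cobdd:lips cobdd lemma}}$ with the slimness bookkeeping. The case analysis in part (4)(ii), while conceptually clean, is also somewhat delicate in terms of tracking slimness constants across the quadrilateral $p, x, y, q$.
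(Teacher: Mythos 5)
The paper does not prove this lemma --- parts (1)--(3) carry citations, and (4), (5) are stated as consequences --- so the proposal stands on its own merits. Parts (1), (3), (4)(i), (4)(ii) are essentially correct; (4)(i) in particular is exactly the right reduction (contrapositive of (2), then Lemma \ref{lem-union qc} with the pair $\{U\}$, $V$). Your sketch for (2) is off: the concatenation $[u_1,v_1]*[v_1,v_2]_V*[v_2,u_2]$ runs through $V$, far from $U$, and there is no reason for it to be a uniform quasigeodesic; the standard argument runs slimness on the geodesic quadrilateral with vertices $v_1,u_1,u_2,v_2$. Since (2) is a cited result this is a minor point.

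Part (5) has a genuine gap. Your estimate $d(v,v_0)\le L\,d(v,\rho_U(v))+L+C$ grows linearly in $d(v,U)$, and so imposes no bound at all for $v\in V$ far from $U$; ``a slim-triangle bootstrap eliminates the unbounded $d(v,U)$ term'' names no argument. The correct organising idea is to apply the two retractions to the geodesic $\beta=[u_1,u_2]$ rather than to points of $V$, where $u_i=P^X_U(v_i)$. Every $x\in\beta$ is $k$-close to $U$ by quasiconvexity. Moreover, by $2\delta$-slimness of the quadrilateral $(v_1,u_1,u_2,v_2)$, each $x\in\beta$ is $2\delta$-close to $[v_1,u_1]\cup[v_1,v_2]\cup[v_2,u_2]$; if $x$ is $2\delta$-close to $p\in[v_1,u_1]$ then --- since $u_1$ is the nearest point of $U$ to every point of $[v_1,u_1]$ --- one gets $d(p,u_1)=d(p,U)\le 2\delta+k$ and hence $d(x,u_1)\le 4\delta+k$ (symmetrically for $[v_2,u_2]$); otherwise $x$ is $(2\delta+k)$-close to $V$. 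For any $x$ simultaneously near $U$ and near $V$, both $\rho_U(x)$ and $\rho_V(x)$ are within a uniformly bounded distance of $x$ (Lipschitz retraction fixing a nearby point), hence so is $\rho_V\circ\rho_U(x)$; but the latter lies in $\rho_V(U)$, a set of diameter $\le C$. Thus the middle of $\beta$ sits in a bounded neighbourhood of a $C$-bounded set, giving $d(u_1,u_2)\le D(\delta,k,L,C)$. The bound for $diam\,P^X_V(U)$ is symmetric.
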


\subsubsection{\bf Gromov Boundary and Cannon--Thurston Maps}\label{Gro-b-CT-maps}
$\newline$
In this section we briefly recall basic definitions and results concerning boundary of hyperbolic
metric space. Although most of these can be worked out for general hyperbolic spaces. We will
make the following convention as the main result in this paper is about proper hyperbolic spaces.

\begin{convention}
In this paper we shall always assume
that our spaces are proper hyperbolic metric spaces unless they are trees in which case they need
not be proper. However, in this section some of the results hold without this assumption. Therefore,
we explicitly mention where properness is used and absence of that hypothesis will indicate that
the result is valid even without properness.
\end{convention}

Suppose $X$ is a Gromov hyperbolic metric space.
\begin{itemize}
\item Let $\mathcal G(X)$ be the set of all geodesic rays in $X$.
One defines an equivalence relation on $\mathcal G(X)$ by setting $\alpha \sim \beta$ if
$Hd(\alpha, \beta)<\infty$ for all $\alpha,\beta \in \mathcal G(X)$.
The set of equivalence classes, denoted by $\partial X$, is called the (geodesic) boundary
of $X$.

\item Suppose $\alpha:[a,\infty)\map X$ (resp. $\alpha: (-\infty, a]\map X$) is a geodesic ray in $X$.
Then the equivalence class of $\alpha$ is denoted by $\alpha(\infty)$ (resp. $\alpha(-\infty)$).
If $\alpha(0)=x$ then we say that $\alpha$ joins $x$ to $\alpha(\infty)$ (resp. $\alpha(-\infty)$).

\item Suppose $\gamma$ is a geodesic line in $X$. Then the restrictions of $\gamma$ on
$[0,\infty)$ and $(-\infty, 0]$ give two geodesic rays in $X$. We shall denote their equivalence classes
by $\gamma(\infty)$ and $\gamma(-\infty)$ respectively. We will say that $\gamma$ joins
$\gamma(\pm \infty)$.
\end{itemize}

The following has been taken from \cite{bridson-haefliger}.
\begin{lemma}(\cite[Lemma 3.1, Lemma 3.2, Chapter II.H]{bridson-haefliger})
	Suppose $X$ is a proper $\delta$-hyperbolic metric space or a tree.

	1) If $x\in X$ and $\xi\in \partial X$ then there is a geodesic ray $\alpha$ in $X$ with
	$\alpha(0)=x$ and $\alpha(\infty)=\xi$. If $\alpha'$ any other geodesic joining $x$ to $\xi$
	then $Hd(\alpha, \alpha')\leq D=D(\delta)$.

	2) If $\xi_1\neq \xi_2$ are two points of $\partial X$ then there is a geodesic line
	$\gamma$ in $X$ joining $\xi_1$ to $\xi_2$. If $\gamma'$ any other geodesic joining $\xi_1$
	to $\xi_2$ then $Hd(\gamma, \gamma')\leq D'=D'(\delta)$.
\end{lemma}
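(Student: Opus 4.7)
The plan is to prove existence by an Arzelà--Ascoli compactness argument and uniqueness up to bounded Hausdorff distance by slimness of (generalized) triangles. In both parts we approximate the ideal objects by finite geodesic segments whose endpoints escape along the required boundary directions, then extract a limit using properness; the bounded Hausdorff distance statements come from applying $\delta$-slimness to ideal triangles/quadrilaterals with ideal vertices.

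For part (1), fix $x\in X$ and $\xi\in\partial X$. Pick any geodesic ray $\beta:[0,\infty)\to X$ representing $\xi$ and set $x_n=\beta(n)$. For each $n$, let $\alpha_n:[0,L_n]\to X$ be a geodesic from $x$ to $x_n$; note $L_n\to\infty$ since $d(x,x_n)\to\infty$. Each $\alpha_n$ is $1$-Lipschitz, and because $X$ is proper the image of each $\alpha_n$ lies in a countable collection of compact balls centered at $x$. By a diagonal Arzelà--Ascoli argument, after passing to a subsequence, $\alpha_n$ converges uniformly on compact subsets of $[0,\infty)$ to a $1$-Lipschitz map $\alpha:[0,\infty)\to X$ with $\alpha(0)=x$. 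Since the $\alpha_n$ are geodesics, so is $\alpha$. To see $\alpha(\infty)=\xi$, fix any $T>0$; by $\delta$-slimness of the triangle $[x,\beta(0)]\cup\beta|_{[0,n]}\cup\alpha_n$, every point of $\alpha_n$ at distance $\ge d(x,\beta(0))+\delta$ from $x$ is within $\delta$ of $\beta$; passing to the limit gives a constant $C(\delta,d(x,\beta(0)))$ with $\alpha([C,\infty))\subset N_\delta(\beta)$, hence $\alpha\sim\beta$. For the Hausdorff bound, if $\alpha'$ is another geodesic ray from $x$ to $\xi$, then comparing $\alpha|_{[0,n]}$ and $\alpha'|_{[0,n]}$ using $\delta$-slimness of the triangle formed with a short geodesic between $\alpha(n)$ and $\alpha'(n)$ together with the fact that $\alpha\sim\alpha'$ gives $Hd(\alpha,\alpha')\le 2\delta$ (take a point on one ray, drop to a large parameter on the other using asymptoticity, and conclude by the slim triangle estimate). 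For the tree case, geodesic rays from $x$ to $\xi$ are in fact unique, so $D=0$ works.

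For part (2), fix $\xi_1\ne\xi_2$ in $\partial X$, choose a basepoint $x_0$ and, by (1), geodesic rays $\alpha_i:[0,\infty)\to X$ with $\alpha_i(0)=x_0$ and $\alpha_i(\infty)=\xi_i$. Set $p_n=\alpha_1(n)$, $q_n=\alpha_2(n)$ and let $\gamma_n$ be a geodesic from $p_n$ to $q_n$. The key point is to bound $d(x_0,\gamma_n)$: since $\xi_1\ne\xi_2$ there is some $R\ge 0$ such that $\alpha_1([R,\infty))$ and $\alpha_2([R,\infty))$ are not within $2\delta$ of each other (otherwise $\alpha_1$ and $\alpha_2$ would be asymptotic). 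Applying $\delta$-slimness to the triangle $\alpha_1|_{[0,n]}\cup\alpha_2|_{[0,n]}\cup\gamma_n$ then forces $\gamma_n$ to pass within a uniform distance of $x_0$; reparametrize $\gamma_n:[-a_n,b_n]\to X$ so that a point nearest $x_0$ is at parameter $0$. With this normalization the family $\{\gamma_n\}$ is $1$-Lipschitz with images in bounded balls around $x_0$ on any bounded parameter range, so properness plus Arzelà--Ascoli yields a subsequential limit $\gamma:\mathbb{R}\to X$, which is automatically a geodesic. Exactly as in (1), slimness shows $\gamma(\pm\infty)=\xi_{\pm}$. For the uniqueness, given another such $\gamma'$, one applies $\delta$-slimness to the ``ideal quadrilateral'' formed by $\gamma,\gamma'$ and two short geodesics between far-out corresponding points, then passes to the limit as in (1) to conclude $Hd(\gamma,\gamma')\le D'(\delta)$.

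The main obstacle is the existence of the limit geodesic, for which properness of $X$ is essential: without properness, the sequences $\alpha_n$ and $\gamma_n$ need not have any subsequential limit. In the tree case, properness is not needed because between any two points there is a \emph{unique} geodesic, so existence reduces to combinatorics on the tree (take the nested union of segments $[x,x_n]$ resp.\ $[p_n,q_n]$) and uniqueness is automatic, giving $D=D'=0$. The slimness-based bounded Hausdorff estimates are routine once the convergence is set up, with the only care being to ensure that the implicit constants depend only on $\delta$ and not on the auxiliary choices (basepoint, representing rays), which one verifies by repeating the slimness argument with the approximating finite triangles and taking a limit.
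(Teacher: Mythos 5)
The paper does not give a proof of this lemma; it simply cites Bridson--Haefliger, whose argument is exactly the Arzel\`a--Ascoli compactness construction plus slim-triangle estimates that you have reproduced. Your proposal is correct and follows the same standard route (properness for subsequential limits of approximating segments, $\delta$-slimness of ideal triangles/quadrilaterals for the Hausdorff bounds, and uniqueness of geodesics in trees), so there is nothing substantive to compare.
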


Once the existence of geodesic rays and lines are guaranteed, one constructs ideal triangle. The following lemma
follows easily from \cite[Lemma 3.3, Chapter III.H]{bridson-haefliger}.

\begin{lemma}\label{ideal triangle}
Suppose $X$ is a $\delta$-hyperbolic space and $x_1,x_2\in X$, $x_3\in \partial X$. Let $\alpha_{ij}$ be
a geodesic (segment or ray) joining $x_i, x_j$, $i\neq j$. Then the geodesic triangle  formed
by the $\alpha_{ij}$'s is $2\delta$-slim (provided the sides of the triangle are defined).
\end{lemma}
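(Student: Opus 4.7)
The plan is to approximate the ideal vertex $x_3$ by points $p_n$ going to infinity along $\alpha_{13}$ and pass to the limit of finite $\delta$-slim triangles using properness of $X$.

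First, I would set $p_n := \alpha_{13}(n)$, and for each $n$ choose a geodesic segment $\beta_n$ from $x_2$ to $p_n$. Then $\triangle(x_1, x_2, p_n)$ is an ordinary geodesic triangle with sides $\alpha_{12}$, $\alpha_{13}|_{[0,n]}$ and $\beta_n$, and so is $\delta$-slim by definition of $\delta$-hyperbolicity. Since $X$ is proper, by the Arzelà--Ascoli theorem some subsequence of $\{\beta_n\}$ converges uniformly on compact sets to a geodesic ray $\beta_\infty$ starting at $x_2$. Because $p_n\to x_3\in\partial X$, the ray $\beta_\infty$ satisfies $\beta_\infty(\infty)=x_3$.

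Next, both $\beta_\infty$ and $\alpha_{23}$ are geodesic rays starting at $x_2$ and terminating at $x_3\in\partial X$, so by the standard ``asymptotic rays from a common basepoint'' argument (essentially the content of the cited Bridson--Haefliger Lemma III.H.3.3, which uses $\delta$-slimness of bigons obtained from long finite subsegments) one obtains $d(\beta_\infty(t),\alpha_{23}(t))\le \delta$ for all $t\ge 0$; in particular $Hd(\beta_\infty,\alpha_{23})\le \delta$.

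Now fix any $p\in\alpha_{12}$. For every $n$, $\delta$-slimness of $\triangle(x_1,x_2,p_n)$ gives a point $q_n\in\alpha_{13}|_{[0,n]}\cup\beta_n$ with $d(p,q_n)\le\delta$. Since $p$ is fixed and $X$ is proper, after extracting a further subsequence we may assume $q_n\to q_\infty\in\alpha_{13}\cup\beta_\infty$, whence $d(p,\alpha_{13}\cup\beta_\infty)\le\delta$, and hence $d(p,\alpha_{13}\cup\alpha_{23})\le 2\delta$ by the previous paragraph. A completely symmetric argument, considering a point on $\alpha_{13}$ (using the slim triangle $\triangle(x_1,x_2,p_n)$ directly) or on $\alpha_{23}$ (after exchanging the roles of $\alpha_{13}$ and $\alpha_{23}$ and approximating $x_3$ along $\alpha_{23}$ instead), handles the other two sides. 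The only technical step of any substance is the bound $Hd(\beta_\infty,\alpha_{23})\le\delta$ for asymptotic rays sharing a basepoint, which is standard and cleanly extracted from \cite[III.H.3.3]{bridson-haefliger}; tracking the exact constant through this step is what produces the $2\delta$ (rather than a larger multiple of $\delta$) in the final bound.
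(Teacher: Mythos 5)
Your proof is correct and follows essentially the route the paper indicates by citing Bridson--Haefliger III.H.3.3: approximate the ideal vertex by points along one ray, take a limit of the resulting $\delta$-slim finite triangles (using properness and Arzel\`{a}--Ascoli), and compare the limiting ray to the given one via the asymptotic-rays-from-a-common-basepoint lemma. One small inaccuracy worth noting: the parametrized bound $d(\beta_\infty(t),\alpha_{23}(t))\le\delta$ you quote is too strong (the correct parametrized bound for two asymptotic rays issuing from a common point is $2\delta$), but the Hausdorff bound $Hd(\beta_\infty,\alpha_{23})\le\delta$, which is the estimate you actually use in the final step, is correct, so your $2\delta$ conclusion goes through unchanged.
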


\subsubsection{\bf Sequences converging to $\partial X$}
We note that there is a natural compact, Hausdorff topology on $\bar{X}=X\cup \partial X$ for a proper
hyperbolic space $X$; see e.g. \cite[Chapter III.H]{bridson-haefliger} or \cite{abc}. We shall skip any
detailed discussion on that as we do not need it in this paper. However, we will mention a few
results below about convergence of points (or subsets) of $X$ to  points of $\partial {X}$ which will be essential for the
discussion about Cannon--Thurston maps later.

For example, the following result (Lemma \ref{conv criteria}) provides a geometric criterion for convergence in $\bar{X}$. The
proof relies on the fact that in a hyperbolic metric space $X$, the Gromov inner product of $x$ and $y$ with respect to $z$ remains uniformly close to $d(z,[x,y])$ (see \cite[Chapter $2$, Lemma $17$]{GhH}, for instance). For a complete proof, see \cite[Lemma 2.44, Lemma 2.45]{ps-krishna}. 

\begin{lemma}\label{conv criteria}
Suppose $\{x_n\}$ is a sequence in $\bar{X}$, $x\in X$ and $\xi\in \partial X$.
Suppose $\alpha_n$ is a geodesic ray or line (according as $x_n\in X$ or $x_n\in \partial X$)
in $X$ joining $x_n$ to $\xi$.  Then $\{x_n\}$ converges to $\xi$ if and only if $\lim_{n\map \infty}d(x,\alpha_n) =\infty$.
\end{lemma}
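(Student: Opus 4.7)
The plan is to reduce this statement to the standard comparison between Gromov products and distances to geodesics, and then appeal to the boundary-topology characterization of convergence via Gromov products. Concretely, for a proper $\delta$-hyperbolic space $X$, the basepointed topology on $\bar X$ is characterized (see \cite{bridson-haefliger}, Chapter III.H) by the condition that $\{x_n\} \to \xi$ in $\bar X$ iff the (extended) Gromov product $(x_n \mid \xi)_x$ tends to infinity. So it suffices to show that $(x_n \mid \xi)_x$ and $d(x, \alpha_n)$ differ by a constant depending only on $\delta$.

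For any two points $a, b \in X$ and any geodesic $[a,b]$, the cited fact \cite[Chapter~$2$, Lemma~$17$]{GhH} gives $|(a\mid b)_x - d(x,[a,b])| \leq C_0(\delta)$. I would extend this to the case where one or both of $a,b$ lie in $\partial X$ by approximating $\xi$ (and, when needed, $x_n$) by interior points $\xi_k \in X$ with $\xi_k \to \xi$, choosing geodesics $[x_n, \xi_k]$ that converge (along a subsequence, using properness and Arzel\`a--Ascoli) to a geodesic ray/line from $x_n$ to $\xi$ as in Lemma~\ref{ideal triangle}. Taking $\liminf$ in $k$ of the comparison inequality and using the definition of the extended Gromov product as $\liminf_{p \to \xi}(x_n\mid p)_x$, one obtains
\[
|(x_n \mid \xi)_x - d(x, \alpha_n)| \leq C_1(\delta),
\]
where $\alpha_n$ is any chosen geodesic from $x_n$ to $\xi$ (well-defined up to bounded Hausdorff distance by Lemma~\ref{ideal triangle} and the discussion preceding it).

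With this inequality in hand, the forward direction is immediate: if $x_n \to \xi$ then $(x_n\mid \xi)_x \to \infty$, hence $d(x, \alpha_n) \to \infty$. For the converse, if $d(x,\alpha_n) \to \infty$ then $(x_n\mid \xi)_x \to \infty$, so $\{x_n\}$ converges to $\xi$ in $\bar X$. One must carry out the two cases $x_n \in X$ and $x_n \in \partial X$ in parallel: when $x_n \in \partial X$, one instead works with $(x_n\mid \xi)_x$ as a double $\liminf$, and the same comparison holds after choosing representative geodesic rays, with constants depending only on $\delta$.

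The main (minor) obstacle is the careful bookkeeping of the $\liminf$ definition when both endpoints are ideal points: one needs to ensure that the constant in the Gromov-product-vs-distance-to-geodesic inequality does not blow up under the passage to the limit. This is handled by the uniform slimness of ideal triangles (Lemma~\ref{ideal triangle}) together with the fact that in a proper hyperbolic space any two geodesics joining the same pair of endpoints in $\bar X$ stay within Hausdorff distance $D(\delta)$ of each other, which makes the choice of $\alpha_n$ inessential up to an additive constant absorbed into $C_1(\delta)$.
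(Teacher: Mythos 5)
Your proposal is correct and follows exactly the route the paper indicates: the paper does not write out a proof but states that the argument rests on the comparison $|(a\mid b)_x - d(x,[a,b])|\le C(\delta)$ from \cite[Chapter~2, Lemma~17]{GhH} together with the Gromov-product characterization of convergence in $\bar X$, and refers to \cite[Lemmas~2.44, 2.45]{ps-krishna} for details. Your handling of the ideal-endpoint cases via $\liminf$ and uniform slimness of ideal triangles is the same bookkeeping those references carry out.
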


The following lemma is a straightforward consequence of the version of Arzel\`a--Ascoli theorem
as in \cite[Lemma 3.10, Chapte I.3]{bridson-haefliger}; see the proof of
\cite[Lemma 3.1, Chapter III.H]{bridson-haefliger} for instance.

\begin{lemma}\label{subseq exists}
Any unbounded subsequence $\{x_n\}$ in a proper hyperbolic metric space $X$ has
		a subsequence $\{x_{n_k}\}$ with $\lim_{k\map \infty} x_{n_k}\in \partial X$.
\end{lemma}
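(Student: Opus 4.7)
The plan is to combine the Arzel\`a--Ascoli-type statement cited in the hint (\cite[Lemma 3.10, Chapter I.3]{bridson-haefliger}) with the slim-triangle characterization of convergence to $\partial X$. Fix a basepoint $x_0\in X$. Since $\{x_n\}$ is unbounded and $X$ is proper, by passing to a subsequence I may assume $d(x_0,x_n)\to\infty$. For each $n$, choose a geodesic segment $\alpha_n:[0,d(x_0,x_n)]\map X$ with $\alpha_n(0)=x_0$ and $\alpha_n(d(x_0,x_n))=x_n$, and extend it to a $1$-Lipschitz map $\tilde\alpha_n:[0,\infty)\map X$ by setting $\tilde\alpha_n(t)=x_n$ for all $t\ge d(x_0,x_n)$.

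The family $\{\tilde\alpha_n\}$ is equicontinuous (all $1$-Lipschitz) and pointwise precompact: $\tilde\alpha_n(t)\in\overline{B}(x_0,t)$, which is compact because $X$ is proper. Arzel\`a--Ascoli therefore yields a subsequence $\{\tilde\alpha_{n_k}\}$ converging uniformly on compact subsets of $[0,\infty)$ to some $1$-Lipschitz map $\alpha:[0,\infty)\map X$. Because $d(x_0,x_{n_k})\to\infty$, for every $T>0$ eventually $\tilde\alpha_{n_k}\big|_{[0,T]}$ agrees with the geodesic $\alpha_{n_k}\big|_{[0,T]}$; a uniform limit of geodesic segments is a geodesic, so $\alpha\big|_{[0,T]}$ is a geodesic for each $T$, and hence $\alpha$ is a genuine geodesic ray. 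Let $\xi=\alpha(\infty)\in\partial X$.

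It remains to verify $x_{n_k}\to \xi$ in $\overline X$. I will invoke Lemma~\ref{conv criteria}: for each $k$ pick a geodesic ray $\beta_k$ from $x_{n_k}$ to $\xi$, and show $d(x_0,\beta_k)\to\infty$. Form the ideal triangle with vertices $x_0,x_{n_k},\xi$ and sides $\alpha_{n_k}$, $\beta_k$, $\alpha$. By Lemma~\ref{ideal triangle} this triangle is $2\delta$-slim. Fix $T>0$; for $k$ large enough, $\alpha_{n_k}(T)$ lies within $\delta$ of $\alpha(T)$ and of $x_0$ at distance exactly $T$. The point $\alpha(T)$ must lie in the $2\delta$-neighborhood of $\alpha_{n_k}\cup\beta_k$; if it were close to $\alpha_{n_k}$ then by comparing parametrizations one sees the closest point on $\alpha_{n_k}$ has parameter within $O(\delta)$ of $T$; otherwise it is close to a point of $\beta_k$, which forces that point of $\beta_k$ to lie at distance $\ge T-O(\delta)$ from $x_0$. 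Either way a standard chase around the ideal triangle forces $d(x_0,\beta_k)\ge T-O(\delta)$. Letting $T$ grow with $k$ completes the proof.

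The only delicate step is the last one, where one has to turn uniform-on-compacta convergence of the $\alpha_{n_k}$ into the divergence $d(x_0,\beta_k)\to\infty$ via slim-triangle bookkeeping. A cleaner variant, should the above accounting be awkward, is to bypass Lemma~\ref{conv criteria} altogether and verify directly that the Gromov products satisfy $(x_{n_k}\cdot x_{n_l})_{x_0}\to\infty$ as $k,l\to\infty$: uniform closeness of $\alpha_{n_k}$ and $\alpha_{n_l}$ on $[0,T]$, combined with $2\delta$-slimness of the triangle with vertices $x_0,x_{n_k},x_{n_l}$ and the standard comparison of $(x_{n_k}\cdot x_{n_l})_{x_0}$ with $d(x_0,[x_{n_k},x_{n_l}])$, forces this Gromov product to exceed $T-O(\delta)$; hence $\{x_{n_k}\}$ is Gromov-Cauchy and defines a point of $\partial X$ (it cannot converge in $X$ since $d(x_0,x_{n_k})\to\infty$).
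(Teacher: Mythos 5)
Your proof is correct and follows exactly the approach the paper has in mind: the paper gives no argument of its own and simply refers to \cite[Lemma~3.1, Chapter~III.H]{bridson-haefliger}, which is precisely the Arzel\`a--Ascoli construction of a limiting ray from a basepoint followed by a slim-triangle argument for convergence. One small caveat: in your first route the claim that ``a standard chase around the ideal triangle forces $d(x_0,\beta_k)\geq T-O(\delta)$'' is a bit quicker than the actual work required --- in the case where $\alpha(T)$ lands $2\delta$-close to $\alpha_{n_k}$ rather than to $\beta_k$, a single slimness application near $\alpha(T)$ tells you nothing about how close $\beta_k$ comes to $x_0$, and one really needs to control every point of $\beta_k$ (or, equivalently, pass through the Gromov product $(x_{n_k}\cdot\xi)_{x_0}$, which coarsely equals $d(x_0,\beta_k)$). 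Your second variant via Gromov-Cauchyness of $(x_{n_k}\cdot x_{n_l})_{x_0}$ handles this cleanly and is the version I'd keep; it rests only on the elementary identity $(x\cdot y)_{x_0}\geq T-\tfrac{\epsilon}{2}$ when two geodesics from $x_0$ to $x,y$ stay $\epsilon$-close up to time $T$, together with the standard coarse identification of the Gromov boundary with the set of Gromov-Cauchy sequences.
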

An immediate consequence of the above lemma is the following.

\begin{lemma}\label{unbdd-pro-sp-ray}
	If $X$ is an unbounded proper hyperbolic metric space then $\partial X\neq \emptyset$.
\end{lemma}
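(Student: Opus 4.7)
The plan is to deduce this directly from Lemma \ref{subseq exists}, which was stated immediately before and guarantees that any unbounded sequence in a proper hyperbolic metric space has a subsequence converging to a point of $\partial X$.

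First, I would fix any basepoint $x_0 \in X$. Since $X$ is unbounded, the distance function $d(x_0, \cdot) : X \to \mathbb{R}_{\geq 0}$ is unbounded, so I can choose a sequence $\{x_n\}$ in $X$ with $d(x_0, x_n) \to \infty$. In particular, $\{x_n\}$ is an unbounded sequence in $X$.

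Next, I would apply Lemma \ref{subseq exists} to this sequence. That lemma hands me a subsequence $\{x_{n_k}\}$ such that $\lim_{k \to \infty} x_{n_k} = \xi$ for some $\xi \in \partial X$. Hence $\partial X$ contains the point $\xi$, so $\partial X \neq \emptyset$.

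There is no real obstacle here: the content lies entirely in Lemma \ref{subseq exists} (which in turn uses the Arzel\`a--Ascoli argument requiring properness of $X$). The only minor point worth noting is that the existence of an unbounded sequence is equivalent to the unboundedness of the metric space, which is immediate from the definition, so the two-line proof above is essentially the entire argument.
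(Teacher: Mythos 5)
Your proof is correct and matches the paper's intent exactly: the paper presents this lemma as "an immediate consequence" of Lemma \ref{subseq exists}, which is precisely the deduction you make. Nothing to add.
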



\noindent{\bf Notation.} Suppose $\{x_n\}$ is a sequence in $X$ and $\xi\in \bar{X}$.
Then we write $\LMX x_n=\xi$ to mean that $\{x_n\}$ converges to $\xi$ in $\bar{X}$.
If $\LMX x_n=\xi$ for some $\xi\in \partial X$ then we say that $\LMX x_n$ exists.
Later in this paper we shall frequently encounter situations where there are two
hyperbolic spaces $Y\subset X$ and sequences $\{y_n\}$ in $Y$. To differentiate
between the limits of this sequences in $\bar{X}$ and $\bar{Y}$ we use superscript
as above.\smallskip

The following three lemmas follow easily from Lemma \ref{ideal triangle} and
Lemma \ref{conv criteria}.

\begin{lemma}\label{con-same-pt}
	Suppose $X$ is a proper hyperbolic metric space and $x\in X$. Then the following hold.

Suppose $\{x_n\}$ and $\{x'_n\}$ are two unbounded sequence in $X$ such that $\LMX x_n$ exists. Then $\LMX x'_n$ exists and $\LMX x_n=\LMX x'_n$ if and only if $\lim_{n\map \infty} d(x,[x_n,x'_n])=\infty$.

		Moreover, if $\LMX x_n=\LMX x'_n$ and $z_n\in [x_n,x'_n]$ then $\LMX z_n=\LMX x_n$.

\end{lemma}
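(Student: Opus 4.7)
The plan is to exploit the slimness of ideal triangles (Lemma \ref{ideal triangle}) together with the geometric convergence criterion in Lemma \ref{conv criteria} throughout. Fix $\xi = \LMX x_n \in \partial X$, and for each $n$ choose a geodesic ray $\alpha_n$ joining $x_n$ to $\xi$. By Lemma \ref{conv criteria} we have $d(x,\alpha_n)\to\infty$.

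For the forward direction, suppose $\LMX x'_n$ exists and equals $\xi$. Choose geodesic rays $\alpha'_n$ from $x'_n$ to $\xi$, so that by Lemma \ref{conv criteria} also $d(x,\alpha'_n)\to\infty$. The ideal triangle with sides $[x_n,x'_n]$, $\alpha_n$, $\alpha'_n$ is $2\delta$-slim, so any point $z\in[x_n,x'_n]$ lies within $2\delta$ of $\alpha_n\cup\alpha'_n$; hence
\[
d(x,z)\ \geq\ \min\bigl(d(x,\alpha_n),\,d(x,\alpha'_n)\bigr)-2\delta,
\]
which forces $d(x,[x_n,x'_n])\to\infty$.

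For the reverse direction, assume $d(x,[x_n,x'_n])\to\infty$. Since $\{x'_n\}$ is unbounded, Lemma \ref{subseq exists} lets us pass to any convergent subsequence $x'_{n_k}\to\xi'\in\partial X$; I will show $\xi'=\xi$, which (since the argument applies to every such subsequence) will give $\LMX x'_n = \xi$. For each $n$ pick a geodesic ray $\alpha'_n$ from $x'_n$ to $\xi'$ (restricting to the subsequence) and again consider the ideal triangle with vertices $x_n$, $x'_n$, $\xi'$ which is $2\delta$-slim. The same slimness argument as above, applied now with the two finite sides $\alpha_n$-segment replaced by $\alpha_n$ going to $\xi$ — actually cleaner: work directly with the triangle on $x_n,x'_n,\xi$. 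Choose a geodesic $\tilde\alpha'_n$ from $x'_n$ to $\xi$. Any $y\in\tilde\alpha'_n$ is within $2\delta$ of $[x_n,x'_n]\cup\alpha_n$, so
\[
d(x,y)\ \geq\ \min\bigl(d(x,[x_n,x'_n]),\,d(x,\alpha_n)\bigr)-2\delta\ \longrightarrow\ \infty.
\]
Thus $d(x,\tilde\alpha'_n)\to\infty$, and Lemma \ref{conv criteria} yields $\LMX x'_n = \xi$, as required.

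For the \emph{moreover} part, assume $\LMX x_n=\LMX x'_n=\xi$ and $z_n\in[x_n,x'_n]$. The forward direction already gives $d(x,z_n)\geq d(x,[x_n,x'_n])\to\infty$, so $\{z_n\}$ is unbounded. Pick a geodesic ray $\beta_n$ from $z_n$ to $\xi$; by Lemma \ref{conv criteria} it suffices to show $d(x,\beta_n)\to\infty$. Apply Lemma \ref{ideal triangle} to the ideal triangle with vertices $z_n,x_n,\xi$ and sides $[z_n,x_n]\subseteq[x_n,x'_n]$, $\alpha_n$, $\beta_n$: any $y\in\beta_n$ is within $2\delta$ of $[x_n,x'_n]\cup\alpha_n$, and the same estimate as above yields $d(x,y)\to\infty$ uniformly in $y\in\beta_n$. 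Hence $\LMX z_n=\xi$.

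There is no serious obstacle here; the only mild subtlety is that in the reverse direction we do not yet know $\{x'_n\}$ converges in $\bar X$, so we first use properness and Lemma \ref{subseq exists} to reduce to a convergent subsequence before identifying its limit as $\xi$ via slimness of the ideal triangle on $x_n,x'_n,\xi$.
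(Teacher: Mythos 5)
Your proof is correct, and it uses exactly the tools the paper flags for this lemma, namely slimness of ideal triangles (Lemma \ref{ideal triangle}) and the convergence criterion (Lemma \ref{conv criteria}); the paper states the result without proof, citing these two lemmas, so your argument fills in what is meant. The brief detour through subsequences in the reverse direction is unnecessary, as you yourself observe, since applying Lemma \ref{conv criteria} directly to geodesic rays $\tilde\alpha'_n$ from $x'_n$ to $\xi$ already yields $\LMX x'_n=\xi$ without first extracting a convergent subsequence.
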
 

\begin{lemma}\label{lem-close imp same limit}
Suppose $X$ is a hyperbolic metric space. Let $R\ge0$, and let $\{x_n\},~\{y_n\}\sse X$ be such that $\lim_{n\map\infty}x_n$ exists in $\pa X$, and $d(x_n,y_n)\le R$. Then $\lim_{n\map \infty}y_n$ exists in $\pa X$ and $\lim_{n\map \infty}x_n=\lim_{n\map \infty}y_n$. 
\end{lemma}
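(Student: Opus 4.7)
The plan is to reduce the statement directly to Lemma \ref{con-same-pt}. Fix a basepoint $x\in X$. Since $\lim^X_{n\map\infty}x_n$ exists in $\pa X$, the sequence $\{x_n\}$ is unbounded, i.e.\ $d(x,x_n)\map\infty$. Combined with the hypothesis $d(x_n,y_n)\le R$, the triangle inequality gives $d(x,y_n)\ge d(x,x_n)-R\map\infty$, so $\{y_n\}$ is unbounded as well.

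Next, for any $z\in[x_n,y_n]$ we have $d(x_n,z)\le d(x_n,y_n)\le R$, hence
\[
d(x,z)\ge d(x,x_n)-d(x_n,z)\ge d(x,x_n)-R.
\]
Taking infimum over $z\in[x_n,y_n]$ and letting $n\map\infty$ yields $d(x,[x_n,y_n])\map\infty$. By Lemma \ref{con-same-pt}, applied to the two unbounded sequences $\{x_n\}$ and $\{y_n\}$, this forces $\lim^X_{n\map\infty}y_n$ to exist in $\pa X$ and to equal $\lim^X_{n\map\infty}x_n$.

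There is no real obstacle here; the only thing to be careful about is that Lemma \ref{con-same-pt} as stated requires both sequences to be unbounded, which is why the first paragraph explicitly verifies unboundedness of $\{y_n\}$ from the bound $d(x_n,y_n)\le R$ before invoking it.
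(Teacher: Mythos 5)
Your proof is correct and matches the paper's intent: the paper simply remarks that this lemma (together with Lemma \ref{con-same-pt}) follows easily from Lemmas \ref{ideal triangle} and \ref{conv criteria}, and your reduction to Lemma \ref{con-same-pt} — which is itself derived from those — is the same underlying argument, cleanly packaged. The care taken to verify unboundedness of $\{y_n\}$ before invoking Lemma \ref{con-same-pt} is the right point to flag.
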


\begin{lemma}\label{seq-on-con-geo}
Suppose $X$ is hyperbolic metric space and $\{x_n\}\sse X$ such that $\lim_{n\ri\infty}x_n$ exists. Let $x\in X$ and $x\pr_n\in[x,x_n]$ such that $d(x,x\pr_n)\ri\infty$ as $n\ri\infty$. Then $\lim_{n\ri\infty}x_n=\lim_{n\ri\infty}x\pr_n$.
\end{lemma}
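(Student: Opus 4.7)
The plan is to derive this directly from Lemma \ref{con-same-pt}, which characterizes when two unbounded sequences in a hyperbolic space converge to the same boundary point.

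First I would verify the hypotheses of Lemma \ref{con-same-pt}. By assumption $\lim^X_{n\to\infty} x_n$ exists in $\partial X$, and the sequence $\{x'_n\}$ is unbounded since $d(x,x'_n)\to\infty$. Next, for each $n$ I would choose the geodesic $[x_n,x'_n]$ to be the subsegment of our fixed geodesic $[x,x_n]$ from $x'_n$ to $x_n$ (which is legitimate, since any other geodesic between $x_n$ and $x'_n$ is within $D_{\ref{ml}}(\delta,1,0)$-Hausdorff distance of this one by Lemma \ref{ml}, so the choice is irrelevant for statements of the form ``distance tends to infinity''). With this choice, since $x'_n$ is the endpoint of the subsegment closest to $x$ along $[x,x_n]$, we have
\[
d(x,[x_n,x'_n]) \;=\; d(x,x'_n) \;\longrightarrow\; \infty.
\]

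Then applying Lemma \ref{con-same-pt} with the sequences $\{x_n\}$ and $\{x'_n\}$ and base point $x$, I conclude that $\lim^X_{n\to\infty} x'_n$ exists and equals $\lim^X_{n\to\infty} x_n$, which is the desired conclusion.

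There is no real obstacle here; the lemma is essentially an immediate corollary of Lemma \ref{con-same-pt}. The only small subtlety is the freedom in the choice of $[x_n,x'_n]$, but this is handled by Morse stability since geodesics between two fixed points in a hyperbolic space are uniformly Hausdorff close.
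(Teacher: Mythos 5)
Your proof is correct and essentially matches the paper's intent: the paper simply asserts that Lemma~\ref{seq-on-con-geo} (like Lemmas~\ref{con-same-pt} and~\ref{lem-close imp same limit}) follows from Lemma~\ref{ideal triangle} and Lemma~\ref{conv criteria}, and Lemma~\ref{con-same-pt} is precisely the two-sequence packaging of that ideal-triangle argument, so routing through it is the natural shortcut. Your key observation that the subsegment of $[x,x_n]$ from $x'_n$ to $x_n$ realizes $d(x,[x_n,x'_n])=d(x,x'_n)$, together with Morse stability to dispose of the choice of geodesic, closes the argument cleanly.
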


\begin{lemma}\label{lem-same limits}
Let $\phi:\R_{\ge0}\map\R_{\ge0}$ be a proper map, and $k\ge0$, $\dl\ge0$. Suppose $Y\sse X$ are $\dl$-hyperbolic spaces where $Y$ is equipped with the path metric induced from $X$. Assume that the inclusion $Y\map X$ is $\phi$-proper embedding. If $Z\sse Y$ is $k$-quasiconvex in both $X$ and $Y$ then $N^Y_{k+1}(Z)$ (equipped with path metric induced from $Y$) is $L_{\ref{lem-same limits}}$-qi embedding in both $Y$ and $X$ where $L_{\ref{lem-same limits}}=L_{\ref{lem-same limits}}(\dl,k,\phi)\ge1$.

In particular, we have the following. For all $n\in\N$, let $Z_n\sse Y$ be $k$-quasiconvex in both $X$ and $Y$, and let $y_n,y'_n\in Z_n$. Further, suppose that $\LMX y_n$ and $\LMY y_n$ exist in $\pa X$ and $\pa Y$ respectively. Then $\LMX y'_n$ exists in $\pa X$ and $\LMX y_n=\LMX y'_n$ if and only if $\LMY y'_n$ in $\pa Y$ and $\LMY y_n=\LMY y'_n$.
\end{lemma}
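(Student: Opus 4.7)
The plan is to establish the main qi-embedding claim first, and then derive the ``in particular'' sequence-theoretic statement by combining it with Lemma \ref{con-same-pt} and the proper-embedding hypothesis.

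For the qi embedding of $N^Y_{k+1}(Z)$ into $Y$, I would simply invoke Lemma \ref{qi-emb-in-Y-X}$(1)$ applied to the $k$-quasiconvex subset $Z$ of $Y$; this immediately gives that $N^Y_{k+1}(Z)$ is path-connected and the inclusion into $Y$ is an $L_1$-qi embedding for some $L_1=L_1(k)$. The lower bound for the qi embedding into $X$ is also routine: since paths in $N^Y_{k+1}(Z)$ are paths in $X$, we have $d_X(y,y')\le d_Y(y,y')\le d^Y_{\mathrm{path}}(y,y')$ for $y,y'\in N^Y_{k+1}(Z)$.

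The heart of the argument is the upper bound $d^Y_{\mathrm{path}}(y,y')\le L\,d_X(y,y')+C$. Given $y,y'\in N^Y_{k+1}(Z)$, pick $z,z'\in Z$ with $d_Y(y,z),d_Y(y',z')\le k+1$; concatenating $[y,z]_Y$, $[z,z']_Y$ and $[z',y']_Y$ yields a path staying in $N^Y_{k+1}(Z)$ (using $k$-quasiconvexity of $Z$ in $Y$) of length at most $2(k+1)+d_Y(z,z')$. The main obstacle is bounding $d_Y(z,z')$ linearly in $d_X(z,z')$, since the proper-embedding hypothesis only gives $d_Y\le\phi(d_X)$, which by itself is non-linear. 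To overcome this I would use the quasiconvexity of $Z$ in $X$ via a discretisation trick: sample the $X$-geodesic $[z,z']_X$ at unit spacing $p_0=z,p_1,\dots,p_n=z'$ with $n\le\lceil d_X(z,z')\rceil$; by $k$-quasiconvexity in $X$ each $p_i$ is within $k$ of some $q_i\in Z\subseteq Y$, so $d_X(q_i,q_{i+1})\le2k+1$ and hence $d_Y(q_i,q_{i+1})\le\phi(2k+1)$. Telescoping gives $d_Y(z,z')\le(d_X(z,z')+1)\phi(2k+1)$, and since $d_X(z,z')\le 2(k+1)+d_X(y,y')$, one obtains a linear bound on $d^Y_{\mathrm{path}}(y,y')$ in terms of $d_X(y,y')$ with constants depending only on $k$ and $\phi$ (WLOG $\phi$ is monotone). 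Taking $L_{\ref{lem-same limits}}$ to be the maximum of this constant and $L_1$ completes the first assertion.

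For the ``in particular'' part, set $W_n=N^Y_{k+1}(Z_n)$ with the path metric from $Y$; by the first part each inclusion $W_n\hookrightarrow Y$ and $W_n\hookrightarrow X$ is an $L_{\ref{lem-same limits}}$-qi embedding with \emph{uniform} constant. Fix a basepoint $y\in Y$ and, for each $n$, a geodesic $\gm_n$ in $W_n$ from $y_n$ to $y'_n$. Then $\gm_n$ is a uniform quasigeodesic in both $Y$ and $X$, so by Lemma \ref{ml} it is uniformly Hausdorff close to $[y_n,y'_n]_Y$ in $Y$ and to $[y_n,y'_n]_X$ in $X$. Thus
\[
d_Y(y,\gm_n)\to\infty\iff d_Y(y,[y_n,y'_n]_Y)\to\infty,\qquad d_X(y,\gm_n)\to\infty\iff d_X(y,[y_n,y'_n]_X)\to\infty.
\]
Since every $z\in\gm_n$ lies in $Y$, the proper embedding gives $d_X(y,z)\le d_Y(y,z)\le\phi(d_X(y,z))$; taking infima over $z\in\gm_n$ yields $d_X(y,\gm_n)\le d_Y(y,\gm_n)\le\phi(d_X(y,\gm_n))$, and because $\phi$ is proper this forces $d_X(y,\gm_n)\to\infty\iff d_Y(y,\gm_n)\to\infty$. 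Chaining these equivalences with Lemma \ref{con-same-pt} in $X$ and in $Y$ gives $\LMX y_n=\LMX y'_n\iff\LMY y_n=\LMY y'_n$, together with existence of the respective limits (unboundedness of $\{y'_n\}$ being forced at each step). The main technical obstacle remains Step~2 above; once that linear comparison on $Z$ is in hand, everything else is a direct combination of stability of quasigeodesics and Lemma \ref{con-same-pt}.
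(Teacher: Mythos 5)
Your proof is correct and takes essentially the same route as the paper's; the only difference is that where the paper observes $N^Y_{k+1}(Z)$ is quasiconvex and properly embedded in $X$ and then cites Lemma \ref{qi-emb-in-Y-X}$(2)$ as a black box, you re-derive that lemma inline via the unit-spacing discretisation of the $X$-geodesic. The ``in particular'' part --- a geodesic in $N^Y_{k+1}(Z_n)$ is a uniform quasigeodesic in both $Y$ and $X$, combine stability of quasigeodesics with properness and Lemma \ref{con-same-pt} --- coincides with the paper's argument.
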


\begin{proof}
By Lemma \ref{qi-emb-in-Y-X} $(1)$, $N^Y_D(Z)$ is $L_1$-qi embedded in $Y$ where $D=k+1$ and $L_1=L_{\ref{qi-emb-in-Y-X}}(k,D)$. Since $Y$ is $\phi$-properly embedded in $X$ and $N^Y_D(Z)$ is $L_1$-qi embedded in $Y$, it follows that $N^Y_D(Z)$ is $\psi$-properly embedded in $X$ where $\psi:\R_{\ge0}\map\R_{\ge0}$ such that $\psi(r)= L_1\phi(r)+L^2_1$. Note also that $N^Y_D(Z)$ is $k_1$-quasiconvex in $X$ where $k_1=D+2\dl+k$. Thus by Lemma \ref{qi-emb-in-Y-X} $(2)$, $N^Y_D(Z)$ is $L_2$-qi embedded in $X$ where $L_2=L_{\ref{qi-emb-in-Y-X}}(k_1,\psi)$. We can take $L_{\ref{lem-same limits}}=max\{L_1,L_2\}$. This completes the first part.

Now a geodesic, say $\alpha_n$, 
		joining $y_n,y'_n$ in $N^Y_D(Z_n)$ is $L$-quasigeodesic in both $X$ and $Y$. Note that $Y\map X$ is $\phi$-proper embedding. Then by the stability of quasigeodesic (Lemma \ref{ml}), for any $y\in Y$, we have $d_Y(y,[y_n, y'_n]_Y)\map \infty$ as $n\map \infty$ if and only if $d_X(y,[y_n,y'_n]_X)\map\infty$ as $n\map\infty$. Finally, the proof follows from Lemma \ref{con-same-pt}.
\end{proof}

Next we discuss when a sequence of subsets in a hyperbolic space converge to a point of the boundary.

\begin{defn}
	Suppose $X$ is hyperbolic metric space and $\{A_n\}$ is a sequence of
	subsets. Suppose $\xi\in \partial X$. We say that the sequence of subsets $\{A_n\}$
	converges to $\xi$ (in $\bar{X}$) if the following holds: Given $R>0$ there is $N\in \N$
	such that for all $n\geq N$ and $x_n\in A_n$ and any geodesic ray $\alpha$ joining
	$x_n$ to $\xi$, we have $d(x,\alpha)>R$.
\end{defn}
Informed reader would notice that the above definition is equivalent to the following
with the natural topology on $\bar{X}$:

{\em A sequence $\{A_n\}$ of subsets in a hyperbolic space $X$ converges to $\xi\in \pa X$ iff
for all neighbourhood $U$ of $\xi$ in $\bar{X}$, there is $N\in \N$ such that $A_n\subset U$
for all $n\geq N$.}

The following lemma gives a criteria for convergence of quasiconvex sets.
\begin{lemma}\label{qc conv criteria}
	1) Suppose $\{A_n\}$ is a sequence of subsets in a hyperbolic metric space
	$X$ that converges to $\xi\in \partial X$. Suppose for all $n\in \N$ we choose $z_n\in A_n$.
	Then $\LMX z_n =\xi$.

	2) Suppose $\{A_n\}$ is a sequence of uniformly quasiconvex subsets in a hyperbolic metric space
	$X$. Then $A_n$'s converge to $\xi\in \partial X$ iff $\lim_{n\map \infty} d(x, A_n)=\infty$
	and there is a sequence $\{z_n\}$ in $X$ where $z_n\in A_n$ for all $n\in \N$, such that
	$\LMX z_n=\xi$.
\end{lemma}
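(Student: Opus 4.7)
\textbf{Proof plan for Lemma \ref{qc conv criteria}.} Part (1) follows directly from Lemma \ref{conv criteria}. Indeed, fix any basepoint $x\in X$. The definition of $\{A_n\}\to\xi$ says that for every $R>0$ there is $N$ such that for all $n\ge N$, all $x_n\in A_n$, and every geodesic ray $\alpha$ from $x_n$ to $\xi$, we have $d(x,\alpha)>R$. Applying this with $x_n=z_n$ and $\alpha=\alpha_n$ a geodesic ray from $z_n$ to $\xi$, the condition $d(x,\alpha_n)\to\infty$ of Lemma \ref{conv criteria} is satisfied, so $\LMX z_n=\xi$.

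For part (2), the forward direction is easy: if $A_n\to\xi$, pick any sequence $z_n\in A_n$, and part (1) gives $\LMX z_n=\xi$. Moreover, $d(x,A_n)\to\infty$ because otherwise one could extract $z_n\in A_n$ with $d(x,z_n)$ bounded, contradicting the definition applied to the ray from $z_n$ to $\xi$ (which starts at $z_n$). For the reverse direction, assume the $A_n$'s are uniformly $K$-quasiconvex, $X$ is $\delta$-hyperbolic, $d(x,A_n)\to\infty$, and $\LMX z_n=\xi$ for some choice $z_n\in A_n$. The plan is to show $d(x,\alpha_n)\to\infty$ for any $w_n\in A_n$ and any geodesic ray $\alpha_n$ from $w_n$ to $\xi$; part (1)'s definition will then follow.

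The key step uses Lemma \ref{ideal triangle} applied to the ideal triangle with vertices $z_n,w_n,\xi$, with sides $\beta_n=[z_n,\xi)$, $\alpha_n=[w_n,\xi)$, and $[z_n,w_n]$. The lemma gives $2\delta$-slimness, so every point $p\in\alpha_n$ lies within $2\delta$ of $\beta_n\cup[z_n,w_n]$. By $K$-quasiconvexity, $[z_n,w_n]\subset N_K(A_n)$, so any point of $[z_n,w_n]$ is at distance at least $d(x,A_n)-K$ from $x$. Hence
\[
d(x,p)\ge \min\bigl(d(x,\beta_n)-2\delta,\; d(x,A_n)-K-2\delta\bigr).
\]
Since $\LMX z_n=\xi$, Lemma \ref{conv criteria} gives $d(x,\beta_n)\to\infty$; combined with $d(x,A_n)\to\infty$, this forces $d(x,\alpha_n)\to\infty$, which is the required convergence criterion.

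I do not expect any genuine obstacle here; the only place care is needed is recognizing that quasiconvexity of the $A_n$'s is used precisely to bound $[z_n,w_n]$ away from $x$ via the hypothesis $d(x,A_n)\to\infty$, which is why uniform quasiconvexity is listed as a hypothesis in (2) but not in (1).
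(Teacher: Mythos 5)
Your proof is correct and follows the same line of reasoning as the paper: both use the $2\delta$-slimness of the ideal triangle with vertices $z_n$, $w_n$, $\xi$ (Lemma \ref{ideal triangle}), bound the side $[z_n,w_n]$ away from $x$ via uniform quasiconvexity and the hypothesis $d(x,A_n)\to\infty$, bound the ray from $z_n$ away from $x$ via Lemma \ref{conv criteria}, and conclude that the ray from $w_n$ must also be far from $x$. Your closing remark about why quasiconvexity appears in (2) but not (1) correctly identifies the role of that hypothesis.
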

\proof (1) This follows immediately from Lemma \ref{conv criteria}.

(2)
If $A_n\map \xi$ then clearly $d(x,A_n)\map \infty$. Moreover, by (1) if $z_n\in A_n$ for all
$n\in \N$ then $\LMX z_n =\xi$. Conversely suppose $\lim_{n\map \infty} d(x, A_n)=\infty$
	and there is a sequence $\{z_n\}$ in $X$ where $z_n\in A_n$ for all $n\in \N$, such that
	$\LMX z_n=\xi$. We note that since $A_n$'s are uniformly quasiconvex, there is some $k\geq0$
	such that each $A_n$ is $k$-quasiconvex in $X$. Let $\alpha_n$ be a geodesic ray
joining $z_n$ to $\xi$. Let $w_n\in A_n$ be any point and let $\beta_n$ be any geodesic ray
joining $w_n$ to $\xi$. Given $R\geq 0$, let $N\in \N $ be such that  $d(x,A_n)\geq R+2\delta+k$ and
$d(x,\alpha_n)\geq R+k+2\delta$ for all $n\geq N$ where the latter is guaranteed by Lemma \ref{conv criteria}.
 Let $n\geq N$. Now the triangle formed by the three geodesics $\alpha_n$,
$\beta_n$ and $[z_n,w_n]$ is $2\delta$-slim by Lemma \ref{ideal triangle}.
Hence, $d(x, \beta_n)\geq \min\{d(x,\alpha_n), d(x, [z_n, w_n])\}-2\delta$. However, since $A_n$ is $k$-quasiconvex
and $d(x, A_n)\geq R+k+2\delta$, we have $d(x, [z_n, w_n])\geq R+2\delta$. It follows that
$d(x, \beta_n)\geq R$ for all $n\geq N$. \qed\smallskip

The following lemma is an analogue of Lemma \ref{subseq exists} for sets.
\begin{lemma}\label{seq-set-con}
	Suppose $X$ is a proper hyperbolic metric space and $\{A_n\}$ is sequence of uniformly
	quasiconvex subsets in $X$ such that the collection $\{A_n: n\in \N\}$ is locally finite.
	Then there is a subsequence $\{A_{n_k}\}$ of $\{A_n\}$ that converges to a point of
	$\partial X$.
\end{lemma}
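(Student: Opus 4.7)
The plan is to reduce this to a combination of Lemma \ref{subseq exists} and Lemma \ref{qc conv criteria} (2) by constructing an auxiliary sequence of points, one from each $A_n$.

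First I would fix a basepoint $x \in X$ and, for each $n$, pick a point $z_n \in A_n$ with $d(x, z_n) = d(x, A_n)$; such a point exists because $X$ is proper and $A_n$ is closed (and if not closed, by an infimum argument followed by a slight perturbation, which is harmless up to a bounded error). The local finiteness hypothesis gives $d(x, A_n) \to \infty$: for any $R > 0$, only finitely many members of the collection $\{A_n : n \in \N\}$ meet the closed ball $\overline{B}(x, R)$, so for all sufficiently large $n$ we have $A_n \cap \overline{B}(x, R) = \emptyset$, i.e., $d(x, A_n) > R$. In particular $\{z_n\}$ is an unbounded sequence in the proper hyperbolic space $X$.

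Next, by Lemma \ref{subseq exists}, some subsequence $\{z_{n_k}\}$ satisfies $\lim^X_{k \to \infty} z_{n_k} = \xi$ for some $\xi \in \partial X$. Now the subsequence of sets $\{A_{n_k}\}$ still consists of uniformly quasiconvex subsets of $X$ (with the same quasiconvexity constant), and still $d(x, A_{n_k}) \to \infty$. Since $z_{n_k} \in A_{n_k}$ and $\lim^X_{k \to \infty} z_{n_k} = \xi$, the hypotheses of Lemma \ref{qc conv criteria} (2) are satisfied, which yields $A_{n_k} \to \xi$ in $\overline{X}$, as required.

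There is no genuine obstacle here; the only mild care needed is to ensure that the chosen $z_n \in A_n$ realize (or almost realize) the distance $d(x, A_n)$ so that the unboundedness of $\{d(x, A_n)\}$ forced by local finiteness transfers to unboundedness of $\{z_n\}$, enabling the application of Lemma \ref{subseq exists}.
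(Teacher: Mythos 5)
Your proof is correct and follows essentially the same route as the paper: pick $z_n\in A_n$, use local finiteness to get $d(x,A_n)\to\infty$, apply Lemma \ref{subseq exists} to extract a convergent subsequence $\{z_{n_k}\}$, and conclude via Lemma \ref{qc conv criteria}(2). One small remark: the paper first passes to a subsequence to guarantee $d(x,A_n)\to\infty$ (guarding against repeated sets in the sequence), whereas you deduce it directly from local finiteness, and your insistence that $z_n$ realize $d(x,A_n)$ is unnecessary since any $z_n\in A_n$ satisfies $d(x,z_n)\ge d(x,A_n)$.
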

We note that {\em a family of subsets $\{A_{\al}\}_{\al\in\Lambda}$ in a metric space $X$ is said
to be {\em locally finite} if any finite radius ball in $X$ intersects only finitely
many $A_{\al}$'s.}
\proof Since the collection $\{A_n\}$ is locally finite, we can assume, up to passing to
a subsequence if needed, that $\lim_{n\map \infty} d(x, A_n)= \infty$. Let $z_n\in A_n$ for all $n\in \N$. Now,
we can apply Lemma \ref{subseq exists} to find a subsequence $\{z_{n_k}\}$ which
converges to a point of $\partial X$. Then by Lemma \ref{qc conv criteria}(2)
$\{A_{n_k}\}$ is a required subsequence. \qed


\subsubsection{\bf CT maps}
\begin{defn}\label{CT-map}\textup{({\bf Cannon--Thurston map})}
	Suppose $f:Y\ri X$ is a (proper) embedding between hyperbolic metric spaces.
	We say that $f$ admits the Cannon--Thurston (CT) map if there is a map
	$\partial f :\partial Y\map \partial X$ induced by $f$ in the following sense:

	For all $\xi\in \partial Y$ and for any sequence $\{y_n\}$ in $Y$ with
	$\LMY y_n=\xi$ one has $\LMX f(y_n)=\partial f(\xi)$.
\end{defn}

In this case $\partial f$ is called the CT map induced by $f$.
We note that in the Definition \ref{CT-map}, the existence of the CT map implies
that it is also continuous (see e.g. \cite[Lemma $2.50$]{ps-krishna}).
In \cite{mitra-trees}, Mitra gave the following criterion for the existence of CT maps.

\begin{lemma}\textup{({\bf Mitra's Criterion})}
	Suppose $f:Y\ri X$ is a map between hyperbolic metric spaces.
	Fix $y_0\in Y$ and let $x_0=f(y_0)$. Then $f$ admits the CT map if there is a map
	$\phi:\R_{\ge0}\ri\R_{\ge0}$ with $\phi(r)\ri\infty$ as $r\ri\infty$ such that the following
	holds:

	Suppose $y,y\pr\in Y$ are any points and $\alpha $, $\beta$
	are any geodesics joining $y,y'$ in $Y$ and $X$ respectively, and $R\geq 0$.
	Then $d_Y(y_0,\alpha)\geq R$ implies $d_X(x_0,\beta)\geq \phi(R)$.
\end{lemma}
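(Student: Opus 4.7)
The plan is to verify that for every $\xi\in\partial Y$ and every sequence $\{y_n\}$ in $Y$ with $\LMY y_n=\xi$, the image sequence $\{f(y_n)\}$ converges in $\bar{X}$ to a point of $\partial X$ depending only on $\xi$. First I would translate the convergence $\LMY y_n=\xi$ into the statement
$$R_{n,m}:=d_Y\bigl(y_0,[y_n,y_m]_Y\bigr)\longrightarrow\infty\quad\text{as }n,m\to\infty.$$
This is immediate from Lemma \ref{conv criteria} applied to geodesic rays $\alpha_n$ joining $y_n$ to $\xi$, combined with Lemma \ref{ideal triangle} for the ideal triangle $(y_n,y_m,\xi)$: the finite side $[y_n,y_m]_Y$ lies in the $2\delta$-neighbourhood of $\alpha_n\cup\alpha_m$, so $d_Y(y_0,[y_n,y_m]_Y)\geq\min\{d_Y(y_0,\alpha_n),d_Y(y_0,\alpha_m)\}-2\delta$.

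Second, the Mitra hypothesis applied to each pair $(y_n,y_m)$ upgrades this to
$$d_X\bigl(x_0,[f(y_n),f(y_m)]_X\bigr)\ \geq\ \phi(R_{n,m})\ \longrightarrow\ \infty.$$
Since $f(y_n)$ lies on the geodesic $[f(y_n),f(y_m)]_X$, this also forces $d_X(x_0,f(y_n))\to\infty$, so $\{f(y_n)\}$ is an unbounded sequence in the proper hyperbolic space $X$. By Lemma \ref{subseq exists}, some subsequence $\{f(y_{n_k})\}$ converges to a point $\eta\in\partial X$. If a further subsequence $\{f(y_{n'_k})\}$ converges to some $\eta'\in\partial X$, the interleaved sequence $y_{n_1},y_{n'_1},y_{n_2},y_{n'_2},\dots$ still converges to $\xi$ in $\bar Y$; rerunning the previous step for this interleaved sequence yields $d_X(x_0,[f(y_{n_k}),f(y_{n'_k})]_X)\to\infty$, and Lemma \ref{con-same-pt} then gives $\eta'=\eta$. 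Hence $\LMX f(y_n)=\eta$. Independence from the choice of sequence follows by the same interleaving trick applied to two sequences $\{y_n\},\{y'_n\}$ both converging to $\xi$, and setting $\partial f(\xi):=\eta$ produces the desired CT map.

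The main obstacle is the careful handling of the double-limit behaviour: one must first verify that the endpoints of $[f(y_n),f(y_m)]_X$ themselves escape from $x_0$ (so that Lemma \ref{subseq exists} can be invoked to land a subsequential limit in $\partial X$), and then ensure that the argument is symmetric enough for interleaving to preserve convergence. Everything else reduces to routine diagonalization using Lemmas \ref{conv criteria}, \ref{subseq exists}, \ref{con-same-pt} and \ref{ideal triangle}.
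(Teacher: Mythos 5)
Your proof is correct, and it reconstructs the standard argument for Mitra's criterion. The paper itself does not prove this lemma (it is attributed to \cite{mitra-trees} without proof), so there is no in-paper proof to compare against; your argument supplies the missing details in a reasonable way.

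Two small points worth tightening if this were to be written out in full. First, the deduction that $d_X(x_0,f(y_n))\to\infty$ from $d_X\bigl(x_0,[f(y_n),f(y_m)]_X\bigr)\geq\phi(R_{n,m})$ is right, but the clean way to phrase it is to fix, say, $m=n+1$, note that $f(y_n)$ is an endpoint of $[f(y_n),f(y_{n+1})]_X$, and observe that $R_{n,n+1}\to\infty$; one should also replace $\phi$ by the nondecreasing function $\tilde\phi(r)=\inf_{s\geq r}\phi(s)$ (which still tends to $\infty$ and satisfies the same implication) so that $\tilde\phi(R_{n,n+1})\to\infty$ follows without worrying about monotonicity. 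Second, passing from ``all subsequential limits in $\partial X$ coincide'' to ``$\{f(y_n)\}$ converges in $\bar X$'' uses compactness of $\bar X$ for proper $X$ (Lemma \ref{subseq exists} and the convention that spaces are proper); that is implicit in your write-up but is the step where properness is genuinely needed. Aside from these cosmetic points, the argument is sound and the interleaving device correctly handles both uniqueness of the subsequential limit and independence from the choice of sequence representing $\xi$.
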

In the situation of the above lemma we shall say that {\em $f$ satisfies Mitra's criterion
	with respect to the base point $y_0$} and we shall refer to the function $\phi$
to be a {\bf CT parameter} for this base point.
We note that Mitra's criterion implies that $f:Y\map X$ is a $\phi$-proper
embedding. On the other hand it is easy to check that if Mitra's criterion holds
for a map $f:Y\map X$ as above with respect to a base point $y_0\in Y$, then the
same will be true for any other base point in $Y$ although in that case the CT parameter
$\phi$ may be different. However if there is a group $G$ acting by isometries
on both $Y$ and $X$ such that $f$ is $G$-equivariant and the $G$-action on $Y$ is
transitive then the same function $\phi$ works for all base points in $Y$.
Typically this is the case in group theoretic situations, i.e. when we have hyperbolic groups
$H<G$ and $f$ is an inclusion map between their Cayley graphs.

\begin{defn}\label{mitra-irr}
	Suppose $f:Y\ri X$ is a proper embedding between hyperbolic metric spaces and that
	$f$ satisfies Mitra's criterion with respect to a base point. We say that
	$f$ satisfies a {\bf uniform Mitra's criterion} if
	there is a function $\phi$ which works as a CT parameter for all base points in $Y$.
\end{defn}
The function $\phi$ in Definition \ref{mitra-irr} will be referred to as a {\em uniform
CT parameter}.
We note that although Mitra's criterion is not necessary for the existence of CT maps,
it is a very reasonable sufficient condition for the
existence of CT maps as the following lemma shows. Since this is quite standard we
skip its proof.

\begin{lemma}\label{necessity}\textup{({\bf Necessity of Mitra's criterion})}
	Suppose $X,Y$ are two proper hyperbolic metric spaces and $f:Y\map X$ is a proper embedding.
	If $f$ admits the CT map, then $f:Y\ri X$ satisfies Mitra's criterion.
\end{lemma}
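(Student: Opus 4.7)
The plan is to argue by contradiction (contrapositive). Suppose $f:Y\to X$ admits the CT map $\partial f:\partial Y\to\partial X$, but Mitra's criterion fails with respect to the base point $y_0$. Then there exists a constant $M\geq 0$ and sequences of points $y_n,y'_n\in Y$ with geodesics $\alpha_n=[y_n,y'_n]_Y$ in $Y$ and $\beta_n=[f(y_n),f(y'_n)]_X$ in $X$ such that $d_Y(y_0,\alpha_n)\geq n$, yet $d_X(x_0,\beta_n)\leq M$ for every $n\in\N$. I will show this configuration cannot exist once $f$ admits the CT map.

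Since $y_n,y'_n\in\alpha_n$, the inequality $d_Y(y_0,\alpha_n)\geq n$ forces both $d_Y(y_0,y_n)\to\infty$ and $d_Y(y_0,y'_n)\to\infty$. Because $Y$ is proper hyperbolic, Lemma \ref{subseq exists} allows me to pass to a subsequence along which both $\LMY y_n=\xi$ and $\LMY y'_n=\xi'$ exist in $\partial Y$. The hypothesis $d_Y(y_0,[y_n,y'_n]_Y)=d_Y(y_0,\alpha_n)\to\infty$ combined with Lemma \ref{con-same-pt} (applied in $Y$) then forces $\xi=\xi'$.

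Now apply the CT map hypothesis: since $\LMY y_n=\xi=\LMY y'_n$, one has $\LMX f(y_n)=\partial f(\xi)=\LMX f(y'_n)$. Invoking Lemma \ref{con-same-pt} again, this time in $X$, yields $d_X(x_0,[f(y_n),f(y'_n)]_X)=d_X(x_0,\beta_n)\to\infty$, directly contradicting $d_X(x_0,\beta_n)\leq M$. Hence Mitra's criterion must hold: for every $M\geq 0$ there exists $R(M)\geq 0$ such that $d_Y(y_0,\alpha)\geq R(M)$ implies $d_X(x_0,\beta)\geq M$ for all geodesics $\alpha\subset Y$, $\beta\subset X$ with matching endpoints. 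Setting $\phi(r):=\sup\{M\geq 0:R(M)\leq r\}$ (with the convention $\sup\emptyset=0$) gives a proper function $\phi:\R_{\ge 0}\to\R_{\ge 0}$ serving as the required CT parameter.

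The only potentially delicate step is confirming that the two applications of Lemma \ref{con-same-pt} operate on genuinely convergent sequences in both spaces; but this is automatic from the definition of the CT map (Definition \ref{CT-map}), which guarantees that $\LMY y_n=\xi$ implies $\LMX f(y_n)=\partial f(\xi)$ without any auxiliary hypothesis. There is no obstacle beyond invoking properness of $Y$ (to extract the boundary limits via Lemma \ref{subseq exists}) and properness of $X$ (implicit in our standing convention that allows the boundary machinery of Lemma \ref{con-same-pt} to apply).
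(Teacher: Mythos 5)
Your proof is correct. The paper explicitly skips this proof, remarking only that it is "quite standard," and your argument is precisely the standard one: negate Mitra's criterion to extract sequences $\{y_n\},\{y'_n\}$ with $d_Y(y_0,[y_n,y'_n]_Y)\to\infty$ but $d_X(x_0,[f(y_n),f(y'_n)]_X)$ bounded, use properness (Lemma \ref{subseq exists}) to pass to a convergent subsequence in $\partial Y$, apply Lemma \ref{con-same-pt} in $Y$ to conclude the two boundary limits agree, push through $\partial f$, and apply Lemma \ref{con-same-pt} in $X$ to obtain the contradiction. One minor streamlining worth noting: you do not actually need to pass to a further subsequence to make $\LMY y'_n$ converge, since the "if" direction of Lemma \ref{con-same-pt} already produces existence and equality of the limit for $\{y'_n\}$ once $\LMY y_n$ exists and $d_Y(y_0,[y_n,y'_n]_Y)\to\infty$; but including the extra subsequence extraction is harmless.
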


The following lemmas describe some basic properties of CT maps.

\begin{lemma}\label{qi-im-ct-map}
Suppose $Y$ is a qi embedding (hyperbolic) subspace of a hyperbolic space $X$. Then the inclusion $Y\to X$ admits an (injective) CT map.
\end{lemma}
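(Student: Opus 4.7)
The plan is to verify Mitra's criterion for the inclusion $i: Y \hookrightarrow X$ and then invoke the standard fact that Mitra's criterion is sufficient for the existence of the CT map. Fix a base point $y_0 \in Y$, and set $x_0 = i(y_0) = y_0 \in X$. Assume that $X$ is $\delta$-hyperbolic and that $i$ is a $k$-qi embedding (where we may assume $Y$ carries its intrinsic geodesic metric, and in fact $Y$ is hyperbolic, so the notion of geodesic in $Y$ makes sense).

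Let $y, y' \in Y$ be arbitrary, let $\alpha$ be a geodesic in $Y$ joining them, and let $\beta$ be a geodesic in $X$ joining them. Suppose $d_Y(y_0, \alpha) \geq R$. The key observation is that $\alpha$, viewed as a path in $X$, is a $k$-quasigeodesic connecting the same endpoints as $\beta$ (because $i$ is a $k$-qi embedding). Therefore by Lemma \ref{ml} (Stability of Quasi-Geodesic), we get
\[
Hd_X(\alpha, \beta) \leq D := D_{\ref{ml}}(\delta, k, k).
\]

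Next I would estimate $d_X(x_0, \alpha)$ in terms of $R$. For any point $p \in \alpha$, the qi-embedding inequality gives
\[
d_X(x_0, p) = d_X(y_0, p) \geq \tfrac{1}{k}\, d_Y(y_0, p) - k \geq \tfrac{R}{k} - k.
\]
Taking the infimum over $p \in \alpha$ yields $d_X(x_0, \alpha) \geq R/k - k$. Combining with the Hausdorff bound,
\[
d_X(x_0, \beta) \geq d_X(x_0, \alpha) - D \geq \tfrac{R}{k} - k - D.
\]
Setting $\phi(R) := R/k - k - D$, we see that $\phi(R) \to \infty$ as $R \to \infty$. This verifies Mitra's criterion with respect to the base point $y_0$, and hence $i : Y \to X$ admits the CT map.

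There is no real obstacle here; the main step is simply the application of Morse stability, which is permitted since $i(\alpha)$ is a quasigeodesic in $X$ with the same endpoints as $\beta$. I would also remark that the CT parameter $\phi$ depends only on $\delta$ and $k$, so Mitra's criterion holds uniformly in the sense of Definition \ref{mitra-irr}, should that be needed downstream.
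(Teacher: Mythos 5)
Your proof is correct, and it is exactly the standard argument the paper implicitly relies on (the lemma is stated without proof as a well-known fact): viewing an intrinsic geodesic $\alpha\subset Y$ as a $(k,k)$-quasigeodesic of $X$, applying Lemma~\ref{ml} to compare $\alpha$ with the $X$-geodesic $\beta$, and then converting the $Y$-distance bound $d_Y(y_0,\alpha)\geq R$ into an $X$-distance bound $d_X(x_0,\beta)\geq R/k-k-D$ via the qi-embedding inequality. Your closing remark that $\phi$ depends only on $\delta$ and $k$ (so uniform Mitra's criterion in the sense of Definition~\ref{mitra-irr} holds) is also correct and worth noting.
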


\begin{lemma}\textup{(\cite[Lemma $2.1$]{mitra-pams})}\label{lem-qc and inj ct}
Suppose $H$ is a hyperbolic group of a hyperbolic group $G$. Then $H$ is quasiconvex in $G$ if and only if the inclusion $H\map G$ admits an injective CT map.
\end{lemma}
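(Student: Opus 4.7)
Fix a finite generating set of $G$ containing a finite generating set of $H$ and work in the Cayley graphs $\Gamma_H \subset \Gamma_G$; the inclusion is $1$-Lipschitz and, since $H$ is finitely generated, a proper embedding.

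($\Rightarrow$) Assume $H$ is quasiconvex in $\Gamma_G$. By Lemma~\ref{qi-emb-in-Y-X}(2) the inclusion $\Gamma_H \hookrightarrow \Gamma_G$ is a qi embedding, so by Lemma~\ref{qi-im-ct-map} the Cannon--Thurston map $\partial i : \partial H \to \partial G$ exists. For injectivity, given $\xi \neq \xi'$ in $\partial H$, join them by a geodesic line $\gamma$ in the proper hyperbolic space $\Gamma_H$; its image in $\Gamma_G$ is a bi-infinite quasigeodesic, which by Lemma~\ref{ml} lies within finite Hausdorff distance of a genuine geodesic line $\gamma'$ in $\Gamma_G$. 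The endpoints $\gamma'(\pm\infty) \in \partial G$ are distinct, and by Lemma~\ref{lem-close imp same limit} they coincide with $\partial i(\xi)$ and $\partial i(\xi')$.

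($\Leftarrow$) Assume $\partial i$ exists and is injective, and suppose for contradiction that $H$ is not quasiconvex in $\Gamma_G$. Then there exist sequences $h_n, h'_n \in H$ and $G$-geodesics $\beta_n = [h_n, h'_n]_{\Gamma_G}$ along which $d_G(\cdot, H)$ takes values tending to $\infty$; in particular $d_G(h_n, h'_n) \to \infty$, and therefore $d_H(h_n, h'_n) \to \infty$. Let $w_n \in H$ be an $H$-midpoint of $[h_n, h'_n]_{\Gamma_H}$ and exploit the left $G$-action on $\Gamma_G$ by setting $\tilde h_n = w_n^{-1} h_n$ and $\tilde h'_n = w_n^{-1} h'_n$, both in $H$. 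Then $d_H(e, \tilde h_n) = d_H(e, \tilde h'_n) \to \infty$ and $e$ lies on the $H$-geodesic $[\tilde h_n, \tilde h'_n]_{\Gamma_H}$. After passing to a subsequence via Lemma~\ref{subseq exists}, $\tilde h_n \to \xi$ and $\tilde h'_n \to \xi'$ in $\partial H$, and Lemma~\ref{con-same-pt} applied in $\Gamma_H$ forces $\xi \neq \xi'$. By the CT map the corresponding $\Gamma_G$-limits equal $\partial i(\xi)$ and $\partial i(\xi')$.

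The main obstacle is now to force $\partial i(\xi) = \partial i(\xi')$, contradicting injectivity; by Lemma~\ref{con-same-pt} in $\Gamma_G$ this amounts to showing $d_G(e, [\tilde h_n, \tilde h'_n]_{\Gamma_G}) \to \infty$. The naive $H$-midpoint $w_n$ controls the location of $e$ only on the $H$-geodesic, not on the $G$-geodesic, so a refinement is needed: one passes to sub-geodesic pairs $(a_n, b_n) \subset [h_n, h'_n]_{\Gamma_H}$ where the distortion $d_H/d_G$ is essentially maximal, and one chooses $w_n$ at bounded $G$-distance from a point $z_n \in \beta_n$ realising $\max_{\beta_n} d_G(\cdot, H)$. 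Hyperbolicity of $G$ together with the defining property of $z_n$ then propagates the ``far from $H$'' property from $z_n$ to the whole of $\beta_n$, so after translation every point of $\tilde\beta_n$ is uniformly far from $e$, yielding the desired contradiction. This refinement, which combines iterated use of the quasiconvexity failure with thin-triangle estimates in $\Gamma_G$, is the technical heart of Mitra's argument in \cite{mitra-pams}.
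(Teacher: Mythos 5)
The paper does not prove this lemma; it cites it directly as \cite[Lemma 2.1]{mitra-pams}, so there is nothing in the text to compare against, and the question is simply whether your sketch is correct. Your ($\Rightarrow$) direction is fine. In the ($\Leftarrow$) direction, however, the proposed ``refinement'' contains an impossibility at its core: you ask to choose the translating element $w_n$ (which must lie in $H$ for $\tilde h_n = w_n^{-1}h_n$ and $\tilde h'_n = w_n^{-1}h'_n$ to remain in $H$) ``at bounded $G$-distance from a point $z_n \in \beta_n$ realising $\max_{\beta_n} d_G(\cdot, H)$.'' But $d_G(z_n, H)\to\infty$ is exactly what the failure of quasiconvexity hands you, so no element of $H$ is within bounded $G$-distance of $z_n$. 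The subsequent heuristic (``propagate the far-from-$H$ property from $z_n$ to all of $\beta_n$'') is also off: the endpoints $h_n, h'_n$ of $\beta_n$ lie \emph{in} $H$, so $\beta_n$ is not uniformly far from $H$, and the thing you actually need — that $\tilde\beta_n$ is uniformly far from $e$ — does not follow from any such propagation.

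The correct device is nearest-point projection rather than proximity. The $H$-geodesic $[h_n,h'_n]_{\Gamma_H}$ is a connected edge path in $\Gamma_G$ from $h_n$ to $h'_n$; nearest-point projection onto the $G$-geodesic $\beta_n=[h_n,h'_n]_{\Gamma_G}$ is coarsely Lipschitz (Lemma \ref{proj-on-qc}~\ref{proj-on-qc:1}) and fixes the endpoints, so its image coarsely covers all of $\beta_n$. Choose $w_n\in H\cap[h_n,h'_n]_{\Gamma_H}$ whose projection lands within a uniform distance of $z_n$. After translating by $w_n^{-1}$, the nearest-point projection of $e$ onto $[\tilde h_n,\tilde h'_n]_{\Gamma_G}$ is uniformly close to $\tilde z_n:=w_n^{-1}z_n$; since $d_G(e,\tilde z_n)=d_G(w_n,z_n)\ge d_G(z_n,H)\to\infty$, this gives $d_G(e,[\tilde h_n,\tilde h'_n]_{\Gamma_G})\to\infty$, hence $\partial i(\xi)=\partial i(\xi')$ by Lemma \ref{con-same-pt}. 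You should also verify that $d_G(e,\tilde h_n),d_G(e,\tilde h'_n)\to\infty$ (so that the subsequential limits $\xi,\xi'\in\partial H$ really exist): this again follows from coarse Lipschitzness of the projection, since the projections of $e$ and $\tilde h_n$ are respectively near $\tilde z_n$ and $\tilde h_n$, which are $\ge D_n$ apart. Combined with $\xi\neq\xi'$ (which you correctly read off from $e\in[\tilde h_n,\tilde h'_n]_{\Gamma_H}$ and Lemma \ref{con-same-pt}), this contradicts injectivity.
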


\begin{lemma}\textup{(\cite[Lemma $8.6$]{ps-kap})}\label{functo-ct-map}
Suppose $f:Z\ri Y$ and $g:Y\ri X$ are maps between hyperbolic spaces both admitting the CT-maps. Then the composition $g\circ f:Z\ri X$ admits the CT map and $\partial (g\circ f)=\pa g \circ \pa f$.
\end{lemma}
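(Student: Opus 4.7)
The proof is essentially a direct unwinding of Definition \ref{CT-map}. The plan is to take an arbitrary sequence in $Z$ converging to a boundary point $\xi \in \partial Z$, apply the CT map of $f$ to pass to $\partial Y$, and then apply the CT map of $g$ to pass to $\partial X$.

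First I would verify that $g \circ f$ is a proper embedding, so that the setup of Definition \ref{CT-map} applies to the composition. If $f$ is $\phi_f$-proper and $g$ is $\phi_g$-proper, then whenever $d_X(g(f(z)), g(f(z'))) \leq r$, applying the defining property of $g$ gives $d_Y(f(z), f(z')) \leq \phi_g(r)$, and then applying that of $f$ gives $d_Z(z,z') \leq \phi_f(\phi_g(r))$. Hence $g \circ f$ is $(\phi_f \circ \phi_g)$-proper.

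Now fix $\xi \in \partial Z$ and let $\{z_n\}$ be any sequence in $Z$ with $\LMY z_n = \xi$ (reading this as convergence in $\bar{Z}$). Since $f$ admits the CT map $\partial f$, the sequence $\{f(z_n)\}$ in $Y$ satisfies $\lim^Y_{n \to \infty} f(z_n) = \partial f(\xi) \in \partial Y$. Thus $\{f(z_n)\}$ is itself a sequence in $Y$ converging to a point of $\partial Y$, and we may apply the CT map hypothesis for $g$ to it, obtaining $\LMX g(f(z_n)) = \partial g(\partial f(\xi)) \in \partial X$. Since $\{z_n\}$ was an arbitrary sequence in $Z$ converging to $\xi$, the value $\partial g (\partial f(\xi))$ depends only on $\xi$, so the assignment $\xi \mapsto \partial g(\partial f(\xi))$ serves as a CT map for $g \circ f$, establishing both the existence of $\partial(g \circ f)$ and the formula $\partial(g \circ f) = \partial g \circ \partial f$.

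There is no real obstacle here; the only mildly subtle point is ensuring the hypotheses of Definition \ref{CT-map} are literally in force at each stage, namely that $g \circ f$ is a proper embedding and that the intermediate sequence $\{f(z_n)\}$ is the sort of sequence to which the CT hypothesis for $g$ applies. Both are immediate from the properness of the component maps and the definition of CT map.
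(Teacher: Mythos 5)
Your proof is correct and is the standard direct argument; the paper simply cites this as Lemma~8.6 of \cite{ps-kap} without reproducing a proof, and the argument there is the same chase through Definition~\ref{CT-map}. The only point worth stressing is the one you already flag: the key step is that $\{f(z_n)\}$ is a genuine sequence in $Y$ converging to a point of $\partial Y$, so the CT hypothesis for $g$ is literally applicable, and your composition of proper maps handles the proper-embedding hypothesis.
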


We note that all the spaces in consideration in this paper, for which CT maps are to be
discussed, are proper. For proofs, the following lemma will be very useful. Since the proof is immediate using Lemma \ref{subseq exists}
we skip it.

\begin{lemma}\label{not-ct-not-comp} {\em ({\bf Nonexistence criteria})}
	Suppose $X,Y$ are two proper hyperbolic metric spaces, and $f:Y\map X$ is a proper
	embedding which does not admit the CT map. Then there are two unbounded sequences $\{y_n\}$,
	$\{y'_n\}$ in $Y$ such that $\LMY y_n=\LMY y'_n\in \partial Y$ but
	$\LMX y_n\neq \LMX y'_n$ in $\pa X$.
\end{lemma}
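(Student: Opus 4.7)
The plan is to negate the definition of CT map directly and then use Lemma \ref{subseq exists} to extract convergent subsequences when necessary. By Definition \ref{CT-map}, $f$ admits the CT map precisely when there exists a function $\partial f:\partial Y\map\partial X$ such that for every $\xi\in\partial Y$ and every sequence $\{y_n\}\subset Y$ with $\LMY y_n=\xi$, one has $\LMX f(y_n)=\partial f(\xi)$. The negation of this is: there exists $\xi\in\partial Y$ and a sequence $\{y_n\}\subset Y$ with $\LMY y_n=\xi$ such that \emph{either} $\LMX f(y_n)$ fails to exist in $\partial X$, \emph{or} $\LMX f(y_n)$ exists but differs from $\LMX f(y'_n)$ for some other sequence $\{y'_n\}$ with $\LMY y'_n=\xi$.

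In the second alternative we already have the required pair of sequences. So it suffices to handle the first alternative: assume $\{y_n\}\subset Y$ is a sequence with $\LMY y_n=\xi\in\partial Y$ but $\{f(y_n)\}$ does not converge in $\bar X$. Since $\LMY y_n=\xi\in\partial Y$, the sequence $\{y_n\}$ is unbounded in $Y$, and because $f:Y\map X$ is a proper embedding (so distances in $Y$ are controlled by distances in $X$ via $\phi$), the image sequence $\{f(y_n)\}$ is unbounded in $X$.

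Now I invoke Lemma \ref{subseq exists} twice. First, applied to $\{f(y_n)\}$, it produces a subsequence $\{f(y_{n_k})\}$ with limit $\eta\in\partial X$. Since by assumption the full sequence $\{f(y_n)\}$ does not converge to $\eta$ in $\bar X$, there must be an $R>0$ and an infinite subset of indices $\{n_l\}$ with $f(y_{n_l})\notin B_{\bar X}(\eta,R)$ (using the neighbourhood description of $\bar X$ implicit in Lemma \ref{conv criteria}); the subsequence $\{f(y_{n_l})\}$ remains unbounded in $X$, so applying Lemma \ref{subseq exists} to it yields a further subsequence converging to some $\eta'\in\partial X$ with $\eta'\ne\eta$. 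Renaming, take $\{y_n\}$ to be the subsequence giving limit $\eta$ and $\{y'_n\}$ the one giving $\eta'$. Both are subsequences of the original sequence converging to $\xi$, so $\LMY y_n=\LMY y'_n=\xi\in\partial Y$, while $\LMX y_n=\eta\ne\eta'=\LMX y'_n$ in $\partial X$, which is exactly the conclusion.

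The whole argument is routine diagonalization; there is no real obstacle. The only subtlety worth naming is the use of Lemma \ref{subseq exists}, which needs properness of $X$ to guarantee that unbounded sequences have boundary-convergent subsequences—this is where the hypothesis that both $X$ and $Y$ are proper enters.
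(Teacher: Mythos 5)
Your proof is correct and uses exactly the tool the paper indicates, Lemma \ref{subseq exists}, to extract boundary-convergent subsequences and split into the cases of a divergent image sequence versus two image sequences with distinct boundary limits. The only cosmetic imprecision is the phrase $B_{\bar X}(\eta,R)$, since $\bar X$ has no canonical metric; replacing this with ``a neighbourhood of $\eta$ in $\bar X$'' makes the argument airtight without changing its substance.
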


\subsubsection{\bf Coarse separation and coarse intersection}\label{subsubsec-coarse sep}

\begin{defn}{\em ({\bf Coarse separation})}
Suppose $X$ is a geodesic metric space. Let $Y_1,Y_2, Z\sse X$ be nonempty subsets. We say that $Z$ coarsely separates $Y_1$ and $Y_2$ if for all $k\ge1$ there is $R\ge0$ such that for all $k$-quasigeodesic $\gm$ joining $y_1$ and $y_2$ where $y_i\in Y_i$, we have $\gm\cap N_R(Z)\ne\emptyset$.
\end{defn}

A bit general form of coarse separation is as follows.

\begin{defn}\label{coarse-inter}{\em ({\bf Coarse intersection})}
	Suppose $X$ is a metric space and $Y_1, Y_2, Z\subset X$. We say that $Z$ is a {\em coarse intersection}
	of $Y_1,Y_2$ in $X$ if the following hold. (1) There is a constant $r\geq 0$ such that
	$Z\subset N_r(Y_1)\cap N_r(Y_2)$. (2) For any $C\geq 0$ there is $D\geq 0$ such that
	$N_C(Y_1)\cap N_C(Y_2)\subset N_D(Z)$.
\end{defn}
It is clear that coarse separation implies coarse intersection provided $Z\sse N_r(Y_1)\cap N_r(Y_2)$ for some $r\ge0$. However, we will show in Lemma \ref{lem-coarse int iff coarse sep} that, in a hyperbolic space, the two notions are equivalent when both $Y_1$ and $Y_2$ are quasiconvex.
\begin{example}\label{exp-coarse sep}
Suppose $G$ is a $\delta$-hyperbolic group with the word metric $d_G$ for some finite generating set $S$. Assume that $H, K<G$ are $k$-quasiconvex subgroups.  Then for all $g\in G$, $gKg^{-1}$ is also $k'$-quasiconvex for some $k'\ge0$ depending on $\dl$, $k$ and $d_G(1,g)$. Thus $H\cap gKg^{-1}$ is $k''$-quasiconvex for some $k''\ge0$ depending on $\dl$, $k$ and $k'$ \textup{(\cite[Proposition $3$]{short})}. On the other hand, given $L\ge0$ there is a constant $L'\ge0$ depending on $H$, $gK$ and $S$ such that $N_L(H)\cap N_{L}(gK)\sse N_{L'}(H\cap gKg^{-1})$ by \textup{\cite[Lemma $4.5$]{HW-bdd-pck}}. Note that $Hd_G(gK,gKg^{-1})\le d_G(1,g)$. This concludes that $H\cap gKg^{-1}$ is a coarse intersection of $H$ and $gKg^{-1}$.
	
Moreover, it follows that for all $k\in K$, $k(H\cap K)$ is a coarse separation of $kH$ and $K$.
\end{example}

\begin{lemma}\label{lem-coarse int iff coarse sep}
Suppose $X$ is a $\dl$-hyperbolic metric space, and $Y_1,Y_2$ are $k$-quasiconvex subsets of $X$. Let $Z\sse X$ be (non-empty) such that $Z\sse N_r(Y_1)\cap N_r(Y_2)$ for some $r\ge0$. Then $Z$ is a coarse separation of $Y_1$ and $Y_2$ if and only if $Z$ is a coarse intersection of $Y_1$ and $Y_2$.	
\end{lemma}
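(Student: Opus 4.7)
The plan is to prove the two implications separately. The direction \emph{coarse separation} $\Rightarrow$ \emph{coarse intersection} should be immediate, so I would handle it first. Given $C \geq 0$ and $x \in N_C(Y_1) \cap N_C(Y_2)$, I would choose $y_i \in Y_i$ with $d(x,y_i) \leq C$ for $i=1,2$. Any geodesic $[y_1,y_2]$ has length at most $2C$, so every point on it lies within $2C$ of $x$. Applying the coarse separation hypothesis with quasigeodesic constant $k=1$ yields a uniform constant $R_0$ such that $[y_1,y_2] \cap N_{R_0}(Z) \neq \emp$; hence $d(x,Z) \leq 2C + R_0$, giving the required $D = D(C)$.

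The substantive direction is \emph{coarse intersection} $\Rightarrow$ \emph{coarse separation}. Fixing $k \geq 1$, let $\gm$ be a $k$-quasigeodesic from $y_1 \in Y_1$ to $y_2 \in Y_2$, and let $\al = [y_1,y_2]$ be any geodesic joining them. By Morse stability (Lemma \ref{ml}), $Hd(\gm,\al) \leq D_0 := D_{\ref{ml}}(\dl,k,k)$, so it would suffice to locate a point of $\al$ within a uniform distance of $Z$. My plan is to route this through the projection set $U' := P^X_{Y_1}(Y_2)$. First, Lemma \ref{lem-cobdd}~\ref{lem-cobdd:qc proj new} (applied with $U = Y_1$, $V = Y_2$) guarantees that $\al$ meets $N_{D_1}(U')$ for a constant $D_1 = D_1(\dl,k)$. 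Second, since $Z$ is non-empty we have $d(Y_1,Y_2) \leq 2r$, so applying Lemma \ref{lem-cobdd}~\ref{lem-cobdd:close imp proj inclu} places $U'$ inside $N_{R'}(Y_2)$ for some $R' = R'(\dl,k,r)$. Combined with the trivial inclusion $U' \sse Y_1$, this shows $U' \sse N_{R'}(Y_1) \cap N_{R'}(Y_2)$. Third, the coarse intersection hypothesis applied at $C = R'$ furnishes a constant $D_2 = D_2(R')$ with $U' \sse N_{D_2}(Z)$.

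Chaining these inclusions, some point of $\al$ will lie within $D_1 + D_2$ of $Z$, and hence some point of $\gm$ within $R := D_0 + D_1 + D_2$ of $Z$, giving the required coarse separation constant (depending only on $\dl,k,r$). I do not expect a deep obstacle: the argument is essentially a careful composition of standard projection estimates with the coarse intersection hypothesis. The main point requiring care is invoking the correct reading of Lemma \ref{lem-cobdd}~\ref{lem-cobdd:close imp proj inclu} — namely, that the projection of a nearby quasiconvex set lies in a uniform neighbourhood of that set — after which the rest is bookkeeping of constants.
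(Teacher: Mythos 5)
Your proof is correct, and the easy direction (coarse separation $\Rightarrow$ coarse intersection) is the same elementary argument the paper dismisses with ``the other direction follows easily.'' For the substantive direction, however, you take a genuinely different route from the paper. You route through the projection set $U' = P^X_{Y_1}(Y_2)$: Lemma \ref{lem-cobdd}~\ref{lem-cobdd:qc proj new} puts every geodesic between the two quasiconvex sets near $U'$; Lemma \ref{lem-cobdd}~\ref{lem-cobdd:close imp proj inclu} places $U'$ uniformly close to $Y_2$ (your ``correct reading'' caveat is warranted — as printed the lemma says $P^X_U(V) \sse N_{R'}(U)$, which is vacuous since $P^X_U(V) \sse U$; its actual use in Remark \ref{rmk-about flow constants} confirms the intended conclusion is $P^X_U(V) \sse N_{R'}(V)$, which is what you need); and then the coarse intersection hypothesis, applied at the resulting uniform constant $R'$, pins $U'$ to $Z$. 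The paper instead argues from the slim triangle on $y_1$, $y_2$, and a chosen point $z \in Z$: it produces ``waist'' points $x_1, x_2 \in [y_1,z], [y_2,z]$ that are $\delta$-close to each other and to $[y_1,y_2]$, uses quasiconvexity of the $Y_i$ (and $d(z,Y_i)\le r$) to show $x_1 \in N_C(Y_1)\cap N_C(Y_2)$ for $C = 2\delta + r + k$, applies coarse intersection there, and then invokes Lemma \ref{lem-qg} to see that $[y_1,x_1]\cup[x_1,x_2]\cup[x_2,y_2]$ is a uniform quasigeodesic which the true geodesic must fellow-travel. Your version is more economical because it leans on the pre-packaged projection machinery in Lemma \ref{lem-cobdd}; the paper's version is more self-contained, doing the thin-triangle and quasigeodesic-gluing analysis by hand. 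Both deliver the same uniformity: a separation constant depending only on $\delta$, the quasiconvexity constant, $r$, and the quasigeodesic constant of $\gm$ (plus, through the coarse intersection hypothesis itself, the fixed triple $Y_1, Y_2, Z$).
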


\begin{proof}
We will only prove that coarse intersection implies coarse separation as the other direction follows easily. Let $y_i\in Y_i$. We show that $[y_1,y_2]$ passes through a uniform ball of $Z$, and this will completes the proof by the stability of quasigeodesic (Lemma \ref{ml}) as $X$ is hyperbolic. Let $z_i\in Y_i$ and $z\in Z$ be such that $d(z,z_i)\le r$. Let $x_i\in[y_i,z]$ be such that $d(x_1,x_2)\le \dl$ and $d(x_i,[y_1,y_2])\le \dl$. Since $Y_i$ is $k$-quasiconvex, $z_i\in Y_i$ and $d(z,z_i)\le r$, there is $w_i\in Y_i$ such that $d(x_i,w_i)\le\dl+r+k$. Thus $x_1\in N_C(Y_1)\cap N_C(Y_2)$ where $C=2\dl+r+k$. Hence by coarse intersection property, there is $D\ge0$ such that $d(x_1,w)\le D$ for some $w\in Z$. On the other hand, the arc length parametrization of $[y_1,x_1]\cup[x_1,x_2]\cup[x_2,y_2]$ is a uniform quasigeodesic; see for instance Lemma \ref{lem-qg}. Since $X$ is hyperbolic, by the stability of quasigeodesic, for any $K$-quasigeodesic joining $y_1$ and $y_2$ passes through a uniform ball centered at $x_1$; hence passes through a uniform ball of $Z$. Since $y_i\in Y_i$ was arbitrary, this completes the proof.
\end{proof}	



The following definition will also appear in Sections \ref{sec-subtree of spaces} and \ref{sec-main thm} for the {\em fibers of trees of metric spaces}.

\begin{defn}[\bf Compatible projection condition]\label{defn-compatible proj}
	Suppose we have a hyperbolic space $X$ and a properly embedded hyperbolic subspace
	$Y$. Suppose $A$ is a quasiconvex subset of $X$ such that $B= Y\cap A$ is quasiconvex
	in both $X$ and $Y$. We will say that the quadruple $(X,Y,A,B)$ satisfies
	the compatible projection condition if the following holds: \\
	There is a constant $R\geq 0$ such that for all $y\in Y$  we have
	$$d_X(P^{X}_A(y), P^{Y}_{B}(y))\leq R.$$
	
	Sometimes, we say that the quadruple $(X,Y,A,B)$ satisfies the compatible projection condition with a constant $R$.
\end{defn}

\begin{example}\label{exp-compatible projection}
Suppose we have the assumption of Example \ref{exp-coarse sep}. Then by Lemma \ref{lem-proj iff coarse intersec} below it is easy to see that the quadruple $(G,K,H,H\cap K)$ satisfies the compatible projection condition.
\end{example}

Before going into Lemma \ref{lem-proj iff coarse intersec}, in the following lemma, we will characterize the compatible projection condition in terms of a limit set intersection property, extending Example \ref{exp-compatible projection} in a more general setting.

\begin{lemma}\label{lem-char of proj cond}
Suppose $H<G$ are hyperbolic groups, and $K, H\cap K<G$ are quasiconvex subgroups. Assume that the inclusion $H\to G$ admits a CT map. Then the quadruple $(G,H,K,H\cap K)$ satisfies the compatible projection condition if and only if $\Lambda_G(H)\cap\Lambda_G(K)=\Lambda_G(H\cap K)$.
\end{lemma}

\begin{proof}
Let $L=H\cap K$. Suppose $Y\sse X$ are Cayley graphs of $H$ and $G$ respectively with respect to some finite generating sets. Suppose the inclusion $Y\to X$ is $\eta$-proper embedding for some proper map $\eta:\N\to\N$.

 $(\Longrightarrow)$  Note that $\Lambda_G(L)\sse\Lambda_G(H)\cap \Lambda_G(K)$. Let $\{h_n\}\sse H$ and $\{k_n\}\sse K$ be such that $\LMX h_n=\LMX k_n=\xi\in\Lambda_G(H)\cap\Lambda_G(K)$. Suppose $P^X_L(h_n)=h'_n$ and $P^X_K(h_n)=h''_n$. Then by the given condition, $d_X(h'_n,h''_n)\le D$ for some fixed $D\ge0$. Note that $\al_n=[h_n,h''_n]_X\cup[h''_n,k_n]_X$ is a uniform quasigeodesic in $X$ (Lemma \ref{lem-proj on pair qc}~\ref{lem-proj on pair qc:concatenation qg}), and $d_X(\al_n,h'_n)\le D$. This shows that $\xi\in\Lambda_G(L)$ (see Lemma \ref{con-same-pt}).

 $(\Longleftarrow)$ On contrary, suppose not. After translating, we assume that $\{h_n\}\sse H$ such that $P^Y_{L}(h_n)=1$, $P^X_K(h_n)=k_n$ and $d_X(1,k_n)\ge n, d_X(1,h_n)\ge n$. 
 
  Let $P^X_L(k_n)=l_n$. Note that $\al_n=[h_n,k_n]_X\cup[k_n,1]_X$ and $[k_n,l_n]_X\cup[l_n,1]_X$ are uniform quasigeodesics in $X$ (Lemma \ref{lem-proj on pair qc}~\ref{lem-proj on pair qc:concatenation qg}). Then $d_X(\al_n,l_n)\le D$ for some fixed $D\ge0$. Thus $d_X(l_n,[h_n,1]_X)\le D'$ for some fixed $D'\ge0$ (Lemma \ref{ml}).
 
 Now $Y\to X$ satisfies a uniform Mitra's criterion with the parameter function $\phi:\N\to\N$. Let $R\in\N$ be such that $\phi(R)>D'$. Therefore, $d_X(l_n,[h_n,1]_Y)\le R$; otherwise, $d_X(l_n,[h_n,1]_X)\ge\phi(R)>D'$. That is, we have $l'_n\in[h_n,1]_Y$ such that $d_X(l_n,l'_n)\le R$, and hence $d_Y(l_n,l'_n)\le\eta(R)$. Since $1$ is a nearest point projection of $h_n$ in $Y$ on $L$, $l'_n\in[h_n,1]_Y$ and $d_X(l'_n,l_n)\le \eta(R)$ for some $l_n\in L$, so $d_Y(l'_n,1)\le\eta(R)$. Thus, by triangle inequality, $d_Y(1,l_n)\le2\eta(R)$.
 
 Hence $\{P^X_L(k_n):n\in\N\}\sse B(1;2\eta(R))$ (ball of radius $2\eta(R)$ centered at $1$). Note that $d_X(L,[h_n,k_n]_X)\to\infty$ as $n\to\infty$. Otherwise, let $p_n\in L$ be such that $d_X(p_n,[h_n,k_n]_X)\le D_0$ for some constant $D_0\ge0$. This shows that $d_X(k_n,p_n)\le2D_0$, and so $d_X(k_n,P^X_L(k_n))=d_X(k_n,l_n)\le2D_0$. Hence $\{k_n\}$ is a bounded sequence $-$ which is a contradiction.
 
 Therefore, in particular, $d_X(1,[h_n,k_n]_X)\to\infty$ as $n\to\infty$. After passing to a sequence, if necessary, we assume that $\LMX h_n,\LMX k_n$ exist. Hence, $\LMX h_n=\LMX k_n\in\Lambda_G(H)\cap\Lambda_G(K)=\Lambda_G(L)$. This gives a contradiction to the fact that $\{P^X_L(h_n):n\in\N\}\sse B(1;2\eta(R))$ is bounded. This completes the proof.
\end{proof}

A characterization of compatible projection condition in terms of coarse separation and coarse intersection.

\begin{lemma}\label{lem-proj iff coarse intersec}
Let $\dl\ge0, ~k\ge0,~L\ge1$. Suppose $Y\sse X$ are $\dl$-hyperbolic metric spaces where $Y$ is equipped with the induced path metric from $X$. Assume that the inclusion $Y\map X$ is $L$-qi embedding. Suppose $A$ is a $k$-quasiconvex subset of $X$ such that $B= Y\cap A$ is $k$-quasiconvex
in both $X$ and $Y$.
Then the following are equivalent.
\begin{enumerate}
\item $B$ coarsely separates $Y$ and $A$ in $X$.

\item The quadruple $(X,Y,A,B)$ satisfies the compatible projection condition


\item For any $C\ge0$ there is a constant $D\ge0$ such that $N^X_C(Y)\cap N^X_C(A)\sse N^X_D(B)$, i.e., $B$ is a uniform coarse intersection of $Y$ and $A$ in $X$.
\end{enumerate}

Moreover, the constants in the three conditions are mutually determined by $\dl$, $L$ and $k$.
\end{lemma}

\begin{proof}
Note that $Y$ and $A$ are quasiconvex in $X$ (see Lemma \ref{qc morph}). Moreover, $B\sse Y\cap A$. Hence $(1)\Longleftrightarrow(3)$ follows from Lemma \ref{lem-coarse int iff coarse sep}. We will prove that $(1)$ is equivalent to $(2)$.

\noindent$(2)\Longrightarrow(1)$: Let $K\ge1$ and $\gm$ is a $K$-quasigeodesic joining $y\in Y$ and $a\in A$. Let $y_1=P^X_{A}(y)$ and $y_2=P^Y_{B}(y)$.
Then by $(2)$, we have $d_{X}(y_1, y_2)\leq R$ for some uniform $R\ge0$. Now $[y,y_1]_X\cup[y_1,a]_X$ is a $(3+2k)$-quasigeodesic in $X$ by Lemma \ref{lem-proj on pair qc}~\ref{lem-proj on pair qc:concatenation qg}. By the stability of quasigeodesic (Lemma \ref{ml}), $d_X(y_1,\gm)\le D'$ for some constant $D'\ge0$ depending on $\dl$, $K$ and $k$. Thus $d_X(y_2,\gm)\le R+D'$.

\noindent$(1)\Longrightarrow(2)$: Let $y\in Y$ and let $y'= P^X_A(y)$ and $y''=P^Y_B(y)$.
Now, by Lemma \ref{lem-proj on pair qc}~\ref{lem-proj on pair qc:concatenation qg} the arc length parametrization of
$[y,y']_X\cup[y',y'']_X$ is $(3+2k)$-quasigeodesic in $X$ joining $y,y''\in Y$.
Since $Y$ is $L$-qi embedded in $X$, it follows that $y'$ is $D_1$-close
to a point $y'''\in [y,y'']_Y$ for some constant $D_1\ge0$ depending only on $\dl$, $(3+2k)$ and $L$ (see Lemma \ref{ml}). Note that $d_X(y',y''')\le D_1$ and $y'\in A,~y'''\in Y$. If $B$ is coarse separation of $Y$ and $A$, there is a constant $R'\ge0$ depending on $D_1$, $\dl$ such that $y'''$ is $R'$ close to $B$ in $X$. Thus $d_Y(y''',b)\le LR'+L$ for some $b\in B$. Then $d_Y(y''',y'')\le LR'+L$ as $P^Y_B(y)=y''$ and $y'''\in [y,y'']_Y$.

Therefore, by triangle inequality, we have $d_{X}(y', y'')\le L.R'+L+D_1$. This completes the proof.
\end{proof}

Note that the forward direction of Lemma \ref{lem-char of proj cond} holds in general hyperbolic metric spaces and does not require a group structure. However, the converse direction relies essentially on group properties. In the following result, we see an instance where both the limit set intersection property and the compatible projection condition hold without a group structure. 

\begin{lemma}\label{uni-mitra-proj}
Given $\dl\ge0$, $k\ge0$, $L\ge0$ and a proper function $\psi:\R_{\ge0}\ri\R_{\ge0}$,
	there is a constant $R=R(\dl, k, L, \psi)\ge0$ such that the following holds:\\
	Suppose $Y\sse X$ are $\dl$-hyperbolic metric spaces where $Y$ is equipped with the induced path metric from $X$. 
	Assume that the
	inclusion $Y\to X$ satisfies uniform Mitra's criterion with uniform CT parameter $\psi$.
	Suppose $A$ is a quasiconvex subset of $X$ such that $B= Y\cap A$ is quasiconvex
	in both $X$ and $Y$. Moreover, $Hd_X(A,B)\le L$. Then the quadruple $(X,Y,A,B)$ satisfies the compatible projection condition.
\end{lemma}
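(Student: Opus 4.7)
The plan is to compare the two projections $y_1:=P^X_A(y)$ and $y_2:=P^Y_A(y)$ by producing a short detour between them using the uniform Mitra's criterion applied with basepoint $y_1$.

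First I would recall that by Lemma \ref{lem-proj on pair qc}\ref{lem-proj on pair qc:concatenation qg}, since $A$ is $k$-quasiconvex in $X$, $y_1=P^X_A(y)$, and $y_2\in A$, the concatenation $[y,y_1]_X\cup[y_1,y_2]_X$, reparametrised by arclength, is a $(3+2k)$-quasigeodesic in $X$ joining $y$ and $y_2$. Applying Morse stability (Lemma \ref{ml}) to this quasigeodesic and any geodesic $\beta=[y,y_2]_X$, I would obtain a point $y_1'\in\beta$ with $d_X(y_1,y_1')\le D_0$ where $D_0:=D_{\ref{ml}}(\delta,3+2k,3+2k)$. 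In particular, $d_X(y_1,\beta)\le D_0$.

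Next, pick any $Y$-geodesic $\alpha=[y,y_2]_Y$. Since the inclusion $Y\hookrightarrow X$ satisfies uniform Mitra's criterion with parameter $\psi$, applied at basepoint $y_1\in A\subseteq Y$, we have the implication
\[
d_Y(y_1,\alpha)>N\ \Longrightarrow\ d_X(y_1,\beta)>\psi(N).
\]
Choose $N=N(\delta,k,\psi)$ to be the smallest nonnegative integer with $\psi(N)>D_0$; this is finite as $\psi$ is proper. Then the fact $d_X(y_1,\beta)\le D_0$ forces $d_Y(y_1,\alpha)\le N$, so I can pick $y_3\in\alpha$ with $d_Y(y_1,y_3)\le N$.

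Finally, since $y_2=P^Y_A(y)$ and $y_3$ lies on the $Y$-geodesic $[y,y_2]_Y$, for any $a\in A$ one has
\[
d_Y(y,y_3)+d_Y(y_3,y_2)=d_Y(y,y_2)\le d_Y(y,a)\le d_Y(y,y_3)+d_Y(y_3,a),
\]
whence $d_Y(y_3,y_2)\le d_Y(y_3,a)$. Taking $a=y_1\in A$ yields $d_Y(y_3,y_2)\le d_Y(y_3,y_1)\le N$. Because $Y$ carries the induced path metric from $X$, one has $d_X\le d_Y$ on pairs of points of $Y$, so
\[
d_X(y_1,y_2)\le d_X(y_1,y_3)+d_X(y_3,y_2)\le d_Y(y_1,y_3)+d_Y(y_3,y_2)\le 2N,
\]
and I would set $R:=2N$. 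No step here is a real obstacle; the only subtle point is to remember that the uniform Mitra criterion is what allows a single function $\psi$ to handle the basepoint $y_1$, which varies with $y$.
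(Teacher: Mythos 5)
Your proof is essentially the paper's proof, with identical structure: project in both senses, observe via Lemma~\ref{lem-proj on pair qc}~\ref{lem-proj on pair qc:concatenation qg} that $[y,y_1]_X\cup[y_1,y_2]_X$ is a $(3+2k)$-quasigeodesic to place $y_1$ near $[y,y_2]_X$, invoke the uniform Mitra criterion with basepoint $y_1$ to transfer the closeness into $Y$, and use the nearest-point-projection property of $y_2$ to close the triangle. The only cosmetic difference is that you spell out the nearest-point step ($d_Y(y_3,y_2)\le d_Y(y_3,y_1)$) in full, and your final bound $R=2N$ is a slightly cleaner bookkeeping of the same estimates.
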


\begin{proof}
Suppose $y\in Y$. Let $y'=P^X_A(y)$ and $y''=P^Y_B(y)$. Let $b\in B$ be such that $d_X(y',b)\le L$. Now by Lemma \ref{lem-proj on pair qc}~\ref{lem-proj on pair qc:concatenation qg}, the arc length parametrization of $[y,y']_X\cup[y',y'']_X$ is a $(3+2k)$-quasigeodesic in $X$. Note that the inclusion $Y\map X$ satisfies the Mitra's criterion with the base point $b\in Y$ and the proper map $\psi$. Now by the stability of quasigeodesic, $[y,y'']_X$ passes through $D$-radius ball centered at $y'$ in $X$ for some constant $D\ge0$ depending on $\dl$ and $(3+2k)$ (see Lemma \ref{ml}). Thus by triangle inequality, $d_X(b,[y,y'']_X)\le D+L$. 
We claim that $d_Y(b,[y,y'']_Y)\le r$ for some $r$ such that $\psi(r)> D+L$. (Note that such $r$ exists as $\psi$ is a proper map.) Indeed, if not, then $d_Y(b,[y,y'']_Y)>r$ implies $D+L\ge d_X(b,[y,y'']_X)\ge\psi(r)> D+L$.

 Let $y'''\in[y,y'']_Y$ such that $d_Y(b,y''')\le r$. Since $y''$ is a nearest point projection of $y$ on $B$ and $y'''\in[y,y'']_Y$, so $d_Y(y''',y'')\le r$. Hence by triangle inequality, we have $d_X(y',y'')\le d_X(y',b)+d_X(b,y''')+d_X(y''',y'')\le L+2r$. We take $R=L+2r$. This completes the proof.
\end{proof}



In the forward direction of Lemma \ref{lem-char of proj cond}, we have $\lim^G_{n\to\infty}h_n=\lim^G_{n\to\infty}k_n\in\Lambda_G(L)$ for some sequences $\{h_n\}\sse H$, $\{k_n\}\sse K$ provided $\Lambda_G(L)\ne\emptyset$. By restricting the sequence $\{h_n\}$ to  geodesic ray, we can get something more as obtained in the following result.
\begin{lemma}\label{qua-im-qua}
	Let $\dl\ge0$, $R\ge0$, $k\ge0$, and $\eta:\R_{\ge0}\map\R_{\ge0}$ be a proper map.	
	
	Suppose $Y\sse X$ are proper $\dl$-hyperbolic spaces where $Y$ is equipped with the induced path metric from $X$. Further, assume that the inclusion $i:Y\map X$ is an $\eta$-proper embedding. 
	Suppose $A$ is a $k$-quasiconvex subset of $X$ such that $B=Y\cap A$ is $k$-quasiconvex in both $X$ and $Y$. Moreover, assume that the quadruple $(X,Y,A,B)$ satisfies the compatible projection condition with the constant $R$.

	
Furthermore, let $\al$ be a geodesic ray in $Y$ such that $\LMX\al(n)\in\Lm_X(A)\sse\pa X$.
	
	Then for any geodesic ray $\bt$ in $X$ with $\bt(\infty)=\LMX\al(n)$, we have $$Hd_X(\al,\bt)<\infty\text{ and }\textrm{lim}^Y_{n\map\infty}\al(n)\in\Lm_Y(B)\sse\pa Y.$$
	
	In particular, $\al$ is a quasigeodesic ray in $X$. 
\end{lemma}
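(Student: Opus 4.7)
The plan is to first establish that $\eta := \lim^Y_{n \to \infty} \alpha(n)$ lies in $\Lambda_Y(A)$, and then exploit this to force an eventual containment $\alpha|_{[T, \infty)} \subset N^Y_{k'}(A)$. Lemma \ref{lem-same limits} will then upgrade $\alpha$ to a quasigeodesic in $X$, from which Morse yields the Hausdorff bound.

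Set $b_n := P^X_B(\alpha(n))$, $a_n := P^Y_A(\alpha(n))$, and $\xi := \lim^X_{n \to \infty}\alpha(n) \in \Lambda_X(B)$. I first claim $b_n \to \xi$ in $\bar X$. By Lemma \ref{lem-proj on pair qc}~\ref{lem-proj on pair qc:concatenation qg}, the concatenation $[\alpha(0), b_n]_X \cup [b_n, \alpha(n)]_X$ is a $(3+2k)$-quasigeodesic in $X$, so Morse (Lemma \ref{ml}) produces $y_n \in [\alpha(0), \alpha(n)]_X$ with $d_X(y_n, b_n) \le D$. To see that $d_X(\alpha(0), b_n) \to \infty$, suppose the contrary: along a subsequence $\{b_n\}$ stays bounded, so by Arzel\`a--Ascoli the geodesics $[b_n, \alpha(n)]_X$ subconverge to a geodesic ray $\gamma'$ with $\gamma'(\infty) = \xi$. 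By Lemma \ref{lem-proj on pair qc}~\ref{lem-proj on pair qc:qc proj qc1} applied to each $[b_n, \alpha(n)]_X$ (whose $B$-projection is uniformly close to the singleton $\{b_n\}$), the image $P^X_B(\gamma')$ is bounded. But $\xi \in \Lambda_X(B)$ forces any geodesic ray to $\xi$ to be asymptotic to a ray lying in $N^X_k(B)$ (found via limits of $[b^*_m, b^*_n]_X \subset N^X_k(B)$ for some $b^*_n \in B$ with $b^*_n \to \xi$), so $\gamma'(t)$ lies close to $B$ for large $t$ and hence $P^X_B(\gamma'(t))$ is unbounded --- a contradiction. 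With $b_n$ unbounded, Lemma \ref{seq-on-con-geo} gives $y_n \to \xi$ and Lemma \ref{lem-close imp same limit} propagates this to $b_n \to \xi$ in $\bar X$. The projection hypothesis $d_X(a_n, b_n) \le R$ combined with Lemma \ref{lem-close imp same limit} then yields $a_n \to \xi$ in $\bar X$.

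Next, Lemma \ref{lem-proj on pair qc}~\ref{lem-proj on pair qc:concatenation qg} in $Y$ and Morse give $t_n \in [0, n]$ with $d_Y(\alpha(t_n), a_n) \le D_1$. If $\{t_n\}$ were bounded along a subsequence, $\{a_n\}$ would be bounded in $Y$ and hence in $X$ (since $d_X \le d_Y$), contradicting $a_n \to \xi \in \partial X$. So $t_n \to \infty$; then $\alpha(t_n) \to \eta := \alpha(\infty)$ in $\bar Y$, and Lemma \ref{lem-close imp same limit} propagates this to $a_n \to \eta$, showing $\eta \in \Lambda_Y(A)$. A standard comparison of the $Y$-geodesic ray $\alpha$ with a geodesic ray in $N^Y_k(A)$ ending at $\eta$ (asymptotic fellow-travel of rays to the same boundary point in $Y$) yields $T \ge 0$ and some $k' \ge k+1$ with $\alpha|_{[T, \infty)} \subset N^Y_{k'}(A)$. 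By Lemma \ref{lem-same limits}, $N^Y_{k'}(A)$ equipped with the $Y$-induced path metric is $L$-qi embedded in $X$; hence the $Y$-geodesic $\alpha|_{[T,\infty)}$, which is also a geodesic in this neighborhood's path metric, is an $(L, L)$-quasigeodesic in $X$. Absorbing the bounded piece $\alpha|_{[0, T]}$ shows $\alpha$ is a quasigeodesic ray in $X$ from $\alpha(0)$ to $\xi$, and Morse in $X$ immediately yields $Hd_X(\alpha, \beta) < \infty$ for any geodesic ray $\beta$ in $X$ with $\beta(\infty) = \xi$. The principal obstacle is the unboundedness argument for $b_n$; everything after that is a routine chaining of hyperbolic-geometry facts already recorded in the preliminaries.
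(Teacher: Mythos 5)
Your proof is correct, and it reaches both conclusions of the lemma, but via a route that differs from the paper's in two places. First, to establish $d_X(\alpha(0), b_n) \to \infty$, you argue by contradiction through a subsequential limit ray $\gamma'$ and the fact that $P^X_B(\gamma')$ must be unbounded because $\xi \in \Lambda_X(B)$. The paper gets the same conclusion more directly: for $x\in B$ the concatenation $[\alpha(n),x_n]_X\cup[x_n,x]_X$ is a $(3+2k)$-quasigeodesic (Lemma \ref{lem-proj on pair qc}~\ref{lem-proj on pair qc:concatenation qg}), so letting $x\to\xi$ along $B$ and invoking Lemma \ref{con-same-pt} forces $x_n$ to escape. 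Second, and more substantively, once $a_n\to\xi$ in $\bar X$ the two arguments diverge: you first derive $\eta=\alpha(\infty)\in\Lambda_Y(A)$ (via $t_n\to\infty$ and Lemma \ref{lem-close imp same limit}), then use asymptotic fellow-travel in $Y$ to get $\alpha|_{[T,\infty)}\subset N^Y_{k'}(A)$ for some $k'$, and finally invoke the qi-embedding of $N^Y_{k'}(A)$ into $X$ to upgrade $\alpha$ to an $X$-quasigeodesic. The paper instead never phrases things in terms of eventual containment: it compares the $X$-geodesics $\beta_n=[\alpha(0),x_n]_X$ with the $Y$-geodesic segments $[\alpha(0),a_n]_Y$ (which, lying in $N^Y_k(A)$, are $L$-quasigeodesics in $X$ via Lemma \ref{lem-same limits}) and with $\alpha|_{[0,t_n]}$, and passes to the limit to get $Hd_X(\alpha,\beta)<\infty$ directly, with $\eta\in\Lambda_Y(A)$ following as a byproduct. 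Your packaging buys a cleaner separation of the two conclusions and makes $\eta\in\Lambda_Y(A)$ manifest rather than implicit; the paper's approach avoids the eventual-containment step and keeps the constants more explicitly uniform (which matters for the next corollary). One small point to flag: Lemma \ref{lem-same limits} as stated is for the neighborhood radius $k+1$; you apply it with a (non-uniform) radius $k'\geq k+1$. The proof of Lemma \ref{lem-same limits} (via Lemma \ref{qi-emb-in-Y-X}) does extend verbatim to any fixed radius $\geq k+1$, so this is fine, but it is worth saying explicitly since the resulting constant $L$ then depends on $k'$ and hence on the ray $\alpha$, not merely on $\delta$, $k$, $R$, $\eta$.
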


\begin{proof}
Fix a point $y_0\in B$. Without loss of generality, we  may assume that $\al$ starts at $y_0$, i.e., $\al(0)=y_0$. Let $P^Y_B(\al(n))=y_n$ and $P^X_A(\al(n))=x_n$. Note that $[y_0,y_n]_Y\cup[y_n,\al(n)]_Y$ and $[y_0,x_n]_X\cup[x_n,\al(n)]_X$ are $(3+2k)$-quasigeodesics in $Y$ and in $X$ respectively (see Lemma \ref{lem-proj on pair qc}~\ref{lem-proj on pair qc:concatenation qg}). Note also that for any $x\in A$, $[\al(n),x_n]_X\cup[x_n,x]_X$ is also $(3+2k)$-quasigeodesic in $X$ (by Lemma \ref{lem-proj on pair qc}~\ref{lem-proj on pair qc:concatenation qg}). 
	Hence, by the stability of quasigeodesic (Lemma \ref{ml}), and Lemmas \ref{con-same-pt} and \ref{lem-close imp same limit}, one concludes that $\LMX x_n=\LMX\al(n)$. 
Suppose $\bt_n$ is a geodesic in $X$ joining $y_0$ and $x_n$, and that $\bt_n$ converges to a (quasi)geodesic ray, $\bt$ say, in $X$.
	

Since $B$ is $k$-quasiconvex in both $X$ and $Y$, by Lemma \ref{lem-same limits}, $N^Y_{k+1}(B)$ (equipped with the path metric induced from $Y$) is $L$-qi embedded in both $X$ and $Y$ for some constant $L=L_{\ref{lem-same limits}}(\dl,k,\eta)\ge1$. Again $d_X(x_n,y_n)\le R$ for all $n$ (by the assumption). Hence by the stability of quasigeodesic (Lemma \ref{ml}), we have $Hd_X(\bt_n,[y_0,y_n]_Y)\le D$ for some constant $D\ge0$ depending only on $\dl$, $L$ and $R$.
	
	By the stability of quasigeodesic, we also have $t_n\in[0,n]$ such that $d_Y(y_n,\al(t_n))\le D_1$ where $D_1=D_{\ref{ml}}(\dl,3+2k)$. Hence $Hd_Y([y_0,y_n]_Y,\al|_{[0,t_n]})\le D_1+\dl$. 
	Thus by triangle inequality, we conclude that $Hd_X(\bt_n,\al|_{[0,t_n]})\le D+D_1+\dl$. Finally, the result follows from the fact that $\bt_n$ converges to $\bt$.
	
	Since $Y$ is properly embedded and $Hd_X(\al,\bt)<\infty$, $\al$ is a quasigeodesic in $X$ by Lemma \ref{lem-giving qg}.
\end{proof}

As a simple corollary of Lemma \ref{qua-im-qua}, we have the following. Second statement of Corollary \ref{cor-line-imp-line} follows from Lemma \ref{lem-giving qg}. 

\begin{cor}\label{cor-line-imp-line}
Suppose we have the assumption of Lemma \ref{qua-im-qua}. Assume that $\al$ is a geodesic line in $Y$ such that $$\textup{lim}^X_{n\map\infty}\al(\pm n)\in\Lm_X(A)\sse\pa X.$$ Then there are constants $D\ge0$ and $K\ge1$ depending only on $\dl$, $R$, $k$ and $\eta$ satisfying the following.
	
	For any geodesic line $\bt$ in $X$ with $\bt(\pm\infty)=\LMX\al(\pm n)$, we have $Hd_X(\al,\bt)<D$. In particular, $\al$ is a $K$-quasigeodesic line in $X$.
\end{cor}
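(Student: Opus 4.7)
The plan is to reduce the statement to two applications of Lemma \ref{qua-im-qua}, one for each end of $\al$, and then upgrade the resulting Hausdorff bound to a quasigeodesic estimate via Lemma \ref{lem-giving qg}. Split $\al$ at $0$ into two geodesic rays in $Y$ by setting $\al^+ := \al|_{[0,\infty)}$ and $\al^-(t) := \al(-t)$ for $t\ge 0$. Given any geodesic line $\bt$ in $X$ with $\bt(\pm\infty) = \LMX\al(\pm n)$, split it similarly into two geodesic rays in $X$: $\bt^+ := \bt|_{[0,\infty)}$ and $\bt^-(t) := \bt(-t)$. By construction $\bt^{\pm}(\infty) = \LMX\al^{\pm}(n) \in \Lm_X(B)$.

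First I would apply Lemma \ref{qua-im-qua} to $\al^+$, using $\bt^+$ as the geodesic ray of $X$ sharing its ideal endpoint, and symmetrically to $\al^-$ with $\bt^-$. The lemma gives $Hd_X(\al^{\pm}, \bt^{\pm}) < \infty$; inspecting the proof shows that the bound is the explicit quantity $D + D_1 + \dl$ there, depending only on $\dl$, $R$, $k$ (and on $\eta$ through the constant $L$ supplied by Lemma \ref{lem-same limits}). Calling this bound $D$, we obtain
\[
Hd_X(\al, \bt) \le D.
\]

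Next, to derive the quasigeodesic conclusion, I would verify that $\al$, viewed as a path in $X$, is $1$-Lipschitz and $\eta$-proper. The first point follows from $d_X \le d_Y$ on $Y$, since $Y$ carries the path metric induced from $X$. For $\eta$-properness in $X$, the $\eta$-proper embedding hypothesis gives $d_X(\al(s),\al(t)) \le r \Longrightarrow d_Y(\al(s),\al(t)) \le \eta(r)$, whence $|s-t| = d_Y(\al(s),\al(t)) \le \eta(r)$. Since $\bt$ is in particular a $1$-quasigeodesic in $X$ at Hausdorff distance at most $D$ from $\al$, Lemma \ref{lem-giving qg} yields that $\al$ is a $K$-quasigeodesic line in $X$ with $K := K_{\ref{lem-giving qg}}(1, D, \eta)$.

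The only real point that requires care, and the place I would check most carefully, is that the constant produced by Lemma \ref{qua-im-qua} is truly uniform over the choice of $\bt$ (and of the basepoint used to anchor the two rays); this uniformity is already delivered by the proof of Lemma \ref{qua-im-qua}, which produces a bound depending only on $\dl$, $R$, $k$, $\eta$ and not on the specific geodesic ray chosen. Everything else is mechanical concatenation of the two halves of $\al$ and a single invocation of Lemma \ref{lem-giving qg}.
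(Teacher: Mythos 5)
Your approach is the intended one: split $\al$ into the two rays $\al^{\pm}$, apply Lemma \ref{qua-im-qua} to each, and upgrade the resulting Hausdorff bound to a quasigeodesic estimate via Lemma \ref{lem-giving qg} after checking that $\al$ is $1$-Lipschitz and $\eta$-proper as a path in $X$. Your observation that the constants in fact also depend on the proper function $\eta$ (through $L_{\ref{lem-same limits}}$ and $K_{\ref{lem-giving qg}}$) is correct and points to an omission in the corollary's stated dependencies.

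There is, however, a gap precisely at the place you say you "would check most carefully" and then dismiss. The proof of Lemma \ref{qua-im-qua} derives the quantitative estimate $Hd_X(\bt_n,\al|_{[0,t_n]})\le D+D_1+\dl$ only after imposing the normalization $\al(0)=y_0\in A$ "without loss of generality." That normalization is harmless for the lemma's qualitative conclusion $Hd_X(\al,\bt)<\infty$, because moving a ray's basepoint changes the Hausdorff distance by a finite but ray-dependent amount; it is not harmless for the corollary, where the bound must be uniform over $\al$. Your rays $\al^{\pm}$ both issue from $\al(0)$, and you never control $d_Y(\al(0),A)$, so your assertion that the uniformity is "already delivered by the proof of Lemma \ref{qua-im-qua}" is incorrect as stated. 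To close the gap: first apply the second, qualitative conclusion of Lemma \ref{qua-im-qua} to $\al^{\pm}$ to deduce $\al(\pm\infty)\in\Lm_Y(A)$; then, since $A$ is $k$-quasiconvex in the $\dl$-hyperbolic space $Y$, a geodesic line joining two points of $\Lm_Y(A)$ lies in a uniform neighborhood of $A$ (compare $\al$ with a limit of geodesics $[a_n,b_n]_Y$, $a_n,b_n\in A$, tending to the two ideal endpoints, using that two geodesic lines in $Y$ with the same endpoints at infinity are uniformly Hausdorff close). This bounds $d_Y(\al(0),A)$ purely in terms of $\dl$ and $k$, after which re-anchoring at a nearby point $y_0\in A$ costs only a uniform amount, and the remainder of your argument, including the final application of Lemma \ref{lem-giving qg}, goes through.
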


\section{Geometry of trees of metric spaces}\label{trees-of-metric-sps}
The notion of {\em trees of metric spaces} was introduced by Bestvina and Feighn in \cite{BF}.
Inspired by that, later, an equivalent definition was used by Mitra in \cite{mitra-trees}.
We are going to adopt Mitra's definition. However, we restate some part of it for better
clarity.
\subsection{Basics of trees of hyperbolic spaces}
\begin{defn}\label{tree-of-sps}
	Suppose $T$ is a tree and $(X, d)$ is a geodesic metric space. Suppose that there is a surjective,
	$1$-Lipschitz map
	$\pi:X\ri T$. For any point $b\in T$, let $X_b=\pi^{-1}(b)$ and let $d_b$ denote the length metric
	on $X_b$ induced from $X$. Also for any unoriented edge $e$ of $T$ let $m_e$ denote the midpoint of $e$.
	Let $X_e$ denote $X_{m_e}$ and let $d_e$ denote the metric $d_{m_e}$.

	Then we say that $\pi:X\map T$ is a {\bf tree of hyperbolic metric spaces with qi embedding condition}
	if there is a (proper) function $\eta_0: \RR \ri\RR$ and constants $\delta_0 \geq 0$ and $L_0\geq 1$ with the following properties:
	\begin{enumerate}
		\item  For all $b\in T$, $X_b$ is a $\delta_0$-hyperbolic, geodesic metric space.
		\item The inclusion map $X_b\map X$ is an $\eta_0$-proper embedding for all $b\in T$.
		\item For any (unoriented) edge $e$ of $T$ joining two vertices $v,w$, there is a map
		$f_e: X_e \times[0,1]\ri\pi^{-1}(e)\sse X$ such that the following hold.
		\begin{enumerate}
			\item $\pi\circ f_e$ is the projection map onto $[v,w]$.

			\item $f_e$ restricted to $X_e\times(0,1)$ is an isometry onto the pre-image (under $\pi$) of the interior of $e$ equipped with the
			path metric.

			\item $f_e$ restricted to $X_e \times \{0\}$ and $X_e \times \{1\}$ are $L_0$-qi embeddings into $X_v$ and $X_w$ respectively.

		\end{enumerate}
	\end{enumerate}
\end{defn}

\begin{convention}\label{con-trees of metric spaces}
Following important terminologies and conventions will be used in the rest of the paper:
\begin{enumerate}
\item {\bf Vertex spaces:} In the rest of the paper, the spaces $X_v$, $v\in V(T)$ will be referred to as the {\bf vertex spaces} of the tree of spaces $\pi:X\map T$.

\item {\bf Edge spaces:} $X_e$'s where $e\in E(T)$ are referred to as the {\bf edge spaces}.

\item {\bf Incidence maps:}  The natural composition maps $X_e\map X_e\times \{v\}\map X_v$ and
$X_e\map X_e\times \{w\}\map X_w$ will be denoted by $f_{e,v}$ and $f_{e,w}$ respectively and
they will be referred to as the {\bf incidence maps}.

\item We shall denote $Im(f_{e,v})$ by $X_{ev}$ and $Im(f_{e,w})$ by $X_{ew}$.
By Lemma \ref{qc morph}, $X_{ev}$ and $X_{ew}$ are uniformly quasiconvex in $X_v$ and $X_w$
respectively. We shall assume that they are $\lambda_0$-quasiconvex.
\item The constants $\eta_0, \delta_0, L_0, \lambda_0$ will be referred to as the {\bf parameters}
of the tree of hyperbolic metric spaces.
\item For any subtree $T_1\subset T$ we shall denote $\pi^{-1}(T_1)$ by $X_{T_1}$ which will
be endowed with the induced path metric from $X$.

\item {\bf QI sections and QI lifts:}\label{lift}
	Suppose $T\pr$ is a subtree of $T$ and $k\geq 1$ is an arbitrary constant.
	A $k$-{\bf qi section} over $T\pr$ will mean $k$-qi embedding $s:T\pr\ri X$ such that
	$\pi\circ s=Id_{T\pr}$ on $V(T')$.

	It is easy to verify that a set theoretic section on $V(T')$ can be made to a qi section if for all edge $e=[u,v]\in E(T\pr)$, $d_{X_{uv}}(s(u),s(v))\le k$, where $X_{uv}=\pi^{-1}([u,v])$.

	If $T\pr=[v,w]$ is a geodesic segment in $T$ joining $v,w\in V(T)$ then $s:[v,w]\ri X$
	will be called a $k$-{\bf qi lift} of the geodesic $[v,w]$. 


{\em We shall assume that for a qi section (respectively, a qi lift) $s:T'\map X$ we have $\pi(Im(s))=V(T')$.}

\item \label{defn-flaring condition}{\bf The Bestvina--Feighn flaring condition:}
Suppose $k\ge1$. A tree of metric spaces $\pi:X\ri T$ is said to satisfy $k$-flaring if there are constants $M_k,~n_k\in\N$ and $\lm_k>1$ such that the following holds.

For every pair $(\gm_0,\gm_1)$ of $k$-qi lifts of a geodesic $\gm:[-n_k,n_k]\map T$ with $$d_{\gm(0)}(\gm_0(0),\gm_1(0))\ge M_k$$we have $$\lm_kd_{\gm(0)}(\gm_0(0),\gm_1(0))\le max\{d_{\gm(-n_k)}(\gm_0(-n_k),\gm_1(-n_k)),~d_{\gm(n_k)}(\gm_0(n_k),\gm_1(n_k))\}.$$

We say that the tree of metric spaces satisfies (Bestvina--Feighn) flaring
condition if it satisfies $k$-flaring for all $k\geq 1$.

\end{enumerate}
\end{convention}
Now, we collect some basic results on trees of hyperbolic spaces for later use. Lemma \ref{lem-edge sps loc finite} follows easily from the structure of trees of spaces.

\begin{lemma}\label{lem-edge sps loc finite}
	Suppose $\pi:X\map T$ is a tree of metric spaces such that $X$ is a proper metric spaces. Let $u$ be a vertex of $T$ and $\Lambda$ be the set containing all the edges incident on $u$. Then $\{X_{e_{\lm}u}:e_{\lm}\in\Lambda\}$ is a locally finite collection subsets of $X_u$.
\end{lemma}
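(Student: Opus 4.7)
The plan is to argue by contradiction, exploiting the properness of $X$ and the fact that midpoints of distinct edges incident on $u$ are uniformly separated in $T$. Fix a basepoint $x_0 \in X_u$ and a radius $R > 0$; I want to show that $X_{eu}$ meets the ball $B_{X_u}(x_0, R)$ for only finitely many edges $e$ incident on $u$. Suppose for contradiction that there is an infinite collection of distinct edges $e_i$ incident on $u$ such that $X_{e_i u} \cap B_{X_u}(x_0, R) \neq \emptyset$ for each $i$.

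For each $i$, choose $y_i \in X_{e_i u} \cap B_{X_u}(x_0, R)$ and a preimage $p_i \in X_{e_i}$ with $f_{e_i,u}(p_i) = y_i$, i.e., $y_i = f_{e_i}(p_i, 0)$ under the convention that $u$ corresponds to parameter $0$ on the edge $e_i$ (the other case is symmetric). Let $z_i := f_{e_i}(p_i, 1/2)$, a point of the edge space $X_{e_i}$ sitting over the midpoint $m_{e_i}$. The vertical path $t \mapsto f_{e_i}(p_i, t)$, $t \in [0, 1/2]$, is $1$-Lipschitz into $X$ (by Definition \ref{tree-of-sps}(3)(a),(b) together with continuity at the endpoint), so $d_X(y_i, z_i) \leq 1/2$. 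Since the path metric $d_{X_u}$ dominates the restriction of $d_X$, one has $d_X(x_0, y_i) \leq R$, and hence $d_X(x_0, z_i) \leq R + 1/2$.

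Thus $\{z_i\}$ lies in the closed ball $\overline{B}_X(x_0, R + 1/2)$, which is compact by properness of $X$. Passing to a convergent subsequence $z_{i_k} \to z \in X$ and applying the (continuous) projection $\pi$, we obtain $m_{e_{i_k}} = \pi(z_{i_k}) \to \pi(z)$ in $T$. However, for any two distinct edges $e$ and $e'$ incident on $u$, the only path in $T$ from $m_e$ to $m_{e'}$ passes through $u$, so $d_T(m_e, m_{e'}) = 1$. The midpoints $m_{e_{i_k}}$ are therefore pairwise at distance $1$, so the convergent sequence $\{m_{e_{i_k}}\}$ would have to be eventually constant, contradicting distinctness of the $e_{i_k}$. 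This contradiction completes the proof.

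The argument is essentially routine given the definition of a tree of spaces; the only mildly delicate point is the Lipschitz estimate $d_X(y_i, z_i) \leq 1/2$, which follows from how the edge-space parameterization $f_e$ is glued to the adjacent vertex spaces.
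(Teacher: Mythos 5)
Your argument is correct, and since the paper states this lemma without proof there is no alternative to compare against. The strategy is the natural one: push the nearby fiber points $y_i\in X_{e_iu}$ up to midpoint-level points $z_i\in\pi^{-1}(m_{e_i})$ via the tube parametrization, observe that these all lie in a fixed compact ball by properness of $X$ (using $d_X\le d_{X_u}$ on $X_u$ and the $1$-Lipschitz vertical displacement of length $1/2$, the latter justified by the $\ell_1$-metric identification in Definition \ref{tree-of-sps}(3)(b)), and then note that their $\pi$-images are pairwise at distance $1$ in $T$, which is incompatible with having a convergent subsequence unless the edges are eventually repeating. The one small thing worth flagging is that you can avoid the detour through a convergent subsequence: a compact metric space is totally bounded, so it cannot contain infinitely many points that are pairwise $1$-separated (which the $z_i$ are in $X$, since $\pi$ is $1$-Lipschitz and the $m_{e_i}$ are pairwise $1$-separated in $T$). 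This gives the contradiction in one line without needing to quotient through $\pi$ again. Either way the proof is sound.
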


\begin{prop}\label{pro-emb}\textup{(\cite[Proposition $2.17$]{ps-kap})}
	Suppose $X\map T$ is a tree of metric spaces and $T'\subset T$ is a subtree.
	Then the inclusion $X_{T\pr}\map X$ is $\eta_{\ref{pro-emb}}$-proper embedding,
	where $\eta_{\ref{pro-emb}}$ depends only on $\eta_0$ in the Definition \ref{tree-of-sps}.
\end{prop}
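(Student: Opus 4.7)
The plan is to take an arbitrary path $\gamma : [0, L] \to X$ from $x$ to $y$ of length $L$ close to $d_X(x, y) \le r$, and to surgically modify it into a path lying entirely in $X_{T'}$ whose length is controlled by a proper function of $r$ depending on the parameters of the tree of spaces.

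The structural observation driving the construction is that since $T'$ is a subtree of the tree $T$, every connected component $C$ of $T \setminus T'$ is attached to $T'$ through a unique edge $e_C = [v_C, w_C]$ with $v_C \in V(T')$ and $w_C \in C$. Consequently the open set $(\pi \circ \gamma)^{-1}(T \setminus T') \subset [0, L]$ decomposes into a countable disjoint union of open intervals $\{(a_i, b_i)\}_{i \in I}$, each of which projects into a single component $C_i$; in particular $\pi(\gamma(a_i)) = \pi(\gamma(b_i)) = v_i$ (the attaching vertex of $C_i$) and both endpoints of the excursion lie in $X_{e_i v_i} \subset X_{v_i}$. Using $d_X(\gamma(a_i), \gamma(b_i)) \le b_i - a_i$ together with the $\eta_0$-proper embedding $X_{v_i} \hookrightarrow X$, I obtain $d_{v_i}(\gamma(a_i), \gamma(b_i)) \le \eta_0(b_i - a_i)$; replacing each $\gamma|_{[a_i, b_i]}$ by a geodesic in $X_{v_i}$ of this length yields a path $\tilde{\gamma}$ in $X_{T'}$ from $x$ to $y$ of total length at most $L + \sum_i \eta_0(b_i - a_i)$.

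The main obstacle I anticipate is bounding $\sum_i \eta_0(b_i - a_i)$, since a rectifiable path can have infinitely many arbitrarily short excursions and $\eta_0$ need not tend to zero as its argument does. To control this, I plan to separate the excursions into two regimes. For \emph{deep} excursions, where $\pi \circ \gamma|_{[a_i, b_i]}$ reaches a point of $T$ at distance $\geq 1/2$ from $T'$, the bound $b_i - a_i \geq 1$ forces the number of such excursions to be at most $L$, so their contribution is at most $L \cdot \eta_0(L)$. For \emph{shallow} excursions, entirely confined to one edge region $\pi^{-1}(e_i) \cong X_{e_i} \times [0, 1]$, the isometric product structure together with the $L_0$-qi embedding $X_{e_i} \hookrightarrow X_{v_i}$ upgrades the bound to the linear estimate $d_{v_i}(\gamma(a_i), \gamma(b_i)) \leq L_0(b_i - a_i) + L_0$. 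A clustering and approximation step --- first approximating $\gamma$ by a path with only finitely many shallow excursions above any given length threshold, and then merging consecutive shallow excursions into the same edge into single replacements via $\eta_0$-properness --- then bounds the shallow contribution linearly and produces the desired proper function $\eta_{\ref{pro-emb}}$.
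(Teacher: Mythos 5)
Your decomposition into excursions $(a_i,b_i)$ with both endpoints landing in the attaching fibre $X_{v_i}$ is sound, and you correctly identify the danger: $\sum_i\eta_0(b_i-a_i)$ is not controlled by $L$. The proposed fix, however, has a genuine gap. The $L_0$-linear bound for shallow excursions still carries the additive constant $L_0$, so its sum diverges for exactly the same reason $\eta_0$'s does --- the product structure buys nothing here. More seriously, merging ``consecutive shallow excursions into the same edge'' does not control the cluster count: the path can alternate between excursions through two different edges incident to a single vertex $v$, or bounce between excursions attached at several different vertices of $T'$, so that no two excursions into a fixed edge are ever consecutive in $[0,L]$. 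Such interleaving is perfectly consistent with $\gamma$ being rectifiable and leaves the number of clusters unbounded. The ``approximation'' remark is vacuous, since for any rectifiable $\gamma$ there are already only finitely many excursions above any fixed length threshold --- that observation does not reduce the total count.

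Two missing ingredients repair the argument. First, cluster by vertex rather than by edge: for each $v\in V(T')$ carrying an excursion, let $[A_v,B_v]$ span \emph{all} excursions attached at $v$ (through any of its edges, consecutive or not), and replace $\gamma|_{[A_v,B_v]}$ by a geodesic of $X_v$; interleaved excursions at other vertices lying inside $[A_v,B_v]$ are absorbed for free, and the deep/shallow split becomes unnecessary. Second, the a priori count that makes this terminate: since $\pi\circ\gamma$ is $1$-Lipschitz on $[0,L]$, its image is a subtree of $T$ of total length at most $L$, hence containing at most $L+1\le r+2$ vertices of $T$; so at most $r+2$ vertices of $T'$ carry excursions, and iterating the per-vertex replacement (each step increases the current length $s$ by at most $\eta_0(s)$) finishes in at most $r+2$ steps with a path in $X_{T'}$ whose length is bounded by an iterated composition of $s\mapsto s+\eta_0(s)$, a proper function of $r$. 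One can also avoid excursions entirely: subdivide $\gamma$ at the last times $t_1<\cdots<t_k$ it meets the edge spaces over the midpoints of the $k\le d_T(\pi(x),\pi(y))\le r$ edges of the $T$-geodesic $[\pi(x),\pi(y)]\subset T'$, push each $\gamma(t_j)\in X_{e_j}$ into the common vertex space $X_{u_j}$, and connect consecutive pushed points by geodesics in $X_{u_j}$ using $\eta_0$-properness; this directly yields a path in $X_{T'}$ of length roughly $r\cdot\eta_0(r)$, with no need to control excursions at all.
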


\begin{lemma}\textup{(\cite[Corollary $2.62$, Lemma $2.27$]{ps-kap})}\label{com-two-hyp-sps}
	Suppose $\pi:X\map T$ is a tree of hyperbolic spaces with the qi embedded condition with the parameters $\eta_0,\dl_0,L_0$ as in Definition \ref{tree-of-sps}. Let $e=[u,v]\sse T$ be an edge. Then we have constants $L'_0\ge 1$, $\lm'_0\ge0$ and $L'_1\ge0$ depending on $\eta_0$, $\dl_0$ and $L_0$ satisfying the following.
	
	$(1)$ $X_{uv}:=\pi^{-1}([u,v])$ is $\dl\pr_0$-hyperbolic and $X_u,X_v$ are $L\pr_0$-qi embedded in $X_{uv}$.
	
	We also assume that $X_{eu}$ (resp., $X_{ev}$) is $\lm_0$-quasiconvex in $X_u$ (resp., $X_v$) (see Convention \ref{con-trees of metric spaces} $(4)$).
	
	$(2)$ Suppose $U$ is a $2\dl_0$-quasiconvex subset in the fiber $X_u$ or $X_v$. Then $U$ is $\lambda\pr_0$-quasiconvex in $X_{uv}$. In particular, $X_u,X_v$ are $\lambda\pr_0$-quasiconvex in $X_{uv}$. Thus a nearest point projection map $P^{X_{uv}}_{X_u}:X_{uv}\ri X_v$ or $P^{X_{uv}}_{X_v}:X_{uv}\ri X_v$ in the path metric $X_{uv}$ is $L\pr_1$-coarsely Lipschitz.
\end{lemma}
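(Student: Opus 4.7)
The plan is to prove (1) first by establishing that $X_u, X_v$ are qi-embedded in $X_{uv}$ via coarsely Lipschitz retractions, and then to feed these retractions into part (2) through Lemma~\ref{proj-on-qc}.

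For part (1), the key structural fact from Definition~\ref{tree-of-sps}(3) is that $X_{uv}$ is obtained by attaching $X_u$ and $X_v$ to the two ends of $X_e \times [0,1]$ via the $L_0$-qi-embeddings $f_{e,u}$ and $f_{e,v}$, while the open edge region is isometric to $X_e \times (0,1)$. First I would build a coarsely Lipschitz retraction $\rho_u : X_{uv} \to X_u$ as follows: it is the identity on $X_u$; on the edge region it sends $(x_e, t) \mapsto f_{e,u}(x_e)$; and for $y \in X_v$ it sends $y \mapsto f_{e,u}(\bar f_{e,v}(P^{X_v}_{X_{ev}}(y)))$, where $\bar f_{e,v}$ is a coarse inverse to $f_{e,v} : X_e \to X_{ev}$. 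Coarse Lipschitzness of $\rho_u$ follows because $X_{ev}$ is $\lambda_0$-quasiconvex in $X_v$ (so $P^{X_v}_{X_{ev}}$ is coarsely Lipschitz by Lemma~\ref{proj-on-qc}(1)), and because $f_{e,u}, f_{e,v}$ are uniform qi-embeddings. A symmetric retraction $\rho_v : X_{uv} \to X_v$ is defined analogously. The existence of $\rho_u$ combined with the fact that $X_u \to X_{uv}$ is $\eta_0$-properly embedded (Proposition~\ref{pro-emb}) yields that $X_u$ is $L'_0$-qi-embedded in $X_{uv}$ with $L'_0$ depending only on $\eta_0, \delta_0, L_0$. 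For hyperbolicity of $X_{uv}$, one can either verify the slim-triangle condition directly by case analysis on whether each side crosses the edge region (a crossing decomposes into a controlled piece in $X_u$, a bounded-width edge segment, and a controlled piece in $X_v$), or invoke the trivial single-edge case of Bestvina--Feighn's combination theorem, where the flaring condition is vacuous.

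For part (2), let $U \subseteq X_u$ be $2\delta_0$-quasiconvex. By Lemma~\ref{proj-on-qc}(1), the nearest point projection $P^{X_u}_U$ is coarsely Lipschitz. The composition $P^{X_u}_U \circ \rho_u : X_{uv} \to U$ is then a coarsely Lipschitz retraction onto $U$ with constant depending only on $\eta_0, \delta_0, L_0$. Applying the converse direction Lemma~\ref{proj-on-qc}(2) shows that $U$ is $\lambda'_0$-quasiconvex in $X_{uv}$ for a constant $\lambda'_0$ depending only on $\delta'_0$ and this Lipschitz constant, hence only on $\eta_0, \delta_0, L_0$. In particular, taking $U = X_u$ (or $U = X_v$) gives the stated $\lambda'_0$-quasiconvexity of the vertex spaces in $X_{uv}$. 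Finally, $\rho_u$ itself is already a coarsely Lipschitz retraction onto $X_u$, and it agrees with any nearest point projection $P^{X_{uv}}_{X_u}$ up to bounded error by Lemma~\ref{lem-proj on pair qc}(1), so $P^{X_{uv}}_{X_u}$ is $L'_1$-coarsely Lipschitz with constants as required; similarly for $P^{X_{uv}}_{X_v}$.

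The main obstacle is the uniform coarse Lipschitzness of $\rho_u$ across the gluing interface between $X_v$ and the edge region. Two points that are close in $X_{uv}$ may lie on opposite sides of this interface, so one must control the resulting jump uniformly in terms of $\eta_0, \delta_0, L_0$. This uses the uniform $L_0$-qi-embedding constant of $f_{e,v} : X_e \to X_v$ (which makes $\bar f_{e,v}$ well-defined and coarsely Lipschitz on $X_{ev}$), together with Lemma~\ref{qc morph} to ensure uniform quasiconvexity of $X_{ev}$ in $X_v$, and Lemma~\ref{lem-proj on pair qc}(1) to compare projections onto $X_{ev}$ with those onto nearby coarse sets. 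Once this uniform coarse Lipschitzness is in hand, both parts of the lemma reduce to direct applications of Lemma~\ref{proj-on-qc}.
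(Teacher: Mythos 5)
The paper gives no proof of this lemma; it simply cites \cite[Corollary~$2.62$, Lemma~$2.27$]{ps-kap}. Your proposal therefore supplies a self-contained argument rather than a variant of the paper's own. The overall route is sound: the retraction $\rho_u$ you build (identity on $X_u$, horizontal collapse of the edge cylinder, and $y \mapsto f_{e,u}\bigl(\bar f_{e,v}(P^{X_v}_{X_{ev}}(y))\bigr)$ on $X_v$) is indeed uniformly coarsely Lipschitz with constants controlled by $\eta_0,\delta_0,L_0$, and directly gives that the inclusion $X_u\hookrightarrow X_{uv}$ is a qi embedding: the inclusion is $1$-Lipschitz and $d_{X_u}(x,y)=d_{X_u}(\rho_u(x),\rho_u(y))\le L\, d_{X_{uv}}(x,y)+L$. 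Hyperbolicity of $X_{uv}$ by a one-edge Bestvina--Feighn application (or direct thin-triangle case analysis) is fine. And once $X_{uv}$ is hyperbolic, composing $P^{X_u}_U\circ\rho_u$ and invoking Lemma~\ref{proj-on-qc}~\ref{proj-on-qc:lem-retraction imp qc} correctly yields $\lambda'_0$-quasiconvexity of $U$ in $X_{uv}$.

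There is, however, one misstep at the very end. You claim that $\rho_u$ ``agrees with any nearest point projection $P^{X_{uv}}_{X_u}$ up to bounded error by Lemma~\ref{lem-proj on pair qc}~\ref{lem-proj on pair qc:new proj lemma3}.'' That lemma compares nearest-point projections onto two \emph{Hausdorff-close quasiconvex sets}; it says nothing about an arbitrary coarsely Lipschitz retraction being close to the nearest-point projection onto its image, and in fact that is false in general. (On the comb $\mathbb R\cup\bigcup_n\{n\}\times[0,\infty)$ with $U=\mathbb R$, the map $(n,t)\mapsto n+t$ is a $1$-coarsely Lipschitz retraction onto $U$ that drifts unboundedly from the nearest-point projection.) Fortunately you do not need this claim at all: having just established that $X_u$ (and $X_v$) is $\lambda'_0$-quasiconvex in the $\delta'_0$-hyperbolic space $X_{uv}$, Lemma~\ref{proj-on-qc}~\ref{proj-on-qc:1} \emph{immediately} says that the nearest-point projection $P^{X_{uv}}_{X_u}$ is $L'_1$-coarsely Lipschitz with $L'_1$ depending only on $\delta'_0$ and $\lambda'_0$, hence only on $\eta_0,\delta_0,L_0$. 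Replace the last sentence of part~(2) with this one-line application and the proof is complete.
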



\begin{theorem} \textup{(\cite{BF} and \cite[Lemma $2.56$]{ps-kap})}\label{thm-nec suff of flaring}
Suppose $\pi:X\map T$ is a tree of hyperbolic metric spaces with qi embedding condition.
Then $X$ is hyperbolic if and only if the tree of spaces satisfies flaring condition.

In particular, if $S\subset T$ is a subtree and $X$ is hyperbolic then $X_S$ is also hyperbolic.
\end{theorem}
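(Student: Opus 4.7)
The plan is that the first assertion, the equivalence between hyperbolicity of $X$ and the flaring condition, is precisely the Bestvina--Feighn combination theorem, so there is nothing new to prove there; one simply invokes \cite{BF} (with \cite[Lemma 2.56]{ps-kap} serving to translate the flaring condition stated in Convention \ref{con-trees of metric spaces}\ref{defn-flaring condition} into the form used in \cite{BF}). A self-contained sketch would run: for the ``only if'' direction, if $k$-flaring failed for some $k$, then for arbitrarily large $M,n$ one could find a pair of $k$-qi lifts $(\gamma_0,\gamma_1)$ of a geodesic $\gamma:[-n,n]\to T$ whose midpoint fiber distance is large but whose endpoint fiber distances are not geometrically larger; pairing these lifts with horizontal quasi-geodesics in the vertex spaces gives a quasi-geodesic quadrilateral in $X$ (using Lemma \ref{com-two-hyp-sps} for the coarse Lipschitz character of the relevant projections) whose slimness constant would blow up, contradicting hyperbolicity. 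For the ``if'' direction, one would reproduce the Bestvina--Feighn ``hallways flare'' construction: for each pair $x,y\in X$ one builds a quasiconvex ladder over $[\pi(x),\pi(y)]\subset T$, and flaring lets one bound the width of any ``thin'' sub-ladder, yielding uniform slimness of quadrilaterals in $X$.

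For the ``in particular'' clause, my plan is to reduce the hyperbolicity of $X_S$ to the equivalence above applied to the restricted tree of spaces $\pi|_{X_S}:X_S\to S$. First I would check that $\pi|_{X_S}$ is itself a tree of hyperbolic metric spaces with the qi embedding condition, with the same parameters $\eta_0,\delta_0,L_0,\lambda_0$ as $\pi$: its vertex and edge spaces are literally those of $\pi$ for $v\in V(S)$ and $e\in E(S)$ (as subsets with the induced path metric), and the incidence maps $f_{e,v}$ are unchanged. The only subtle point is that the path metric on $X_S$ induced from itself is bi-Lipschitz to the path metric induced from $X$, which is controlled by Proposition \ref{pro-emb}.

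Next I would transfer the flaring condition from $\pi$ to $\pi|_{X_S}$. Fix $k\geq 1$. Any $k$-qi lift $\gamma_i:[-n,n]\to X_S$ of a geodesic $\gamma:[-n,n]\to S$ is, by the very definition, also a $k$-qi lift of $\gamma$ viewed as a geodesic in $T$ (since $S\hookrightarrow T$ is an isometric embedding of trees, and the fiber spaces along $\gamma$ are identical whether one works in $X$ or in $X_S$). Consequently the fiber distances $d_{\gamma(t)}(\gamma_0(t),\gamma_1(t))$ are identical in the two settings, and the flaring constants $M_k,n_k,\lambda_k$ witnessing $k$-flaring for $\pi$ work verbatim for $\pi|_{X_S}$. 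Applying the ``flaring $\Rightarrow$ hyperbolic'' direction of the first assertion to $\pi|_{X_S}$ then yields that $X_S$ is hyperbolic.

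The only genuine obstacle is the Bestvina--Feighn direction ``flaring implies hyperbolic'', which is deep but is being cited rather than reproved; the remaining work for the ``in particular'' part is a routine bookkeeping argument of the kind above.
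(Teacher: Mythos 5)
Your proposal is correct, and it mirrors what the paper does: the paper states Theorem~\ref{thm-nec suff of flaring} as a citation of \cite{BF} and \cite[Lemma 2.56]{ps-kap} without giving its own proof, and your ``in particular'' argument is exactly the standard reduction one expects that lemma to encode. The transfer-of-flaring observation is the heart of the matter and you have it right: a $k$-qi lift of a geodesic $\gamma\subset S$ into $X_S$ is automatically a $k$-qi lift into $X$ (the upper bound passes to $X$ since $d_X\le d_{X_S}$, and the lower bound holds in $X$ outright because $\pi$ is $1$-Lipschitz, so $d_X(\gamma_i(a),\gamma_i(b))\ge|a-b|$), while the fiber metrics $d_b$ for $b\in S$ are unchanged; so the flaring constants for $\pi$ work verbatim for $\pi|_{X_S}$, and Bestvina--Feighn then gives hyperbolicity of $X_S$.

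One small inaccuracy worth cleaning up: the clause about ``the path metric on $X_S$ induced from itself'' being bi-Lipschitz to ``the path metric induced from $X$'' is a non-issue. By Convention~\ref{con-trees of metric spaces}, $X_S$ is \emph{defined} with the path metric induced from $X$; there is no second candidate metric to reconcile. What Proposition~\ref{pro-emb} actually controls is the comparison between this path metric on $X_S$ and the restriction of $d_X$ to $X_S$, i.e.\ properness of the inclusion $X_S\hookrightarrow X$ --- a fact you do not actually need for the flaring transfer, only for the later CT-map arguments. Since the vertex/edge spaces $X_b$ for $b\in S$ carry the same length metrics $d_b$ whether viewed inside $X$ or inside $X_S$, verifying that $\pi|_{X_S}$ is a tree of hyperbolic spaces with the qi-embedded condition (same parameters) is immediate and requires no appeal to Proposition~\ref{pro-emb}.
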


The purpose of our paper was to extend the following theorem.

\begin{theorem}\label{thm-CT-ps-kap} \textup{(\cite[Theorem $8.11$]{ps-kap})}
Suppose $\pi:X\map T$ is a tree of hyperbolic metric spaces with qi embedding condition.
Suppose $X$ is also hyperbolic. Then for any subtree $S\subset T$ the inclusion $X_S\map X$
admits a CT map.
\end{theorem}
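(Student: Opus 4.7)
\textbf{Proof proposal for Theorem \ref{thm-CT-ps-kap}.}

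Since $X$ is hyperbolic and the tree of spaces satisfies the qi embedded condition, Theorem \ref{thm-nec suff of flaring} gives the Bestvina--Feighn flaring condition on $\pi: X \map T$; the same theorem then shows $X_S$ is hyperbolic. The plan is to verify Mitra's criterion for $i: X_S \hri X$ by fixing a basepoint $x_0 \in X_S$ and producing a proper $\phi: \RR \map \RR$ so that for every pair $y, y' \in X_S$, every geodesic $\gamma$ in $X_S$ joining them, and every geodesic $\bar\gamma$ in $X$ with the same endpoints, $d_{X_S}(x_0, \gamma) \geq R$ forces $d_X(x_0, \bar\gamma) \geq \phi(R)$. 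Given the introduction's emphasis, I would work instead with Lemma \ref{not-ct-not-comp}: assume for contradiction that test sequences $\{y_n\}, \{y'_n\} \subset X_S$ exist with $\LMY y_n = \LMY y'_n$ in $\partial X_S$ but $\LMX y_n \ne \LMX y'_n$ in $\partial X$, and derive a contradiction via a uniform quasiconvex set containing each pair $y_n, y'_n$.

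The core construction is, for every geodesic $\gamma$ in $X_S$, a subset $B_\gamma \subset X$ that is (a) uniformly quasiconvex in $X$ with constant independent of $\gamma$, and (b) contains (a uniform neighbourhood of) $\gamma$. The subset $B_\gamma$ is built as a tree-indexed \emph{ladder}: decompose $\gamma$ via $\pi$ into pieces lying in vertex spaces $X_v$ (for $v$ in a subtree of $S$) and segments traversing edge bundles, and over each adjacent vertex $w$ of $T$ iteratively flow the local quasiconvex hulls of $\gamma$ through the qi sections furnished by flaring, using the flow-space construction of Subsection \ref{subsec-flow space}. The union of these uniformly quasiconvex fibers together with the connecting edge bundles is the candidate $B_\gamma$. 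Its quasiconvexity in $X$ is the crux: two qi lifts in $\pi: X \map T$ that are close at one vertex cannot stay close over a bounded subtree (by flaring), so any geodesic in $X$ trying to exit $B_\gamma$ is forced back in. Concretely, one constructs a uniformly coarsely Lipschitz retraction $\rho_\gamma: X \map B_\gamma$ by combining the fiber-wise nearest-point projections with the tree-level retraction $T \map \pi(B_\gamma)$, and then Lemma \ref{proj-on-qc}\ref{proj-on-qc:lem-retraction imp qc} upgrades this retraction to uniform quasiconvexity.

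With $B_\gamma$ available, the finish is routine. A uniformly coarsely Lipschitz ``vertical'' map $B_\gamma \map \gamma$, given fiber-wise by the quasiconvex-hull projection onto the arc of $\gamma$ in that fiber and globally coherent thanks to the qi embedded hypothesis and Lemma \ref{lem-proj on pair qc}, composes with $\rho_\gamma$ to a coarsely Lipschitz map $X \map \gamma$ fixing $\gamma$ pointwise; applied to $x_0$ this yields $d_{X_S}(x_0, \gamma) \leq C\, d_X(x_0, B_\gamma) + C$. Since $\bar\gamma$ lies in a uniform $X$-neighbourhood of $B_\gamma$ (by the quasiconvexity of $B_\gamma$ and the fact that its endpoints lie on $B_\gamma$), this completes Mitra's criterion, giving the CT map. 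The main obstacle is the ladder construction itself, specifically: proving the uniform quasiconvexity of $B_\gamma$ in $X$ and the uniform coarse-Lipschitz property of the retraction when $S$ is a proper subtree of $T$. One must track flows that start from a $\gamma \subset X_S$ but live in the ambient $X$, and show that the flaring constants inherited from $X$ are strong enough to stabilise the ladder even where it extends into fibers over $V(T) \setminus V(S)$; this is where the bulk of the work, together with the delicate projection estimates of Lemmas \ref{lem-proj on pair qc} and \ref{lem-cobdd}, is concentrated.
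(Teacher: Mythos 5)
This theorem is cited from \cite[Theorem 8.11]{ps-kap}; the paper does not reproduce a proof, so there is no in-paper argument to compare against. Your sketch is a reasonable reconstruction of the strategy of the cited reference (and the present paper develops exactly the needed tools --- the flow-space machinery of Subsection \ref{subsec-flow space}, Theorem \ref{thm-mitra proj on stable-satura}, Corollary \ref{flow-qc} --- for use in its own CT argument in Section \ref{sec-main thm}).

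One clarification worth making. In your gloss on why $B_\gamma$ is quasiconvex you invoke flaring (``two qi lifts that are close at one vertex cannot stay close over a bounded subtree''); that is not the mechanism. Flaring is what certifies hyperbolicity of $X$ (hence of $X_S$) via Theorem \ref{thm-nec suff of flaring}. The quasiconvexity of a ladder $B_\gamma$ comes purely from the fact that it is a flow-stable, saturated collection, so that Mitra's retraction $\rho_\gamma: X \map B_\gamma$ is uniformly coarsely Lipschitz (Theorem \ref{thm-mitra proj on stable-satura} --- its proof never uses flaring), and then hyperbolicity of $X$ converts the Lipschitz retraction into quasiconvexity via Lemma \ref{proj-on-qc}~\ref{proj-on-qc:lem-retraction imp qc}, i.e.\ Corollary \ref{flow-qc}. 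You do state the concrete two-step argument correctly in the next sentence, so this is a matter of intuition rather than a real gap.

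For the finish: rather than the inequality $d_{X_S}(x_0, \gamma) \leq C\, d_X(x_0, B_\gamma) + C$, the cleaner route is to observe that the existence of a coarsely Lipschitz retraction $X \map \gamma$ makes $\gamma$ uniformly quasiconvex in $X$; combined with the fact that $\gamma$ is a path-connected subset of $X$ on which the restriction of the proper embedding $X_S \hookrightarrow X$ (Proposition \ref{pro-emb}) is still $\eta$-proper, Lemma \ref{qi-emb-in-Y-X}~(2) gives that $\gamma$ is a uniform quasigeodesic in $X$. Hence $Hd_X(\gamma, \bar\gamma)$ is uniformly bounded by Lemma \ref{ml}, and Mitra's criterion follows from the proper embedding of $X_S$ in $X$ via Corollary \ref{cor-inverse of proper map}. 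The genuinely hard part is, as you say, the uniform control on the ladder itself (its flow-stability and saturation over the full tree $T$, including fibers over $T \setminus S$), and your sketch correctly identifies this as the crux.
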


This paper makes substantial use of the results and proof techniques of
the book \cite{ps-kap}. In this books three general constructions are used
repeatedly, namely `flow spaces', `ladders' and `boundary flow spaces'.
Therefore, we shall briefly recall them here for the ease of reference.
However, we modify these definitions to make them more readable and intuitive.
On the other hand, this makes this paper more self contained.

\begin{convention}\label{con-1}
$(1)$ For the rest of this section, we shall assume that $\pi:X\map T$ is a tree of
hyperbolic metric spaces satisfying the qi embedded condition, with the parameters
$\eta_0,\dl_0,L_0,\lm_0$ as in Definition \ref{tree-of-sps}. However, we shall not
assume that $X$ is hyperbolic. We shall explicit mention when we need hyperbolicity.


$(2)$ In our statements, the constants obtained from the given constants also depend on the parameters of the tree of spaces $\pi:X\map T$. For simplicity of notation, we sometimes omit their dependence on parameters of $\pi:X\map T$.
\end{convention}

\subsection{Flow spaces}\label{subsec-flow space}
Flow spaces are some special subsets of a tree of hyperbolic spaces defined
in \cite{ps-kap}. Here, we shall give a slightly different but equivalent definition
for the ease and better clarity of exposition.
\subsubsection{Flowing across edge spaces}
In this subsection we shall describe what it means to flow
a subset of a vertex space to an adjacent vertex space, and a few other
related notions. In what follows $k\geq 0$ will denote an arbitrary constant.

\begin{enumerate}
	\item Suppose $u,v$ are two vertices of $T$ connected by an edge $e$.
	If $A\subset X_{eu}$ then we define the {\bf transport} of $A$ into $X_v$
	to be the set $Tr_{uv}(A)=f_{e,v}(f^{-1}_{e,u}(A))$.

	\item
	Suppose $A\subset X_u$ is a $k$-quasiconvex subset of $X_u$ and $R\geq 0$.
	We say that $A$ is $R$-{\bf flowable} into $X_v$ if
	$P^{X_u}_{X_{eu}}(A)\subset N^{X_u}_R(A)$.

	In this case, we define the $R$-{\bf flow} of $A$
	into $X_v$ to be the set $$\F l_{uv,R}(A)=Hull_{X_v}(Tr_{uv}(P^{X_u}_{X_{eu}}(A))).$$

	We note that the set $\F l_{uv,R}(A)$ does not dependent on $R$ as long as
	$A$ is $R$-flowable, and in that case, we sometimes write $\F l_{uv}(A)$ instead $\F l_{uv,R}(A)$. 
	\item
	Suppose $A_u\subset X_u$ and $A_v\subset X_v$ are $k$-quasiconvex subsets of $X_u$ and $X_v$ respectively.
	We will say that they are $R$-{\bf flow stable} for some $R\geq 0$ if
	$\F l_{uv,R}(A_u)\subset N^{X_v}_R(A_v)$ and $\F l_{vu,R}(A_v)\subset N^{X_u}_R(A_u)$. 

	\item
	Suppose $S\subset T$ is a subtree and for all $v\in V(S)$ we have a $k$-quasiconvex
	subset $A_v$ of $X_v$. Let $\mathcal A_S$ be the collection $\{A_v:v\in V(S)\}$.

	$(i)$ We will say that $\mathcal A_S$
	is $R$-flow stable for some $R\geq 0$ if for any two adjacent vertices $u,v\in V(S)$,
	the sets $A_u\subset X_u$ and $A_v\subset X_v$ are $R$-flow stable.

	$(ii)$ $\mathcal A_S$ will be called $D$-{\bf saturated} if for any pair of adjacent
	vertices $v\in V(S)$, $w\in V(T)\setminus V(S)$ connected by an edge $e$, say, we
	have $diam(P^{X_v}_{X_{ev}}(A_v))\leq D$ in $X_v$.
\end{enumerate}

We record some of the easy conclusions of the definitions as a lemma.

\begin{lemma}\label{flow1}
	Suppose $u,v\in V(T)$ are connected by an edge $e$ and $A\subset X_u$ is a $k$-quasiconvex
	subset. Then the following hold:
	\begin{enumerate}
		\item $P^{X_u}_{X_{eu}}(A)$ is $\bar{k}=K_{\ref{lem-proj on pair qc}\ref{lem-proj on pair qc:qc proj qc2}}(\delta_0,k+\lambda_0)$-quasiconvex in $X_u$.
		\item $Tr_{uv}(P^{X_u}_{X_{eu}}(A))$ is $k'$-quasiconvex in $X_v$ where
		$$k'=K_{\ref{qc morph}}(\delta_0, L_0, K'_{\ref{qc morph}}(\delta_0, L_0, \bar{k})).$$
		\item If $A$ is $R$-flowable into $X_v$ then
		$$Hd_{X_v}(Tr_{uv}(P^{X_u}_{X_{eu}}(A)), \F l_{uv,R}(A))\leq k'; \text{ see Lemma \ref{hull of qc}}.$$
		\item If $A_1\subset X_{eu}$ is $k_1$-quasiconvex then $Hull_{X_u}(A_1)$ is
		a $2\delta_0$-quasiconvex subset of $X_u$ which is $\lambda_0$-flowable
		into $X_v$.
		\item If $A$ is $R$-flowable into $X_v$ and $R\geq \lambda_0$ then it follows from $(4)$ above that $\F l_{uv,R}(A)$ is
		$R$-flowable into $X_u$.

	\end{enumerate}
\end{lemma}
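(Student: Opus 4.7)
The plan is to verify each of the five clauses directly from the definitions together with quasiconvexity lemmas already in hand. The unifying idea is that both operations used to build a flow — nearest-point projection onto $X_{eu}$ followed by transport across $e$ — preserve uniform quasiconvexity, and that the convex hull of a quasiconvex subset sitting inside another quasiconvex set stays close to it in Hausdorff distance.

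For (1), I will apply Lemma \ref{lem-proj on pair qc}\ref{lem-proj on pair qc:qc proj qc2}: $X_{eu}$ is $\lambda_0$-quasiconvex in $X_u$ (Convention \ref{con-trees of metric spaces}(4)) and $A$ is $k$-quasiconvex, both sitting in the $\delta_0$-hyperbolic space $X_u$, so their projection $P^{X_u}_{X_{eu}}(A)$ is $\bar{k}$-quasiconvex with the displayed constant. For (2), let $B:=P^{X_u}_{X_{eu}}(A)\subset X_{eu}$, which is $\bar{k}$-quasiconvex by (1). Pulling $B$ back through the $L_0$-qi embedding $f_{e,u}$, Lemma \ref{qc morph}(2) gives that $f_{e,u}^{-1}(B)\subset X_e$ is $K'_{\ref{qc morph}}(\delta_0,L_0,\bar{k})$-quasiconvex; pushing forward through the $L_0$-qi embedding $f_{e,v}$, Lemma \ref{qc morph}(1) then yields the stated quasiconvexity of $Tr_{uv}(B)=f_{e,v}(f_{e,u}^{-1}(B))$ in $X_v$. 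For (3), under $R$-flowability the flow is by definition $\F l_{uv,R}(A)=Hull_{X_v}(Tr_{uv}(B))$, and (2) says $Tr_{uv}(B)$ is $k'$-quasiconvex in the $\delta_0$-hyperbolic $X_v$, so Lemma \ref{hull of qc}(2) bounds the Hausdorff distance between $Tr_{uv}(B)$ and its hull in $X_v$ in terms of $\delta_0$ and $k'$.

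For (4), the fact that $Hull_{X_u}(A_1)$ is $2\delta_0$-quasiconvex is an instance of Lemma \ref{hull of qc}(1) (and of slim triangles in $X_u$). For $\lambda_0$-flowability, I use that $A_1\subset X_{eu}$ and $X_{eu}$ is $\lambda_0$-quasiconvex in $X_u$: every geodesic joining two points of $A_1$ lies in $N^{X_u}_{\lambda_0}(X_{eu})$, so $Hull_{X_u}(A_1)\subset N^{X_u}_{\lambda_0}(X_{eu})$. Consequently, any $x\in Hull_{X_u}(A_1)$ satisfies $d_{X_u}(x,P^{X_u}_{X_{eu}}(x))\le \lambda_0$, which shows $P^{X_u}_{X_{eu}}(Hull_{X_u}(A_1))\subset N^{X_u}_{\lambda_0}(Hull_{X_u}(A_1))$, i.e.\ $\lambda_0$-flowability. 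For (5), the transported set $Tr_{uv}(B)$ lies in $X_{ev}$ and is uniformly quasiconvex by (2), so applying (4) to $A_1:=Tr_{uv}(B)$ in the role of the edge-space-supported set shows $\F l_{uv,R}(A)=Hull_{X_v}(Tr_{uv}(B))$ is $\lambda_0$-flowable into $X_u$, hence $R$-flowable since $R\ge \lambda_0$.

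None of the clauses is deep; the entire proof is essentially definition-chasing via Lemmas \ref{qc morph}, \ref{hull of qc}, and \ref{lem-proj on pair qc}. The only piece requiring some care is (2), where one must shepherd quasiconvexity constants across \emph{both} incidence qi embeddings $f_{e,u}$ and $f_{e,v}$ — once pulling back into $X_e$ and once pushing forward into $X_v$ — which is exactly why the constant in the statement appears as the double composition $K_{\ref{qc morph}}(\delta_0,L_0,K'_{\ref{qc morph}}(\delta_0,L_0,\bar{k}))$.
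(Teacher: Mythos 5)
Your proof is correct and follows exactly the intended route: the paper gives no proof for Lemma \ref{flow1}, merely remarking that its clauses are ``easy conclusions of the definitions,'' and your definition-chasing via Lemmas \ref{lem-proj on pair qc}\ref{lem-proj on pair qc:qc proj qc2}, \ref{qc morph}, and \ref{hull of qc} is precisely what is meant. One small sharpening you might note in clause (3): the exact bound $Hd_{X_v}(Tr_{uv}(B),\F l_{uv,R}(A))\le k'$ follows immediately from the definition of $k'$-quasiconvexity (every geodesic between points of $Tr_{uv}(B)$ lies in $N_{k'}(Tr_{uv}(B))$, so its hull does too, and the reverse inclusion is automatic), rather than from Lemma \ref{hull of qc}\ref{hull of qc:2} whose stated constant $D_{\ref{hull of qc}\ref{hull of qc:2}}(\delta_0,k')$ is a priori a different function of the data. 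The rest — shepherding quasiconvexity through both incidence maps in (2), the $\lambda_0$-flowability in (4) via $Hull_{X_u}(A_1)\subset N_{\lambda_0}(X_{eu})$, and the role reversal in (5) — is exactly right.
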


\begin{lemma}\label{transport lemma}
	Suppose $u,v\in V(T)$ are connected by an edge $e$. Then the following hold:

	$(1)$ Suppose $A_1, A_2\subset X_u$ are any two subsets.
	If $Hd(A_1,A_2)\leq D$ then $$Hd_{X_v}(Tr_{uv}(P^{X_u}_{X_{eu}}(A_1)), Tr_{uv}(P^{X_u}_{X_{eu}}(A_2)))\leq D'$$ where $D'$
	depends only on $D$ and the parameters of the tree of spaces.

	$(2)$ Suppose $A\subset X_{eu}$. Then

	$(a)$ $Hd_{X_{uv}}(A, Tr_{uv}(A)) \leq1$ and

	$(b)$ $Hd_{X_u}(A,Tr_{vu}(Tr_{uv}(A)))\leq \eta_0(2)$

	where fibers are $\eta_0$-proper embedding as in Definition \ref{tree-of-sps}.
\end{lemma}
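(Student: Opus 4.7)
All three statements reduce to bookkeeping with the two ingredients that define the transport: the incidence qi embeddings $f_{e,u}, f_{e,v}\colon X_e \to X_u, X_v$, and the nearest point projection $P^{X_u}_{X_{eu}}$.

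For part $(1)$, the plan is to bound the Hausdorff distances at each of the three stages and chain them. First, since $X_{eu}$ is $\lambda_0$-quasiconvex in the $\delta_0$-hyperbolic space $X_u$, Lemma~\ref{proj-on-qc}~\ref{proj-on-qc:1} gives that $P^{X_u}_{X_{eu}}$ is $C_0$-coarsely Lipschitz with $C_0 = C_{\ref{proj-on-qc}\ref{proj-on-qc:1}}(\delta_0, \lambda_0)$. Hence $Hd_{X_u}(A_1, A_2) \le D$ implies $Hd_{X_u}(P^{X_u}_{X_{eu}}(A_1), P^{X_u}_{X_{eu}}(A_2)) \le C_0 D + C_0 =: D_0$. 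Second, using that $f_{e,u}$ is an $L_0$-qi embedding and that $P^{X_u}_{X_{eu}}(A_i) \subset X_{eu} = \mathrm{Im}(f_{e,u})$, I would apply Lemma~\ref{qi11}~(3) (taking $A = f_{e,u}^{-1}(P^{X_u}_{X_{eu}}(A_1))$, $B = P^{X_u}_{X_{eu}}(A_2)$, so that $f_{e,u}(A)$ and $B$ are $D_0$-Hausdorff close) to obtain a bound $D_1 = D_1(L_0, D_0)$ on $Hd_{X_e}(f_{e,u}^{-1}(P^{X_u}_{X_{eu}}(A_1)), f_{e,u}^{-1}(P^{X_u}_{X_{eu}}(A_2)))$. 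Third, $f_{e,v}$ is $L_0$-coarsely Lipschitz, so Lemma~\ref{qi11}~(1) converts this into a bound on $Hd_{X_v}$ of the two transports, giving the desired $D' = D'_{\ref{transport lemma}}(D)$.

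For part $(2)(a)$, the key observation is that for every $x \in X_e$ the path $t \mapsto f_e(x,t)$ is a path in $\pi^{-1}(e)$ of length $1$ in the path metric on $\pi^{-1}(e)$ (this uses that $f_e|_{X_e \times (0,1)}$ is an isometry onto the pre-image of the interior of $e$). The endpoints of this path are $f_{e,u}(x) \in X_u$ and $f_{e,v}(x) \in X_v$. Consequently, for any $a \in A \subset X_{eu}$ and any $x \in f_{e,u}^{-1}(a)$, the point $f_{e,v}(x) \in Tr_{uv}(A)$ satisfies $d_{X_{uv}}(a, f_{e,v}(x)) \le 1$; conversely every point of $Tr_{uv}(A)$ arises this way from some $a \in A$. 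Both inclusions in the definition of Hausdorff distance follow.

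For part $(2)(b)$, apply $(2)(a)$ twice: $Hd_{X_{uv}}(A, Tr_{uv}(A)) \le 1$ and $Hd_{X_{uv}}(Tr_{uv}(A), Tr_{vu}(Tr_{uv}(A))) \le 1$, so by the triangle inequality for Hausdorff distance, $Hd_{X_{uv}}(A, Tr_{vu}(Tr_{uv}(A))) \le 2$. Since both $A$ and $Tr_{vu}(Tr_{uv}(A))$ lie in $X_u$, and the inclusion $X_u \hookrightarrow X$ is $\eta_0$-proper (Definition~\ref{tree-of-sps}) — while $d_X \le d_{X_{uv}}$ — any two points of $X_u$ at distance $\le 2$ in $X_{uv}$ are at distance $\le \eta_0(2)$ in $X_u$. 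This upgrades the bound to $Hd_{X_u}(A, Tr_{vu}(Tr_{uv}(A))) \le \eta_0(2)$.

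There is no real obstacle; the main care required is in $(1)$, where one must invoke the correct part of Lemma~\ref{qi11} to pull a Hausdorff estimate back across the (non-injective) qi embedding $f_{e,u}$ before pushing it forward by $f_{e,v}$.
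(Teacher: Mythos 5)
Your proposal is correct and takes essentially the same route as the paper: part $(1)$ chains the coarse Lipschitz bound for $P^{X_u}_{X_{eu}}$ with the two $L_0$-qi embeddings (the paper does the estimate by hand while you invoke Lemma~\ref{qi11}$(3)$ then $(1)$, but the computation is the same), and parts $(2)(a)$--$(b)$ just spell out what the paper dismisses as ``clear from definition.'' The product-structure path argument for $(2)(a)$ and the triangle-inequality-plus-$\eta_0$-properness step for $(2)(b)$ are exactly the details the paper leaves implicit.
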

\proof $(1)$ Since $X_{eu}$ is $\lm_0$-quasiconvex in $X_u$, the projection map
$P^{X_u}_{X_{eu}}$ is $C_{\ref{proj-on-qc}\ref{proj-on-qc:1}}(\dl_0,\lm_0)$-coarsely Lipschitz (Lemma \ref{proj-on-qc}~\ref{proj-on-qc:1}). Hence,
we have $D_1=(D+1)C_{\ref{proj-on-qc}\ref{proj-on-qc:1}}(\dl_0,\lm_0)$
such that $Hd_{X_u}(P^{X_u}_{X_{eu}}(A_1), P^{X_u}_{X_{eu}}(A_2))\leq D_1$. Let $x\in P^{X_u}_{X_{eu}}(A_1)$ and $y\in P^{X_u}_{X_{eu}}(A_2)$ be such that $d_{X_u}(x,y)\le D_1$.
Since the maps $f_{e,v}:X_e\map X_v$ and $f_{e,u}:X_e\map X_u$
are $L_0$-qi embeddings, we have $Hd_{X_v}(f_{e,v}(f^{-1}_{e,u}(x)),f_{e,v}(f^{-1}_{e,u}(y)))\le L_0(D_1+L^2_0)+L_0$. Since $x$ and $y$ were arbitrary with their choices, we have $Hd_{X_v}(Tr_{uv}(P^{X_u}_{X_{eu}}(A_1)), Tr_{uv}(P^{X_u}_{X_{eu}}(A_2)))\leq D'$ where $D'=L_0(D_1+L^2_0)+L_0$.

$(2)$ Note that $(a)$ is clear from definition whereas $(b)$ follows from $(a)$.
\qed

\begin{lemma} \label{flow stable lemma1}
Let $k\ge0$ and $R\ge0$.	Suppose $A\subset X_{eu}$ is $k$-quasiconvex in $X_u$ then
	$Hd_{X_{uv}}(A,\F l_{uv,R}(A))\leq D_{\ref{flow stable lemma1}}(k)$.
\end{lemma}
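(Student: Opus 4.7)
The plan is to bound the Hausdorff distance in $X_{uv}$ by passing through the intermediate sets $Tr_{uv}(A)$ and $Tr_{uv}(P^{X_u}_{X_{eu}}(A))$, both lying in $X_{ev}\subset X_v$, and then invoking that the quasiconvex hull of a uniformly quasiconvex set is uniformly close to the set itself. The key observation is that since $A\subset X_{eu}$, one may take the identity on $A$ as a valid nearest point projection onto $X_{eu}$, so by coarse well-definedness of $P^{X_u}_{X_{eu}}$ (Lemma \ref{lem-proj on pair qc}~\ref{lem-proj on pair qc:new proj lemma3}), we have $Hd_{X_u}(A,P^{X_u}_{X_{eu}}(A))\le C_0$ for a constant $C_0$ depending only on $\delta_0$ and $\lambda_0$. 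In particular $A$ is $C_0$-flowable into $X_v$, so the flow is defined for $R$ large enough (depending only on parameters) and independent of $R$ thereafter.

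Next, I would control the three pieces separately. First, for any $y\in f_{e,u}^{-1}(a)$ with $a\in A$, the vertical path $t\mapsto f_e(y,t)$ lies in $\pi^{-1}(e)\subset X_{uv}$ and has length $1$, giving $Hd_{X_{uv}}(A,Tr_{uv}(A))\le 1$. Second, applying Lemma \ref{transport lemma}(1) to the pair $A,P^{X_u}_{X_{eu}}(A)$ with $D=C_0$ yields $Hd_{X_v}(Tr_{uv}(A),Tr_{uv}(P^{X_u}_{X_{eu}}(A)))\le C_1$ with $C_1$ depending only on parameters. Third, since $A$ is $k$-quasiconvex in $X_u$ and the incidence maps $f_{e,u},f_{e,v}$ are $L_0$-qi embeddings, Lemma \ref{qc morph} shows that $Tr_{uv}(A)$ is $k_1(k)$-quasiconvex in $X_v$, and the same argument using Lemma \ref{flow1}(1)(2) shows $Tr_{uv}(P^{X_u}_{X_{eu}}(A))$ is $k_2(k)$-quasiconvex. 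Hence by Lemma \ref{hull of qc}(2),
\[
Hd_{X_v}\bigl(Tr_{uv}(P^{X_u}_{X_{eu}}(A)),\F l_{uv,R}(A)\bigr)=Hd_{X_v}\bigl(Tr_{uv}(P^{X_u}_{X_{eu}}(A)),Hull_{X_v}(Tr_{uv}(P^{X_u}_{X_{eu}}(A)))\bigr)\le C_2(k).
\]

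Finally, to combine these three bounds (two of which live in $X_v$) into a single bound in $X_{uv}$, I would invoke Lemma \ref{com-two-hyp-sps}(1), which guarantees that $X_v$ is $L'_0$-qi embedded in $X_{uv}$; thus a Hausdorff bound in $X_v$ passes to a Hausdorff bound in $X_{uv}$ with constants depending only on $L'_0$. Chaining the three inequalities via the triangle inequality for Hausdorff distance in $X_{uv}$ then produces the required constant $D_{\ref{flow stable lemma1}}(k)$. The proof involves no real obstacle; the only care needed is the bookkeeping to ensure each intermediate estimate is uniform in the parameters and depends on $k$ only through the quasiconvexity of $A$. The whole argument is a direct consequence of the definitions and the properties of nearest point projections, quasiconvex hulls, and the qi-embeddings of fibers in $X_{uv}$ already established earlier in the paper.
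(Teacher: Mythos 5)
Your argument is correct and follows the same route as the paper's proof: transport $A$ to $X_v$ (Hausdorff distance $\le 1$ in $X_{uv}$), observe that the transport is uniformly quasiconvex in $X_v$ via the $L_0$-qi embeddings, bound the hull using Lemma \ref{hull of qc}\ref{hull of qc:2}, and chain the estimates. One simplification worth noting: since $A\subset X_{eu}$, one has $P^{X_u}_{X_{eu}}(A)=A$ \emph{exactly} (each $a\in A$ is its own nearest point), so your intermediate comparison between $Tr_{uv}(A)$ and $Tr_{uv}(P^{X_u}_{X_{eu}}(A))$ is unnecessary --- and as written, Lemma \ref{transport lemma}(1) bounds $Hd_{X_v}\bigl(Tr_{uv}(P^{X_u}_{X_{eu}}(A_1)),Tr_{uv}(P^{X_u}_{X_{eu}}(A_2))\bigr)$ rather than $Hd_{X_v}(Tr_{uv}(A_1),Tr_{uv}(A_2))$, so that citation would need minor adjustment if the step were retained; similarly, passing from an $X_v$-Hausdorff bound to an $X_{uv}$-Hausdorff bound only uses that $X_v\hookrightarrow X_{uv}$ is $1$-Lipschitz, not the full qi embedding of Lemma \ref{com-two-hyp-sps}.
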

\proof
Let $A_1=Tr_{uv}(A)$. Then $Hd_{X_{uv}}(A,A_1)\leq 1$ (Lemma \ref{transport lemma} $(2)$ $(a)$).
Now, by Lemma \ref{flow1} $(2)$, $A_1$ is $k'$-quasiconvex in $X_v$ where $k'$ as in Lemma \ref{flow1}.
Hence, $Hd_{X_v}(A_1, \F l_{uv,R}(A))\leq k'$ and so $Hd_{X_{uv}}(A_1,\F l_{uv,R}(A))\leq k'$.
Hence, $Hd_{X_{uv}}(A,\F l_{uv,R}(A))\leq Hd_{X_{uv}}(A,A_1)+Hd_{X_{uv}}(A_1,\F l_{uv,R}(A))\leq 1+k'$. We set $D_{\ref{flow stable lemma1}}(k)=1+k'$.
\qed

\begin{lemma}\label{flow stable lemma}
Suppose $[u,v]$ is an edge in $T$ and $A\subset X_u$ is a $k$-quasiconvex subset of $X_u$. Let $A$ be $R$-flowable into $X_v$ where $R\geq \lambda_0$
	and $B=\F l_{uv,R}(A)$. Then $A,B$ are $D_{\ref{flow stable lemma}}(k)$-flow stable.
\end{lemma}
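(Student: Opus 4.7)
The containment $\F l_{uv,R}(A) \subset N^{X_v}_D(B)$ is vacuous for any $D \geq 0$ since $\F l_{uv,R}(A) = B$ by definition, and Lemma \ref{flow1}(5) together with $R \geq \lambda_0$ guarantees that $B$ is $R$-flowable back into $X_u$, so the set $\F l_{vu,R}(B)$ is well-defined. The substance is therefore to show $\F l_{vu,R}(B) \subset N^{X_u}_D(A)$ for a constant $D$ depending on $k$ (and the parameters of the tree of spaces). My plan is to track a point of $\F l_{vu,R}(B)$ back through four successive approximations: project $B$ to the edge space, transport across to $X_u$, identify the result with $A'$, and then pass to the hull.

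Set $A' := P^{X_u}_{X_{eu}}(A) \subset X_{eu}$ and $B' := Tr_{uv}(A') \subset X_{ev}$; by definition $B = Hull_{X_v}(B')$, and the flowability hypothesis gives $A' \subset N^{X_u}_R(A)$. First I would observe that $B'$ is uniformly quasiconvex in $X_v$ by Lemma \ref{flow1}(2), so Lemma \ref{hull of qc}(2) bounds $Hd_{X_v}(B', B)$; since $B' \subset X_{ev}$ makes $Hd_{X_v}(P^{X_v}_{X_{ev}}(B'), B')$ uniformly bounded as well, the coarse Lipschitz property of $P^{X_v}_{X_{ev}}$ (Lemma \ref{proj-on-qc}) then yields $Hd_{X_v}(P^{X_v}_{X_{ev}}(B), B') \leq C_1$ for some $C_1 = C_1(k)$. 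Next, because $f_{e,v}$ and $f_{e,u}$ are $L_0$-qi embeddings, the transport $Tr_{vu}$ converts this bound into $Hd_{X_u}(Tr_{vu}(P^{X_v}_{X_{ev}}(B)), Tr_{vu}(B')) \leq C_2$, while Lemma \ref{transport lemma}(2)(b), applied to $B' = Tr_{uv}(A')$, gives $Hd_{X_u}(Tr_{vu}(B'), A') \leq \eta_0(2)$. Chaining these estimates and using $A' \subset N^{X_u}_R(A)$ shows $Tr_{vu}(P^{X_v}_{X_{ev}}(B)) \subset N^{X_u}_{C_1 + C_2 + \eta_0(2) + R}(A)$.

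Finally, to pass from this set to its hull $\F l_{vu,R}(B) = Hull_{X_u}(Tr_{vu}(P^{X_v}_{X_{ev}}(B)))$, I invoke the elementary fact that in a $\delta_0$-hyperbolic space any set contained in $N_r(A)$ for a $k$-quasiconvex $A$ has its quasiconvex hull in $N_{r + k + 2\delta_0}(A)$: slimness of the quadrilateral whose two short sides realise the $r$-neighbourhood, combined with $k$-quasiconvexity of $A$, keeps every geodesic between pairs in $N_r(A)$ inside the enlarged neighbourhood. Applying this to the set obtained in the previous paragraph yields $\F l_{vu,R}(B) \subset N^{X_u}_D(A)$ for a suitable $D$, completing flow stability. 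The principal bookkeeping obstacle will be the first step of the chain: verifying that projecting $B = Hull_{X_v}(B')$ back onto $X_{ev}$ does not drift away from $B'$. This simultaneously requires the uniform quasiconvexity of $B'$, the fact that $B'$ already lies in the edge space $X_{ev}$, and the coarse Lipschitzness of $P^{X_v}_{X_{ev}}$, and all subsequent constants propagate from this single Hausdorff estimate.
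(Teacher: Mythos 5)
Your proof is correct but routes the argument differently from the paper's. The paper works almost entirely inside the two-vertex space $X_{uv}$ and factors the key estimate through Lemma~\ref{flow stable lemma1}, which it applies twice: once to $A_1 = P^{X_u}_{X_{eu}}(A)$ to bound $Hd_{X_{uv}}(A_1,B)$, and once to $B_1 = P^{X_v}_{X_{ev}}(B)$ to bound $Hd_{X_{uv}}(B_1, \F l_{vu,R}(B))$; the intermediate link $Hd_{X_{uv}}(B,B_1)\le \lambda_0$ comes directly from $B$ being the hull of a subset of $X_{ev}$, and the final pass from an $X_{uv}$-Hausdorff bound to an $X_u$-Hausdorff bound uses $\eta_0$-properness of $X_u$ in $X$. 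You, by contrast, never leave the vertex spaces $X_u$ and $X_v$: you unroll the transport explicitly, propagate Hausdorff estimates across $Tr_{vu}$ by hand using the $L_0$-qi embedding property of the incidence maps together with Lemma~\ref{transport lemma}(2)(b), and close by a hull-absorption observation (a set in $N_r(A)$ with $A$ quasiconvex has hull in $N_{r+k+2\delta_0}(A)$) that the paper never needs because passing to the hull is already handled inside Lemma~\ref{flow stable lemma1}. The paper's route is more modular and shorter because the reusable work is packaged in Lemma~\ref{flow stable lemma1}; yours is more elementary and self-contained but does more bookkeeping. One small point of hygiene: in your chained bound you list $C_1 + C_2 + \eta_0(2) + R$, but $C_2$ already absorbs $C_1$ (it is the transported version of the $C_1$-bound), so $C_1$ is double-counted; the correct chain gives $C_2 + \eta_0(2) + R$ before the hull step. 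This does not affect correctness.
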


\proof
Let $A_1=P^{X_u}_{X_{eu}}(A)$. Note that $B=\F l_{uv,R}(A)=Hull_{X_v}(Tr_{uv}(A_1))$ by definition. Thus we only will prove that $\F l_{vu,R}(B)$ lies in uniform neighbourhood of $A$ in $X_u$.

Since $A$ is $k$-quasiconvex and $X_{eu}$ is $\lambda_0$-quasiconvex in $X_u$,
$A_1$ is $K_{\ref{lem-proj on pair qc}\ref{lem-proj on pair qc:qc proj qc2}}(\delta_0,k+\lambda_0)$-quasiconvex in $X_u$.
Let $k_1=K_{\ref{lem-proj on pair qc}\ref{lem-proj on pair qc:qc proj qc2}}(\delta_0,k+\lambda_0)$. Then from Lemma \ref{flow stable lemma1},
we get $Hd_{X_{uv}}(A_1, B)\leq D_{\ref{flow stable lemma1}}(k_1)$.

Let $B_1=P^{X_v}_{X_{ev}}(B)$. 
Since $X_{ev}$ is $\lambda_0$-quasiconvex in $X_v$ we see that
$B\subset N^{X_v}_{\lambda_0}(X_{ev})$. It follows then that $B$ is $R$-flowable
into $X_u$ as $R\geq \lambda_0$ and also that
$Hd_{X_{v}}(B,B_1)\leq \lambda_0$,
whence, $Hd_{X_{uv}}(B,B_1) \leq \lambda_0$. On the other hand, since $B$ is
$2\delta_0$-quasiconvex in $X_v$, $B_1$ is $K_{\ref{lem-proj on pair qc}\ref{lem-proj on pair qc:qc proj qc2}}(\delta_0,2\delta_0+\lm_0)$-quasiconvex
in $X_v$. Let $k_2=K_{\ref{lem-proj on pair qc}\ref{lem-proj on pair qc:qc proj qc2}}(\delta_0,2\delta_0+\lm_0)$. Now by Lemma
\ref{flow stable lemma1}, we have
$Hd_{X_{uv}}(B_1, \F l_{vu,R}(B))\leq D_{\ref{flow stable lemma1}}(k_2)$.

Hence, $Hd_{X_{uv}}(B, \F l_{vu,R}(B))\leq Hd_{X_{uv}}(B_1, \F l_{vu,R}(B))+ Hd_{X_{uv}}(B_1,B)
\leq \lambda_0+D_{\ref{flow stable lemma1}}(k_2)$. Finally we have
$Hd_{X_{uv}}(A_1,\F l_{vu,R}(B))\leq Hd_{X_{uv}}(A_1,B)+Hd_{X_{uv}}(B,\F l_{vu,R}(B))
\leq D_{\ref{flow stable lemma1}}(k_1)+\lambda_0+D_{\ref{flow stable lemma1}}(k_2)=D'$, say.
Hence, $Hd_{X_u}(A_1, \F l_{vu,R}(B))\leq \eta_0(D')$ as $X_u$ is $\eta_0$-properly embedded in $X$.
Since, by hypothesis $A_1\subset N^{X_u}_R(A)$, we have
$\F l_{vu,R}(B)\subset N^{X_u}_{R+\eta_0(D')}(A)$. Thus we can take
$D_{\ref{flow stable lemma}}=R+\eta_0(D')$. \qed

%

\begin{lemma}\label{flow stable lemma2}
Given non-negative numbers $k,~D,~D',~R$ we have constants $D_{\ref{flow stable lemma2}}=D_{\ref{flow stable lemma2}}(D)$, $R'_{\ref{flow stable lemma2}}=R'_{\ref{flow stable lemma2}}(D)$ and $D'_{\ref{flow stable lemma2}}=D'_{\ref{flow stable lemma2}}(k,D',R)$ satisfying the following. Suppose $A\sse X_u$ and $B\sse X_v$ are $k$-quasiconvex in $X_u$ and $X_v$ respectively.
	Let $A_1=P^{X_u}_{X_{eu}}(A)$ and $B_1=P^{X_v}_{X_{ev}}(B)$. Then:

	$(1)$ If $A$, $B$ are $D$-flow stable then $Hd_{X_{uv}}(A_1,B_1)\leq D_{\ref{flow stable lemma2}}$. Moreover, $A$ is $R'_{\ref{flow stable lemma2}}$-flowable in $X_v$ and $B$ is $R'_{\ref{flow stable lemma2}}$-flowable in $X_u$

	$(2)$ Conversely, if $A$ is $R$-flowable in $X_v$, $B$ is $R$-flowable in $X_u$ and $Hd_{X_{uv}}(A_1,B_1)\leq D'$ then $A$, $B$ are $D'_{\ref{flow stable lemma2}}$-flow
	stable
\end{lemma}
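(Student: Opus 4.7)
The plan is to translate between distances in $X_v$, $X_u$, and $X_{uv}$ via the qi embeddings of Lemma \ref{com-two-hyp-sps}(1), exploiting that $A_1 \sse X_{eu}$ and $B_1 \sse X_{ev}$ are $1$-Hausdorff close to their transports in $X_{uv}$ by Lemma \ref{transport lemma}(2)(a). In each direction the argument is purely an exercise in chasing constants through a short chain of subsets.

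For part $(1)$, I first note that $D$-flow stability of $(A,B)$ presupposes $D$-flowability of $A$ into $X_v$ and of $B$ into $X_u$; in particular $A_1 \sse N^{X_u}_D(A)$ and $B_1 \sse N^{X_v}_D(B)$, so the ``moreover'' clause holds with $R'_{\ref{flow stable lemma2}} = D$. To bound $Hd_{X_{uv}}(A_1, B_1)$ I would apply Lemma \ref{flow1}(1) to conclude that $A_1$ is uniformly quasiconvex in $X_u$, and then apply Lemma \ref{flow stable lemma1} to $A_1 \sse X_{eu}$ (observing that $\F l_{uv,D}(A_1) = Hull_{X_v}(Tr_{uv}(A_1)) = \F l_{uv,D}(A)$) to obtain $Hd_{X_{uv}}(A_1, \F l_{uv,D}(A)) \le D_1$ for a constant $D_1 = D_1(k)$. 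Combined with $\F l_{uv,D}(A) \sse N^{X_v}_D(B)$, every $a \in A_1$ is at $X_{uv}$-distance at most $D_1 + D$ from some $b \in B$. Applying Lemma \ref{transport lemma}(2)(a) gives that $Tr_{uv}(a)$ is $1$-close to $a$ in $X_{uv}$, hence $(D_1+D+1)$-close to $b$ in $X_{uv}$; since both $Tr_{uv}(a)$ and $b$ lie in $X_v$ and $X_v \hookrightarrow X_{uv}$ is an $L'_0$-qi embedding (Lemma \ref{com-two-hyp-sps}(1)), this upgrades to a bound on $d_{X_v}(Tr_{uv}(a), b)$. The coarsely Lipschitz projection $P^{X_v}_{X_{ev}}$ (Lemma \ref{proj-on-qc}(1)) then sends $b$ to a point $b_1 \in B_1$ at uniformly bounded $X_v$-distance from $P^{X_v}_{X_{ev}}(Tr_{uv}(a))$, which is itself uniformly close to $Tr_{uv}(a)$ as $Tr_{uv}(a) \in X_{ev}$; this produces a point of $B_1$ within uniformly bounded $X_{uv}$-distance of $a$. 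Hence $A_1 \sse N^{X_{uv}}_{D_2}(B_1)$ for some $D_2 = D_2(D)$, and the symmetric argument using $\F l_{vu,D}(B) \sse N^{X_u}_D(A)$ gives the reverse inclusion.

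For part $(2)$, I would bound $\F l_{uv,R}(A) = Hull_{X_v}(Tr_{uv}(A_1))$ in an $X_v$-neighbourhood of $B$; the symmetric bound for $\F l_{vu,R}(B)$ follows by swapping roles. Lemma \ref{transport lemma}(2)(a) gives $Hd_{X_{uv}}(A_1, Tr_{uv}(A_1)) \le 1$, so the hypothesis $Hd_{X_{uv}}(A_1, B_1) \le D'$ yields $Hd_{X_{uv}}(Tr_{uv}(A_1), B_1) \le D' + 1$; since both sets lie in $X_v$, Lemma \ref{com-two-hyp-sps}(1) promotes this to a Hausdorff bound in $X_v$. The $R$-flowability of $B$ gives $B_1 \sse N^{X_v}_R(B)$, whence $Tr_{uv}(A_1) \sse N^{X_v}_{C_1}(B)$ for some $C_1 = C_1(k, D', R)$. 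By Lemma \ref{flow1}(2), $Tr_{uv}(A_1)$ is uniformly quasiconvex in $X_v$, so Lemma \ref{hull of qc}(2) bounds the Hausdorff distance in $X_v$ between $Tr_{uv}(A_1)$ and its hull, yielding $\F l_{uv,R}(A) \sse N^{X_v}_{D'_{\ref{flow stable lemma2}}}(B)$. Choosing $D'_{\ref{flow stable lemma2}} \ge R$ also guarantees the flowability requirement implicit in the definition of flow stability.

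The main obstacle is entirely bookkeeping: tracking how the quasiconvexity constants propagate through the chain $A \to A_1 \to Tr_{uv}(A_1) \to Hull_{X_v}(Tr_{uv}(A_1))$, and repeatedly converting between $X_v$- and $X_{uv}$-distances via Lemma \ref{com-two-hyp-sps}(1). The strategy closely parallels the one already used in Lemma \ref{flow stable lemma}, which is itself a quantitative instance of part $(1)$.
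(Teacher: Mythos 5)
Your proof is essentially correct and closely parallels the paper's, but the two diverge in a couple of places worth noting.

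In part $(1)$, your route to $Hd_{X_{uv}}(A_1, B_1)$ detours through Lemma \ref{flow stable lemma1}: you first bound $Hd_{X_{uv}}(A_1, \F l_{uv,D}(A))$ by a constant $D_1(k)$, then combine with $\F l_{uv,D}(A)\sse N^{X_v}_D(B)$ and project back. The paper argues more directly: for $x\in A_1$ take $x_1\in Tr_{uv}(A_1)\sse\F l_{uv,D}(A)$ with $d_{X_{uv}}(x,x_1)\le 1$, note $d_{X_v}(x_1,B)\le D$ by flow stability, and project the nearby point of $B$ onto $X_{ev}$; since $x_1\in X_{ev}$ this lands within $2D$ of $x_1$, giving $D_{\ref{flow stable lemma2}}=1+2D$. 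Your detour also works, but the intermediate use of Lemma \ref{flow stable lemma1} brings in a dependence on $k$ (via $\bar k$ and $k'$ in Lemma \ref{flow1}), so your bound for $D_{\ref{flow stable lemma2}}$ would be $D_{\ref{flow stable lemma2}}(k,D)$ rather than the $D$-only dependence claimed in the statement and delivered by the paper's direct argument. This is not a gap — the applications all have $k$ fixed — but you prove a marginally weaker quantitative statement than what is asserted. You could have skipped the detour: $Tr_{uv}(a)\in\F l_{uv,D}(A)\sse N^{X_v}_D(B)$ already, and the rest of your chain goes through unchanged without $D_1$.

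On the \emph{moreover} clause of $(1)$, your observation that $D$-flow stability presupposes $D$-flowability of $A$ and $B$ (since $\F l_{uv,D}(A_u)$ is only defined under $D$-flowability), giving $R'_{\ref{flow stable lemma2}}=D$ immediately, is cleaner and tighter than the paper's, which threads $A_1\to Tr_{uv}(A_1)\to B\to B_1\to Tr_{vu}(B_1)\to A$ to get $R'=\eta_0(2+3D)$.

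In part $(2)$ you and the paper do essentially the same thing, with two cosmetic differences: you invoke Lemma \ref{hull of qc}(2) on $Tr_{uv}(A_1)$ where the paper invokes Lemma \ref{flow stable lemma1} (which packages the same content), and you convert $X_{uv}$-distances to $X_v$-distances via the qi embedding $X_v\hookrightarrow X_{uv}$ of Lemma \ref{com-two-hyp-sps}(1) where the paper uses the proper embedding $\eta_0$. Both conversions are legitimate; the qi-embedding route gives a linear bound while the proper-embedding route goes through $\eta_0$, and either serves the later applications.
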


\begin{proof}
$(1)$ Suppose $x\in A_1$ and $x_1\in Tr_{uv}(A_1)$ such that $d_{X_{uv}}(x,x_1)\le 1$ (see Lemma \ref{transport lemma} $(2)$). Since $\F l_{uv,D}(A)=Hull_{X_v}(Tr_{uv}(A_1))\sse N^{X_v}_D(B)$, we have $x_2\in B$ such that $d_{X_v}(x_1,x_2)\le D$. Thus $d_{X_v}(x_1,B_1)\le d_{X_v}(x_1,x_2)+d_{X_v}(x_2,P^{X_v}_{X_{ev}}(x_2))\le 2D$. Hence by triangle inequality, $x\in N^{X_{uv}}_{1+2D}(B_1)$. Since $x$ was arbitrary, $A_1\sse N^{X_{uv}}_{1+2D}(B_1)$. Similarly, we have $B_1\sse N^{X_{uv}}_{1+2D}(A_1)$. Therefore, we can take $D_{\ref{flow stable lemma2}}(D)=1+2D$.

Let $x_3=P^{X_v}_{X_{ev}}(x_2)$ and $x_4\in Tr_{vu}(B_1)$ be such that $d_{X_{uv}}(x_3,x_4)\le 1$. Since $A$ and $B$ are $D$-flow stable, we have $x_5\in A$ such that $d_{X_u}(x_4,x_5)\le D$. By triangle inequality, $d_{X_{uv}}(x,x_5)\le 2+3D$. Since the fibers are $\eta_0$-proper embedding, $d_{X_u}(x,A)\le \eta_0(2+3D)$. Since $x$ was arbitrary, we have $A_1\sse N^{X_u}_{\eta_0(2+3D)}(A)$. Similarly, $B_1\sse N^{X_v}_{\eta_0(2+3D)}(B)$. Therefore, we can take $R'_{\ref{flow stable lemma2}}=\eta_0(2+3D)$.

$(2)$ By Lemma \ref{flow1} $(1)$, $A_1\sse X_{eu}$ is $\bar{k}$-quasiconvex in $X_u$ where $\bar{k}=K_{\ref{lem-proj on pair qc}\ref{lem-proj on pair qc:qc proj qc2}}(\delta_0,k+\lambda_0)$. Then by Lemma \ref{flow stable lemma1}, $Hd_{X_{uv}}(A_1,\F l_{uv,R}(A))=Hd_{X_{uv}}(A_1,\F l_{uv,R}(A_1))\le D_{\ref{flow stable lemma1}}(\bar{k})$. Hence by triangle inequality, $Hd_{X_{uv}}(\F l_{uv,R}(A),B_1)\le D'+D_{\ref{flow stable lemma1}}(\bar{k})$. Since $B$ is $R$-flowable in $X_u$, i.e., $B_1\sse N^{X_v}_R(B)$, so $\F l_{uv,R}(A)\sse N^{X_{uv}}_{D''}(B)$ where $D''=R+D'+D_{\ref{flow stable lemma1}}(\bar{k})$. Since fibers are $\eta_0$-proper embedding in $X$, $\F l_{uv,R}(A)\sse N^{X_v}_{\eta_0(D'')}(B)$. Similarly, $\F l_{vu,R}(B)\sse N^{X_u}_{\eta_0(D'')}(A)$. Therefore, we can take $D'_{\ref{flow stable lemma2}}(k,D')=\eta_0(D'')\ge R$.
\end{proof}

\subsubsection{\bf Definition of flow spaces}\label{subsub-flow space}

In this section we state the definition of flow of a subset of $X$ which will turn out to be
flow stable and saturated subset of $X$. We will also see below that these sets are uniformly
quasiconvex in $X$. Given $k\ge0$, fix two constants $\bm{ R_{fc}}(k)=2k+7\dl_0+R_{\ref{lem-cobdd}\ref{lem-cobdd:R-sep-D-cobdd}}(\dl_0,k+2\dl_0+\lm_0)$ and $\bm {D_{fc}}(k)=D_{\ref{lem-cobdd}\ref{lem-cobdd:R-sep-D-cobdd}}(\dl_0,k+2\dl_0+\lm_0)$; see Remark \ref{rmk-about flow constants} for the significance of these constants. These will refer as flow constants; here suffix `$fc$' stands for `flow constant'.

\begin{defn}\label{defn-flow space}{\em ({\bf Flow of fiber subspaces})} {\em
		(I) Suppose $u\in V(T)$ and $A\subset X_u$ is $k$-quasiconvex in $X_u$ for some $k\geq 0$. Assume that $R\geq \bm{R_{fc}}(k)$, $D \geq 0$. Then the $D$-saturated, $R$-flow space
		determined by $A$, or simply the $(R,D)$-flow of $A$, denoted by $\F l^X_{R,D}(A)$,
		is defined inductively as follows: $\F l^X_{R,D}(A)$ consists of a collection
		$\{A_v:v\in V(S)\}$ where
		\begin{itemize}
			\item $S$ is a subtree of $T$ containing $u$,
			\item $A_u=A$ and
			\item each $A_v$, $v\in V(S)$, $v\neq u$, is a $2\delta_0$-quasiconvex subset of $X_v$.
		\end{itemize}
		The induction is on distance from $u$ in $T$. Moreover, $S$ and the sets $A_v$'s are
		simultaneously constructed in the induction process.\smallskip

\noindent{\em Base of induction:} For each $v\in V(T)$ which is connected by an edge $e$ to $u$ one checks
		(1) if $A$ is $R$-flowable into $X_v$, and (2) if $diam(P^{X_u}_{X_{eu}}(A))\geq D$ in $X_u$; and if
		both the conditions are verified then we include the segment $[u,v]$
		in $S$ and we let $A_v=\F l_{uv,R}(A)$. Thus by the first step of induction
		we get a subtree of $T$ contained in the ball $B(u,1)$.\smallskip

\noindent{\em Induction step:} Suppose $v\in V(S)$ with $d(u,v)=n$. Then for each
		$w\in V(T)$ which is connected to $v$ by an edge $e'$, say, such that $d(u,w)=n+1$
		we check if (1) $A_v$ is $R$-flowable into $X_w$ and (2) $diam(P^{X_v}_{X_{e'v}}(A_v))\geq D$ in $X_v$.
		If both the conditions are verified then we include the segment $[v,w]$
		in $S$ and define $A_w=\F l_{vw,R}(A_v)$.

		\smallskip
		(II) ({\bf Flow of flow stable subspaces.}) More generally, suppose $S$ is a subtree of $T$ containing at least two vertices and $k\geq 0, D\geq 0$ are
		constants, and $\A=\{A_v: v\in V(S)\}$ is a $R$-flow stable family of subsets
		in $X$ where each $A_v\in \A$ is $k$-quasiconvex in $X_v$.
		Then we define the $(R,D)$-flow $\F l^X_{R,D}(\A)$ or $\F l_R(\A)$ of $\A$ where
		$R\geq \bm{R_{fc}}(k)$ as follows.

For each $v\in V(S)$, let $T'_v$ be the union of all the
		components $T\setminus \{v\}$ which are disjoint from $S$ and let $T_v=T'_v\cup \{v\}$.
		Let $Z_v=X_{T_v}$ be the subtree of spaces in $X$ defined over $T_v$.
		Let $\V$ be the set of vertices of $S$ such
		that $v\in \V$ implies $T'_v\neq \emptyset$.

		Then we define
		$$\F l^X_{R,D}(\A)=(\bigcup_{v\in V(S)} A_v)\bigcup (\bigcup_{v\in \V} \F l^{Z_v}_R(A_v)).$$}
\end{defn}

\begin{remark}\label{rmk-about flow constants}
Suppose we consider the $(R,D)$-flow space in Definition \ref{defn-flow space} (I)  with the constant $D=\bm{D_{fc}}(k)=D_{\ref{lem-cobdd}\ref{lem-cobdd:R-sep-D-cobdd}}(\dl_0,k+2\dl_0+\lm_0)$ and $R\geq\bm{R_{fc}}(k)=2k+7\dl_0+R_{\ref{lem-cobdd}\ref{lem-cobdd:R-sep-D-cobdd}}(\dl_0,k+2\dl_0+\lm_0)$. Here is the reason for choosing such constants. Recall that for an edge $e=[u,v]$ joining $u$ and $v$, $X_{eu}$ is $\lm_0$-quasiconvex in $X_u$ by Lemma \ref{com-two-hyp-sps}. In the definition of flow spaces, one notes that when we do induction, we take quasiconvex hull of a transport. We mention that quasiconvex hull of any subset in a $\dl_0$-hyperbolic space is $2\dl_0$-quasiconvex. So we take sum of all these quasiconvexity constants as $k+2\dl_0+\lm_0$ so that all the mentioned subsets are $(k+2\dl_0+\lm_0)$-quasiconvex in their respective ambient spaces. 

Suppose we have flowed $A_u$ in $X_v$, i.e., $A_v\ne\emptyset$. Note that $diam(P^{X_u}_{X_{eu}}(A_u))\ge \bm{D_{fc}}(k)$. Thus $d_{X_u}(A_u,X_{eu})\le R_{\ref{lem-cobdd}\ref{lem-cobdd:R-sep-D-cobdd}}(\dl_0,k+2\dl_0+\lm_0)$ by Lemma \ref{lem-cobdd}~\ref{lem-cobdd:R-sep-D-cobdd}. Hence by Lemma \ref{lem-cobdd}~\ref{lem-cobdd:close imp proj inclu}, $P^{X_u}_{X_{eu}}(A_u)\sse N^{X_u}_{\bm{R_{fc}}(k)}(A_u)$. In other words, if $P^{X_u}_{X_{eu}}(A_u)$ has large diameter in $X_u$ then $A_u$ is $\bm{R_{fc}}(k)$-flowable in $X_v$.

Flow space is being introduced to capture a uniform quasiconvex subset in $X$ containing $A_u$. This can be achieved by constructing a retraction of the entire space $X$ onto it provided $X$ is hyperbolic (see Theorem \ref{thm-mitra proj on stable-satura}). Thus the computation above shows that if the projection $P^{X_u}_{X_{eu}}(A_u)$ has small diameter in $X_u$, then the direction containing $e$ away from $u$ is not an effective direction for the flow space to be quasiconvex, so we do not flow in that direction.
\end{remark}

A direct consequence of the definition, we have the following.

\begin{lemma}\label{lem-lift in flow space}
Suppose $R\ge\bm{R_{fc}}(k)$ and $D\ge0$. Let $u\in V(T)$, and let $e$ be an edge in $T$ incident on $u$. Suppose $\F l^X_{R,D}(X_{eu})$ is the flow space of $X_{eu}$ in $X$. Let $x\in\F l^X_{R,D}(X_{eu})$ be any point such that $\pi(x)=w$. Then there is a $K_{\ref{lem-lift in flow space}}$-qi lift of $[u,w]$ whose image lies in $\F l^X_{R,D}(X_{eu})$ where $K_{\ref{lem-lift in flow space}}=R+k'+1$ and $k'$ as in Lemma \ref{flow1} $(2)$.
	
	Moreover, we may assume that if $\al$ is such a qi lift then $\al(v)\in X_{ev}$ where $e$ is the edge in $[u,v]\sse [u,w]$ incident on $v$.
\end{lemma}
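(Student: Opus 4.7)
The plan is to unravel the inductive construction of $\F l^X_{R,D}(X_{eu})$ along the $T$-geodesic $[u,w]$ and then build $\al$ vertex-by-vertex by a backward induction, choosing each $\al(v_j)$ inside the transport image $B_j$ (which sits in the edge-space image $X_{e_jv_j}$) and within bounded $X_{v_{j-1}v_j}$-distance of $\al(v_{j-1})$; Convention \ref{con-trees of metric spaces}~\ref{lift} will then package this into a qi lift. First, let $u = v_0, v_1, \ldots, v_n = w$ denote the consecutive vertices of $[u,w]$ and $e_j := [v_{j-1},v_j]$ its edges; the case $n=0$ is trivial (take $\al \equiv x$), so assume $n \geq 1$. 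Because $x \in \F l^X_{R,D}(X_{eu})$ and $\pi(x) = w$, the subtree $S$ produced by Definition \ref{defn-flow space} contains $w$, hence all of $[u,w]$. Unwinding the definition, $A_{v_0} = X_{eu}$, and for every $1 \leq i \leq n$, $A_{v_{i-1}}$ is $R$-flowable into $X_{v_i}$ and $A_{v_i} = Hull_{X_{v_i}}(B_i)$, where
\[
B_i := Tr_{v_{i-1}v_i}\big(P^{X_{v_{i-1}}}_{X_{e_iv_{i-1}}}(A_{v_{i-1}})\big) \;\subset\; X_{e_iv_i}
\]
is non-empty and, by Lemma \ref{flow1}(2), $k'$-quasiconvex in $X_{v_i}$; in particular, directly from $k'$-quasiconvexity one has $A_{v_i} \subset N^{X_{v_i}}_{k'}(B_i)$.

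The main step is the backward induction on $j$. Pick any $\al(v_n) \in B_n$. Given $\al(v_j) \in B_j \subset X_{e_jv_j}$ for some $j \geq 1$, I will (i) transport to $y_j := Tr_{v_jv_{j-1}}(\al(v_j)) \in X_{e_jv_{j-1}}$, so that Lemma \ref{transport lemma}(2)(a) gives $d_{X_{v_{j-1}v_j}}(\al(v_j), y_j) \leq 1$; (ii) invoke $R$-flowability of $A_{v_{j-1}}$ into $X_{v_j}$ to obtain $z_j \in A_{v_{j-1}}$ with $d_{X_{v_{j-1}}}(y_j, z_j) \leq R$; (iii) for $j \geq 2$, use $A_{v_{j-1}} \subset N^{X_{v_{j-1}}}_{k'}(B_{j-1})$ to choose $\al(v_{j-1}) \in B_{j-1}$ with $d_{X_{v_{j-1}}}(z_j, \al(v_{j-1})) \leq k'$; (iv) for $j = 1$, simply set $\al(v_0) := z_1 \in A_{v_0} = X_{eu}$.

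To conclude, since the fiber path metric on $X_{v_{j-1}}$ dominates the path metric on $X_{v_{j-1}v_j}$ restricted to $X_{v_{j-1}}$ (any path in the fiber is a path in the two-vertex tree of spaces), the triangle inequality yields $d_{X_{v_{j-1}v_j}}(\al(v_{j-1}), \al(v_j)) \leq R + k' + 1$ for each $1 \leq j \leq n$. Convention \ref{con-trees of metric spaces}~\ref{lift} then promotes $\al$ to a $K_{\ref{lem-lift in flow space}}$-qi lift of $[u,w]$ with $K_{\ref{lem-lift in flow space}} = R + k' + 1$, whose image (supported on $V([u,w])$ by the standing convention) lies in $\bigcup_{i=0}^n A_{v_i} \subset \F l^X_{R,D}(X_{eu})$, because $\al(v_i) \in B_i \subset A_{v_i}$ for $i \geq 1$ and $\al(v_0) \in A_{v_0}$. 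The ``moreover'' clause is built into the construction, as $\al(v_j) \in B_j \subset X_{e_jv_j}$ for every $j \geq 1$, and $e_j$ is precisely the edge of $[u,v_j] \subset [u,w]$ incident on $v_j$. I do not anticipate a genuine obstacle; the only mild care needed is to apply the triangle inequality with the correct (coarser) two-vertex path metric rather than the individual fiber metrics.
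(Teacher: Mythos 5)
Your argument is correct and fills in a proof the paper itself omits (the paper merely calls Lemma~\ref{lem-lift in flow space} ``a direct consequence of the definition''). You correctly observe that $[u,w]\subseteq S$, unwind the inductive construction of the flow space along $[u,w]$, and then do a backward induction building $\al$ vertex-by-vertex inside the transport sets $B_j\subset X_{e_jv_j}$: the transport step costs $\le 1$ (Lemma~\ref{transport lemma}(2)(a)), $R$-flowability costs $\le R$, and passing from $A_{v_{j-1}}=Hull_{X_{v_{j-1}}}(B_{j-1})$ back into $B_{j-1}$ costs $\le k'$ by $k'$-quasiconvexity of $B_{j-1}$. One point worth making explicit: when you write $y_j:=Tr_{v_jv_{j-1}}(\al(v_j))$, you need to choose the witness $z\in X_{e_j}$ with $f_{e_j,v_j}(z)=\al(v_j)$ and $f_{e_j,v_{j-1}}(z)\in P^{X_{v_{j-1}}}_{X_{e_jv_{j-1}}}(A_{v_{j-1}})$ — such a $z$ exists precisely because $\al(v_j)\in B_j$ — so that $y_j$ genuinely lies in the nearest-point projection image and $R$-flowability applies to it; a generic preimage under $f_{e_j,v_j}$ need not have this property. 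With that choice pinned down, the triangle inequality in $X_{v_{j-1}v_j}$ gives $R+k'+1$, and Convention~\ref{con-trees of metric spaces}~(7) upgrades the discrete section to a qi lift. The ``moreover'' part is indeed automatic from $\al(v_j)\in B_j\subset X_{e_jv_j}$. The only other thing left implicit (as it is in the paper) is which $k$ feeds into $k'$: $A_{v_0}=X_{eu}$ is $\lambda_0$-quasiconvex while the later $A_{v_i}$ are $2\delta_0$-quasiconvex, so one should take the larger of the resulting $k'$ values, exactly in the spirit of Remark~\ref{rmk-about flow constants}.
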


\begin{prop}\label{consistent qc}
Suppose $k\ge0$, $R\ge\bm{R_{fc}}(k)$ and $D\ge0$. Suppose $S$ is a subtree of $T$ and $\A=\{A_v: v\in V(S)\}$ is a $R$-flow stable family of subsets
	in $X$ where each $A_v\in \A$ is $k$-quasiconvex in $X_v$.
	Then $(R,D)$-flow $\F l^X_{R,D}(\A)$ of $\A$ is a $R'$-flow stable
	collection of subsets in $X$ for some $R'$ depending on $R$. 

	In particular the same conclusion holds for $\F l_R(A)$ where $A$ is
	a $k$-quasiconvex subset of $X_u$ for some $u\in V(T)$.

	Lastly, $\F l^X_{R,D}(\A)$ is $D'$-saturated where $D'=max\{D,\bm{D_{fc}}(k)\}$.
\end{prop}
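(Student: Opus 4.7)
The strategy is to unwind Definition~\ref{defn-flow space} carefully and then apply Lemma~\ref{flow stable lemma} and Remark~\ref{rmk-about flow constants}. First I would fix notation: write $\F l^X_{R,D}(\A)=\{A_{v'}:v'\in V(S')\}$, where $S'\supseteq S$ is the enlarged subtree produced by the construction (i.e.\ $S$ together with the subtrees of the flows $\F l^{Z_v}_R(A_v)$ for $v\in\V$). Then $A_{v'}$ is $k$-quasiconvex if $v'\in V(S)$ (by hypothesis) and $2\dl_0$-quasiconvex otherwise (since it arose as a quasiconvex hull in the inductive step). Setting $k_1:=\max\{k,2\dl_0\}$, every $A_{v'}$ is $k_1$-quasiconvex; one may harmlessly replace $k$ by $k_1$ in the hypothesis so that $R\ge\bm{R_{fc}}(k_1)\ge\lm_0$ (the second inequality follows from the explicit formula for $\bm{R_{fc}}$).

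For flow stability, I would fix an edge $[v',w']$ of $S'$ and split into three mutually exclusive cases. (i) If $v',w'\in V(S)$, then $(A_{v'},A_{w'})$ is $R$-flow stable by the hypothesis on $\A$. (ii) If $v'\in V(S)$ and $w'$ is the first new vertex encountered from $v'$ while running the inductive flow in $Z_{v'}$, then by definition $A_{w'}=\F l_{v'w',R}(A_{v'})$; since $A_{v'}$ is $k$-quasiconvex and $R\ge\lm_0$, Lemma~\ref{flow stable lemma} yields $D_{\ref{flow stable lemma}}(k)$-flow stability. (iii) If $v',w'\in V(S'_v)\setminus V(S)$ for some $v\in\V$ (where $S'_v$ denotes the subtree over which $\F l^{Z_v}_R(A_v)$ lives), then after relabeling $A_{w'}=\F l_{v'w',R}(A_{v'})$ with $A_{v'}$ being $2\dl_0$-quasiconvex, and Lemma~\ref{flow stable lemma} gives $D_{\ref{flow stable lemma}}(2\dl_0)$-flow stability. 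Taking $R':=\max\{R,D_{\ref{flow stable lemma}}(k_1)\}$ would then finish this part. The ``in particular'' assertion is the special case $|V(S)|=1$, where only (ii) and (iii) can arise.

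For $D'$-saturation, I would take $v'\in V(S')$ and $w'\in V(T)\setminus V(S')$ joined by an edge $e'$, and observe that the reason $w'$ was not added during the construction is that one of the two induction-step conditions failed for $(v',w')$: either (1) $A_{v'}$ is not $R$-flowable into $X_{w'}$, or (2) $\mathrm{diam}_{X_{v'}}\bigl(P^{X_{v'}}_{X_{e'v'}}(A_{v'})\bigr)<D$. In case (2) the required bound is immediate. Case (1) is the only point requiring actual thought, and it is the content of Remark~\ref{rmk-about flow constants}: failure of $R$-flowability together with $R\ge\bm{R_{fc}}(k_1)$ forces $d_{X_{v'}}(A_{v'},X_{e'v'})>R_{\ref{lem-cobdd}\ref{lem-cobdd:R-sep-D-cobdd}}(\dl_0,k_1+2\dl_0+\lm_0)$ via Lemma~\ref{lem-cobdd}~\ref{lem-cobdd:close imp proj inclu}, and then Lemma~\ref{lem-cobdd}~\ref{lem-cobdd:R-sep-D-cobdd} bounds $\mathrm{diam}_{X_{v'}}\bigl(P^{X_{v'}}_{X_{e'v'}}(A_{v'})\bigr)\le\bm{D_{fc}}(k_1)$. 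Either way, the diameter is at most $\max\{D,\bm{D_{fc}}(k_1)\}=D'$, as required. The implication ``not $R$-flowable $\Rightarrow$ small projection diameter'' is in fact the sole reason $\bm{R_{fc}}$ and $\bm{D_{fc}}$ were defined the way they were, and everything else in the proof is straightforward bookkeeping from the inductive definition.
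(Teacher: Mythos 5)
Your proof fleshes out the paper's terse two-sentence argument in essentially the same way the authors intended: split by edge type for flow stability and apply Lemma~\ref{flow stable lemma}; for saturation, at a boundary vertex distinguish whether condition~(1) or~(2) of the inductive step fails and invoke the contrapositive of Remark~\ref{rmk-about flow constants}. The case analysis (i)--(iii) for flow stability is correct, and the observation that the ``in particular'' clause is the case $|V(S)|=1$ is right.

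The one genuine issue is in the preparatory step where you ``harmlessly replace $k$ by $k_1=\max\{k,2\dl_0\}$ so that $R\ge\bm{R_{fc}}(k_1)$.'' This does not follow: $\bm{R_{fc}}$ is increasing, so when $2\dl_0>k$ the hypothesis $R\ge\bm{R_{fc}}(k)$ gives no control over $\bm{R_{fc}}(k_1)$, and the later invocation of Remark~\ref{rmk-about flow constants} with $k_1$ in place of $k$ then silently requires the stronger input $R\ge\bm{R_{fc}}(k_1)$. It also changes your terminal saturation bound to $\max\{D,\bm{D_{fc}}(k_1)\}$, which is not the claimed $D'=\max\{D,\bm{D_{fc}}(k)\}$ in that regime. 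The replacement is in fact unnecessary: the constants $\bm{R_{fc}}(k)$ and $\bm{D_{fc}}(k)$ are defined via the padded quasiconvexity bound $k+2\dl_0+\lm_0$ precisely so that the coboundedness estimate in the Remark applies uniformly to every set appearing in the flow, including the $2\dl_0$-quasiconvex hulls produced past the first step (since $2\dl_0\le k+2\dl_0+\lm_0$). Run the Remark's contrapositive with the original $k$ throughout: if $A_{v'}$ is not $R$-flowable (and $R\ge\bm{R_{fc}}(k)$, hence not $\bm{R_{fc}}(k)$-flowable), then $\mathrm{diam}\bigl(P^{X_{v'}}_{X_{e'v'}}(A_{v'})\bigr)<\bm{D_{fc}}(k)\le D'$, which is exactly what the proposition asserts. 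The flow-stability half of your argument is unaffected, and for case~(ii)/(iii) the requirement $R\ge\lm_0$ for Lemma~\ref{flow stable lemma} already holds since $\bm{R_{fc}}(k)\ge 2\lm_0$, so no replacement is needed there either.
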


\proof The saturation condition is immediate from the construction of the flow
space. The flow stability is a direct consequence of Lemma \ref{flow stable lemma}.
\qed\smallskip


\subsubsection{{\bf Lipschitz retraction on flow spaces}}\label{subsubsec-Lipschitz retraction}
The following theorem makes the flow spaces extremely useful. However, both the definition of the retraction
and the proof that it is coarsely Lipschitz are adaptation of the work of M. Mitra from \cite{mitra-trees}.

\smallskip
\noindent{\bf Definition of Mitra's retraction map.}

Suppose $S$ is a subtree of $T$, and for some $k\geq 0$, $A_v$ is a $k$-quasiconvex subset of
$X_v$ for all $v\in V(S)$. Suppose $\A=\{A_v:v\in V(S)\}$ is an $R$-flow stable and $D$-saturated
collection of subsets of $X$ for some $R\ge\bm{R_{fc}}(k)$, $D\geq 0$.
Let $A'=\bigcup_{v\in V(S)}A_v$ and $X'_S=\bigcup_{v\in V(S)}X_v$.
Let $\rho': X'_S\map A'$ be the map defined by setting $\rho'(x)=P^{X_v}_{A_v}(x)$ for all
$x\in X_v$, $v\in V(S)$. Now $\rho'$ is extended to $\rho':X'=\bigcup_{w(\in V(T)} X_w\map A'$ as follows.
Suppose $w\in V(T)\setminus V(S)$. Let $u$ be the nearest point projection of $w$ on $S$
and let $e$ be the edge incident on $u$ on the geodesic $[u,w]$.
Let $x_u\in P^{X_u}_{A_u}(X_{eu})$ be an arbitrary point chosen once and for all.
Then for any $x\in X_w$ we define $\rho'(x)=x_u$. With this definition of $\rho'$ we have the following
result. Thus for any connected component $T_1$ of $T\setminus S$, if $X'_1=\bigcup_{v\in V(T_1)} X_v$
then $\rho'(X'_1)$ is a point.

\begin{theorem}\label{thm-mitra proj on stable-satura}
There are constants $L_{\ref{thm-mitra proj on stable-satura}}=L_{\ref{thm-mitra proj on stable-satura}}(R,D,k)\geq 0$ such that the following hold:

The map $\rho'$ can be extended to a
$L_{\ref{thm-mitra proj on stable-satura}}$-coarsely Lipschitz retraction map $\rho:X\map A'$.

\end{theorem}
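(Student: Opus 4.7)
The plan is to first extend $\rho'$ from $X' = \bigcup_{w \in V(T)} X_w$ to the full space $X$ by defining it on points in edge interiors $\pi^{-1}(\mathrm{int}(e))$, and then to verify the coarse Lipschitz bound via a case analysis on short paths. For the extension, given $x \in \pi^{-1}(\mathrm{int}(e))$ with $e = [u,v]$, write $x = f_e(z,t)$ uniquely with $z \in X_e, t \in (0,1)$, and set $\rho(x) := \rho'(f_{e,u}(z))$. Since $f_e$ restricted to $X_e \times (0,1)$ is an isometry onto its image with the induced path metric, it suffices to bound $d_X(\rho(x),\rho(y))$ whenever $d_X(x,y)\le 1$, and the only nontrivial cases reduce to (a) $x,y \in X_w$ for some vertex $w$, and (b) $x \in X_{eu}$ with $y = Tr_{uv}(x) \in X_{ev}$ for some edge $e=[u,v]$.

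Case (a) is immediate: if $w \in V(S)$, then $\rho|_{X_w} = P^{X_w}_{A_w}$ is $C_{\ref{proj-on-qc}\ref{proj-on-qc:1}}(\delta_0,k)$-coarsely Lipschitz on $X_w$ by Lemma \ref{proj-on-qc}~\ref{proj-on-qc:1}, and $\eta_0$-properness of $X_w \hookrightarrow X$ converts $d_X(x,y)\le 1$ into $d_{X_w}(x,y)\le \eta_0(1)$, yielding the required bound. If $w \notin V(S)$ then $\rho|_{X_w}$ is constant and there is nothing to check.

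For case (b), three sub-cases arise. When both $u,v \in V(S)$, the flow-stability of $(A_u,A_v)$ combined with Lemma \ref{flow stable lemma2}~(1) yields $Hd_{X_{uv}}(P^{X_u}_{X_{eu}}(A_u), P^{X_v}_{X_{ev}}(A_v)) \le D_{\ref{flow stable lemma2}}(R)$. Combining Lemma \ref{lem-proj on pair qc}~\ref{lem-proj on pair qc:new proj lemma3} (projections onto Hausdorff-close quasiconvex sets remain close) with Lemma \ref{lem-proj on pair qc}~\ref{lem-proj on pair qc:lem-nested proj} (nested projections almost commute), and using that $f_e$ intertwines $X_{eu}$ and $X_{ev}$ isometrically up to $X_{uv}$-bounded error, one deduces that $P^{X_u}_{A_u}(x)$ and $P^{X_v}_{A_v}(Tr_{uv}(x))$ lie within a uniform $d_{X_{uv}}$-distance, hence within a uniform $d_X$-distance. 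When $u \in V(S)$ but $v \notin V(S)$, the $D$-saturation hypothesis gives $\mathrm{diam}(P^{X_u}_{X_{eu}}(A_u)) \le D$, so by Lemma \ref{lem-proj on pair qc}~\ref{lem-proj on pair qc:small-imp-small} the dual diameter $\mathrm{diam}(P^{X_u}_{A_u}(X_{eu}))$ is bounded by some $D'$ depending only on $\delta_0, k, D$; since both $\rho(x) = P^{X_u}_{A_u}(x)$ and $\rho(y) = x_u$ lie in $P^{X_u}_{A_u}(X_{eu})$, they are within $D'$. Finally, when neither $u$ nor $v$ is in $V(S)$, the edge $e$ is contained in a single hanging subtree of $T \setminus V(S)$, on which $\rho$ is constant by construction, so $\rho(x)=\rho(y)$.

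The main technical obstacle is the first sub-case of (b), which is the heart of Mitra's original retraction argument: one must carefully interleave nearest point projection onto $A_u$, its iterated projection onto $X_{eu}$, the fiber-transport map $Tr_{uv}$, and the flow-stability Hausdorff bound supplied by Lemma \ref{flow stable lemma2}. Once this sub-case is in hand, $L_{\ref{thm-mitra proj on stable-satura}}$ can be taken as the maximum of the bounds produced in cases (a) and (b), and it depends on $R, D, k$ along with the parameters $\delta_0, \lambda_0, L_0, \eta_0$ of the tree of hyperbolic metric spaces.
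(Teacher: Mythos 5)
Your overall plan — extend $\rho'$ across edge interiors, reduce to unit-distance pairs $x,y$, split into the same-vertex case and the case of a transported pair $x\in X_{eu}$, $y=Tr_{uv}(x)\in X_{ev}$, and then further split the latter according to which of $u,v$ lie in $V(S)$ — is exactly the decomposition the paper uses, and your handling of the easy sub-cases (both outside $S$; one in $S$ via saturation and Lemma \ref{lem-proj on pair qc}~\ref{lem-proj on pair qc:small-imp-small}) matches as well. So the architecture is the same.

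The one place where your sketch goes astray is in the crucial sub-case with $u,v\in V(S)$. You cite Lemma \ref{lem-proj on pair qc}~\ref{lem-proj on pair qc:lem-nested proj}, but that lemma requires a containment $V\subseteq U$, which is not available: the relevant sets are $A_v$ and $B_v=P^{X_v}_{X_{ev}}(A_v)$, and $B_v$ is \emph{not} a subset of $A_v$ — flow-stability only gives $B_v\subset N_{R_1}(A_v)$. The tool actually needed to pass from $\rho(x)=P^{X_v}_{A_v}(x)$ to $P^{X_v}_{B_v}(x)$ is Lemma \ref{lem-proj on pair qc}~\ref{lem-proj on pair qc:new proj lemma}, whose hypothesis $U_1=P^X_V(U)$ with $U_1\subset N_R(U)$ is tailored to exactly this situation. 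You also gloss over the change of ambient metric: the projections $P^{X_v}_{B_v}$ and $P^{X_w}_{B_w}$ live in different fibers, so one must pass to $X_{vw}$ using Lemma \ref{com-two-hyp-sps} (to get quasiconvexity of $B_v,B_w$ in $X_{vw}$) and Lemma \ref{proj-in-diff-are-close} (so $P^{X_v}_{B_v}$ and $P^{X_{vw}}_{B_v}$ agree coarsely); only then can Lemma \ref{lem-proj on pair qc}~\ref{lem-proj on pair qc:new proj lemma3} and the flow-stability Hausdorff bound from Lemma \ref{flow stable lemma2}(1) be applied. Your phrase ``$f_e$ intertwines ... up to bounded error'' gestures at this, but the actual chain of coarse equalities is $\rho(x)\approx P^{X_v}_{B_v}(x)\approx P^{X_{vw}}_{B_v}(x)\approx P^{X_{vw}}_{B_w}(x)\approx P^{X_{vw}}_{B_w}(y)\approx P^{X_w}_{B_w}(y)\approx\rho(y)$, and the first and last links need item 7, not item 6, of Lemma \ref{lem-proj on pair qc}.
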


\proof
We first define an extension map $\rho:X\map A'$ of $\rho':X'\map A'$. Note that $\rho$ is already defined on $X'$. Now let $x\in X\setminus X'$ so that $\pi(x)\in[u,v]=e\in E(T)$. If $\pi(x)\ne m(e)$ (the middle point of $e$) and $\pi(x)\in[u,m(e)]$ (respectively, $\pi(x)\in[m(e),v]$), we take $f_{e,u}(x)=x'\in X_{eu}\sse X'$ (respectively, $f_{e,v}(x)=x'\in X_{ev}\sse X'$), and define $\rho(x):=\rho(x')$. If $\pi(x)=m(e)$, we choose either $f_{e,u}$ or $f_{e,v}$ to define $\rho(x)$ as above, and this choice is fixed once and for all.

Since $X$ is a geodesic metric space and $X=N_1(X')$, it is enough prove that for all $x,y\in X'$ with $d_X(x,y)\leq 1$,
$d_X(\rho(x),\rho(y))$ is uniformly small. However, by Lemma \ref{proj-on-qc}~\ref{proj-on-qc:1} and the definition of $\rho$,
this is easy if $x,y$ are in the same vertex space of $X$. Hence, it only remains to consider the
case where $x\in X_v, y\in X_w$ where $v,w$ are connected by an edge $e$ and $z\in X_e$,
$x=f_{e,v}(z),~ y=f_{e,w}(z)$, $d_X(x,y)=1$. This is done in three different cases.

{\bf Case 1.} Suppose $v\not\in S, w\not\in S$.  In this case, it follows from the
definition of $\rho$ that $\rho(x)=\rho(y)$. So there is nothing to prove.

{\bf Case 2.} Suppose $v\in S$ but $w\not\in S$. In this case,
$\rho(x), \rho(y)\in P^{X_v}_{A_v}(X_{ev})$. However, by the saturation condition on $\A$, we know that $diam(P^{X_v}_{ X_{ev}}(A_v))< D$. Hence, by Lemma \ref{lem-proj on pair qc}~\ref{lem-proj on pair qc:small-imp-small},
$diam(P^{X_v}_{A_v}(X_{ev}))\leq D'$ for some constant $D'$ depending only on $\dl_0$, $max\{k,\lm_0\}$ and $D$.

{\bf Case 3.} Suppose $v,w\in S$.
Let $B_v=P^{X_v}_{X_{ev}}(A_v)$ and $B_w=P^{X_w}_{X_{ew}}(A_w)$. Since $\mathcal A$ is a $R$-flow stable collection, we have $B_v\sse N_{R_1}(A_v)$ and $B_w\sse N_{R_1}(A_w)$ for some constant $R_1$ depending on $R$ and $\eta_0$ (see Lemma \ref{flow stable lemma2} $(1)$). Then by Lemma
\ref{lem-proj on pair qc}~\ref{lem-proj on pair qc:new proj lemma} we have
$d_{X_v}(\rho(x), P^{X_v}_{B_v}(x))\leq D_{\ref{lem-proj on pair qc}\ref{lem-proj on pair qc:new proj lemma}}(\dl_0,k+\lm_0,R_1)$.

On the other hand, $X_{vw}$ is hyperbolic and $X_v,X_w$ are uniformly qi embedded
in $X_{vw}$ by Lemma \ref{com-two-hyp-sps}. Thus by Lemma {\ref{lem-proj on pair qc}~\ref{lem-proj on pair qc:qc proj qc2}} $B_v$ is uniformly
quasiconvex in $X_{vw}$. We note that $P^{X_v}_{B_v}(x)$ is uniformly close to
$P^{X_{vw}}_{B_v}(x)$ by Lemma \ref{proj-in-diff-are-close}. Hence, $\rho(x)$ is uniformly close to $P^{X_{vw}}_{B_v}(x)$.
In the same way, $P^{X_{vw}}_{B_w}(y)$ is uniformly close to $\rho(y)$.

Next since $\mathcal A$ is a
$R$-flow stable collection we have by Lemma \ref{flow stable lemma2} $(1)$ that
$Hd_{X_{vw}}(B_v,B_w)\leq R_2$ for a constant depending on $R$. Hence, by Lemma \ref{lem-proj on pair qc}~\ref{lem-proj on pair qc:new proj lemma3}, $d_{X_{vw}}(P^{X_{vw}}_{B_v}(x), P^{X_{vw}}_{B_w}(x))$ is uniformly
small. Since by Lemma \ref{proj-on-qc}~\ref{proj-on-qc:1}, $P^{X_{vw}}_{B_w}$ is uniformly coarsely Lipschitz,
we know that $d_{X_{vw}}(P^{X_{vw}}_{B_w}(x), P^{X_{vw}}_{B_w}(y))$ is uniformly small.
Case 3 follows from these.\smallskip

This concludes Theorem \ref{thm-mitra proj on stable-satura}.\qed

\smallskip

Theorem  \ref{thm-mitra proj on stable-satura} along with Lemma \ref{proj-on-qc}~\ref{proj-on-qc:lem-retraction imp qc}
immediately give the following.
\begin{cor}\label{flow-qc}
Suppose we have the hypotheses (and notation) of Theorem \ref{thm-mitra proj on stable-satura}.
Moreover, suppose that $X$ is hyperbolic. Then we have the following:

$(1)$ $A'$ is uniformly quasiconvex in $X$.

$(2)$ $A'$ is uniformly quasiconvex in $X_S$.
\end{cor}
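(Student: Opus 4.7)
The result is essentially a direct combination of two previously established facts, so the proof plan is short.

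\textbf{Plan for (1).} By Theorem \ref{thm-mitra proj on stable-satura}, there is an $L_{\ref{thm-mitra proj on stable-satura}}$-coarsely Lipschitz retraction $\rho:X\to A'$. Since $X$ is assumed hyperbolic with hyperbolicity constant depending on the parameters of the tree of spaces (via Theorem \ref{thm-nec suff of flaring}), Lemma \ref{proj-on-qc}~\ref{proj-on-qc:lem-retraction imp qc} immediately gives that $A'$ is $K_{\ref{proj-on-qc}\ref{proj-on-qc:lem-retraction imp qc}}$-quasiconvex in $X$, with a constant depending only on $\delta_X$ and $L_{\ref{thm-mitra proj on stable-satura}}$, hence only on $R$, $D$, $k$ and the tree-of-spaces parameters.

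\textbf{Plan for (2).} First, Theorem \ref{thm-nec suff of flaring} ensures that $X_S$ is hyperbolic (with constant depending only on $\delta_X$ and the parameters). The idea is then to produce a coarsely Lipschitz retraction $X_S\to A'$ internal to $X_S$ and apply Lemma \ref{proj-on-qc}~\ref{proj-on-qc:lem-retraction imp qc} again. There are two equally quick ways to do this. The cleanest is to observe that $\pi|_{X_S}:X_S\to S$ is itself a tree of hyperbolic metric spaces with the qi embedded condition and with the same parameters, and the collection $\A=\{A_v:v\in V(S)\}$ is still $R$-flow stable with respect to this smaller tree of spaces; moreover, the $D$-saturation condition becomes vacuous since no vertex of $V(S)$ has a neighbour in $V(T)\setminus V(S)$ inside $X_S$. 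Hence Theorem \ref{thm-mitra proj on stable-satura} applies verbatim to $\pi|_{X_S}:X_S\to S$ and produces a coarsely Lipschitz retraction $\rho_S:X_S\to A'$ with constant depending only on $R,D,k$ and the parameters.

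\textbf{Conclusion.} Since $X_S$ is hyperbolic and $\rho_S:X_S\to A'$ is a coarsely Lipschitz retraction, Lemma \ref{proj-on-qc}~\ref{proj-on-qc:lem-retraction imp qc} gives that $A'$ is uniformly quasiconvex in $X_S$. No real obstacle arises here; the only minor point to verify is that one may legitimately restrict the Mitra construction to $X_S$, which is immediate since the estimates in the three cases of the proof of Theorem \ref{thm-mitra proj on stable-satura} take place entirely in a single vertex space $X_v$ or in an edge space $X_{vw}$ with $v,w\in V(S)$, and all such subspaces sit isometrically inside $X_S$ with its induced path metric.
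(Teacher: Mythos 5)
Your proof is correct and is essentially the intended argument: for (1) you apply Lemma \ref{proj-on-qc}~\ref{proj-on-qc:lem-retraction imp qc} to the Mitra retraction $\rho:X\to A'$ from Theorem \ref{thm-mitra proj on stable-satura}; for (2) you note that $X_S$ is hyperbolic (Theorem \ref{thm-nec suff of flaring}), that $\pi|_{X_S}:X_S\to S$ is itself a tree of hyperbolic spaces to which Theorem \ref{thm-mitra proj on stable-satura} applies with the saturation hypothesis now vacuous, and then apply Lemma \ref{proj-on-qc}~\ref{proj-on-qc:lem-retraction imp qc} again. This matches what the paper means by the corollary following "immediately" from those two results.
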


\begin{remark}\label{rmk-properties used}
	The main fundamental properties of flow spaces that we use in this paper are as follows. Let $R=\bm{ R_{fc}}(\lm_0)$ and $D=\bm{D_{fc}}(\lm_0)$.
	\begin{enumerate}
		\item Flow spaces respect order, which follows directly from the definition. Namely, if
		$[u,v]\cup[v,w]$ is a geodesic segment in $T$ where $u,v,w\in V(T)$, then for every vertex $w'$ in $[v,w]$, we have $\F l_{R,D}(X_u)\cap X_{w'} \subseteq \F l_{R,D}(X_v)\cap X_{w'}$.
		
		\item The flow spaces $\F l_{R,D}(X_u)$ are coarsely path connected. Moreover, given any point
		$x \in \F l_{R,D}(X_u)$, there exists a uniform qi lift through $x$ till $X_u$
		(Lemma \ref{lem-lift in flow space}).
		
		\item The flow spaces, as well as the flow-stable spaces with the saturated condition, admit a uniform coarse Lipschitz retraction of the ambient space (Theorem \ref{thm-mitra proj on stable-satura}). This, in turn, implies that these spaces are quasiconvex (Corollary \ref{flow-qc}).
	\end{enumerate}
\end{remark}

Note that $X_u$ is $0$-quasiconvex in $X_u$ for any vertex $u\in V(T)$. Keeping this in mind, we choose the constants accordingly in the following lemma.


\begin{lemma}\label{lem-flows are cobdd}
Let $D_1\ge0$, and let $R=\bm{R_{fc}}(0)$ and $D=\bm{D_{fc}}(0)$. Suppose $u,v,w\in V(T)$, and $w$ is a vertex in $[u,v]\setminus\{u,v\}$. Assume that $X$ is hyperbolic. If $\F l^X_{R,D}(X_u)$ and $\F l^X_{R,D}(X_w)$ are $D_1$-cobounded then $\F l^X_{R,D}(X_u)$ and $\F l^X_{R,D}(X_v)$ are $D'_1$-cobounded for some uniform constant $D'_1\ge0$ depending on $D_1$ and the other structural constants. 
\end{lemma}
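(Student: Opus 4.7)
The goal is to establish $\mathrm{diam}\, P^X_{A_u}(A_v) \le D''_1$ for some uniform constant $D''_1$, where we abbreviate $A_u := \F l^X_{R,D}(X_u)$ and similarly for $A_v$ and $A_w$; by Lemma \ref{lem-proj on pair qc}~\ref{lem-proj on pair qc:small-imp-small} the other inequality in the definition of coboundedness then follows automatically. By Corollary \ref{flow-qc}, the three flow spaces are uniformly quasiconvex in the hyperbolic space $X$, so $P^X_{A_u}$ is uniformly coarsely Lipschitz by Lemma \ref{proj-on-qc}~\ref{proj-on-qc:1}. Fix any $p_0 \in P^X_{A_u}(A_w)$; the hypothesis gives $P^X_{A_u}(A_w) \subseteq N^X_{D_1}(p_0)$.

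The main geometric input is a \emph{flow containment lemma}: each elementary step $B \mapsto Hull_{X_{v'}}(Tr_{w'v'}(P^{X_{w'}}_{X_{ew'}}(B)))$ of the iterated flow construction is monotone in $B \subseteq X_{w'}$, and the $R$-flowability and $D$-saturation conditions for a larger starting set $B'\supseteq B$ are automatically satisfied whenever they hold for $B$ (since the projected image of $B$ sits inside that of $B'$ in the same fiber). Iterating along the tree-geodesic from $w$ in the $v$-direction with $B = A_u \cap X_w \subseteq X_w = B'$ yields
\[
A_u \cap X_{v'} \ \subseteq \ A_w \cap X_{v'} \ \subseteq \ A_w
\]
for every $v' \in S_u$ strictly on the $v$-side of $w$. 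Symmetrically, $A_v \cap X_{v'} \subseteq A_w$ for every $v' \in S_v$ strictly on the $u$-side of $w$.

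Now fix $x \in A_v$ and set $y := P^X_{A_u}(x)$. The following three cases cover every arrangement of $\pi(x)$ and $\pi(y)$ in $T$ relative to $w$. \textbf{Case 1:} $\pi(x)$ lies at $w$ or on the $v$-side of $w$, and $\pi(y)$ lies at $w$ or on the $u$-side of $w$. Then the tree-geodesic $[\pi(x),\pi(y)]$ passes through $w$, so the $X$-geodesic $\gamma = [x,y]$ contains some point $z \in X_w \subseteq A_w$. Since $y = P^X_{A_u}(x) \in A_u$, Lemma \ref{lem-proj on pair qc}~\ref{lem-proj on pair qc:qc proj qc1} applied to $\gamma$ gives $P^X_{A_u}(\gamma) \subseteq N^X_C(y)$ for a uniform $C$; combined with $P^X_{A_u}(z) \in P^X_{A_u}(A_w) \subseteq N^X_{D_1}(p_0)$ this yields $d_X(y,p_0) \le C + D_1$. \textbf{Case 2:} $\pi(x)$ is strictly on the $u$-side of $w$. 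Then $w \in S_v$ (as $S_v$ is a subtree containing both $v$ and $\pi(x)$), and the flow containment gives $x \in A_v \cap X_{\pi(x)} \subseteq A_w$; hence $y = P^X_{A_u}(x) \in P^X_{A_u}(A_w) \subseteq N^X_{D_1}(p_0)$. \textbf{Case 3:} $\pi(y)$ is strictly on the $v$-side of $w$. Symmetrically $w \in S_u$ and $y \in A_u \cap X_{\pi(y)} \subseteq A_w$; since $y \in A_u$, we have $y = P^X_{A_u}(y) \in P^X_{A_u}(A_w) \subseteq N^X_{D_1}(p_0)$. In every case, $d_X(y,p_0)$ is uniformly bounded.

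The main obstacle I anticipate is the careful verification of the flow containment lemma: one has to check rigorously that, in the iterative definition of $\F l^X_{R,D}(\cdot)$, both the $R$-flowability and $D$-saturation checks propagate from the smaller starting set $B = A_u \cap X_w$ to the larger $B' = X_w$ through every step (they do, essentially because $P(B) \subseteq P(B')$ forces the projected diameters and flowability conditions for $B'$ to be at least as favorable as those for $B$), and that the exact monotonicity $B \subseteq B' \Rightarrow Hull(Tr(P(B))) \subseteq Hull(Tr(P(B')))$ of the three elementary flow operations is preserved at each iterative step. Once this structural lemma is secured, the case split above together with the standard hyperbolic-geometry projection lemmas collected in Lemma \ref{lem-proj on pair qc} reduces the proof to routine manipulations.
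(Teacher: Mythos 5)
Your proof is correct and takes essentially the paper's route: the core of both arguments is that the flows of $X_u$ and $X_v$ past $w$ are contained in $\F l^X_{R,D}(X_w)$, after which a single direction of the coboundedness estimate follows and Lemma \ref{lem-proj on pair qc}~\ref{lem-proj on pair qc:small-imp-small} supplies the other; your three-case split just makes explicit what the paper's terse separation argument leaves to the reader. On the one point you flag: $P(B)\subseteq P(B')$ is exactly the right reason for the saturation (diameter lower bound) check to propagate from $B$ to $B'$, but it does not by itself give $R$-flowability for $B'$, i.e.\ $P(B')\subseteq N_R(B')$ — that instead comes for free because $X_w$ is trivially flowable into any adjacent fiber (as $P^{X_w}_{X_{ew}}(X_w)=X_{ew}\subseteq X_w$) and, at subsequent vertices, the flow-stability of $\F l^X_{R,D}(X_w)$ guaranteed by Proposition \ref{consistent qc} together with Lemma \ref{flow stable lemma2}(1) supplies flowability wherever that flow continues; with that replacement the termwise monotonicity of $P$, $Tr$ and $Hull$ closes the induction exactly as you describe.
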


\begin{proof}
	Suppose $w'$ is the vertex in $[w,v]$ adjacent to $w$. Let $T_1$ be the connected component of $T\setminus\{w'\}$ containing $w$. Then it follows from the definition of flow spaces (see also Remark \ref{rmk-properties used} $(1)$) that $\pi(\F l^X_{R,D}(X_v))\cap T_1\sse\pi(\F l^X_{R,D}(X_w))$. Note that $X_w$ is a separator of $X_u$ and $X_{T_2}$ where $T_2$ is the connected component of $T\setminus\{w\}$ containing $w'$. Hence it follows that the diameter of nearest point projections of $\F l^X_{R,D}(X_v)$ onto $\F l^X_{R,D}(X_u)$ is bounded by $D_1$. Since $X$ is hyperbolic and flow spaces are uniformly quasiconvex in $X$ (Corollary \ref{flow-qc}), the result follows from Lemma \ref{lem-proj on pair qc}~\ref{lem-proj on pair qc:small-imp-small}.
\end{proof}

\subsection{Boundary of X}\label{subsec-boundary of trees of sps}
In general it is difficult to give a satisfactory description of
the geodesic rays in $X$ even when $X$ is hyperbolic. However, the following
theorem is somewhat informative in this connection. {\em For rest of the section, we will assume that $X$ is hyperbolic}


\begin{theorem}\label{bdry of X}
	Suppose $\xi\in \partial X$. Then there is a sequence $\{x_n\}$ in $X$ with $\LMX x_n=\xi$ with
	one of the following additional properties:\\
	$(1)$ $\{\pi(x_n)\}$ is a constant sequence or \\
	$(2)$ there is a geodesic ray $\alpha\subset T$ such that
	$\pi(x_n)\in \alpha$ for all $n\in\N$ and $\lim^T_{n\map \infty} \pi(x_n)=\alpha(\infty)$.
\end{theorem}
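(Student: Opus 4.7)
Pick a geodesic ray $\gamma\colon[0,\infty)\map X$ with $\gamma(0)=x_0\in X_{v_0}$ and $\gamma(\infty)=\xi$, and set $\tilde\gamma=\pi\circ\gamma$, a $1$-Lipschitz path in $T$. The argument splits according to whether $\tilde\gamma([0,\infty))$ is bounded in $T$; these alternatives will yield conclusions $(1)$ and $(2)$ respectively. A preliminary input used throughout is that since $X$ is proper, $\pi(B_X(x_0,R))$ is a compact subset of $T$ for each $R\geq 0$, hence a finite subtree; consequently only finitely many vertices $v$ satisfy $X_v\cap B_X(x_0,R)\neq\emptyset$, and at each vertex only finitely many incident edges $e$ satisfy $X_e\cap B_X(x_0,R)\neq\emptyset$ (for an infinite such family would yield infinitely many points at pairwise distance $\geq 1$ in the compact ball $B_X(x_0,R+1)$).

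In Case~$1$ ($\tilde\gamma$ bounded in $T$), I produce a point $p\in T$ and a subsequence $n_k\to\infty$ with $\tilde\gamma(n_k)\to p$. If only finitely many vertices of $T$ lie in $\tilde\gamma([0,\infty))$, this is pigeonhole together with continuity of $\tilde\gamma$; otherwise the bounded subtree containing $\tilde\gamma([0,\infty))$ must have a vertex $v^{*}$ of infinite valence, and $\tilde\gamma$ is forced to revisit $v^{*}$ at times tending to infinity (in order to traverse each of the infinitely many incident edges it has entered), giving the stronger conclusion $\tilde\gamma(n_k)=v^{*}$. Given such $p$ and $n_k$, transport $\gamma(n_k)$ to the fiber $X_p$ at cost at most $d_T(\tilde\gamma(n_k),p)\to 0$ using that each edge region $\pi^{-1}(e)$ is isometric (via $f_e$) to $X_e\times[0,1]$ in the $[0,1]$ coordinate. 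The resulting $x_k\in X_p$ satisfies $\pi(x_k)=p$ constantly and $\lim^X x_k=\xi$ by Lemma~\ref{lem-close imp same limit}.

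In Case~$2$ ($\tilde\gamma$ unbounded in $T$), pick $n_k$ with $r_k:=d_T(v_0,\tilde\gamma(n_k))\to\infty$ and let $\beta_k\subset T$ be the $T$-geodesic from $v_0$ to $\tilde\gamma(n_k)$; connectedness of $\tilde\gamma([0,n_k])$ forces $\beta_k\subset\tilde\gamma([0,n_k])$. A K\"onig-style diagonal argument, using the preliminary finiteness statement at each stage to either restrict to a single edge direction at the current vertex or to invoke the Case~$1$ conclusion at some intermediate vertex of infinite valence, produces a geodesic ray $\alpha\colon[0,\infty)\map T$ from $v_0$ with $\beta_{k_j}$ sharing arbitrarily long initial segments with $\alpha$ along a subsequence. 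Since $\beta_{k_j}\subset\tilde\gamma([0,n_{k_j}])$, for each $j$ we may pick $t_j\leq n_{k_j}$ with $\tilde\gamma(t_j)=\alpha(L_j)$ for some $L_j\to\infty$; then $x_j:=\gamma(t_j)$ lies over $\alpha$, and since $t_j\geq L_j\to\infty$ we have $\lim^X x_j=\xi$ and $\pi(x_j)=\alpha(L_j)\to\alpha(\infty)$ in $T$.

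The main obstacle is that $T$ is not assumed to be locally finite, so neither the extraction of the limit point $p$ in Case~$1$ nor the construction of the ray $\alpha$ in Case~$2$ follows from an immediate K\"onig-type argument on $T$ itself. Properness of $X$ is the substitute that drives both steps via the relative local finiteness recorded at the outset; once $p$ or $\alpha$ is produced, the manufacture of the sequence $\{x_n\}$ inside the fiber $X_p$ or along $\alpha$ is a short manipulation in the edge regions of the tree of spaces.
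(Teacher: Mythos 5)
Your route is a genuine variant of the paper's. The paper starts from an arbitrary sequence $\{x_n\}$ with $\LMX x_n=\xi$, normalizes so $\pi(x_n)$ are vertices, forms the convex hull $T_1$ of $\{\pi(x_n)\}$, and splits on whether $T_1$ is locally finite; the non-locally-finite case is handled by Lemma~\ref{star vertex}, which rests on the local finiteness of $\{X_{e_\lambda u}\}$ in $X_u$ (Lemma~\ref{lem-edge sps loc finite}), i.e.\ on properness of $X$. You instead fix a geodesic ray $\gamma$ to $\xi$ and analyze the shadow $\tilde\gamma=\pi\circ\gamma$, splitting on boundedness. Your main technical device is a clean, lighter substitute for local finiteness of $T$: a $1$-Lipschitz continuous path that lands in two distinct components of $T\setminus\{v^*\}$ must pass through $v^*$ between those times, and since there are infinitely many integer times involved, the passage times tend to infinity, giving conclusion~$(1)$ with $p=v^*$. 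The final extraction in Case~$2$ (choosing $t_j\le n_{k_j}$ with $\tilde\gamma(t_j)=\alpha(L_j)$ and $t_j\ge L_j\to\infty$) is exactly right.

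There is, however, a gap in sub-case~$1a$ as written, and a related misattribution in Case~$2$. ``Only finitely many vertices of $T$ lie in $\tilde\gamma([0,\infty))$'' does \emph{not} put $\{\tilde\gamma(n)\}$ in a finite subgraph: $\tilde\gamma$ can enter infinitely many distinct edges only partially (never reaching the far endpoint, so contributing no new vertex), so the integer-time values $\tilde\gamma(n)$ can be scattered among infinitely many open edges emanating from a single visited vertex, and pigeonhole plus continuity does not produce a limit point ($T$ is not proper, so a bounded set in $T$ need not have an accumulation point). The fix is precisely your revisit argument, which you should apply here too: pigeonhole the infinitely many entered edges to a single visited vertex $v^*$, then use the fact that between consecutive $n_k$ with $\tilde\gamma(n_k)$ in distinct edges at $v^*$ the path passes through $v^*$, at times $\to\infty$. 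Similarly, in Case~$2$ the ``preliminary finiteness statement'' about $B_X(x_0,R)$ does not control the first edges of the $\beta_k$, since those edges are crossed by $\tilde\gamma$ at times that grow with $k$, far outside any fixed ball; what actually closes the K\"onig step is again the revisit observation: infinitely many distinct next edges at the current vertex $v$ force $\tilde\gamma$ back through $v$ at times $\to\infty$, landing you in conclusion~$(1)$. So the correct dichotomy at each stage is ``finitely many distinct next edges among the $\beta_k$ $\Rightarrow$ pigeonhole; infinitely many $\Rightarrow$ revisit $\Rightarrow$ conclusion~$(1)$,'' and the ball-finiteness preliminary is not in fact what drives either case.
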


{\em Proof of Theorem \ref{bdry of X}:}
Let $\{x'_n\}$ be any sequence in $X$ such that $\LMX x'_n=\xi$. Moreover, we may assume that $\pi(x'_n)$ are vertices of $T$.
Let $T_1$ be the convex hull in $T$ of the set $\{\pi(x'_n):n\in \N\}$. There are
two cases to consider.

\noindent{\bf Case 1}: Suppose $T_1$ is a locally finite tree, i.e., all its vertices are of finite degree.
Note that if $T_1$ is bounded then $T_1$ consists of finitely many edges (and vertices), and thus $\{x'_n\}$ has a subsequence $\{x'_{n_k}\}$ such that $\{\pi(x'_{n_k})\}$ is a single vertex.

\noindent Otherwise, suppose that $T_1$ is a unbounded. Then,
by Lemma \ref{unbdd-pro-sp-ray}, there is a geodesic ray $\alpha: [0,\infty)\map T_1$.
Let $u=\alpha(0)$ and let $\{x'_{n_k}\}$ be a subsequence of $\{x'_n\}$
such that $\lim^T_{k\map \infty} \pi(x'_{n_k})=\alpha(\infty)$. Let $x\in X_u$ be any point.
Let $v_k$ be the nearest point projection of $\pi(x'_{n_k})$ on $\alpha$. We note that
$[x,x'_{n_k}]_X\cap X_{v_k}\neq \emptyset$ for all $k\in \N$. Let $x_k\in [x,x'_{n_k}]_X\cap X_{v_k}$
for all $k\in \N$. Then by Lemma \ref{seq-on-con-geo}, $\LMX x'_n=\lim^X_{k\map \infty} x'_{n_k}=\lim^X_{k\map \infty} x_k$.

\noindent{\bf Case 2}: Suppose $T_1$ has a vertex of infinite degree. Then we are done by Lemma \ref{star vertex}
below.
\qed

\begin{lemma}\label{star vertex}
	Suppose $\{x_n\}$ is a sequence in $X$ such that $\LMX x_n\in \partial X$.
	Suppose $T_1$ is the convex hull in $T$ of the set $\{\pi(x_n):n\in\N\}$ and that there
	is a vertex $u$ of $T_1$ which has infinite degree in $T_1$.
	Let $\{x_{n_k}\}$ be any subsequence of $\{x_n\}$ such that
	$[u,\pi(x_{n_k})]\cap [u,\pi(x_{n_l})]=\{u\}$ for all $k\neq l$.
	Let $e_k$ be the edge on $[u,\pi(x_{n_k})]$ incident on $u$.
	Then (1) for all $k\in \N$, there is $x'_k\in X_{e_k u}$ such that $\LMX x_n=\LMX x'_n$.

	(2) Suppose that the subsequence $\{x_{n_k}\}$ is chosen
	in such a way that the sets $X_{e_ku}$ converges
	to a point of $\partial X_u$, and suppose that
	$x''_k$ is an arbitrary point of $X_{e_ku}$ for all $k\in \N$. Then $\LMX x_n=\LMX x''_n$.
\end{lemma}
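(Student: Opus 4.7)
The plan is to combine a direct geodesic-tracking argument for (1) with the flow-space machinery of Subsection \ref{subsec-flow space} together with Lemma \ref{qc conv criteria} for (2).

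For part (1), I fix a basepoint $x \in X_u$ and consider a geodesic $\gamma_k := [x, x_{n_k}]_X$. Because $\pi$ is $1$-Lipschitz and $T$ is a tree, the connected image $\pi(\gamma_k)$ is a path from $u$ to $\pi(x_{n_k})$ and therefore contains $[u, \pi(x_{n_k})]$, in particular the midpoint $m_{e_k}$ of $e_k$. I pick $p_k \in \gamma_k \cap X_{e_k}$ and set $x'_k := f_{e_k, u}(p_k) \in X_{e_k u}$; the product structure of $\pi^{-1}(e_k)$ in Definition \ref{tree-of-sps} yields $d_X(p_k, x'_k) = 1/2$. Since the edges $e_k$ are pairwise distinct by hypothesis, Lemma \ref{lem-edge sps loc finite} makes the family $\{X_{e_k u}\}$ locally finite in $X_u$, so $d_{X_u}(x, x'_k) \to \infty$; the $\eta_0$-proper embedding of $X_u$ in $X$ (Proposition \ref{pro-emb}) then forces $d_X(x, x'_k) \to \infty$, hence $d_X(x, p_k) \geq d_X(x, x'_k) - 1/2 \to \infty$. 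Lemma \ref{seq-on-con-geo} gives $\LMX p_k = \LMX x_{n_k} = \xi$, and Lemma \ref{lem-close imp same limit} applied to $d_X(p_k, x'_k) = 1/2$ upgrades this to $\LMX x'_k = \xi$.

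For part (2), I introduce the flow space $F_k := \F l^X_{R, D}(X_{e_k u})$ with $R = \bm{R_{fc}}(\lambda_0)$ and $D = \bm{D_{fc}}(\lambda_0)$; by Corollary \ref{flow-qc} the $F_k$ are uniformly quasiconvex in the hyperbolic space $X$ and each contains $X_{e_k u}$, so both $x'_k$ and any chosen $x''_k$ lie in $F_k$. Once I establish $d_X(x, F_k) \to \infty$, combined with $x'_k \in F_k$ and $\LMX x'_k = \xi$ from part (1), Lemma \ref{qc conv criteria}(2) shows that the sets $F_k$ converge to $\xi$ in $\bar X$, and Lemma \ref{qc conv criteria}(1) then delivers $\LMX x''_k = \xi$. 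To establish $d_X(x, F_k) \to \infty$, I invoke the uniformly coarsely Lipschitz retraction $\rho_k: X \to F_k$ of Theorem \ref{thm-mitra proj on stable-satura}, whose restriction to $X_u$ is the nearest point projection onto $X_{e_k u}$. As in part (1), local finiteness of $\{X_{e_k u}\}$ in $X_u$ together with proper embedding $X_u \hookrightarrow X$ yield $d_X(x, \rho_k(x)) \to \infty$. Letting $y_k \in F_k$ be a nearest point to $x$ in $X$ and using $\rho_k(y_k) = y_k$ with coarse Lipschitz constant $L$, I obtain $d_X(\rho_k(x), y_k) \leq L\, d_X(x, y_k) + L$, whence $d_X(x, \rho_k(x)) \leq (L + 1)\, d_X(x, F_k) + L$ and $d_X(x, F_k) \to \infty$.

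The main obstacle is the lower bound $d_X(x, F_k) \to \infty$ in part (2): a flow space can extend deep into vertex spaces far from $u$, so its nearest point to $x$ need not sit in $X_u$ and one cannot invoke local finiteness inside $X_u$ directly. The coarsely Lipschitz retraction of Theorem \ref{thm-mitra proj on stable-satura} converts this into a distance estimate inside $X_u$, which Lemma \ref{lem-edge sps loc finite} and the proper-embedding hypothesis then close off. I remark that the hypothesis on convergence of the $X_{e_k u}$ to a point of $\partial X_u$ does not enter the argument above; it is a natural companion to the conclusion but is not strictly needed.
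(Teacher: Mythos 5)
Your proof of part (1) is essentially the paper's argument, just repackaged: you first locate a point $p_k$ in the midpoint edge space $X_{e_k}$ and then pass to its image in $X_{e_k u}$, while the paper simply takes $x'_k \in [x,x_{n_k}]_X \cap X_{e_k u}$ directly; both then invoke local finiteness of $\{X_{e_k u}\}$ in $X_u$, the proper embedding of $X_u$ in $X$, and Lemma \ref{seq-on-con-geo}. The minor difference is cosmetic.

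Part (2) is where you genuinely diverge, and your route is correct but different from the paper's. The paper uses the hypothesis $X_{e_k u} \to \eta \in \partial X_u$ to apply Lemma \ref{qc conv criteria}(1) \emph{inside} $X_u$, deducing that the sequences $\{x'_k\}$ and $\{x''_k\}$ share a limit in $\partial X_u$, and then pushes this equality forward via Mitra's CT map $\partial X_u \to \partial X$. You instead work entirely in the ambient space $X$: you build the flow spaces $F_k = \F l^X_{R,D}(X_{e_ku})$, get uniform quasiconvexity from Corollary \ref{flow-qc}, establish $d_X(x,F_k)\to\infty$ by combining the Mitra retraction (Theorem \ref{thm-mitra proj on stable-satura}) with local finiteness in $X_u$, and then run Lemma \ref{qc conv criteria}(2) followed by (1) on the $F_k$ directly. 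A pleasant byproduct, which you correctly flag, is that your argument never uses the hypothesis that $\{X_{e_ku}\}$ converges to a point of $\partial X_u$, so you prove a slightly stronger statement. The trade-off is clear: the paper's version is a two-liner given that the CT map $X_u \to X$ is already available and that the subsequence has been pre-arranged to converge in $\partial X_u$ (which costs nothing, by Lemma \ref{seq-set-con}); your version is heavier, pulling in the flow-space and retraction machinery, but is more self-contained in the sense of working directly in $\partial X$ without a detour through $\partial X_u$ and without the extra convergence hypothesis. One small remark: you assert $d_X(p_k,x'_k) = 1/2$ exactly; what you actually need is only the upper bound $d_X(p_k,x'_k)\le 1/2$ from the vertical path, which together with the $1$-Lipschitz property of $\pi$ gives equality, but the upper bound alone already suffices for Lemma \ref{lem-close imp same limit}.
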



\proof
Note that for all $k\in \N$, $[x,x_{n_k}]_X\cap X_{e_k u}\neq \emptyset$. Let $x'_k$
be any point of $[x,x_{n_k}]_X\cap X_{e_k u}$.
Since $\{X_{e_ku}\}$ is a locally finite collection of subsets in $X_u$ by Lemma \ref{lem-edge sps loc finite},
$d_{X_u}(x, X_{e_ku})\map \infty$ as $k\map \infty$. It follows that $d_{X_u}(x,x'_k)\map \infty$
as $k\map \infty$, and so $d_X(x,x'_k)\map\infty$ as $k\map\infty$. Then it follows from Lemma \ref{seq-on-con-geo} that $\LMX x'_n=\LMX x_n$.

Moreover if $X_{e_ku}\map \eta\in \partial X_u$ as $k\map \infty$ then for any choices of $x''_k\in X_{e_ku}$, $k\in \N$,
we have $\lim^{X_u}_{n\map \infty} x'_n =\lim^{X_u}_{n\map \infty} x''_n$ (see Lemma \ref{qc conv criteria} $(1)$). Since the inclusion $X_u\map X$
admits a CT map (\cite{mitra-trees}) we have $\LMX x''_n= \LMX x'_n=\LMX x_n$.
\qed\smallskip 

The following is a simple instance of Theorem \ref{bdry of X}, which can be easily verified.
\begin{lemma}\label{lem-not geo ray imp seq in vertex}
Suppose $\al$ is a geodesic ray in $X$. Assume that $\pi(\al)$ does not contain any geodesic ray. Then there is an unbounded sequence $\{r_n\}\sse\R_{\ge0}$ such that $\al(r_n)\in X_u$ and $\lim^{X_u}_{n\map\infty}\al(r_n)$ exists in $\pa X_u$. 
\end{lemma}

In the rest of the subsection we prove a few other related results which come to use in the later
part of the paper.


\begin{lemma}\label{reduction0}
	Suppose $\{x_n\}$ is an unbounded sequence in $X$ such that $\LMX x_n$ exists.
	Let $u_n=\pi(x_n)$ for all $n\in \N$, and let $d_T(u,u_n)\ri\infty$ as $n\ri\infty$.
	Let $e_n$ be the edge on $[u,u_n]$ incident on $u_n$ for all $n\in \N$ and
	let $x\pr_n$ be a nearest point projection of $x_n$ on $X_{e_nu_n}$ in $X_{u_n}$.
	Then $\LMX x'=\LMX x_n$.
\end{lemma}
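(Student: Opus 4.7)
The plan is to fix a base point $x \in X_u$ and, via Lemma \ref{con-same-pt}, reduce the conclusion $\LMX x'_n = \LMX x_n$ to showing $d_X(x, [x_n, x'_n]_X) \to \infty$. Rather than compare $x_n$ and $x'_n$ directly, I will introduce an auxiliary sequence $y_n \in X_{e_n u_n}$ that is manifestly controlled by the $X$-geodesic $\al_n := [x, x_n]_X$, prove $\LMX y_n = \LMX x_n$, and then use uniform quasiconvexity of the edge fibre $X_{e_n u_n}$ in $X$ to conclude $\LMX x'_n = \LMX y_n$.

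First I would construct $y_n$ as follows. Since $e_n$ lies on $[u, u_n] \subset T$ and $\pi$ is $1$-Lipschitz, $\al_n$ must cross the midpoint fibre $\pi^{-1}(m_{e_n})$; I pick $p_n \in \al_n$ with $\pi(p_n) = m_{e_n}$ and set $y_n := f_{e_n, u_n}(p_n) \in X_{e_n u_n}$, so $d_X(p_n, y_n) \le 1/2$. Since $d_X(x, p_n) \ge d_T(u, m_{e_n}) = d_T(u, u_n) - 1/2 \to \infty$, Lemma \ref{seq-on-con-geo} gives $\LMX p_n = \LMX x_n$, and Lemma \ref{lem-close imp same limit} then promotes this to $\LMX y_n = \LMX x_n$. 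The problem thus reduces to showing $\LMX x'_n = \LMX y_n$, with both sequences lying in the single edge fibre $X_{e_n u_n}$.

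For the reduced task I will invoke that in the present setting each incidence image $X_{eu}$ is uniformly qi-embedded in the ambient hyperbolic space $X$ (Mitra \cite{mitra-trees}), so $X_{e_n u_n}$ is $K$-quasiconvex in $X$ for a uniform $K$. Then $[y_n, x'_n]_X \subset N^X_K(X_{e_n u_n}) \subset N^X_K(X_{u_n})$, and the $1$-Lipschitzness of $\pi$ forces every $z$ on this geodesic to satisfy $d_T(\pi(z), u_n) \le K$, whence
$$d_X(x, z) \ge d_T(u, \pi(z)) \ge d_T(u, u_n) - K \to \infty.$$
A second application of Lemma \ref{con-same-pt} then yields $\LMX x'_n = \LMX y_n = \LMX x_n$, as required. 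The main obstacle is the uniform-in-$n$ quasiconvexity of the edge fibres $X_{e_n u_n}$ in $X$; everything else is a short projection estimate through $\pi$, but this quasiconvexity genuinely uses both the qi-embedded hypothesis on incidence maps and the hyperbolicity (equivalently, flaring) of $X$, and it is essential that the constant $K$ is independent of $e_n$. If one prefers not to cite Mitra directly, the same conclusion can be extracted from the uniformly quasiconvex flow spaces $\F l^X(X_{e_n u_n})$ of Subsection \ref{subsec-flow space} together with Theorem \ref{thm-mitra proj on stable-satura}.
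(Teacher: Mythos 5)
Your construction of $y_n$ is sound: locating $p_n \in \al_n \cap \pi^{-1}(m_{e_n})$ on the $X$-geodesic $[x,x_n]_X$, pushing it into $X_{e_n u_n}$ to get $y_n$, and applying Lemmas \ref{seq-on-con-geo} and \ref{lem-close imp same limit} correctly gives $\LMX y_n = \LMX x_n$. The gap is the passage from $\LMX y_n$ to $\LMX x'_n$. You assert that ``each incidence image $X_{eu}$ is uniformly qi-embedded in the ambient hyperbolic space $X$ (Mitra \cite{mitra-trees}),'' but this is false and is not what Mitra proves. Mitra's theorem establishes the existence of Cannon--Thurston maps $\partial X_u \to \partial X$, which is interesting precisely because the vertex and edge fibres are in general \emph{not} quasiconvex in $X$: take $X = \mathbb{H}^3$ as the universal cover of a closed hyperbolic surface bundle over $S^1$, so $T = \mathbb{R}$ and every vertex or edge fibre is a copy of $\mathbb{H}^2$, emphatically not quasiconvex in $\mathbb{H}^3$. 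A quick sanity check reveals the error: your argument never uses that $x'_n$ is the \emph{nearest point projection} of $x_n$ on $X_{e_n u_n}$; it would apply verbatim to any sequence $x'_n \in X_{e_n u_n}$ and conclude that all such sequences converge to $\LMX x_n$, which is false (in the fibre-bundle example, two sequences running to distinct ends of $X_{e_n u_n} \cong \mathbb{H}^2$ land on distinct points of $\partial \mathbb{H}^3$).

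Your fallback of replacing the edge fibre by its flow space $\F l^X_{R,D}(X_{e_n u_n})$ does not repair this. The flow space of the \emph{entire} edge fibre need not be confined to the side of $e_n$ away from $u$; in the fibre-bundle example it is all of $X$ and so contains the base point $x$, destroying the distance estimate. The key step that your proposal is missing, and which the paper supplies, is to flow not the edge fibre but the vertex-space geodesic $[x_n, x'_n]_{X_{u_n}}$ itself, and to exploit the hypothesis on $x'_n$: since $x'_n = P^{X_{u_n}}_{X_{e_n u_n}}(x_n)$, Lemma \ref{lem-proj on pair qc}~\ref{lem-proj on pair qc:qc proj qc1} gives a uniform bound $D_1$ on $\mathrm{diam}\, P^{X_{u_n}}_{X_{e_n u_n}}([x_n,x'_n]_{X_{u_n}})$. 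Choosing the saturation constant $D \ge D_1$, the flow of $[x_n,x'_n]_{X_{u_n}}$ never crosses $e_n$, so it coincides with the flow computed inside $W_n = \pi^{-1}(T_n)$ for the subtree $T_n$ containing $u_n$ and missing $e_n$. This flow space is uniformly quasiconvex in $X$ by Corollary \ref{flow-qc}, contains both $x_n$ and $x'_n$, and its $\pi$-image lies in $T_n$, hence at $T$-distance $\ge d_T(u,u_n)$ from $u$; the Morse lemma then gives $d_X(x,[x_n,x'_n]_X) \ge d_T(u,u_n) - D_2 \to \infty$, and Lemma \ref{con-same-pt} finishes. The small-projection observation that prevents the flow from backtracking through $e_n$ is the heart of the lemma, and it appears nowhere in your proposal.
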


\begin{proof}
Note that by Lemma {\ref{lem-proj on pair qc}~\ref{lem-proj on pair qc:qc proj qc1}}, diam$\{P^{X_{u_n}}_{X_{e_nu_n}}([x_n,x'_n]_{X_{u_n}})\}$ is uniformly bounded in $X_{u_n}$; suppose it is bounded by $D_1$ for some uniform constant $D_1\ge0$. Let $T_n$ be the subtree containing $u_n$ not containing interior of $e_n$. Note that $[x_n,x'_n]_{X_{u_n}}$ is $\dl_0$-quasiconvex in $X_{u_n}$. Let $D=max\{\bm{D_{fc}}(\dl_0),D_1\}$ and $R=\bm{R_{fc}}(\dl_0)$. Let $W_n=\pi^{-1}(T_n)$, and let $A_n=\F l^{W_n}_{R,D}([x_n,x'_n]_{X_{u_n}})$ be the $(R,D)$-flow space of $[x_n,x'_n]_{X_{u_n}}$ in $W_n$. Observe that $A_n$ is also $(R,D)$-flow space of $[x_n,x'_n]_{X_{u_n}}$ in $X$ containing $x_n$ and $x'_n$, and $d_X(x,A_n)\ge d_T(u,u_n)$ where $x\in X_u$ is fixed. On the other hand, $A_n$ is $K$-quasiconvex in $X$ for some uniform constant $K\ge0$ by Corollary \ref{flow-qc} $(1)$. Therefore, by the stability of quasigeodesic (Lemma \ref{ml}), there is a uniform constant $D_2\ge0$ such that $d_X(x,[x_n,x'_n]_X)\ge d_X(x,A_n)-D_2\ge d_T(u,u_n)-D_2$. Hence $d_X(x,[x_n,x'_n]_X)\map\infty$ as $n\map\infty$. Thus by Lemma \ref{con-same-pt}, we have $\LMX x'=\LMX x_n$.
\end{proof}

Given $\xi\in \partial X$, by Theorem \ref{bdry of X} there is a sequence $\{x_n\}$
such that either $\{\pi(x_n)\}$ is constant or $\lim^T_{n\map \infty} \pi(x_n)\in \partial T$.
However, these two possibilities are not mutually exclusive, i.e.,
we may have two different sequences $\{x_n\}$ and $\{x'_n\}$ such that
$\LMX x_n=\LMX x'_n=\xi$ where
$\{\pi(x_n)\}$ is constant but $\{\pi(x'_n)\}$ converges to a point of $\partial T$.
The following lemma records the implication of such an instance.


\begin{lemma}\label{not-hv-pro-sub}
Let $R=\bm{ R_{fc}}(\lm_0)\ge\bm{R_{fc}}(0)$ and $D=\bm{D_{fc}}(\lm_0)\ge\bm{D_{fc}}(0)$. Suppose $\{x_n\}$, $\{x'_n\}$ are two unbounded sequences in $X$ such that
	$\LMX x_n=\LMX x'_n\in \partial X$, and
	$\lim^T_{n\map \infty} \pi(x_n)=\xi\in \partial T$.
	Suppose that the nearest point projection of
	each $\pi(x'_n)$ on the geodesic ray $[u,\xi)$ is $u$ for some $u\in V(T)$.
	
	Then
$\F l^X_{R,D}(X_u)$ and $\F l^X_{R,D}(X_v)$ are not cobounded for any
	vertex $v\in [u,\xi)\setminus\{u\}$. Moreover, there is a uniform constant $D'\ge0$ such that $N_{D'}(\F l^X_{R,D}(X_u))\bigcap N_{D'}(\F l^X_{R,D}(X_v))\ne\emptyset$.
In particular, $$N_{D''}(\F l^X_{R,D}(X_{e_1u}))\bigcap N_{D''}(\F l^X_{R,D}(X_{e_2v}))\ne\emptyset$$ for some uniform constant $D''\ge0$ where $e_1$ and $e_2$ are the edges in $[u,\xi)$ incident on $u$ and $v$ respectively.
	
\end{lemma}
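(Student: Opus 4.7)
The plan is to argue by contradiction that $\A:=\F l^X_{R,D}(X_u)$ and $\B:=\F l^X_{R,D}(X_v)$ cannot be cobounded, and then to deduce the moreover clause from the separated-versus-cobounded dichotomy of Lemma~\ref{lem-cobdd}~\ref{lem-cobdd:R-sep-D-cobdd}. By Corollary~\ref{flow-qc}, both $\A$ and $\B$ are uniformly quasiconvex in the hyperbolic space $X$ with some constant $k^*$, and by definition of flow spaces $X_u\sse\A$ and $X_v\sse\B$. For each $n$, let $\al_n$ be a geodesic in $X$ joining $x'_n$ to $x_n$. Since $\LMX x_n=\LMX x'_n$, Lemma~\ref{con-same-pt} gives $d_X(p,\al_n)\ri\infty$ for every fixed $p\in X$.

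The projection hypothesis on $\pi(x'_n)$ places it in the component of $T$ containing $u$ once the initial edge of $[u,\xi)$ is removed, while $\lim^T_{n\map\infty}\pi(x_n)=\xi$ forces, for $n$ large, the nearest point of $\pi(x_n)$ on $[u,\xi)$ to lie strictly beyond $v$. Hence the $T$-geodesic $[\pi(x'_n),\pi(x_n)]$ contains $[u,v]$; since $\pi(\al_n)$ is a connected (hence convex) subtree of $T$ with these endpoints, $[u,v]\sse\pi(\al_n)$. For such $n$ we pick $a_n\in\al_n\cap X_u\sse\A$ and $b_n\in\al_n\cap X_v\sse\B$. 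Assume now for contradiction that $\A$ and $\B$ are $D_1$-cobounded for some $D_1\ge0$, and fix any $p^*\in P^X_\A(\B)$, chosen independently of $n$. Applying Lemma~\ref{lem-cobdd}~\ref{lem-cobdd:qc proj new} with $U=\A$, $V=\B$, $x=a_n$, $y=b_n$, the segment $[a_n,b_n]_X$ meets the $D_{\ref{lem-cobdd}\ref{lem-cobdd:qc proj new}}(\dl_X,k^*)$-neighbourhood of $P^X_\A(\B)$, and since $P^X_\A(\B)$ has diameter at most $D_1$ it passes within $D_1+D_{\ref{lem-cobdd}\ref{lem-cobdd:qc proj new}}(\dl_X,k^*)$ of $p^*$. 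But $[a_n,b_n]_X\sse\al_n$, which bounds $d_X(p^*,\al_n)$ uniformly in $n$, contradicting $d_X(p^*,\al_n)\ri\infty$. Thus $\A$ and $\B$ cannot be cobounded.

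For the moreover clause, since $\A$ and $\B$ in particular fail to be $D_{\ref{lem-cobdd}\ref{lem-cobdd:R-sep-D-cobdd}}(\dl_X,k^*)$-cobounded, the contrapositive of Lemma~\ref{lem-cobdd}~\ref{lem-cobdd:R-sep-D-cobdd} forces them not to be $R_{\ref{lem-cobdd}\ref{lem-cobdd:R-sep-D-cobdd}}(\dl_X,k^*)$-separated. Setting $D':=R_{\ref{lem-cobdd}\ref{lem-cobdd:R-sep-D-cobdd}}(\dl_X,k^*)$, which depends only on the structural parameters and not on $v$, we get $a\in\A$ and $b\in\B$ with $d_X(a,b)<D'$, whence $a\in N_{D'}(\A)\cap N_{D'}(\B)$. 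The one step requiring real attention is verifying $[u,v]\sse\pi(\al_n)$; this follows immediately from the two convergence hypotheses, and no deeper obstacle is anticipated, as the rest of the argument reduces to the standard separation dichotomy for quasiconvex subsets of a hyperbolic space.
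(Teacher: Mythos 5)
Your proof is correct, and the overall strategy matches the paper's: assume for contradiction that $\A=\F l^X_{R,D}(X_u)$ and $\B=\F l^X_{R,D}(X_v)$ are $D_1$-cobounded, extract from $[x'_n,x_n]_X$ a subsegment with endpoints in $X_u\sse\A$ and $X_v\sse\B$, apply Lemma~\ref{lem-cobdd}~\ref{lem-cobdd:qc proj new} to show the geodesics stay uniformly close to a fixed point, and derive a contradiction with Lemma~\ref{con-same-pt}; the ``moreover'' clause is extracted from the contrapositive of Lemma~\ref{lem-cobdd}~\ref{lem-cobdd:R-sep-D-cobdd} with exactly the paper's choice $D'=R_{\ref{lem-cobdd}\ref{lem-cobdd:R-sep-D-cobdd}}$.

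The one place where your route diverges from the paper's is worth noting. The paper's proof does not apply Lemma~\ref{lem-cobdd}~\ref{lem-cobdd:qc proj new} directly to the pair $(\A,\B)$: it first invokes Lemma~\ref{lem-flows are cobdd} to transfer the assumed coboundedness of $(\F l^X_{R,D}(X_u),\F l^X_{R,D}(X_v))$ to the pair $(\F l^X_{R,D}(X_u),\F l^X_{R,D}(X_{v_n}))$, where $v_n$ is the projection of $\pi(x_n)$ onto $[u,\xi)$, and only then invokes Lemma~\ref{lem-cobdd}~\ref{lem-cobdd:qc proj new}. You bypass this by picking the endpoints $a_n\in\al_n\cap X_u$, $b_n\in\al_n\cap X_v$, applying the lemma to $(\A,\B)$ with the fixed anchor point $p^*\in P^X_\A(\B)$, and using $[a_n,b_n]_X\sse\al_n$. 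This is cleaner: since both endpoints already sit in the flow spaces over $u$ and $v$, the detour through $v_n$ and Lemma~\ref{lem-flows are cobdd} is not required (and in fact the paper's step seems to conflate the roles of $X_v$ and $X_{v_n}$ slightly; your version side-steps that). Both approaches yield constants depending only on the structural parameters, so the uniformity of $D'$ over $v$ is preserved either way. Your explicit remark that the contradiction argument is valid for arbitrary $D_1$ is also a small improvement, since that is precisely what ``not cobounded'' (as opposed to ``not $D_1$-cobounded for one $D_1$'') requires.
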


\proof 

Suppose that $X$ is $\dl'$-hyperbolic for some $\dl'\ge0$, and the $(R,D)$-flow spaces are $K$-quasiconvex in $X$ for some uniform constant $K\ge0$ (see Corollary \ref{flow-qc} $(1)$). Let $D_1=D_{\ref{lem-cobdd}\ref{lem-cobdd:R-sep-D-cobdd}}(\dl',K)$.
Suppose $v$ is a vertex of $[u,\xi)$ such that $\F l^X_{R,D}(X_u))$ and $\F l^X_{R,D}(X_v)$ are
$D_1$-cobounded. Let $v_n$ be the nearest point projection of $\pi(x_n)$ on $[u,\xi)$.
Since $\lim^T_{n\map \infty} \pi(x_n)=\xi$, there is $N\in \N$ such that for
$v_n\in [v,\xi)\setminus\{v\}$ for all $n\geq N$, $n\in \N$. Therefore, any geodesic segment
$[x'_n,x_n]_X$ has a subsegment, say $\alpha_n$, joining a point in $X_u$ to a
point in $X_v$. By Lemma \ref{lem-flows are cobdd}, $\F l^X_{R,D}(X_u)$, $\F l^X_{R,D}(X_{v_n})$ are $D'_1$-cobounded in $X$ for all $n\ge N$ and for some uniform constant $D'_1\ge0$ depending on $D_1$, $\dl'$ and $K$. Hence by Lemma \ref{lem-cobdd}~\ref{lem-cobdd:qc proj new},
there is a point of $\F l^X_{R,D}(X_u)$ which is $D_2$-close to $\alpha_n$ for all $n\geq N$ where the constant $D_2$ depends on $\dl'$, $K$ and $D'_1$.
In particular, for any $x\in X$, the distance $d_X(x, [x'_n,x_n]_X)$ is bounded.
This is a contradiction to Lemma \ref{con-same-pt} as $\LMX x_n=\LMX x'_n\in \partial X$.

For the {\em moreover} part, take $D'=R_{\ref{lem-cobdd}\ref{lem-cobdd:R-sep-D-cobdd}}(\dl',K)$.

The {\em last part} follows directly from the definition of flow spaces together with the {\em moreover} part (Remark \ref{rmk-properties used}).\qed

\subsubsection{\bf Vertical and horizontal sequences}

There are the following two types of sequences in $X$ converging to $\pa X$ that come
up repeatedly in our discussion.
\begin{defn}{\em ({\bf Vertical and horizontal sequences})}
Suppose $\{x_n\}$ is a sequence in $X$ such that $\LMX x_n \in \pa X$.
We shall refer to $\{x_n\}$ to be a {\em vertical sequence} if $\{\pi(x_n)\}$ is
a constant sequence in $T$. We shall call it a {\em horizontal sequence}
if $\pi(x_n)$'s lie on a geodesic ray of $T$ and
$\lim^T_{n \map \infty} \pi(x_n)\in \pa T$.
\end{defn}
A priori it is unclear if all points of $\pa X$ are limits of some
vertical or horizontal sequences. However, the following two lemmas proves
this point.

\begin{lemma}{\em ({\bf Vertical replacement lemma})}\label{vertical replacement}
Suppose $\{x_n\}$ is any sequence of points in $X$ such that $\LMX x_n$ exists.
Then there is a vertex $u$ of $T$ and a sequence $\{x'_n\}$ in $X_u$ such that
$\LMX x_n =\LMX x'_n$ provided one of the following conditions hold:

$(1)$ $\{\pi(x_n)\}$ is bounded.

$(2)$ There is a vertex of infinite degree in $T_1=hull\{\pi(x_n):n\in\N\}\sse T$.
\end{lemma}
\proof
We note that in case $\{\pi(x_n)\}$ is bounded, either there is a constant subsequence of
$\{\pi(x_n)\}$ or $T_1$ has a vertex of infinite degree. Hence we may
divide the proof into the following two cases:\smallskip

\noindent{\bf Case 1.} Suppose there is a constant subsequence $\{\pi(x_{n_k})\}$ of $\{\pi(x_n)\}$.
Let $u=\pi(x_{n_k})$ for all $k\in \N$. Then $x_{n_k}\in X_u$ for all $k\in \N$.
Let $x'_k=x_{n_k}$, $k\in \N$. Hence $\LMX x'_n=\LMX x_n$.\smallskip

\noindent{\bf Case 2.} Suppose $T_1$ has a vertex $u$ of infinite degree.
Suppose $\{x_{n_k}\}$ is a subsequence of $\{x_n\}$ such that
$[u,\pi(x_{n_k})]\bigcap [u,\pi(x_{n_l})]=\{u\}$ for all $k\neq l$. For all $k\in \N$, let $e_k$
be the edge on $[u,\pi(x_{n_k})]$ which is incident on $u$.  Fix $x\in X_u$, and fix $x'_k\in[x,x_{n_k}]_X\bigcap X_{e_ku}$. Since $X_u$ is proper embedding in $X$ and $\{X_{e_ku}:k\in\N\}$ is locally finite (Lemma \ref{lem-edge sps loc finite}), $d_X(x,x'_k)\to\infty$ as $k\to\infty$. Hence by Lemma \ref{seq-on-con-geo}, $\lim^X_{k\to\infty}x'_k=\lim^X_{k\to\infty}x_{n_k}=\LMX x_n$. This completes the proof.\qed\smallskip

In this case, we shall refer to $\{x_n\}$ as a sequence {\bf replaceable by
	a vertical sequence} and we shall call $\{x'_n\}$ to be a {\bf vertical replacement
	sequence of $\{x_n\}$ in $X$}.

\begin{lemma}{\em ({\bf Horizontal replacement lemma})}\label{horiz replace}
Suppose $u$ is a vertex of $T$ and $x\in X_u$. Suppose $\{x_n\}$ is an unbounded
	sequence in $X$ such that $\lim^X_{n\map \infty}x_n\in \pa X$.
	Let $u_n=\pi(x_n)$ and suppose that $\lim^T_{n\map \infty} u_n=\xi \in \pa T$.
	Let $c_n$ be the nearest point projection of $u_n$ on $[u,\xi)$ and
	let $e_n$ be the edge on $[u,c_n]$ incident on $c_n$. Suppose
	$x'_n\in X_{e_nc_n}\bigcap [x, x_n]_Y$. Then
	$\lim^X_{n\map \infty}x_n=\lim^X_{n\map \infty}x'_n\in\pa X$.
\end{lemma}
\proof
We note that $d_X(x, x'_n)\geq d_T(u,c_n)=d_T(u, [u_n, \xi))$. Since $\lim^T_{n\to\infty}u_n= \xi$
we have $d_T(u, [u_n, \xi))\map \infty$, whence $d_X(x,x'_n)\map \infty$ as $n\map \infty$. Then by Lemma \ref{seq-on-con-geo}, $\LMY x'_n=\LMY x_n\in\pa X$ as $x'_n\in [x,x_n]_X$.
\qed\smallskip

In this case, we shall refer to $\{x_n\}$ as a sequence {\bf replaceable by
a horizontal sequence} and we shall call $\{x'_n\}$ to be a {\bf horizontal replacement
sequence of $\{x_n\}$ in $X$}. We note that Lemma \ref{vertical replacement} and Lemma
\ref{horiz replace} imply the following.

\begin{lemma}\label{vert or horiz}
Suppose $\{x_n\}$ is an unbounded sequence in $X$ such that $\LMX x_n \in \pa X$.
Then $\{x_n\}$ is either vertically replaceable or horizontally replaceable.
\end{lemma}

\subsection{Boundary flow}
As flow of quasiconvex subsets of vertex spaces were defined earlier, {\em boundary flow}
of points in the boundary of vertex spaces are defined in this section following \cite{ps-conical}.
\begin{defn}\textup{\cite[Definition $4.3$]{ps-conical}}
	(1) Suppose $u,v\in V(T)$ are connected by an edge $e$. If $\xi_u\in \partial X_u$ is in
	the image of $\partial f_{e,u}:\partial X_{e}\map\pa X_u$, then
	$\partial f_{e,v}( (\partial f_{e,u})^{-1}(\xi_u))=\xi_v$, say, is called the boundary
	flow of $\xi_u$ in $X_v$ (or more precisely, in $\partial X_v$).

	(2) Suppose $u,v\in V(T)$ are any two vertices and $u_0=u,u_1, \cdots, u_n=v$ are the
	consecutive vertices on $[u,v]$. Suppose $\xi_0\in \partial X_u$ and
	$\xi_n\in \partial X_v$. We say that $\xi_n$ is the boundary flow of $\xi_0$ if there
	are $\xi_i\in \partial X_{u_i}$, $1\leq i\leq n-1$ such that $\xi_i$ is the boundary flow
	of $\xi_{i-1}$ for all $1\leq i\leq n$.
\end{defn}

In this case we say
that $\xi_0$ can be flowed to $X_v$. Clearly boundary flow of a point of $\partial X_u$
to $\partial X_v$ is unique if it exists.

The following remark clarifies the relationship between the two types of flows: the one defined earlier and the one introduced in this section.
\begin{remark}\label{rmk-relation between flows}
Let $R=\bm{R_{fc}}(\dl_0)$ and $D=\bm{D_{fc}}(\dl_0)$. (Keep in mind that geodesics in vertex spaces $X_u$ are $\dl_0$-quasiconvex in $X_u$.) Suppose $\gamma$ is a geodesic in a vertex space $X_u$ and $v\in V(T)$ is any other vertex
such that $\F l^X_{R,D}(\gamma)\cap X_v \neq \emptyset$. Then $\F l^X_{R,D}(\gamma)\cap X_v $ is
quasiisometric to a segment of $\mathbb R$ where the parameters of the quasiisometry
depends on the parameters of the tree of spaces and $d_T(u,v)$.

In particular, if $\gamma$ is a geodesic ray then for all vertex $w$ in $[u,v]$, $\F l^X_{R,D}(\gamma)\cap X_w $
is qi to a segment of $[0,\infty)$. It is not difficult to check that $\gamma(\infty)$
can be boundary flowed to $X_v$ if and only if $\F l^X_{R,D}(\gamma)\cap X_v $ is qi to a geodesic ray
and in that case the limit point of $\F l^X_{R,D}(\gamma)\cap X_v $ is the boundary flow of
$\gamma(\infty)$.
\end{remark}

\begin{lemma}\label{bdry flow lemma}
	Suppose $u,v\in V(T)$ are any two vertices. Suppose $\xi_u\in \partial X_u$ and
	$\xi_v\in \partial X_v$. Suppose $\alpha_u\subset X_u$ and $\alpha_v\subset X_v$
	are geodesic rays in these vertex spaces such that $\alpha_u(\infty)=\xi_u$ and
	$\alpha_v(\infty)=\xi_v$. Then the following hold:

	$(1)$ $\xi_u$ can be boundary flowed to $X_v$ iff there is a constant $D>0$ such that
	$\alpha_u\subset N_D(X_v)$ in $X$.

	$(2)$ $\xi_v$ is the boundary flow of $\xi_u$ iff $\xi_u$ is the boundary flow of $\xi_v$.

	$(3)$ $\xi_v$ is the boundary flow of $\xi_u$
	iff $Hd_X(\alpha_u, \alpha_v)<\infty$.
\end{lemma}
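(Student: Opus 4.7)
The plan is to establish all three statements first for a single edge $[u,v]=e$, and then extend by induction on $d_T(u,v)$.

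\textbf{Single edge case.} The incidence maps $f_{e,u}:X_e\ri X_u$ and $f_{e,v}:X_e\ri X_v$ are $L_0$-qi embeddings between hyperbolic spaces, and hence induce injective continuous boundary extensions $\pa f_{e,u}$ and $\pa f_{e,v}$ whose images are the limit sets $\Lm_{X_u}(X_{eu})$ and $\Lm_{X_v}(X_{ev})$ respectively. The equation defining the boundary flow rewrites symmetrically as $(\pa f_{e,u})^{-1}(\xi_u)=(\pa f_{e,v})^{-1}(\xi_v)$, which immediately gives (2). For the forward directions of (1) and (3), set $\eta=(\pa f_{e,u})^{-1}(\xi_u)\in\pa X_e$ and choose a geodesic ray $\bt$ in $X_e$ with $\bt(\infty)=\eta$. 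Then $f_{e,u}(\bt)\sse X_u$ is a uniform quasigeodesic ray converging to $\xi_u$, so by Lemma \ref{ml} it is at finite Hausdorff distance from $\al_u$ in $X_u$, hence in $X$; similarly $f_{e,v}(\bt)$ is at finite Hausdorff distance from $\al_v$ in $X$. As $f_{e,u}(\bt)$ and $f_{e,v}(\bt)$ are images of the same $\bt$ at the two sides of the strip $\pi^{-1}(e)$, they lie within Hausdorff distance $1$ in $X$. Chaining these three bounds yields $Hd_X(\al_u,\al_v)<\infty$.

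For the reverse direction of (1), assume $\al_u\sse N^X_D(X_v)$. Given $p\in\al_u$, choose $q\in X_v$ with $d_X(p,q)\le D$; any geodesic $[p,q]_X$ must meet $X_e=\pi^{-1}(m_e)$, so $d_X(p,X_e)\le D$, and since $Hd_X(X_e,X_{eu})\le1$ we obtain $d_X(p,X_{eu})\le D+1$. By Definition \ref{tree-of-sps}(2) the inclusion $X_u\hri X$ is $\eta_0$-proper, so $d_{X_u}(p,X_{eu})\le\eta_0(D+1)$. Thus $\al_u$ lies in a bounded $X_u$-neighborhood of the $\lm_0$-quasiconvex set $X_{eu}$, forcing $\xi_u\in\Lm_{X_u}(X_{eu})=\textrm{Image}(\pa f_{e,u})$, so $\xi_u$ flows across $e$. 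The reverse direction of (3) then follows: $Hd_X(\al_u,\al_v)<\infty$ implies $\al_u\sse N^X_D(X_v)$, yielding a flow $\xi_v'\in\pa X_v$; for a ray $\al_v'\sse X_v$ with $\al_v'(\infty)=\xi_v'$, the forward direction of (3) gives $Hd_X(\al_u,\al_v')<\infty$, and hence $Hd_X(\al_v,\al_v')<\infty$. Proper embedding of $X_v$ in $X$ promotes this to $Hd_{X_v}(\al_v,\al_v')<\infty$, so $\xi_v=\xi_v'$.

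\textbf{Inductive step.} Write $[u,v]=e_1\cdots e_n$ with intermediate vertices $u=u_0,\ldots,u_n=v$. For the forward direction of (3), the single-edge case applied along each $e_i$ yields $Hd_X(\al_{u_{i-1}},\al_{u_i})<\infty$, hence $Hd_X(\al_u,\al_v)<\infty$; (1) forward and (2) then follow edge-by-edge using the symmetry at each step. For the reverse direction, $\al_u\sse N^X_D(X_v)$ forces $\al_u\sse N^X_D(X_{u_1})$, since $\pi$ is $1$-Lipschitz and so any $X$-geodesic from $\al_u$ to $X_v$ must meet $X_{u_1}$; the single-edge case then provides the flow $\xi_u\mapsto\xi_{u_1}$, and the forward direction of (3) transfers the neighborhood bound to $\al_{u_1}\sse N^X_{D'}(X_v)$, enabling induction. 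The most delicate step is the reverse direction of (1) in the single edge case, where quasiconvexity of $X_{eu}$ in $X_u$ together with the proper embedding of $X_u$ in $X$ must be combined to upgrade a neighborhood bound in $X$ to membership in the limit set inside $\pa X_u$; the rest of the argument is bookkeeping of the finitely many bounds across the path $[u,v]$.
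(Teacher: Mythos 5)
Your proof is correct, and it takes a genuinely different route from the paper. The paper disposes of the lemma in one line: it asserts that parts (1) and (2) follow from (3), and for (3) it defers to the external references \cite[Lemmas 4.4, 4.6]{ps-conical} (or alternatively to Remark \ref{rmk-relation between flows}, which runs through the flow-space machinery). You instead work directly from Definition of boundary flow and Definition \ref{tree-of-sps}, establishing the single-edge case by pushing a ray $\beta\subset X_e$ through the incidence maps $f_{e,u}, f_{e,v}$ (for the forward directions) and by using the fact that a geodesic in $X$ from $X_u$ to $X_v$ must meet $X_e$, together with the proper embedding of $X_u$ in $X$, to upgrade an $X$-neighborhood bound to an $X_u$-neighborhood bound of the quasiconvex set $X_{eu}$ (for the reverse direction); the general case is then a clean induction along $[u,v]$. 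What your argument buys is self-containedness and a more elementary toolkit: you avoid flow spaces entirely and use only the incidence maps, stability of quasigeodesics, and properness. One minor observation: the reverse implication of (1) does not obviously follow from (3) alone as the paper's one-liner suggests (one needs to actually produce a flow candidate $\xi_v$, not merely compare two rays with given limits), so your explicit argument for (1) reverse — lying in a bounded $X_u$-neighborhood of $X_{eu}$ forces $\xi_u\in\Lambda_{X_u}(X_{eu})=\mathrm{Image}(\partial f_{e,u})$ — is doing real work; this is presumably subsumed in the cited external lemmas, but your proof makes it transparent.
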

\proof Clearly both (1), (2) follows from (3). For the proof of (3) see
\cite[Lemmas $4.4, 4.6$]{ps-conical} (or apply the remark above).
\qed


The following result is the main property of boundary flow that will be useful
in the proof of our main theorem.
\begin{prop}\textup{(\cite[Proposition $2.3$]{ps-conical-c})}\label{bdry flow prop}
	Suppose $\xi_u\in \partial X_u$ and $\xi_v\in \partial X_v$ are mapped to the same
	point of $\partial X$ under the CT maps $\partial i_{X_u,X}:\partial X_u\map \partial X$ and
	$\partial i_{X_v,X}:\partial X_v\map \partial X$. Then there is a vertex $w$ in $[u,v]$
	such that both $\xi_u,\xi_v$ can be boundary flowed to $X_w$.
\end{prop}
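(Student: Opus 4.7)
\emph{Setup and strategy.} The plan is to set up two uniformly quasiconvex flow spaces in $X$ containing the rays to $\xi_u$ and $\xi_v$ respectively, to show that they share a common asymptotic geodesic ray to $\xi$ in $X$, and to read off the desired vertex $w$ from the $T$-projection of this ray. Let $\alpha_u\subset X_u$ and $\alpha_v\subset X_v$ be geodesic rays with $\alpha_u(\infty)=\xi_u$ and $\alpha_v(\infty)=\xi_v$. Since the common CT image is $\xi\in\partial X$, the sequences $\{\alpha_u(n)\}$ and $\{\alpha_v(n)\}$ both converge to $\xi$ in $\partial X$. Consider the flow spaces $F_u:=\F l^X_{R,D}(\alpha_u)$ and $F_v:=\F l^X_{R,D}(\alpha_v)$; by Corollary~\ref{flow-qc} they are uniformly $K$-quasiconvex in $X$. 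Since $\alpha_u\subset F_u$ and $\alpha_u(n)\to\xi$ in $\partial X$, the point $\xi$ lies in the limit set $\Lambda_X(F_u)$; similarly $\xi\in\Lambda_X(F_v)$.

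\emph{A common asymptotic geodesic ray.} A standard hyperbolic-geometry fact is that a quasiconvex set $F$ with $\xi\in\Lambda_X(F)$ admits a geodesic ray $\beta_F\subset N_K(F)$ with $\beta_F(\infty)=\xi$ (obtained as a limit of geodesics from a base point to a sequence in $F$ converging to $\xi$, using quasiconvexity). Applying this to both $F_u,F_v$ and using that any two geodesic rays in a hyperbolic space ending at the same boundary point are eventually uniformly Hausdorff close, I would produce a single geodesic ray $\beta\colon[0,\infty)\to X$ with $\beta(\infty)=\xi$ and $\beta$ eventually contained in $N_{D'}(F_u)\cap N_{D'}(F_v)$ for a uniform constant $D'$ depending only on the quasiconvexity and hyperbolicity parameters.

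\emph{Tree projection, conclusion, and main obstacle.} Let $S_u:=\pi(F_u)$ and $S_v:=\pi(F_v)$; these are subtrees of $T$ containing $u$ and $v$ respectively. Since $\beta$ eventually stays within $X$-distance $D'$ of both $F_u$ and $F_v$, and $\pi$ is $1$-Lipschitz, its $T$-projection $\pi\circ\beta$ lies, past some parameter, within $T$-distance $D'$ of both $S_u$ and $S_v$. Applying Theorem~\ref{bdry of X} to the sequence $\{\beta(n)\}$, after passing to a subsequence either $\pi(\beta(n))$ stabilizes at a vertex $w^*$ or converges to a point of $\partial T$; in either case tree geometry, combined with the eventual $D'$-closeness to both subtrees, lets us select a vertex $w\in S_u\cap S_v$ through which $\pi\circ\beta$ passes. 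Since $u,w\in S_u$ and $v,w\in S_v$, the unique $T$-paths $[u,w]\subset S_u$ and $[w,v]\subset S_v$ concatenate to a path from $u$ to $v$ through $w$, which must coincide with the unique tree-geodesic $[u,v]$; hence $w\in[u,v]$. For this $w$, the intersections $F_u\cap X_w$ and $F_v\cap X_w$ are unbounded, because $\beta(t)$ escapes to infinity near $X_w$ while remaining close to both flow spaces; by Remark~\ref{rmk-relation between flows} each is therefore quasi-isometric to a ray in $X_w$, equivalently $\xi_u$ and $\xi_v$ both boundary-flow to $X_w$. The hard part will be rigorously establishing this last step: showing that $F_u\cap X_w,\ F_v\cap X_w$ are unbounded (not merely nonempty or bounded) requires exploiting the uniform structure of flow spaces developed in Section~\ref{subsec-flow space}, together with the uniform quasiconvexity of $F_u,F_v$ and the hyperbolic geometry of $X$ (to rule out that $\beta$ crosses $X_w$ only briefly while the flow direction of each $F_\bullet$ has turned away from $w$).
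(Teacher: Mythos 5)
The paper does not actually prove this proposition; it is imported wholesale from \cite[Proposition~2.3]{ps-conical-c}, so there is no in-paper argument to compare yours against. Judging your proposal on its own, the broad idea of using flow spaces and the common limit point $\xi\in\Lambda_X(F_u)\cap\Lambda_X(F_v)$ is reasonable, but two steps have genuine gaps. The smaller one is your justification for $w\in[u,v]$: the concatenation of the tree-paths $[u,w]\subset S_u$ and $[w,v]\subset S_v$ need \emph{not} coincide with $[u,v]$ --- it backtracks unless $w$ is already on $[u,v]$. The conclusion is salvageable via the median: if $w_0\in S_u\cap S_v$, the median $m(u,v,w_0)$ lies on $[u,w_0]\subset S_u$, on $[v,w_0]\subset S_v$, and on $[u,v]$; but the argument you wrote is wrong as stated.

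The larger gap is exactly the step you flag as ``the hard part,'' and the sketch you give does not close it. First, you only know $\pi\circ\beta$ is $D'$-\emph{close} to $S_u$ and $S_v$, not inside them; and even $w\in S_u$ (i.e.\ $F_u\cap X_w\ne\emptyset$) is strictly weaker than ``$\xi_u$ boundary-flows to $X_w$'' (i.e.\ $F_u\cap X_w$ is qi to a ray, by Remark~\ref{rmk-relation between flows}). Second, the ray $\beta$ need not spend unbounded time near $X_w$: it escapes to $\xi$, which may lie in a $T$-direction that turns away from $w$, while the flow of $\alpha_u$ may also have ``turned off'' before reaching $w$ --- so no contradiction arises. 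A robust route is by contradiction: let $[u,w_1]$ and $[w_2,v]$ be the maximal subsegments of $[u,v]$ to which $\xi_u$, resp.\ $\xi_v$, can be boundary-flowed (these are initial/terminal segments by transitivity of boundary flow). If they are disjoint, construct --- exactly as in the proof of Lemma~\ref{lem-equ-im-bdry-flow} --- uniformly quasiconvex subsets $Z_u\supset\alpha_u$ and $Z_v\supset\alpha_v$ of $X$ whose $T$-projections meet $[u,v]$ only in $[u,w_1]$ and $[w_2,v]$ respectively; these are then cobounded, whence by Lemma~\ref{lem-cobdd}~\ref{lem-cobdd:qc proj new} every geodesic $[\alpha_u(n),\alpha_v(n)]_X$ passes through a fixed bounded ball, contradicting $\LMX\alpha_u(n)=\LMX\alpha_v(n)=\xi$ via Lemma~\ref{con-same-pt}.
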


We end this section with the following result which will be used in Section \ref{sec-CT lamination}.

\begin{lemma}\label{lem-equ-im-bdry-flow}
	Suppose $\bt$ is a geodesic ray in $X_u$ and $\al$ is geodesic ray in $X$ such that $\pi(\al)$ contains a geodesic ray, $[u,\xi)$ say, where $\xi\in\pa T$ and $u\in V(T)$. Further, we assume that $\pa i_{X_u,X}(\bt(\infty))=\al(\infty)$. Then $\bt$ has boundary flow over a geodesic ray in $\pi(\al)$.
	
	In particular, $[u,\xi)\sse\pi(\F l^X_{R,D}(X_u))$ where $R=\bm{ R_{fc}}(0)$ and $D=\bm{D_{fc}}(0)$.
\end{lemma}

\begin{proof}
	For the sake of contradiction, let $v,w\in[u,\xi)$ be vertices such that $d_T(v,w)=1$ and $\bt(\infty)$ flows in $X_{v}$ but does not flow in $X_{w}$. In the following paragraph, we will find a $k$-quasiconvex subset, say $Z$, in $X$ containing $\bt$ such that $\pi(Z)\cap[u,\xi)=[u,v]$. Then we will be done as follows. Since $Z$ is quasiconvex in $X$ and $\pa i_{X_u,X}(\bt(\infty))=\al(\infty)$, we have $\al(\infty)\in\Lambda_X(Z)$. Thus $\al\sse N_{D_0}(Z)$ for some $D_0\ge0$. On the other hand, $[u,\xi)\sse\pi(\al)$. Thus we can choose large enough $m$ so that $d_X(Z,\al(m))>d_T(v,\al(m))>D_0$. This gives a contradiction.

	\underline{{\bf \emph{Finding $Z$}}}: Let $R=\bm{ R_{fc}}(0)$ and $D=\bm{D_{fc}}(0)$. Let $e=[v,w]$ and $W=\pi^{-1}([u,v])$. As mentioned in Remark \ref{rmk-relation between flows}, there is $K\ge0$ depending on the structural constants of $\pi:X\map T$ and $d_T(u,v)$ such that $\F l^X_{R,D}(\bt)\cap X_s$ is $K$-quasigeodesic ray in $X_s$ for all vertex $s\in[u,v]$. Moreover, $P^{X_v}_{X_{ev}}(\F l^X_{R,D}(\bt)\cap X_v)$ is of bounded diameter in $X_v$; suppose it is bounded by $D'\ge0$. 
	
Let $T'$ be the subtree $T\setminus\{w\}$ containing $v$. Suppose $Z=\F l^X_{R,D}(\bt)\cap\pi^{-1}(T')$. Let $D_1=max\{D,D'\}$. Then $Z$ is $R$-flow stable and $D_1$-saturated collection of subsets of $X$. Therefore, by Corollary \ref{flow-qc} $(2)$, $Z$ is $K$-quasiconvex in $X$ for some constant $K\ge0$ depending on $R$, $D_1$, and the other structural constants.
	
	This completes the proof.
\end{proof}

\section{Subtrees of hyperbolic spaces}\label{sec-subtree of spaces}
In this section we introduce the main objects of the paper, namely
subspaces of a tree of hyperbolic spaces with additional properties, e.g.
where the subspaces have an `induced' tree of hyperbolic space structure.
Here is the more precise definition.

\begin{defn}{\em ({\bf Subtrees of hyperbolic subspaces})}\label{defn subtree}
Suppose $\pi:X\map T$ is a tree of hyperbolic metric spaces
as defined in Definition \ref{tree-of-sps} and $Y\subset X$. Suppose $Y$ is a geodesic metric space with the induced length metric from
$X$, and that $Y$ is properly embedded in $X$. Let $S=\pi(Y)$ and let $\pi_1$ be the
restriction of $\pi$ to $Y$. We will say that $Y$ is a subtree of hyperbolic subspaces of $X$ if
$\pi_1=\pi|_S: Y\map S$ is a tree of hyperbolic metric spaces with qi embedded condition.
(In particular, in this case, for all $v\in V(S)$, $Y_v=X_v\cap Y$ and  for all $e\in E(S)$,
$Y_e=X_e\cap Y$  are the vertex spaces and edge spaces of $Y$ and the maps $f_{e,v}:Y_e\map Y_v$,
wherever applies, are the restrictions of the corresponding maps $f_{e,v}: X_e\map X_v$.)

If, in addition, there is a CT map for all the inclusions of vertex spaces and edge spaces
$Y_v\map X_v$, $v\in V(S)$ and $Y_e\map X_e$, $e\in E(S)$ respectively then we will say that $Y$ is a
{\bf subtree of hyperbolic subspaces with fiberwise CT maps}. Similarly if these inclusion maps are
all $k$-qi embeddings for some fixed $k\geq 1$ then we will say that $Y$ is a
{\bf subtree of hyperbolic subspaces with fiberwise qi embedding condition.}
\end{defn}

For the rest of this section and the next section we fix the following notation and convention
for the spaces $X$ and $Y$.
\begin{convention}\label{con-induced spaces}
	\begin{enumerate}
	\item $\pi:X\map T$ is a tree of hyperbolic metric spaces with parameters $\eta_0, \delta_0, L_0,\lm_0$
as defined in Definition \ref{tree-of-sps} (see Convention \ref{con-trees of metric spaces} $(5)$ for the constants and function $\eta_0$).

	\item $Y\subset X$ is a subtree of hyperbolic subspaces with fiberwise CT maps, $S=\pi(Y)$
	and $\pi_1=\pi|_S$.
	\item The tree of spaces $\pi_1:Y\map S$ has the same parameters
		$\eta_0, \delta_0, L_0$.

		\item The inclusions $Y_e\map X_e$, $e\in E(S)$, are $L$-qi embeddings for a constant $L\geq 1$.
		\item Both $X$ and $Y$ are proper hyperbolic geodesic metric spaces. We also assume that $Y\to X$ is $\eta_0$-proper embedding.
	\end{enumerate}	
\end{convention}


We recall that the main result in this paper is about the existence of CT maps for the
inclusion $Y\map X$ where $Y$ is a subtree of subspaces of a tree of spaces $X$ where
these satisfy Convention \ref{con-induced spaces}, plus additional conditions to be introduced
in the next subsection. In this connection, in this section, we prove several results addressing
the following issue:  {\em Given two sequences $\{y_n\}$, $\{y'_n\}$ in $Y$ converging to the same
point of $\partial Y$, when do they converge to the same point of $\partial X$?}

\subsection{Superficial test sequence lemmas}
We recall that if $\{y_n\}$, $\{y'_n\}$ are two sequences in $Y$ such that
$\LMY y_n =\LMY y'_n\in \pa Y$ and $\LMX y_n, \LMX y'_n \in \pa X$ which
may not be equal then we call $\{y_n\}$, $\{y'_n\}$ to be a {\em test sequence}
for the pair $(Y,X)$. If, moreover, $\LMX y_n= \LMX y'_n$ then we shall say that
$\{y_n\}$, $\{y'_n\}$ form a pair of {\bf superficial test sequence}.

\begin{lemma}\label{CT-from-a-ver}{\em ({\bf Vertical test sequences are superficial})}
Let $u$ be a vertex of $S$.	Suppose $\{y_n\},\{y\pr_n\}$ are two test sequences for the pair $(Y,X)$
	where $y_n, y'_n \in Y_u$ for all $n\in \N$. Moreover, assume that $\lim^{Y_u}_{n\map \infty} y_n,~\lim^{Y_u}_{n\map \infty} y'_n\in \partial Y_u.$
	Then $\lim^X_{n\map \infty} y_n=\lim^X_{n\map \infty} y'_n\in \partial X$;
	in other words, $\{y_n\},\{y\pr_n\}$ form a pair of superficial test sequences.
\end{lemma}

\proof Since the CT maps for the inclusions $Y_u\map X_u$ and $X_u\map X$ exist (\cite{mitra-trees}),
by the functoriality of CT maps (Lemma \ref{functo-ct-map}), we see that
$\lim^X_{n\map \infty} y_n$ and $\lim^X_{n\map \infty} y'_n$ exist;
and they are equal if $\lim^{Y_u}_{n\map \infty} y_n=\lim^{Y_u}_{n\map \infty} y'_n$.
Suppose $\lim^{Y_u}_{n\map \infty} y_n\neq \lim^{Y_u}_{n\map \infty} y'_n$.
Let $\alpha$ be a geodesic line in $Y_u$ such that
$\alpha(-\infty)=\lim^{Y_u}_{n\map \infty} y_n$ and 
$\alpha(\infty)=\lim^{Y_u}_{n\map \infty} y'_n$.

We note that $\alpha(-\infty)=\lim^{Y_u}_{n\map \infty} \alpha(-n)=\lim^{Y_u}_{n\map \infty} y_n$.
Thus as in the first paragraph of the proof we have
$\lim^X_{n\map \infty} \alpha(-n)=\lim^X_{n\map \infty} y_n$. Similarly, we have
$\lim^X_{n\map \infty} \alpha(n)=\lim^X_{n\map \infty} y'_n$. Thus we are reduced
to showing that $\LMX \alpha(n)=\LMX \alpha(-n)$.

Now we will use two results from Section \ref{sec-CT lamination} that describes uniform quasigeodesic in this case. Since $\pa i_{Y_u,Y}(\al(-\infty))=\pa i_{Y_u,Y}(\al(\infty))$, we will have description of uniform quasigeodesics joining $\al(-m)$ and $\al(n)$ (by Theorem \ref{thm-quasigeo-descrip}). However, the same description holds for $X$, which will conclude the result (by Proposition \ref{prop-con of geo des}).

We denote the threshold constants appearing in Theorem \ref{thm-quasigeo-descrip} by $M^Y$ and $M^X$ respectively to distinguish between $Y$ and $X$. For $k\ge1$, we have constants $M^Y$ and $M^X$ depending on $k$ as in Theorem \ref{thm-quasigeo-descrip}.

Note that the inclusions $Y_v\map Y\map X$ are $\eta_0$-proper embeddings. Thus $Y_v\map X_v$ is $\eta$-properly embedded where $\eta(r)=\eta_0\circ\eta_0(r)$ for all $r\ge0$. Let $\eta':\R_{\ge0}\to\R_{\ge0}$ be a proper map corresponding to $\eta$ as constructed in Lemma \ref{rev-of-pro-map}. Fix $R\ge M^Y$ such that for any $x,y\in Y_v$ with $d_{Y_v}(x,y)\ge R$ implies $d_{X_v}(x,y)\ge\eta'(R)\ge M^X$ (see Corollary \ref{cor-inverse of proper map}). Since $\LMY \alpha(n)=\LMY \alpha(-n)$, by Theorem \ref{thm-quasigeo-descrip}, we may assume the description of uniform quasigeodesics in $Y$ joining $\al(-m)$ and $\al(n)$ as follows $$\gm_{-m}*[\gm_{-m}(w),\gm_n(w)]_{X_w}*\gm_n$$ where $w$ is a vertex in $S$, and $\gm_{-m}$, $\gm_n$ are $k$-qi lifts of $[u,w]$ such that $$d_{Y_v}(\gm_{-m}(v),\gm_n(v))\ge R$$ for all vertex  $v$ in $[u,w]\setminus\{w\}$ and $$d_{Y_w}(\gm_{-m}(w),\gm_n(w))\le C$$ for some fixed $C\ge0$.

Observe that $\gm_{-m}$ and $\gm_n$ are $k$-qi lifts of $[u,w]$ in $X$ as well. Moreover, $$d_{X_v}(\gm_{-m}(v),\gm_n(v))\ge \eta'(R)\ge M^X\text{ and }d_{X_w}(\gm_{-m}(w),\gm_n(w))\le C.$$ Therefore, by Proposition \ref{prop-con of geo des}, we have $\LMX \alpha(n)=\LMX \alpha(-n)$.
\qed\smallskip




\begin{lemma}\label{bounded proj 2}{\em ({\bf Superficial vertical replacement lemma})}
	Suppose $\{y_n\}$ is an unbounded sequence in $Y$ such that both $\LMX y_n$, and 
	$\LMY y_n$ exist. Suppose $\{y_n\}$ is vertically replaceable in $Y$ (see Lemma
	\ref{vertical replacement}). Then there is vertical replacement sequence $\{y'_n\}$
	of $\{y_n\}$ in $Y$ such that 	$\LMX y_n=\LMX y'_n\in\pa X$, i.e. the sequences
	$\{y_n\},\{y\pr_n\}$ form a pair of superficial test sequences.
\end{lemma}
\proof 
With the help of Lemma \ref{CT-from-a-ver}, the proof of this lemma is a carbon copy of that of Lemma \ref{vertical replacement}.
The only notable point is that in Case $2$ of the proof one may, if required, pass to
a further subsequence to make sure that the quasiconvex sets $\{Y_{e_k u}\}$ converges to a point of
$\partial Y_u$ also.\qed




\begin{prop}\label{finite-proj-case}{\em ({\bf Vertically replaceable implies superficial})}
	Suppose $\{y_n\},\{z_n\}$ is a pair of test  sequences for the pair $(Y,X)$.
	If both of them are vertically replaceable then they are superficial.
\end{prop}

\proof Consider the sequence $\{y_n\}$. By Lemma \ref{bounded proj 2}
we can find a vertex $u\in V(T)$ and a sequence $\{y'_n\}$ in $Y_u$ such that
$\LMY y_n=\LMY y'_n$ and $\LMX y_n=\LMX y'_n$. Similarly, we can find a
vertex $v\in V(T)$ and a sequence $\{z'_n\}$ in $Y_v$ such that $\LMY z_n=\LMY z'_n$
and $\LMX z_n=\LMX z'_n$.
Passing to further subsequences, if necessary, we may assume that 
$\lim^{Y_u}_{n\map \infty}y'_n$ and $\lim^{Y_v}_{n\map \infty}z'_n$ exist.

Now, it is enough to show that $\LMX y'_n=\LMX z'_n$.
Let $\al_u$ and $\al_v$ be two geodesic rays in $Y_u$ and $Y_v$ respectively such that 
$\lim^{Y_u}_{n\map \infty} y'_n=\al_u(\infty)$  and 
$\lim^{Y_v}_{n\map \infty} z'_n=\al_v(\infty)$.
Since the inclusion map $Y_u\map Y$ admits a CT map (\cite{mitra-trees}) we have
$\LMY y'_n= \LMY \al_u(n)$ and $\LMY z\pr_n=\LMY \al_v(n)$. 
Similarly, since the inclusion maps $Y_u\map X_u$ and $X_u\map X$ admit CT maps we have  
$\LMX y'_n= \LMX \al_u(n)$ and $\LMX z\pr_n=\LMX \al_v(n)$. 
Hence, we are reduced to showing that $\LMX \alpha_u(n)=\LMX \alpha_v(n)$.
Note that $\LMY \alpha_u(n)=\LMY \alpha_v(n)$ as $\LMY y'_n=\LMY z'_n$.

However,
since $\pa i_{Y_u,Y}(\al_u(\infty))=\pa i_{Y_v,Y}(\al_v(\infty))$, by Proposition 
\ref{bdry flow prop}
there is a vertex $w$ in $[u,v]$ such that both $\al_u(\infty)$ and $\al_v(\infty)$ have
boundary flow to 
$\pa Y_w$. Let $\bt$ and $\bt\pr$ be geodesic rays in $Y_w$ such that the boundary flows of 
$\al_u(\infty)$ and $\al_v(\infty)$ in $\pa Y_w$ are respectively $\bt(\infty)$ and $\bt\pr(\infty)$.
Then by Lemma \ref{bdry flow lemma} we have $Hd_Y(\al_u,\bt)<\infty$, $Hd_Y(\al_v,\bt\pr)<\infty$.
This implies $Hd_X(\al_u,\bt)<\infty$, and $Hd_X(\al_v,\bt\pr)<\infty$ respectively since 
the inclusion $Y\map X$ is $1$-Lipschitz. However, $Hd_Y(\al_u,\bt)<\infty$ implies 
$\LMY \al_u(n)=\LMY \beta(n)$. Similarly, we have $\LMY \al_v(n)=\LMY \beta'(n)$,
$\LMX \alpha_u(n)=\LMX \beta(n)$ and $\LMX \alpha_v(n)=\LMX \beta'(n)$.
Thus it is enough to show that $\LMX \beta(n)=\LMX \beta'(n)$. Note that
$\LMY \beta(n)=\LMY \beta'(n)$ as we had $\LMY \alpha_u(n)=\LMY \alpha_v(n)$.
Therefore, we can apply Lemma \ref{CT-from-a-ver} to the sequences $\{\beta(n)\}$
and $\{\beta'(n)\}$ to finish the proof. \qed


\subsection{Compatible fiberwise projection condition}
The last piece of definition to state our main theorem is the following.

\begin{defn}\label{defn-com fib proj con} {\em  ({\bf Compatible fiberwise projection condition})}
 Suppose $Y$ is a subtree of hyperbolic spaces of a tree of hyperbolic spaces $\pi:X\map T$
 as in Convention \ref{con-induced spaces}. Then we say that the pair $(Y,X)$ satisfies the
compatible fiberwise projection condition with constant $R_0$ if the following holds.

For all $v\in V(S)$ and
	$e\in E(S)$ where $e$ is incident on $v$, and for all $x\in Y_v$ we have
	$$d_{X_v}(P^{X_v}_{X_{ev}}(x), P^{Y_v}_{Y_{ev}}(x))\leq R_0.$$
\end{defn}

Next we mention a few examples where the compatible fiberwise projection condition holds.

\begin{example}
Suppose $G$ is a hyperbolic group and $H$, $K$ are subgroups of $G$. Assume that either

\begin{enumerate}
	\item the inclusion $H\to G$ admits a CT map, $K$ is quasiconvex in $G$ and $K\sse H$, or
	
	\item both $H$ and $K$ are quasiconvex in $G$ (see Example \ref{exp-coarse sep}).
\end{enumerate}
Consider the doubles $G'=G*_KG$ and $H'=H*_KH$ in case $(1)$ or $G'=G*_{K}G$ and $H'=H*_{H\cap K}H$ in case $(2)$.

Note that in either case, $H'<G'$ (see Proposition \ref{prop-inter property}). Now the subgroup $H'$ corresponds to a subtree of subspaces $\pi|_Y:Y\to S$ in the tree of spaces $\pi:X\to T$ corresponding to $G'$ (see Section \ref{sec-application}). Then the fact that the pair $(Y,X)$ satisfies the compatible fiberwise projection condition follows from Lemma \ref{lem-uniform Mitra imp proj con} (below) in case $(1)$ and from Example \ref{exp-compatible projection} in case $(2)$.
\end{example}

\begin{lemma}\label{lem-uniform Mitra imp proj con}
Suppose that for all $e\in E(S)$, the qi embeddings $Y_e\map X_e$ are all uniform quasiisometries. Further assume that there is a proper map $\psi:\R_{\ge0}\map\R_{\geq 0}$ such that for all $u\in V(S)$, the inclusion $Y_u\map X_u$ satisfies uniform Mitra's criterion with the function $\psi$ (see Definition \ref{mitra-irr}). Then the compatible fiberwise projection condition holds by Lemma \ref{uni-mitra-proj}.
\end{lemma}

Another consequence of the projection condition is the following result which is not as straightforward
as the previous lemma and hence we write a proof for it.
\begin{lemma}\label{induced subtree consistent}
	Suppose $\pi:X\map T$ is a tree of hyperbolic spaces and $\pi_1:Y\map S$ is a
	subtree of spaces satisfying the hypotheses of Convention \ref{con-induced spaces}.
	Suppose $Y$ is fiberwise qi embedded too. Then the following are equivalent.

\begin{enumerate}
\item The pair $(Y,X)$ satisfies the compatible fiberwise
		projection condition.
		
\item For any $C\geq 0$ there is $D\geq 0$ such that the following holds:

If $u,v\in V(S)$ are connected by an edge $e\in E(S)$ then
$N^{X_u}_C(Y_u)\cap N^{X_u}_C(X_{eu})\subset N^{X_u}_D(Y_{eu}))$.
In other words, $Y_{eu}$ is a (uniform) coarse intersection of $X_{eu}$ and $Y_u$
in $X_u$. (See Definition \ref{coarse-inter}.)

\item The collection $\{Y_v: v\in V(S)\}$ is an $R$-flow stable family of subsets in $X$ for some $R\ge0$.

\item Any Mitra's retraction map $\{Y_v:v\in v(S)\}\to X_S$ induces a coarsely Lipschitz map $Y\to X_S$ (see Subsection \ref{subsubsec-Lipschitz retraction}).
\end{enumerate}
\end{lemma}
\proof 
Note that $(1)\Longleftrightarrow(2)$ follows from $(2)\Longleftrightarrow(3)$ of Lemma \ref{lem-proj iff coarse intersec}.


We will prove that $(4)\Longrightarrow(1)\Longrightarrow(3)\Longrightarrow(4)$.

Clearly, $\{Y_v:v\in V(S)\}$ is a saturated collection of subsets in $X_S$. Hence $(3)\Longrightarrow(4)$ follows from Theorem \ref{thm-mitra proj on stable-satura}.\smallskip

\noindent$(4)\Longrightarrow(1)$: Suppose $e=[u,v]$ is an edge in $S$. Let $y\in Y_u$, and let $x'=P^{X_u}_{X_{eu}}(y)$ and $y'=P^{Y_u}_{Y_{eu}}(y)$. We will show that $d_{X_u}(x',y')$ is uniformly bounded.

Note that $[y,x']_{X_u}\cup[x',y']_{X_u}$ is a uniformly quasigeodesic in $X_u$ (see Lemma \ref{lem-proj on pair qc}~\ref{lem-proj on pair qc:concatenation qg}). Since $Y_u\to X_u$ is uniformly qi embedding, by the stability of quasigeodesic (Lemma \ref{ml}) there is $y_1\in[y,y']_{Y_u}$ such that $d_{X_u}(x',y_1)\le D_1$ for some uniform constant $D_1\ge0$. Let $y_2=P^{X_u}_{Y_u}(x')$. Then $d_{X_u}(x',y_2)\le D_1$.

Let $x_1\in X_{ev}$ be such that $d_X(x',x_1)=1$. Let $y_3=P^{X_v}_{Y_v}(x_1)$. Then by $(4)$, $d_X(y_2,y_3)\le C$ for some uniform constant $C\ge0$. 

Now $d_{X_{uv}}(y_2,y_3)\le\eta_{\ref{pro-emb}}(C)$ where $X_{uv}=\pi^{-1}([u,v])$. Since $Y\to X$ is $\eta_0$-properly embedded, there is a proper function $\eta:\R_{\ge0}\map\R_{\ge0}$ depending on $\eta_0$ such that $Y_{uv}$ is $\eta$-properly embedded in $X_{uv}$. Thus $d_{Y_{uv}}(y_2,y_3)\le\eta\circ\eta_{\ref{pro-emb}}(C)=D_2$ (say). Let $y_4\in Y_{eu}\cap[y_2,y_3]_{Y_{uv}}$ be such 
that $d_{Y_u}(y_2,y_4)\le D_2$. Hence by triangle inequality, $d_{X_u}(y_1,y_4)\le d_{X_u}(y_1,x')+d_{X_u}(x',y_2)+d_{X_u}(y_2,y_4)\le 2D_1+D_2$. Since $P^{Y_u}_{Y_{eu}}(y)=y'$, $y_1\in[y,y']_{Y_u}$ and $y_4\in Y_{eu}$, we have $d_{Y_u}(y_1,y')\le 2D_1+D_2$.

Finally, by triangle inequality, we have $d_{X_u}(x',y')\le d_{X_u}(x',y_1)+d_{X_u}(y_1,y')\le 3D_1+D_2$. This completes the proof.\smallskip 

\noindent$(1)\Longrightarrow(3)$: We will apply the characterization of flow stable as in Lemma \ref{flow stable lemma2} $(2)$. Let $R_0\ge0$ is the constant coming from the compatible fiberwise projection condition (see Definition \ref{defn-com fib proj con}). Let $e=[u,v]$ be an edge in $S$ joining vertices $u,v\in V(S)$. Let $A_1=P^{X_u}_{X_{eu}}(Y_u)$ and $B_1=P^{X_v}_{X_{ev}}(Y_v)$. Then the compatible fiberwise projection condition says that $Hd_{X_u}(A_1,Y_{eu})\le R_0$ and $Hd_{X_v}(B_1,Y_{ev})\le R_0$. Since $Hd_{Y_{uv}}(Y_{eu},Y_{ev})=1$, by triangle inequality, we have $Hd_{X_{uv}}(A_1,B_1)\le2R_0+1$. (Note that the inclusion $Y_{uv}\map X_{uv}$ is $1$-Lipschitz.) On the other hand, $A_1\sse N^{X_u}_{R_0}(Y_{eu})\sse N^{X_u}_{R_0}(Y_u)$, and $B_1\sse N^{X_v}_{R_0}(Y_v)$. That is, $Y_u$ is $R_0$-flowable in $Y_v$ and vice versa. Finally, assume that $Y_u$ and $Y_v$ are $k$-quasiconvex in $X_u$ and $X_v$ respectively for some uniform constant $k\ge1$ (see Lemma \ref{qc morph} $(2)$). Hence by the characterization of flow stable (Lemma \ref{flow stable lemma2} $(2)$), the collection $\{Y_v: v\in V(S)\}$ is $R$-flow stable family of subsets in $X$ where $R=D'_{\ref{flow stable lemma2}}(k,2R_0+1,R_0)$.\qed



\subsection{ Consequences of the projection condition}

\begin{convention}\label{subsubsec-induced trees}
	For the rest of this section, and all of the next section, we shall assume that the pair
of trees of spaces $Y\subset X$ satisfy both Convention \ref{con-induced spaces} and the
compatible fiberwise projection condition with constant $R_0$.
\end{convention}

The following lemma is a straightforward consequence of the definitions.

\begin{lemma}\label{rmk-coboundedness are same}
	Suppose $Y_{eu}$ and $Y_{e'u}$ are two distinct cobounded edge spaces in $Y_u$. Then $Y_{eu}$ and $Y_{e'u}$ are cobounded in $X_u$.
\end{lemma}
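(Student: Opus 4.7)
The plan is to transfer coboundedness from the path metric on $Y_u$ to that on $X_u$ by interposing the ambient edge spaces $X_{eu},X_{e'u}$ and using the projection hypothesis together with the nested-projection lemma.

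A preliminary observation I would record first is that both $Y_{eu}$ and $Y_{e'u}$ are uniformly quasiconvex in $X_u$: the composition $Y_e\hookrightarrow X_e\xrightarrow{f_{e,u}} X_u$ is a uniform qi embedding (the first arrow by Convention \ref{con-induced spaces}(6), the second by the tree-of-spaces hypothesis), so Lemma \ref{qc morph} implies that its image $Y_{eu}$ is uniformly quasiconvex in $X_u$; consequently $P^{X_u}_{Y_{eu}}$ is a coarsely well-defined, coarsely Lipschitz retraction by Lemma \ref{proj-on-qc}~\ref{proj-on-qc:1}, and the same holds for $Y_{e'u}$.

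Next, fix arbitrary $x_1,x_2\in Y_{e'u}\subset Y_u$. By the assumed coboundedness of $Y_{eu}$ and $Y_{e'u}$ in $Y_u$, the points $P^{Y_u}_{Y_{eu}}(x_i)$ lie within a uniformly bounded $d_{Y_u}$-distance, hence within a uniformly bounded $d_{X_u}$-distance since the inclusion $Y_u\hookrightarrow X_u$ is $1$-Lipschitz. The projection hypothesis then adds that each $P^{Y_u}_{Y_{eu}}(x_i)$ is $R_0$-close in $X_u$ to $P^{X_u}_{X_{eu}}(x_i)$, so the triangle inequality gives a uniform bound on $\mathrm{diam}_{X_u}\bigl(P^{X_u}_{X_{eu}}(Y_{e'u})\bigr)$.

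The final step replaces the larger $X_{eu}$ by the smaller quasiconvex subset $Y_{eu}$. Since $Y_{eu}\subset X_{eu}$ with both uniformly quasiconvex in $X_u$, Lemma \ref{lem-proj on pair qc}~\ref{lem-proj on pair qc:lem-nested proj} yields a uniform constant $D$ with
$$d_{X_u}\bigl(P^{X_u}_{Y_{eu}}(x),\,P^{X_u}_{Y_{eu}}\bigl(P^{X_u}_{X_{eu}}(x)\bigr)\bigr)\le D$$
for every $x\in X_u$. Combining this with the coarse Lipschitzness of $P^{X_u}_{Y_{eu}}$ and the bounded diameter of $P^{X_u}_{X_{eu}}(Y_{e'u})$ established in the previous paragraph produces the desired uniform bound on $\mathrm{diam}_{X_u}\bigl(P^{X_u}_{Y_{eu}}(Y_{e'u})\bigr)$; the symmetric argument with $e$ and $e'$ interchanged then completes the proof. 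I do not anticipate a real obstacle here: the projection hypothesis is tailor-made to bridge $P^{Y_u}_{Y_{eu}}$ with $P^{X_u}_{X_{eu}}$, and the passage from $X_{eu}$ to the smaller $Y_{eu}$ is routine nested-projection bookkeeping. The only point requiring care is to ensure that $Y_{eu}$ is genuinely quasiconvex in $X_u$ so that $P^{X_u}_{Y_{eu}}$ is meaningful, which is precisely why the initial qi-embedding observation is needed.
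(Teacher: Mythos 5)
Your argument is correct, and since the paper states this lemma without proof (calling it ``a straightforward consequence of the projection hypothesis''), your writeup simply supplies the details the authors suppress: transferring the $Y_u$-coboundedness bound to $X_u$ via the $1$-Lipschitz inclusion, invoking the projection hypothesis to move from $P^{Y_u}_{Y_{eu}}$ to $P^{X_u}_{X_{eu}}$, and then the nested-projection lemma to descend from $X_{eu}$ to $Y_{eu}$. One small shortcut you could take at the end: once $\mathrm{diam}_{X_u}\bigl(P^{X_u}_{Y_{eu}}(Y_{e'u})\bigr)$ is bounded, Lemma \ref{lem-proj on pair qc}~\ref{lem-proj on pair qc:small-imp-small} immediately bounds $\mathrm{diam}_{X_u}\bigl(P^{X_u}_{Y_{e'u}}(Y_{eu})\bigr)$ as well, sparing the symmetric rerun of the argument.
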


By Theorem \ref{flow-qc} we know that flow of a quasiconvex subset of a vertex space in $Y$ is quasiconvex
in $Y$. However, the following lemma says that under the compatible fiberwise projection condition
these flow spaces are quasiconvex in $X_S$ too. This is one of main technical tools needed for the results
in the later part of the paper.
\smallskip
\begin{lemma}\label{Y flow qc in X}{\em ({\bf Quasiconvexity of $\bm Y$-flows in $\bm{X_S}$})}
	Suppose the pair $(Y,X)$ of trees of hyperbolic spaces satisfies
the properties in Convention \ref{subsubsec-induced trees}. Then the following holds:

	Let $u\in V(S)$ and $e\in E(S)$ is an edge incident on $u$. Suppose
	$A\subset Y_{eu}$ is $k$-quasiconvex in $Y_u$ for some $k\geq 0$. Let $R\geq\bm{ R_{fc}}(k)$ and $D=\bm{D_{fc}}(k)$.
	Then $\F l^Y_{R,D}(A)$ is a $R$-flow stable and $D'$-saturated collection of
	subsets in $X_S$ where $D'\ge0$ is a constant depending on $D$ and $R_0$. Consequently, $\F l^Y_{R,D}(A)$
	is quasiconvex in $X_S$.
\end{lemma}
\proof Let $v\in \pi(\F l^Y_{R,D}(A))$. We first observe that $Y_v\cap \F l^Y_{R,D}(A)$
is a uniformly quasiconvex subset of $X_v$. This is clear for $v=u$ since the inclusions
$Y_{e}\to X_e\to X_v$ are uniformly qi embeddings. In general, since the edge spaces of $Y$
are uniformly qi embedded in the corresponding vertex spaces of $X$, and
$Y_v\cap \F l^Y_{R,D}(A)$ is the quasiconvex hull in $Y_v$, of a uniformly
quasiconvex subset of an edge space of $Y_v$ (see Lemma \ref{flow1}(2)(3)), it follows
that $Y_v\cap \F l^Y_{R,D}(A)$ is uniformly quasiconvex in $X_v$.
Next by Proposition \ref{consistent qc} $\F l^Y_{R,D}(A)$ is uniformly flow stable
in $Y$. One may then use the characterization of flow stability given in Lemma
\ref{flow stable lemma2} together with the compatible fiberwise projection condition to finish the proof.
\qed

\smallskip

Further applications of the compatible fiberwise projection condition will follow in the
next subsection.

\subsubsection{\bf A generalized flow space construction}

Suppose $u,v\in V(S)$. We construct a flow stable subset $Q_{uv}$ of $X_S$ containing
the edge spaces $Y_e$, $e\in E([u,v])$ through the following three steps. By construction
{\em most part} of $Q_{uv}$ is contained in $Y$, namely $(i)$ for all vertex $w\in \pi(Q_{uv})\setminus[u,v]$,
$Q_{uv}\cap X_w\subset Y_w$ and $(ii)$ for all vertex $w$ in $[u,v]$, $Q_{uv}\cap X_w$ is uniform Hausdorff close to a
quasiconvex hull in $X_w$ of the edge spaces $Y_e$ and $Y_{e'}$ where
$e, e'$ are the edges on $[u,v]$ incident on $w$. Here are the details:

\smallskip
{\bf Step 1. Construction of a flow stable subset of $\bm{X_{[u,v]}:=\pi^{-1}[u,v]}$.}
Suppose $u=u_1,u_2,\cdots, u_n=v$ is the sequence of vertices on the geodesic
$[u,v] \subset T$ with $d(u_1,u_i)=i-1$, $1\leq i\leq n$. Suppose $e_i$ is
the edge joining $u_i$ and $u_{i+1}$. Then for all $1\leq i\leq n$ we first define
a uniformly quasiconvex subset $A_i\subset X_{u_i}$ as follows.

\smallskip
{\bf Construction of $A_i$'s:}

{\bf Type 1.} For $1\leq i\leq n$ if $Y_{e_{i-1},u_i}$ and $Y_{e_{i}u_i}$ are two
cobounded edge spaces in $Y_{u_i}$
then we let $z_{i}\in P^{X_{u_i}}_{Y_{e_i u_i}}(Y_{e_{i-1}u_i})$ and
$z_{i-1}\in P^{X_{u_i}}_{Y_{e_{i-1}u_i}}(Y_{e_iu_i})$. We note that $Y_{e_i,u_i}$ and $Y_{e_{i-1}u_i}$ are also cobounded in $X_{u_i}$ by Lemma \ref{rmk-coboundedness are same}. In this case we define
$A_i= Y_{e_{i-1}u_i}\cup [z_{i-1},z_i]_{X_{u_i}}\cup Y_{e_i u_i}$.

{\bf Type 2.}
On the other hand if $Y_{e_i,u_i}$ and $Y_{e_{i-1}u_i}$ are not cobounded
then by Lemma {\ref{lem-cobdd}~\ref{lem-cobdd:R-sep-D-cobdd}}
$d_{Y_{u_i}}(Y_{e_{i-1},u_i},Y_{e_{i}u_i})\leq D_0$
where $D_0$ depends only on $\delta_0$ and $\lambda_0$. In this case we define
$A_i= Y_{e_{i-1}u_i}\cup Y_{e_i,u_i}$.

\smallskip
{\bf Step 2. Verification of the properties of $A_i$'s.}

\smallskip

{\bf Property 0.} First of all, clearly $A_i$ is {\em uniformly quasiconvex in $X_{u_i}$}.
This follows from the fact that the edge spaces of $Y$ are uniformly qi embedded
in the corresponding vertex spaces of $X$ and that finite union of quasiconvex sets
in a hyperbolic metric space is quasiconvex; see Lemma \ref{lem-cobdd}~\ref{lem-cobdd:union-qc}.

{\bf Property 1.} {\em $Hd_{X_{u_i}}(P^{X_{u_i}}_{X_{e_i u_i}}(A_i), Y_{e_i u_i})$
	and $Hd_{X_{u_i}}(P^{X_{u_i}}_{X_{e_{i-1} u_i}}(A_i), Y_{e_{i-1} u_i})$
	are uniformly small.} This is clear if $A_i$ is of type $2$ using the compatible fiberwise projection compatible.
When $A_i$ is of type $1$ one has to use Lemma {\ref{lem-proj on pair qc} \ref{lem-proj on pair qc:qc proj qc1}} in addition.

On the other hand, since
for $1\leq i\leq n-1$, $Hd_X(Y_{e_i u_i}, Y_{e_i u_{i+1}})\leq 1$,
it follows from the characterization of flow stable Lemma \ref{flow stable lemma2} that the collection {\em $\{A_i:1\le i\le n\}$ is uniformly flow stable family of subsets in $X$.}

{\bf Property 2.} {\em Suppose $e'$ is an edge incident on $u_i$ which is
	not on $[u,v]$. Then $P^{X_{u_i}}_{X_{e' u_i}}(A_i)=B^X_{i,e'}$, say,
	is a uniformly quasiconvex subset in $X_{u_i}$ $($see Lemma {\ref{lem-proj on pair qc}~\ref{lem-proj on pair qc:qc proj qc2}}, and $Hd_{X_{u_i}}(B^X_{i,e'},B^Y_{i,e'})$ is uniformly small}, where $P^{X_{u_i}}_{Y_{e'u_i}}(B^X_{i,e'})=B^Y_{i,e'}$, due to the compatible fiberwise projection condition and Lemma {\ref{lem-proj on pair qc} \ref{lem-proj on pair qc:qc proj qc1}} in addition.
\smallskip

$(*)$ Let $K$ be the maximum of all quasiconvexity constants we have above, and $R=\bm{ R_{fc}}(K)$ and $D=\bm{D_{fc}}(K)$. 

\smallskip
{\bf Step 3. Construction of the flow space.}\\
Now we construct (some modified) flow spaces of the various $A_i$'s in the
direction away from $[u,v]$.
Let $S_i$ be the maximal subtree of $S$ such that $S_i\cap [u,v]=u_i$
and let $Y_i=\pi_1^{-1}(S_i)$ for $1\leq i\leq n$. The modified flow space $\A_i$
of $A_i$ is defined as follows.

{\bf Case 1.} $i=1$ and $e_0=e_1$ or  $i=n$ and $e_n=e_{n-1}$: In this case we let $\A_i$
denote the $\F l^{Y_i}_{R,D}(A_i)$ (flow space of $A_i$ considered only in $Y_i$).

{\bf Case 2:} In all other situation we proceed as follows.
Suppose $e'$ is an edge connecting $u_i$ to $u'_i$, say, such that
$e'$ is not on $[u,v]$. Let $S'_{i}$ be the maximal subtree of $S$ containing
$u'_i$ and not containing $u_i$, and let $Y'_{i}=\pi^{-1}_1(S'_{i})$.
Let $P^{X_{u_i}}_{Y_{e'u_i}}(B^X_{i,e'})=B^Y_{i,e'}$.
We note that
$Hd_{X_{u_i}}(B^X_{i,e'},B^Y_{i,e'})$ is uniformly small by Property (2)
mentioned above. Now if $A_i$ and $X_{e'u_i}$ are $R$-separated (hence, $D$-cobounded)
in $X_{u_i}$, we define the `flow of $A_i$ in the direction of $u'_i$ to be
$\A_{i,e}=\emptyset$. Otherwise, we let $A_{i,e'}=\F l_{u_i u'_i}(B^Y_{i,e'})$ in $Y$ and
$\A_{i,e'}=\F l^{Y'_{i}}_{R,D} (A_{i,e'})$.
We let $\A_i =\bigcup \A_{i,e'}$ where the union is taken over all the edges $e'$
incident on $u_i$, other than $e_i$ and $e_{i-1}$.

\begin{theorem}{\em ({\bf Quasiconvexity of generalized flow spaces})}\label{Q-uv qc}
The set $$Q_{uv}=(\bigcup_{i=1}^n A_i)\bigcup(\bigcup_{i=1}^n \A_i)$$ is uniformly quasiconvex in $X_S$.
\end{theorem}
\proof
Since both the flow stability and saturation conditions are local properties,
i.e., they are to be checked for the edges in $\pi(Q_{uv})$, this follows from
Lemma \ref{Y flow qc in X}, Lemma \ref{flow stable lemma2} and the flow
stability of $\bigcup_{i=1}^n A_i$. One may then apply Theorem \ref{thm-mitra proj on stable-satura} and Lemma \ref{proj-on-qc}~\ref{proj-on-qc:lem-retraction imp qc} to show the quasiconvexity of $Q_{uv}$.
\qed

\smallskip
We recap the results of the construction of the generalized flow spaces in the form
of the following two results.
\begin{theorem}\label{tool 1}{\em ({\bf Geodesic comparison between $\bm Y$ and $\bm {X_S}$})}
There are uniform (non-negative) constants $D_{\ref{tool 1}}$ and $D'_{\ref{tool 1}}$ such that the following hold:

	Suppose $u,v$ are vertices in $S$ and that the $w$ is in $[u,v]$. Let $e$ be the edge on $[w,v]$ $($or $[u,w])$
	incident on $w$.

$(1)$ If $y\in Y_u$ and $y'\in Y_v$ belong to some edge spaces in the respective vertex
	spaces then $N_{D_{\ref{tool 1}}}(Y_{ew})\cap [y,y']_{X_S}\neq \emptyset$

	$(2)$ If $z\in Y_v$ belong to some edge space of $Y_v$ and $z'\in Y_{ew}$ then
	$d_T(u,\pi([z,z']_{X_S}))\geq d_T(u,\pi( \F l^Y_{R,D}(Y_{ew})))-D'_{\ref{tool 1}}$ where $R$ and $D$ are defined in $(*)$ above.
\end{theorem}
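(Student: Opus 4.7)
The plan is to use the quasiconvexity in $X$ of $Y$-flows of edge spaces (Lemma \ref{Y flow qc in X}) together with the projection hypothesis to control $X$-geodesics between edge-space points of $Y$.

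For part $(1)$, I will construct an explicit path $\gm$ from $y$ to $y'$ lying in $Y\subset X$, passing through a point of $Y_{ew}$, and verify it is a uniform $X$-quasigeodesic. The construction proceeds along the consecutive vertices $u=u_0,u_1,\dots,u_N=v$ of $[u,v]$, with $w=u_i$ for some $i$: at each $u_j$ let $e^-_j$, $e^+_j$ denote the edges of $[u,v]$ incident on $u_j$. Inside $Y_u$ take a geodesic from $y\in Y_{e_1u}$ to a nearest point on $Y_{e^+_0u}$; cross the edge $e^+_0$ (cost at most $1$ in $X$); in $Y_{u_1}$ take a geodesic between $Y_{e^-_1 u_1}$ and $Y_{e^+_1 u_1}$; iterate until reaching a point of $Y_{ew}$; mirror the construction from $y'$. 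To check that $\gm$ is a uniform quasigeodesic in $X$, I observe that each vertex-space segment lies in a uniform neighbourhood of a flow space $\F l^Y_{R,D}(Y_{e'u'})$, and by Lemma \ref{Y flow qc in X} these flow spaces are uniformly quasiconvex in $X$. The pieces of $\gm$ meet at edge spaces $Y_{e^-_j u_j}, Y_{e^+_j u_j}$ which are uniformly cobounded inside $Y_{u_j}$ by Lemma \ref{lem-cobdd}~\ref{lem-cobdd:R-sep-D-cobdd} (distinct adjacent edge spaces are at positive $Y_{u_j}$-distance), and by the projection hypothesis together with Lemma \ref{rmk-coboundedness are same} this coboundedness persists in $X_{u_j}$. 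A standard concatenation argument (cf.\ Lemma \ref{lem-qg}) then yields that $\gm$ is a uniform $K$-quasigeodesic in $X$. Applying the Morse Lemma (Lemma \ref{ml}), $[y,y']_X$ lies in a uniform Hausdorff neighbourhood of $\gm$; in particular some point of $[y,y']_X$ is within uniform distance of $Y_{ew}$, yielding the constant $D_{\ref{tool 1}}$.

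For part $(2)$, let $F=\F l^Y_{R,D}(Y_{ew})$, which is quasiconvex in $X$ by Lemma \ref{Y flow qc in X}. Let $p=P^X_F(z)$. By Lemma \ref{lem-proj on pair qc}~\ref{lem-proj on pair qc:concatenation qg} the concatenation $[z,p]_X\cup[p,z']_X$ is a uniform quasigeodesic from $z$ to $z'$, so by Lemma \ref{ml} the geodesic $[z,z']_X$ lies within uniform Hausdorff distance of it. Since both endpoints of $[p,z']_X$ lie in $F$, by quasiconvexity $[p,z']_X$ itself lies in a uniform neighbourhood of $F$; consequently $\pi([p,z']_X)$ is uniformly close (in $T$) to a tree-path inside $\pi(F)$. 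Using the flow-stability of $F$ and the fact that $z$ itself sits in an edge space of $Y_v$ (forcing $p$ to land in the part of $F$ realizing the relevant tree-extent relative to $u$), the tree-projection $\pi([z,z']_X)$ witnesses the extent of $\pi(F)$ up to a uniform additive error, yielding the inequality with the constant $D'_{\ref{tool 1}}$.

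The main obstacle will be the $X$-quasigeodesicity verification in part $(1)$: successive pieces ``vertex-space geodesic $+$ edge hop'' must not fold back in $X$, which is precisely where the projection hypothesis is essential — it is used via Lemma \ref{rmk-coboundedness are same} to transfer intrinsic $Y_{u_j}$-coboundedness of consecutive edge spaces to the ambient $X_{u_j}$-coboundedness needed to apply hyperbolic concatenation lemmas. Part $(2)$ is comparatively routine once Lemma \ref{Y flow qc in X} provides quasiconvexity of $F$ in $X$, though some care is needed to identify which vertex of $\pi(F)$ is actually realized by the tree-projection of the nearest-point $p$.
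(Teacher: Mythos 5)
Your plan for part (1) has a genuine gap. You propose to build an explicit path $\gm$ in $Y$ through consecutive vertex spaces and claim it is a uniform quasigeodesic in $X$. But vertex spaces $Y_{u_j}$ (and $X_{u_j}$) are only $\eta_0$-\emph{properly} embedded in $X$, not qi embedded. A geodesic segment in $Y_{u_j}$ between two edge spaces can therefore be arbitrarily long relative to the $X$-distance between its endpoints, so such a piece is not a uniform $X$-quasigeodesic, and neither is the concatenation. Saying that each piece lies in a uniform neighbourhood of a flow space $\F l^Y_{R,D}(Y_{e'u'})$ (which is quasiconvex in $X$ by Lemma \ref{Y flow qc in X}) does not help: a path inside a quasiconvex set can be as far from being a quasigeodesic as you like. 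Moreover, your coboundedness step is simply false in general: the consecutive edge spaces $Y_{e^-_j u_j}$ and $Y_{e^+_j u_j}$ need not be separated at all (this is precisely the ``Type 2'' dichotomy the paper has to build into the construction of $A_i$), so Lemma \ref{lem-cobdd}~\ref{lem-cobdd:R-sep-D-cobdd} does not apply uniformly, and Lemma \ref{lem-qg} only handles a single local three-piece concatenation, not a long chain. The paper avoids all of this by \emph{not} constructing a path at all. It builds, along $[u,v]$, a flow-stable and saturated collection $A_1,\dots,A_n$ (distinguishing the cobounded ``Type 1'' and non-cobounded ``Type 2'' cases), then takes its saturated flow $Q_{uv}$ and uses Mitra's retraction map $\rho: X\to Q_{uv}$ from Theorem \ref{thm-mitra proj on stable-satura} to get quasiconvexity of $Q_{uv}$ in $X$ directly, with no quasigeodesic-concatenation claim. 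Part (1) then falls out from the observation that $[y,y']_X$ must meet $X_{ew}$ (separation in the tree), plus the fact that $\rho$ sends that intersection point into $A_i\subset Y_w$, uniformly close, and then Lemma \ref{lem-cobdd}~\ref{lem-cobdd:qc proj new} forces nearness to $Y_{ew}$.

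Your plan for part (2) also has an unclosed step. You control $[p,z']_X$ via the quasiconvexity of $F=\F l^Y_{R,D}(Y_{ew})$, which is fine, but you have no control whatsoever on the other piece $[z,p]_X$: it is a genuine $X$-geodesic from $z\in X_v$ to $p\in F$ and, a priori, its tree-projection could wander towards $u$ well beyond $\pi(F)$. Your sentence ``forcing $p$ to land in the part of $F$ realizing the relevant tree-extent relative to $u$'' does not address this; the issue is not where $p$ lands but where the interior of $[z,p]_X$ goes. The paper resolves this by constructing the analogous quasiconvex set $Q_{wv}$ containing \emph{both} $z$ and $z'$ from the start (the edge $e_0$ at $w$ and the edge $e_n$ at $v$ in the construction are chosen so that $Y_{ew}$ and the edge space of $Y_v$ containing $z$ sit inside $Q_{wv}$). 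Then the entire $X$-geodesic $[z,z']_X$ is forced into a uniform neighbourhood of $Q_{wv}$, and since $\pi$ is $1$-Lipschitz while $d_T(u,\pi(Q_{wv}))\geq d_T(u,\pi(F))$ by construction, the inequality follows. The missing idea in your draft, in both parts, is exactly this: do not try to show anything is a quasigeodesic; build one well-chosen uniformly quasiconvex set containing both endpoints via Theorem \ref{thm-mitra proj on stable-satura} and let stability do the rest.
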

\proof  Let us assume that $e$ is on $[w,v]$ since the proof for the other
case is analogous. We will assume the notations of the proof of Theorem \ref{Q-uv qc}.

For (1) we note that $[y,y']_X\cap X_{ew}\neq \emptyset$. Let
$y_1\in [y,y']_X\cap X_{ew}$. By the quasiconvexity of $Q_{uv}$ there
is a point $y_2\in Q_{uv}$ uniformly close to $y_1$. Let $w=u_i$. Next, let $\rho:X_S\map Q_{uv}$
denote a Mitra's projection map restricted on $X_S$ (see Theorem \ref{thm-mitra proj on stable-satura}). Then $\rho(y_1)\in X_w$ is
uniformly close to $\rho(y_2)=y_2$ as $\rho$ is uniformly coarsely Lipschitz.
By triangle inequality, it follows that $d_{X_S}(y_1, \rho(y_1))$ is uniformly bounded. This means
$d_{X_w}(y_1, \rho(y_1))$ is uniformly bounded as $X_w\to X$ is $\eta_0$-proper embedding. We note that in this case
$\rho(y_1)\in A_i$ and $Y_{ew}=Y_{e_i u_i}$. Now, if $\rho(y_1)\in Y_{ew}$
then we are done. However, otherwise, by Lemma \ref{lem-cobdd}~\ref{lem-cobdd:qc proj new},
$[y_1,\rho(y_1)]_{X_{u_i}}$ goes through a uniformly small neighbourhood of
$Y_{ew}$ in $X_{u_i}$ whence (1) follows.

For (2) we appeal to the set $Q_{wv}$
instead of the whole collection. Since $Q_{wv}$ is uniformly quasiconvex in $X_S$,
$[z,z']_{X_S}$ is contained in a uniformly small neighbourhood of $Q_{wv}$.
Therefore, $\pi([z,z']_{X_S})$ is contained in a uniformly small neighbourhood of
$\pi(Q_{wv})$ since $\pi$ is $1$-Lipschitz. Hence, there is a uniform constant $D'\ge0$ such that
$d_T(u, \pi([z,z']_{X_S}))\geq d_T(u,\pi(Q_{wv}))-D'$. However, it is clear from the
construction of $Q_{wv}$ that $d_T(u, \pi(Q_{wv}))\geq d_T(u,\pi(\F l^Y_{R,D}(Y_{ew})))$
whence (2) follows. Hence, we take $D'=D'_{\ref{tool 1}}$. \qed\smallskip


The following lemma is the final piece of tool for the proof of our main theorem
to be given in the next section of the paper.
\begin{lemma}\label{reduction1}{\em ({\bf A superficial horizontal replacement lemma})}
Suppose $u\in V(S)$, $y\in Y_u$ and $\{y_n\}$ is an unbounded
sequence in $Y$ such that $\lim^Y_{n\map \infty}y_n\in \pa Y$ and
$\lim^X_{n\map \infty}y_n \in \pa X$. Let $u_n=\pi(y_n)$, and suppose that
$\lim^T_{n\map \infty} u_n=\xi \in \pa T$. Let $c_n$ be the nearest point projection of
$u_n$ on $[u,\xi)$, and let
$e_n$ be the edge on $[u,c_n]$ incident on $c_n$ for all $n\in \mathbb N$. Suppose
$z_n\in Y_{e_nc_n}\cap [y, y_n]_Y$. Then
$\lim^Y_{n\map \infty}y_n=\lim^Y_{n\map \infty}z_n\in\pa Y$ and
$\lim^X_{n\map \infty}y_n=\lim^X_{n\map \infty}z_n\in\pa X$; in other words,
$\{y_n\}$ and $\{z_n\}$ are a pair of superficial test sequences for the pair $(Y,X)$.
\end{lemma}

\proof That $\LMY y_n =\LMY z_n$ follows from Lemma \ref{horiz replace}.
The only significant point here is to prove the second limit.

For all $n\in \N$ let $e'_n$ be the edge on $[u,u_n]$ incident on $u_n$, and
let $$e_n=e_{n,1},e_{n,2},\cdots, e_{n,t(n)}=e'_n$$ be all the successive edges on the
geodesic from $c_n$ to $u_n$ when $c_n\neq u_n$, i.e. $u_n\not \in [u,\xi)$.
First we do the following reduction.

{\bf Reduction step}:
Let $y'_n$ be a nearest point projection of $y_n$ on
$Y_{e'_n u_n}$ in $Y_{u_n}$. Then by Lemma \ref{reduction0},
$\LMY y_n =\LMY y'_n$. Moreover, by the same lemma and the compatible fiberwise projection condition,
we have $\lim^{X}_{n\to\infty} y_n =\lim^{X}_{n\to\infty} y'_n$ in $\pa X$. Therefore, we are reduced to proving
$\LMX y'_n=\LMX z_n$ in $\pa X$ and so far we have $\LMY y'_n =\LMY z_n$ in $\pa Y$. 

In the proof one will notice that the study is restricted in $Y\sse X_S$ due to Theorem \ref{thm-CT-ps-kap}.

The following recurring argument in the proof. First, we discuss three special cases
and then finally we prove the general case using them. In the proof we will consider $(R,D)$-flow spaces for the constants $R$ and $D$ as in Theorem \ref{tool 1} $(2)$.

{\bf Case 1.} Suppose $u_n\in [u,\xi)$ for all $n\in \N$. In this case $e_n=e'_n$ and the
reduction step yields $y'_n\in Y_{e_n c_n}$. Now note that $y'_n, z_n \in Y_{e_n c_n}$,
and $Z_n=\F l^Y_{R,D}(Y_{e_n c_n})$ is uniformly quasiconvex in both $X_S$ and $Y$ by Lemma \ref{Y flow qc in X} and Corollary \ref{flow-qc} respectively. 

Hence, for any fix $y\in Y_u$, $d_{X_S}(y,[y'_n,z_n]_{X_S})\to\infty$ as $n\to\infty$ by applying Lemma \ref{lem-same limits} in $Y\sse X_S$. Since $X_S\to X$ admits CT map (Theorem \ref{thm-CT-ps-kap}), by the necessity of Mitra's criterion (Lemma \ref{necessity}), we have $d_X(y,[y'_n,z_n]_X)\to\infty$ as $n\to\infty$. Hence $\LMX y'_n=\LMX z_n$ by Lemma \ref{con-same-pt}.

{\bf Case 2.} Suppose $u_n\not \in [u,\xi)$ but
$\F l^Y_{R,D}(Y_{e_{n,t(n)}u_n}) \cap Y_{e_nc_n}\neq \emptyset$ for all $n\in \N$. In this case
$\F l^Y_{R,D}(Y_{e_{n,t(n)}u_n}) \cup \F l^Y_{R,D}(Y_{e_nc_n})=Z_n$, say, is uniformly
quasiconvex in $Y$ as well in $X_S$ by Corollary \ref{flow-qc} and Lemma \ref{Y flow qc in X} and the fact that in a hyperbolic metric space, union of two intersecting quasiconvex subsets is also quasiconvex.
Note that $y'_n,z_n\in Z_n$. A carbon copy of the argument given in the last paragraph of Case $1$ concludes that $\LMX y'_n=\LMX z_n$ in $\pa X$.

{\bf Case 3.} Suppose $u_n\not \in [u,\xi)$ and $\F l^Y_{R,D}(Y_{e_{n,t(n)}u_n})\cap Y_{e_nc_n}=\emptyset $
for all $n\in\N$. Suppose $c_n=v_{n,1}, v_{n,2},\cdots$ are the consecutive vertices
on the geodesic joining $c_n$ to $u_n$ so that each edge $e_{n,j}$ joins $v_{n,j}$ and $v_{n,j+1}$.
Suppose $e_{n,i}$ is the closest edge from $c_n$ such that
$\F l^Y_{R,D}(Y_{e_{n,i}v_{n,i+1}})\cap Y_{e_nc_n}=\emptyset$. Let $y''_n\in Y_{e_{n,i}v_{n,i+1}}$ be any point.
Now, by Theorem \ref{tool 1} $(2)$, there is a uniform constant $D'=D'_{\ref{tool 1}}$ depending
	only on the parameters of the tree of spaces under consideration such that
$d_T(u, \pi([y'_n, y''_n]_{X_S})\geq d_T(u, \F l^Y_{R,D}(Y_{ e_{n,i}v_{n,i+1}}))-D'$. However,
since $\F l^Y_{R,D}(Y_{ e_{n,i}v_{n,i+1}})\cap Y_{e_{n} c_n}=\emptyset$, we have
$d_T(u, \F l^Y_{R,D}(Y_{ e_{n,i}v_{n,i+1}}))\geq d(u,c_n)$. Thus for any fix $y\in Y_u$,
$\lim_{n\map \infty}d_{X_S}(y, [y'_n, y''_n]_{X_S})=\infty$. On the other hand, $X_S\to X$ admits a CT map (Theorem \ref{thm-CT-ps-kap}) and so $X_S\to X$ satisfies Mitra's criterion (Lemma \ref{necessity}), and hence, $\lim_{n\to\infty}d_X(y,[y'_n,y''_n]_X)=\infty$. Therefore, by Lemma \ref{con-same-pt},
$\lim^{X}_{n\to\infty} y'_n=\lim^{X}_{n\to\infty} y''_n$ in $\pa X$. Applying Theorem \ref{tool 1} $(2)$ to the case $Y=X$
we similarly get $\LMY y'_n=\LMY y''_n$ in $\pa Y$. Let $Im_Y(y''_n)$ denote the image of
$y''_n$ in $Y_{ e_{n,i}v_{n,i}}$ (Subsection \ref{tree-of-sps}). Since $d_Y(y''_n, Im(y''_n))=1$, it follows that
$\lim^{X}_{n\to\infty} y'_n =\lim^{X}_{n\to\infty} y''_n=\lim^{X}_{n\to\infty} Im(y''_n)$ in $\pa X$ and $\LMY y'_n =\LMY y''_n=\LMY Im(y''_n)$ in $\pa Y$.

Next, we may apply the reduction step to $Im(y''_n)$ to find
$y'''_n\in Y_{e_{n,i-1}v_{n,i}}$ for each $n\in \N$ so that
$\lim^{X}_{n\to\infty} Im(y''_n)=\lim^{X}_{n\to\infty} y'''_n$ in $\pa X$ and $\LMY Im(y''_n)=\LMY y'''_n~(=\LMY z_n)$ in $\pa Y$.
Finally, since $\F l^Y_{R,D}(Y_{e_{n,i-1}v_{n,i}}) \cap Y_{e_n c_n}\neq \emptyset$
for each $n\in \N$ we have $\lim^{X}_{n\to\infty} z_n=\lim^{X}_{n\to\infty} y'''_n$ in $\pa X$ by Case $2$. Therefore, combining all the equalities, we have $\lim^{X}_{n\to\infty} z_n=\lim^{X}_{n\to\infty} y'_n$ in $\pa X$ as required.

{\bf Case $4$. The general case:} Let $S_1=\{n\in \N: u_n\in [u, \xi)\}$,
$S_2=\{n\in \N\setminus S_1: \F l^Y_{R,D}(Y_{e_{n,t(n)}u_n})\cap Y_{e_n c_n}\neq \emptyset \}$, and
let $S_3=\N \setminus (S_1\cup S_2)$. Now, if any $S_i$, $1\leq i\leq 3$, is infinite then we have a subsequence $\{n_{ik}\}_{k\in \N}$ of the sequence of natural numbers such that
$S_i=\{n_{ik}: k\in \N\}$. Then Case $i$ applies to the subsequence $\{y'_{n_{ik}}\}$ of $\{y'_n\}$ to
give $\lim^X_{k\map \infty} y'_{n_{ik}} =\lim^X_{k \map \infty} z_{n_{ik}}$ in $\pa X$.
Since $\LMX y'_n$ exists in $\pa X$, it follows that $\LMX y'_n =\LMX z_n$ in $\pa X$.
\qed\smallskip

The following corollary is a twin of Lemma \ref{vert or horiz} and it is immediate
from Lemma Lemma \ref{vert or horiz}, Lemma \ref{bounded proj 2} and Lemma \ref{reduction1}.
\begin{cor}\label{vert or horiz replace}
Suppose $\{y_n\}$ is an unbounded sequence in $Y$ such that
$\LMX y_n \in \pa X$ and $\LMY y_n\in \pa Y$. Then there is a sequence
$\{z_n\}$ in $Y$ such that

$(i)$ $\{y_n\}$ and $\{z_n\}$ is a pair of superficial test sequences.

$(ii)$ $\{z_n\}$ is either a vertical sequence or a horizontal sequence.
\end{cor}


\section{Main theorem}\label{sec-main thm}
In this section we prove the following theorem which is the main result of this paper.
Although we have already mentioned Convention \ref{con-induced spaces} and
Convention \ref{subsubsec-induced trees} before, we list all the hypotheses here for
convenience of the reader.

\begin{theorem}\label{main thm}
	
	\begin{enumerate}
		\item $\pi:X\map T$ is a tree of hyperbolic metric spaces with parameters $\eta_0, \delta_0, L_0,\lm_0$
		as defined in Definition \ref{tree-of-sps}.
		
		\item $Y\subset X$ is a subtree of hyperbolic spaces with fiberwise CT maps.
		Suppose $\pi(Y)=S$ and $\pi_1=\pi|_S: Y\map S$ is a tree of hyperbolic metric spaces
		with qi embedded condition with parameters $\eta_0, \delta_0, L_0$ .
		
		\item Both $X$ and $Y$ are proper hyperbolic metric spaces and the inclusion $Y\map X$ is
		$\eta_0$-proper embedding.
		
		\item There is a constant $L\geq 1$ such that the inclusion $Y_e\map X_e$ is an $L$-qi embedding
		for all $e\in E(S)$.
		\item Compatible fiberwise projection condition, with constant $R_0$, holds for
		the pair of trees of spaces $(Y,X)$.
	\end{enumerate}
	Then the inclusion $i:Y \map X$ admits the CT map.
\end{theorem}



\noindent{\bf Proof idea.} First of all, by Theorem \ref{thm-CT-ps-kap} (and Lemma \ref{functo-ct-map}), we may reduce to the case $S=T$ (see Step $1$ below). To prove the theorem, we show that any pair of test sequences for the pair $(Y,X)$ forms a pair of superficial test sequences. The conclusion then follows from Lemma \ref{not-ct-not-comp}. 

Let $\{y_n\}, \{y'_n\}$ be a pair of test sequences. By Lemma \ref{vert or horiz}, each of these sequences is either vertically replaceable or horizontally replaceable. Using Corollary \ref{vert or horiz replace} (see also Lemma \ref{reduction1}), we replace them by their vertical or horizontal counterparts, denoted $\{w_n\}$ and $\{w'_n\}$, respectively. This reduces to the following three cases.

\begin{enumerate}
\item Both $\{w'_n\}$ and $\{w_n\}$ are vertical sequences. We are done by Lemma \ref{finite-proj-case}. 

\item $\{w'_n\}$ is vertical and $\{w_n\}$ is horizontal.

\item Both $\{w'_n\}$ and $\{w_n\}$ are horizontals.
\end{enumerate}

In Case $2$, we replace $\{w_n\}$ by a vertical sequence $\{z_n\}$ such that $\{w_n\}$ and $\{z_n\}$ form a pair of superficial test sequences, reducing to Case $1$. The key input here is that the flow spaces of edge spaces of $Y$ are uniformly quasiconvex in both $Y$ and $X$ (Corollary \ref{flow-qc} and Lemma \ref{Y flow qc in X}). Finally, using the key input, in Case $3$, if $\lim_{n\to\infty}^T \pi(w_n) \neq \lim_{n\to\infty}^T \pi(w'_n)$, we again replace one of the sequences by a suitable vertical sequence, reducing to Case $2$; otherwise, we conclude by exhibiting a uniformly quasiconvex subset of both $Y$ and $X$ containing $w'_n$ and $w_n$ for all large $n\in\N$.



\subsection{Proof of Theorem \ref{main thm}}.

\noindent{\bf Step 1. Reduction to $\bm{S=T}$.} We first note that without loss
of generality one may assume that $X, Y$ are defined on the same tree, i.e. $S=T$.
By Theorem \ref{thm-CT-ps-kap}, there is a CT map for the inclusion $i:X_S\map X$.
Therefore, it is enough, by Lemma \ref{functo-ct-map}, to show the existence of a CT map for the
inclusion $i:Y\ri X$ where both $X$ and $Y$ have same base $T$. Hence, we are going to assume
that $S=T$ for the rest of the proof.

Suppose $\{y_n\}$ and $\{y'_n\}$ are two test sequences. We will show that these form a pair of superficial test sequences.\smallskip

\noindent{\bf Step 2. Reduction to vertical or horizontal sequences.}
By Corollary \ref{vert or horiz replace} we may assume that each of the sequences
$\{y_n\}$ and $\{y'_n\}$ is either vertical or horizontal. Rest of the proof
is divided into three cases based on nature of these sequences.\smallskip

\noindent{\bf Step 3. Case by case analysis based on types of  $\bm{\{y_n\}}$ and $\bm{\{y'_n\}}$.}
\smallskip

\noindent{\bf Case 1}: \underline{Both $\{y_n\}$ and $\{y\pr_n\}$
	are vertical.}
In this case the proof follows from Lemma \ref{finite-proj-case}.\smallskip

\noindent{\bf Case 2}: \underline{Exactly one of the sequences $\{y_n\}$ or $\{y\pr_n\}$
	is vertical and the other one is horizontal.}
Without loss of generality, suppose $\{y\pr_n\}$ is vertical
and $\{y_n\}$ is horizontal. Let $u,w,u_n\in V(T)$ and $\xi\in\pa T$ be such that $\{y'_n\}\sse Y_u$, $\pi(y_n)=u_n\in[w,\xi)$ and $\lim^T_{n\to\infty}u_n=\xi$. 
By discarding finitely many initial terms of the sequence $\{y_n\}$, we may assume that $u_n\in[u,\xi)$. Finally, using Lemma \ref{reduction1}, we may also assume that $y_n\in Y_{e_nu_n}$ where $e_n$ is the edge in $[u,u_n]$ incident on $u_n$. Let $e$ be the edge in $[u,\xi)$ incident on $u$. Fix $z_n\in Y_{eu}\cap[y'_n,y_n]_Y$. 

In the following claim, we will show that $\{z_n\}$, $\{y_n\}$ form a pair of superficial test sequences. First we complete the proof using the claim. From this claim, we also have $\LMX z_n\in\pa X$. Note that $\LMY y'_n=\LMY z_n=\LMY y_n$ by Lemma \ref{con-same-pt} as $\LMY y'_n=\LMY y_n$. Then $\{y'_n\},\{z_n\}$ form a pair of vertical test sequences, and so form a pair of superficial test sequences by Lemma \ref{CT-from-a-ver}.

This concludes that $\{y'_n\}$, $\{y_n\}$ form a pair of superficial test sequences. Hence we are done in this case.\smallskip

\noindent{\em Claim: $\{z_n\},~\{y_n\}$ form a pair of superficial test sequences.}\smallskip

\noindent{\em Proof of the claim:} Let us fix some constants to consider flow spaces. Note that $Y_{eu},Y_{e_nu_n}$ are $\lm_0$-quasiconvex in $Y_u$ and $Y_{u_n}$ respectively (see Convention \ref{con-trees of metric spaces} $(4)$). Let $R=\bm{R_{fc}}(\lm_0)$ and $D=\bm{D_{fc}}(\lm_0)$ as defined in Subsection \ref{subsub-flow space}.
Now, applying Lemma \ref{not-hv-pro-sub} to the sequences $\{z_n\},\{y_n\}$ in $Y$
we have $$N^Y_{D_1}(\F l^Y_{R,D}(Y_{eu}))\bigcap N^Y_{D_1}(\F l^Y_{R,D}(Y_{e_nu_n}))\ne\emptyset$$ for some uniform constant $D_1\ge0$.  Note that flow spaces under consideration are uniformly quasiconvex in both $Y$ and $X$ (see Corollary \ref{flow-qc} and Lemma \ref{Y flow qc in X}), and the union of two intersecting quasiconvex subsets is also quasiconvex in a hyperbolic space. Hence $N^Y_{D_1}(\F l^Y_{R,D}(Y_{eu}))\bigcup N^Y_{D_1}(\F l^Y_{R,D}(Y_{e_nu_n}))$ is uniformly quasiconvex in both $Y$ and $X$ containing $z_n$ and $y_n$. Now the conclusion follows from Lemma \ref{lem-same limits}.\smallskip

\noindent{\bf Case 3}: \underline{ Suppose $\{y\pr_n\}$ and  $\{y_n\}$ are both horizontal.} Let $u',u,b'_n,b_n\in V(T)$ and $\xi',\xi\in \partial T$ be such that $\pi(y'_n)=b'_n\in[u',\xi')$, $\pi(y_n)=b_n\in [u,\xi)$ and $\lim^T_{n\to\infty}b'_n=\xi'$, $\lim^T_{n\to\infty}b_n=\xi$. Let $e'_n$ (resp. $e_n$) be the edge in $[u',\xi')$ (resp. in $[u,\xi)$) incident on $b'_n$ (resp. on $b_n$). Finally, using Lemma \ref{reduction1}, we may assume that $y'_n\in Y_{e'_nb'_n}$ and $y_n\in Y_{e_nb_n}$.\smallskip

\noindent{\bf Subcase 3A}: Suppose $\xi' \ne\xi$. By discarding finitely many initial terms of the sequences $\{y'_n\}$ and $\{y_n\}$, we may assume that $(\xi',u]\cup[u,\xi)$ is a geodesic line. Let $e$ be the edge in $[u,\xi)$ incident on $u$. Fix $z_n\in Y_{eu}\cap[y'_n,y_n]_Y$. A carbon copy of the proof shown in the {\em Claim} of Case $2$ shows that $\{z_n\}$, $\{y_n\}$ form a pair of superficial test sequences. It therefore remains to show that the pair of test sequences $\{y'_n\}$, $\{z_n\}$ form a pair of superficial test sequences. This, however, directly follows from Case $2$ as $\{z_n\}$ is vertical.\smallskip

\noindent{\bf Subcase 3B}: Suppose $\xi=\xi'$. By discarding finitely many initial terms of the sequences $\{y'_n\}$ and $\{y_n\}$, we may assume that $b_n,b'_n$ are vertices in $[u,\xi)$.
Set $u_n=b_n$ and $\MF e_n=e_n$ if $d_T(u,b_n)\le d_T(u,b'_n)$; and $u_n=b'_n$ and $\MF e_n=e'_n$ if $d_T(u,b'_n)\le d_T(u,b_n)$.

{\em We need to show that $\{y_n\}$, $\{y'_n\}$ form a pair of superficial test sequences, i.e. $\LMX y_n=\LMX y'_n$.} On contrary, suppose $\LMX y_n\neq \LMX y'_n$. Then for any fixed $y\in Y_u$, there is a $D_1\in \N$ such that $d_X(y, [y_n,y'_n]_X)\leq D_1$ 
for all $n\in \N$. Fix constants $R=\bm{R_{fc}}(\lm_0)$ and $D=\bm{D_{fc}}(\lm_0)$ as defined in Case $2$. Consider the $(R,D)$ flow space $\F l^Y_{R,D}(Y_{\MF e_nu_n})$. Note that any geodesic $[y_n,y'_n]_X$ in $X$ passes through $Y_{\MF e_nu_n}$. Since the flow spaces $\F l^Y_{R,D}(Y_{\MF e_nu_n})$ are uniformly quasiconvex in $X$ (see Lemma \ref{Y flow qc in X}), there is a uniform constant $D'\ge0$ such that for all $n\in\N$, we have $d_X(y,\F l^Y_{R,D}(Y_{\MF e_nu_n}))\le D'$. Therefore, for all large $n\in\N$, we have $\F l^Y_{R,D}(Y_{ e_n u_n})\bigcap \F l^Y_{R,D}(Y_{ e'_n u'_n})\neq \emptyset$.

Note that the flow spaces under consideration are uniformly quasiconvex in both $Y$ and $X$ (see Corollary \ref{flow-qc} and Lemma \ref{Y flow qc in X}), and the union of two intersecting quasiconvex subsets is also quasiconvex in a hyperbolic space. Hence for all large $n$, $Z_n=\F l^Y_{R,D}(Y_{ e_n b_n})\bigcup \F l^Y_{R,D}(Y_{ e'_n b'_n})$ is uniformly quasiconvex in both $Y$ and $X$. Moreover, $y_n,y'_n\in Z_n$. Therefore, by Lemma \ref{lem-same limits}, $\LMX y_n=\LMX y'_n$ -- which contradicts our assumption. This completes the proof of Theorem \ref{main-theorem-ct}.\qed\smallskip

We will end by mentioning two particular cases of Theorem \ref{main thm}. As we saw in Lemma \ref{lem-uniform Mitra imp proj con}, one can obtain the compatible fiberwise projection condition when the inclusions between edge spaces are uniformly quasiisometries. Thus by a simple application of our main theorem (Theorem \ref{main thm}), we have the following.

\begin{cor}\label{thm-Xe=Ye}
	Suppose we have Convention \ref{con-induced spaces}. Further, assume that for all $u\in V(S)$ and for all $e\in E(S)$, the inclusion $Y_u\to X_u$ satisfies the uniform Mitra's criterion (see Definition \ref{mitra-irr}) and that $Y_e=X_e$. Then the inclusion $i:Y\to X$ admits a CT-map.
\end{cor}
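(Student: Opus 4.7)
The plan is to show that Corollary \ref{thm-Xe=Ye} follows directly from Theorem \ref{main thm} by checking all seven hypotheses of the main theorem, the only nontrivial one being the projection hypothesis. Conditions (1)--(4) of Theorem \ref{main thm} are built into Convention \ref{con-induced spaces}. For condition (5), the uniform Mitra's criterion for $Y_u \hookrightarrow X_u$ implies (ordinary) Mitra's criterion with respect to any fixed base point, and this is a sufficient condition for the existence of a CT map (see the remarks following the definition of Mitra's criterion and Lemma \ref{necessity}). Condition (6) is trivial: since $Y_e = X_e$ as metric spaces, the inclusion $Y_e \hookrightarrow X_e$ is an isometry, hence a $1$-qi embedding.

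The only remaining task is to verify the projection hypothesis, which is exactly the content of Example \ref{exp-projection hypo}(2), and I would prove it by a direct appeal to Lemma \ref{uni-mitra-proj}. Fix $u \in V(S)$ and an edge $e \in E(S)$ incident on $u$. Since the incidence maps of the subtree of spaces $\pi_1 : Y \to S$ are the restrictions of those of $\pi : X \to T$, the equality $Y_e = X_e$ forces $Y_{eu} = X_{eu}$ as subsets of $Y_u \subseteq X_u$. This common set is $\lambda_0$-quasiconvex in $X_u$ by Convention \ref{con-trees of metric spaces}(4), and (by the same convention applied to $\pi_1 : Y \to S$) uniformly quasiconvex in $Y_u$. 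Since the inclusion $Y_u \hookrightarrow X_u$ satisfies uniform Mitra's criterion with a uniform CT parameter $\psi$, Lemma \ref{uni-mitra-proj} applied with $A = Y_{eu} = X_{eu}$ yields a constant $R_0 = R_0(\delta_0, \lambda_0, \psi)$, independent of the choice of $u$ and $e$, such that
\[
d_{X_u}\bigl(P^{X_u}_{X_{eu}}(y),\, P^{Y_u}_{Y_{eu}}(y)\bigr) \le R_0 \quad \text{for all } y \in Y_u.
\]
This is precisely the projection hypothesis.

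Having verified all seven hypotheses, an application of Theorem \ref{main thm} gives the desired CT map for $i : Y \hookrightarrow X$. The main obstacle to this argument is not really an obstacle at all — it is simply confirming that the uniform Mitra's criterion supplies the uniformity of the projection constant across all vertex pairs $(Y_u, X_u)$, which is why the uniformity of $\psi$ (both across base points in a fixed $Y_u$ and across the vertices $u \in V(S)$) is built into the hypothesis. With that uniformity in hand, the corollary is essentially a restatement of Example \ref{exp-projection hypo}(2) combined with Theorem \ref{main thm}.
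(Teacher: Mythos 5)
Your proof is correct and follows the same route as the paper: the paper's own justification is precisely "Example \ref{exp-projection hypo}(2) gives the projection hypothesis (via Lemma \ref{uni-mitra-proj}), and the rest of the hypotheses of Theorem \ref{main thm} are part of Convention \ref{con-induced spaces}." You have simply spelled out in detail the verification the paper leaves implicit, in particular the key observation that $Y_e = X_e$ forces $Y_{eu} = X_{eu}$, so both projections in the projection hypothesis are onto the same quasiconvex set and Lemma \ref{uni-mitra-proj} applies directly.
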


The following theorem is essentially proved in \cite{ps-kap}
(see Proposition 8.70 and Theorem 8.71 there) but not mentioned explicitly.
We include a proof for the sake of completeness. 
\begin{theorem}\label{thm-CT proj plus qi emb}
	Suppose $\pi:X\ri T$ is a tree of hyperbolic spaces with the qi embedded
	condition and $\pi|_Y:Y\ri S$ is an induced subtree of hyperbolic subspaces
	with the qi embedded condition. Further, assume that the vertex and edge spaces of $Y$
	are uniformly qi embedded in the corresponding vertex and edge spaces of $X$.
Moreover, assume that the compatible fiberwise projection condition holds for
the pair of trees of spaces $(Y,X)$. Then:
	\begin{enumerate}
		\item The inclusion $Y\ri X_S$ is (uniform) qi embedding.
		
		\item  The inclusion $Y\ri X$ admits the CT map.
	\end{enumerate}
\end{theorem}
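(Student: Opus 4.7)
The plan is to reduce the proof to a direct application of Mitra's retraction theorem (Theorem \ref{thm-mitra proj on stable-satura}) inside the subspace $X_S$, and then deduce $(2)$ from $(1)$ by composition with the CT map from Theorem \ref{thm-CT-ps-kap}. Since the inclusions $Y_v \to X_v$ are uniformly qi embeddings by hypothesis, Proposition \ref{induced subtree consistent} tells me that the projection hypothesis is equivalent to $\{Y_v : v \in V(S)\}$ being a uniformly $R$-flow stable family in $X$ (for some $R \geq 0$). When viewed as a family inside $X_S = \pi^{-1}(S)$, the saturation condition of Definition \ref{defn-flow space} is automatic (indeed vacuous) because $V(S)$ has no exterior edges inside the tree of spaces $X_S \to S$. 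Flow stability is a condition on adjacent pairs of vertex spaces over edges of $S$, so it transfers unchanged from $X$ to $X_S$.

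With this in hand, the proof of $(1)$ proceeds as follows. First, I would apply Theorem \ref{thm-mitra proj on stable-satura} to the family $\{Y_v : v \in V(S)\}$ inside $X_S$ to obtain a uniformly coarsely Lipschitz retraction $\rho : X_S \to A'$, where $A' := \bigcup_{v \in V(S)} Y_v$. Since $X$ is hyperbolic and $S$ is a subtree of $T$, Theorem \ref{thm-nec suff of flaring} ensures that $X_S$ is hyperbolic. Then Lemma \ref{proj-on-qc}~\ref{proj-on-qc:lem-retraction imp qc} gives that $A'$ is uniformly quasiconvex in $X_S$; and since $Hd_{X_S}(A', Y) \leq 1$, the subspace $Y$ itself is uniformly quasiconvex in $X_S$. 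On the other hand, $Y \to X$ is $\eta_0$-proper embedding by Convention \ref{con-induced spaces}, and since the inclusion $X_S \hookrightarrow X$ is $1$-Lipschitz, the inclusion $Y \to X_S$ is also $\eta_0$-proper embedding. Applying Lemma \ref{qi-emb-in-Y-X} $(2)$ to the quasiconvex and properly embedded subspace $Y$ of the hyperbolic space $X_S$ yields the desired uniform qi embedding, completing $(1)$.

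For $(2)$, the qi embedding $Y \to X_S$ from $(1)$ admits a CT map by Lemma \ref{qi-im-ct-map}. Theorem \ref{thm-CT-ps-kap} provides a CT map for $X_S \to X$. By functoriality of CT maps (Lemma \ref{functo-ct-map}), the composition $Y \to X_S \to X$ — which is the inclusion $Y \to X$ — admits a CT map, and its boundary extension is the composition of the two boundary maps.

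I do not expect any real obstacle. The only point requiring minor care is confirming that flow stability and the hypotheses of Mitra's retraction transfer correctly to the smaller ambient $X_S$; this is immediate since flow stability and the saturation condition are both local in the tree direction, and all the local data (vertex spaces over $V(S)$, edge spaces over $E(S)$, and the incidence maps) are the same whether we work in $X$ or in $X_S$.
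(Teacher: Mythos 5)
Your proposal is correct and follows essentially the same route as the paper's proof: invoke Proposition~\ref{induced subtree consistent} to translate the projection hypothesis into flow stability, observe saturation is automatic in $X_S$, apply Mitra's retraction (Theorem~\ref{thm-mitra proj on stable-satura}) to get quasiconvexity of $Y$ in $X_S$, upgrade to a qi embedding via properness and Lemma~\ref{qi-emb-in-Y-X}(2), and then compose CT maps for part (2). The only difference is that you spell out a few intermediate steps (hyperbolicity of $X_S$, the retraction-to-quasiconvexity step via Lemma~\ref{proj-on-qc}) that the paper compresses, but the argument is the same.
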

\proof (1) By Proposition \ref{induced subtree consistent}, $\{Y_v: v\in V(S)\}=\Y$, say,
is a flow stable family of subsets in $X$. Then by Theorem \ref{thm-mitra proj on stable-satura},
$\bigcup_{v\in V(S)} Y_v=Y'$, say, is quasiconvex in $X_S$ as $\Y$ is clearly a saturated
collection of subsets in $X_S$. Since $Hd_X(Y,Y')\leq 1$, it follows that $Y$ is quasiconvex
in $X_S$ too. Now, since $Y$ is properly embedded it follows from Lemma \ref{qi-emb-in-Y-X} $(2)$
that $Y$ is qi embedded in $X_S$.

(2) By Theorem \ref{thm-CT-ps-kap}, the inclusion $X_S\map X$ admits a CT map. By (1) and Lemma \ref{qi-im-ct-map}, the inclusion $Y\map X_S$ admits a CT map. Hence,
by the functoriality of CT maps (see Lemma \ref{functo-ct-map}) the inclusion $Y\map X$
admits the CT map.
\qed

\begin{remark}
We note that, in the above theorem, the compatible fiberwise projection condition is not necessary for the conclusion, as this can be demonstrated by trivial examples -- for instance, by examples similar to Corollary \ref{new cor sec 6}.
\end{remark}


\section{Cannon--Thurston lamination}\label{sec-CT lamination}
In this section we deal with the Cannon--Thurston lamination
for the pairs of trees of spaces $(Y,X)$ as in Theorem \ref{main thm}.

\begin{defn}[Cannon--Thurston lamination or CT lamination, for short]
	Suppose $f:X_1\map X_2$ is a proper embedding of hyperbolic metric spaces which admits the CT map $\pa i:\pa X_1\map\pa X_2$. Then the CT lamination for $f$ is defined to be $$\L_{CT}(X_1,X_2):=\{(p,q)\in\pa^2 X_1:\pa i(p)=\pa i(q)\}.$$
\end{defn}
In this connection, if $(p,q)\in \L_{CT}(X_1,X_2)$ and $\alpha$ is a geodesic line in $X_1$ joining $p,q$. Then we shall refer to $\alpha$ as a {\em leaf of the CT lamination} $\L_{CT}(X_1,X_2)$.

Suppose $H < G$ are hyperbolic groups and the CT map $\pa H \to \pa G$ exists.  Then the CT lamination is empty iff $H$ is quasiconvex in $G$ (see \cite[Lemma $2.1$]{mitra-pams}). Hence, in a specific context understanding the CT lamination for a group pair $H<G$ helps one to deduce quasiconvexity results. For instance, using Mitra's characterization of the CT lamination as {\em ending lamination} in
\cite{mitra-endlam}, in case $H$ is normal in $G$, this sort of quasiconvexity results are proved in \cite{mitra-pams}, \cite{mj-rafi}, \cite{mahan-pranab-qc}. Using the results of this section, we also prove a quasiconvexity result ( see Theorem \ref{thm-qc}) in the next section of the paper.

Coming back to the context, suppose we have a tree of spaces $\pi:X\map T$ as in Theorem \ref{main thm}. We note that CT lamination for $(X_u, X)$ for $u\in V(T)$ and
more generally for $(X_{S}, X)$ for a subtree $S\subset T$ have been
studied in \cite{ps-kap}. The results (see Theorems \ref{thm-at least one contain ray} and \ref{thm-both does not con geo ray}) in this section are motivated by these. 

\subsection{Recap of results on $\L_{CT}(X_S,X)$}
We  next recall a few results from \cite{ps-kap} which 
will be needed in our proofs. We start with a definition.
\begin{defn}
Suppose $u\in V(T)$ and $\xi\in\pa T$. Define $$\pa^{\xi}(X_u,X):=\{\eta\in\pa i_{X_u,X}(\pa X_u):\exists\text{ qi lift, }\gm\text{ say, of }[u,\xi)\text{ such that }\gm(\infty)=\eta\}$$and $$\Lm^{\xi}(X_u,X):=\{(p,q)\in\pa^2X_u:\pa i_{X_u,X}(p),\pa i_{X_u,X}(q)\in\pa^{\xi}(X_u,X)\}.$$
\end{defn}

We mention that a CT leaf for the pair $(Y,X)$ that does not come from a fiber, i.e. from $\L_{CT}(Y_u,X_u)$ for some vertex $u$ of $S$, arises
from $\Lm^{\xi}(X_u,X)$ for some $\xi\in\pa T\setminus \pa S$ (see Theorem \ref{thm-both does not con geo ray} for a precise statement).

\begin{theorem}\textup{(\cite[Theorem $8.54$ $(1)$]{ps-kap})}\label{thm-both have bdry flow}
Let $u\in V(T)$. Suppose $\al:\R\map X_u$ is a geodesic line such that $(\al(-\infty),\al(\infty))\in\Lm^{\xi}(X_u,X)$ for some $\xi\in\pa T$. Then both of $\al(\pm\infty)$ have boundary flows in $X_v$ for all vertex $v$ in $[u,\xi)$.
\end{theorem}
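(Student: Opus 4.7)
The plan is to leverage Lemma \ref{lem-equ-im-bdry-flow} once for each of the two boundary points. First I unwind the hypothesis $(\al(-\infty),\al(\infty))\in\Lm^{\xi}(X_u,X)$: by the definition of $\Lm^{\xi}$ and the note following it (which invokes Theorem \ref{thm-quasigeo-descrip}), there is a common point $\eta:=\pa i_{X_u,X}(\al(-\infty))=\pa i_{X_u,X}(\al(\infty))\in\pa X$, and a qi lift $\gm:[u,\xi)\map X$ with $\gm(\infty)=\eta$ (possibly one qi lift for each sign, but the argument is symmetric, so I treat the two cases in parallel).

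Fix the direction $\al(-\infty)$ and let $\bt$ be a geodesic ray in $X_u$ with $\bt(\infty)=\al(-\infty)$. Choose a geodesic ray $\al'$ in $X$ based at $\bt(0)$ with $\al'(\infty)=\eta$. Since $X$ is hyperbolic and $\gm$ is a quasigeodesic ray with $\gm(\infty)=\eta$, the Morse lemma (Lemma \ref{ml}) yields $Hd_X(\al',\gm)<\infty$. I then verify that $\pi(\al')$ contains a geodesic ray of $T$ ending at $\xi$: the path $\pi\circ\al':[0,\infty)\map T$ is $1$-Lipschitz, its image sits in a fixed neighborhood of $\pi(\gm)=V([u,\xi))$, and $\al'(t)\to\eta$ forces $\pi(\al'(t))$ to escape every finite subtree of $T$ while staying in a thin tubular neighborhood of $[u,\xi)$; the tree structure then forces $\pi\circ\al'|_{[t_0,\infty)}$ to map onto some geodesic ray $[u_0,\xi)\sse T$ for $t_0$ sufficiently large. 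With this in hand, Lemma \ref{lem-equ-im-bdry-flow} applies to $\bt$ and (a suitable tail of) $\al'$, producing a geodesic ray in $\pi(\al')$, hence a subray $[u_1,\xi)$ of $[u_0,\xi)$, over which $\bt(\infty)$ can be boundary-flowed; moreover, by the ``in particular'' clause of that lemma, $[u,\xi)\sse\pi(\F l^X_{R,D}(X_u))$.

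To upgrade the flow from the cofinal ray $[u_1,\xi)$ to the full ray $[u,\xi)$, I use the chain definition of boundary flow together with Lemma \ref{bdry flow lemma}. By Lemma \ref{bdry flow lemma} $(3)$, the flow of $\bt(\infty)$ over $[u_1,\xi)$ is witnessed by a geodesic ray $\bt_1$ in $X_{u_1}$ with $Hd_X(\bt,\bt_1)<\infty$. Since the segment $[u,u_1]\sse[u,\xi)$ is finite, and the flow across each edge on $[u,u_1]$ is simply composition with the boundary maps $\pa f_{e,\cdot}$ applied to the class of $\bt$ (which is unchanged by the $\eta_0$-proper Hausdorff closeness between $\bt$ and $\bt_1$ and the uniform qi embedding of edge spaces into adjacent vertex spaces), one obtains compatible points $\xi_v\in\pa X_v$ for every $v\in[u,u_1]$ forming a valid chain from $\bt(\infty)$ to $\bt_1(\infty)$; by definition, $\bt(\infty)$ then boundary-flows to every $X_v$ with $v\in V([u,\xi))$. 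The identical argument with $\bt(\infty)=\al(+\infty)$ completes the proof.

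The principal obstacle is the tree-projection step in paragraph two, namely showing $\pi(\al')$ contains an honest geodesic ray of $T$ to $\xi$: \emph{a priori} $\al'$ could oscillate within its bounded $X$-neighborhood of $\gm$ and project non-monotonically onto $T$, so one must genuinely use the tree structure of $T$ and the fact that $\al'(t)\to\eta$ (rather than just being Hausdorff-close to $\gm$) to extract a monotone tail in $T$. Once this tree-combinatorial point is handled, the rest is a direct application of Lemma \ref{lem-equ-im-bdry-flow}, Lemma \ref{bdry flow lemma}, and the chain-definition of boundary flow.
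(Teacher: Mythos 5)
The paper itself does not prove this statement; it cites \cite[Theorem $5.4$ $(1)$]{ps-kap}, so there is no ``internal'' proof to compare against. Evaluating your attempt on its own: the central idea, reducing to Lemma \ref{lem-equ-im-bdry-flow}, is the right one, but the execution has two related problems.

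First, you under-resolve what you call ``the principal obstacle,'' and this under-resolution is exactly what creates the need for your (flawed) upgrade step. You chose $\al'$ so that $\al'(0)=\bt(0)\in X_u$, hence $\pi(\al'(0))=u$. Since $\pi\circ\al'$ is a continuous path in the tree $T$ starting at $u$, and you have already argued that its nearest-point projections onto $[u,\xi)$ escape to $\xi$, the connectedness of $\pi(\al')$ (together with the tree structure) forces $[u,\xi)\subset\pi(\al')$ \emph{for the full ray $\al'$}, not merely a subray $[u_0,\xi)$ for some tail of $\al'$. This is a one-line observation, yet you instead pass to a tail $\al'|_{[t_0,\infty)}$ whose $\pi$-image need not contain $u$ at all. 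Lemma \ref{lem-equ-im-bdry-flow} is stated for $\bt$ a geodesic ray in $X_u$ together with a geodesic ray $[u,\xi)$ \emph{starting at $u$} inside $\pi(\al)$; applying it to a tail whose $\pi$-image only contains $[u_0,\xi)$ with $u_0\neq u$ does not verify the lemma's hypotheses, and the ``in particular'' conclusion $[u,\xi)\subset\pi(\F l^X_{R,D}(X_u))$ that you invoke is precisely what fails without the full ray. Had you applied Lemma \ref{lem-equ-im-bdry-flow} to $\bt$ and the entire $\al'$, the theorem would follow immediately and the upgrade paragraph would be unnecessary.

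Second, the upgrade paragraph is internally inconsistent. You begin it by asserting ``the flow of $\bt(\infty)$ over $[u_1,\xi)$ is witnessed by a geodesic ray $\bt_1$ in $X_{u_1}$''; but boundary flow is defined as a \emph{chain} through every intermediate vertex of $[u,u_1]$. So asserting that $\bt(\infty)\in\pa X_u$ flows to $X_{u_1}$ already presupposes the flow across all of $[u,u_1]$ -- that is, the very conclusion you are trying to ``upgrade'' to. The invocation of Lemma \ref{bdry flow lemma} $(3)$ to produce $\bt_1$ is therefore circular. Fortunately, both problems vanish simultaneously once you apply Lemma \ref{lem-equ-im-bdry-flow} to the full $\al'$, using $\pi(\al'(0))=u$ and connectedness to see $[u,\xi)\subset\pi(\al')$.
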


We will not go into details regarding the threshold constant $M$ in the following theorem as that is not important to us. It is the Bestvina--Feighn flaring (threshold) constant appearing in Convention \ref{con-trees of metric spaces} $(8)$.

\begin{theorem}\textup{(\cite[Theorem $8.49$, Theorem $8.60$]{ps-kap})}\label{thm-quasigeo-descrip}
	Let $u\in V(T)$ and $\al$ be a geodesic line in $X_u$. Assume that $\al$ is a leaf of the CT lamination $\L_{CT}(X_u,X)$. Then there is $k\ge1$ and a threshold constant $M>0$ depending on $k$ such that for any $R\ge M$, a uniform quasigeodesic joining $\al(-m)$ and $\al(n)$ in $X$ has the form$$\gm_{-m}*[\gm_{-m}(w),\gm_n(w)]_{X_w}*\gm_n$$where $w\in V(T)$, and 
	
	$(1)$ $\gm_{-m}$, $\gm_n$ are $k$-qi lifts of $[u,w]$ such that $\gm_{-m}(u)=\al(-m)$, $\gm_n(u)=\al(n)$, 
	
	$(2)$ $d_{X_v}(\gm_{-m}(v),\gm_n(v))\ge R$ for all vertex $v$ in $[u,w]\setminus\{w\}$ and 
	
	$(3)$ $d_{X_w}(\gm_{-m}(w),\gm_n(w))\le C$ for some fixed $C\ge0$ depending on $R$.

	Moreover, we may assume that all such $w$ lies in $[u,\xi)$ for a unique $\xi\in\pa T$. In particular, $(\al(-\infty),\al(\infty))\in\Lm^{\xi}(X_u,X)$.		
\end{theorem}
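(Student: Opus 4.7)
The plan is to exploit the tension between two metric assertions: in the vertex space $X_u$ the geodesic line $\al$ passes within $2\delta_0$ of any fixed basepoint $x_0 \in \al$, whereas in $X$, by Lemma \ref{con-same-pt} applied to the hypothesis $\pa i_{X_u,X}(\al(-\infty))=\pa i_{X_u,X}(\al(\infty))$, one has $d_X(x_0, [\al(-m),\al(n)]_X) \to \infty$ as $m,n\to\infty$. Thus every uniform quasigeodesic joining $\al(-m)$ to $\al(n)$ in $X$ must eventually escape $X_u$, forcing its image in $T$ to be a nontrivial tree path that departs from $u$.

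First I would fix such a uniform quasigeodesic $\sigma_{m,n}$, then apply the standard straightening machinery for trees of hyperbolic spaces (essentially the ladder/flow-space construction and Mitra's projection argument of Theorem \ref{thm-mitra proj on stable-satura}) to replace $\sigma_{m,n}$ by a uniformly comparable path whose tree projection is of the form $u \to w_{m,n} \to u$ with a single apex $w_{m,n}$, and whose two arms are $k$-qi lifts $\gm_{-m},\gm_n$ of $[u,w_{m,n}]$ with $\gm_{-m}(u)=\al(-m)$, $\gm_n(u)=\al(n)$. The middle piece is then a geodesic segment in $X_{w_{m,n}}$ joining $\gm_{-m}(w_{m,n})$ to $\gm_n(w_{m,n})$. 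The value $k$ here depends only on the structural constants of $\pi:X \to T$ and the quasigeodesic constants of $\sigma_{m,n}$.

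Next, given any $R \ge M_k$, I would choose $w_{m,n}$ minimally so that $d_{X_{w_{m,n}}}(\gm_{-m}(w_{m,n}),\gm_n(w_{m,n})) \le R$ (using a discrete continuity-type argument moving $w$ along $[u,\pi(\sigma_{m,n})]$). The Bestvina--Feighn flaring condition then controls the entire geometry: by minimality, at every intermediate vertex $v \in [u,w_{m,n}] \setminus \{w_{m,n}\}$ we have $d_{X_v}(\gm_{-m}(v),\gm_n(v)) \ge R$, while at $w_{m,n}$ an application of $k$-flaring together with the quasigeodesicity of $\sigma_{m,n}$ bounds $d_{X_{w_{m,n}}}(\gm_{-m}(w_{m,n}),\gm_n(w_{m,n}))$ by a constant $C=C(R)$. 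The main obstacle will be making the qi constants of $\gm_{-m},\gm_n$ uniform — independent of $m,n$ — which requires a careful combination of the flaring-based shortening trick with the retraction onto flow spaces.

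Finally, for the uniqueness of $\xi \in \pa T$: since $d_X(x_0,\sigma_{m,n}) \to \infty$, we get $d_T(u,w_{m,n}) \to \infty$. The set $\{w_{m,n}\}$ is therefore unbounded in $T$, so its convex hull contains either a vertex of infinite degree or an unbounded geodesic ray. The former case, via Lemma \ref{star vertex}, produces boundary points of $\pa X_u$ lying in infinitely many distinct directions, which contradicts Proposition \ref{bdry flow prop} applied to $\pa i_{X_u,X}(\al(\pm\infty))$; hence, after passing to a subsequence, $w_{m,n} \to \xi \in \pa T$, and by a nesting argument all large apices lie on $[u,\xi)$. Passing to subsequential limits of $\gm_{-m}$ and $\gm_n$ (by Arzela--Ascoli and properness of $X$) yields qi lifts of $[u,\xi)$ whose ideal endpoints coincide with $\pa i_{X_u,X}(\al(\pm\infty))$, placing the pair $(\al(-\infty),\al(\infty))$ in $\Lm^{\xi}(X_u,X)$ as required.
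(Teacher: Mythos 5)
This theorem is cited in the paper from \cite[Theorems 8.49, 8.60]{ps-kap} and is not proved there, so there is no in-paper argument to compare against; what follows is therefore an assessment of your sketch on its own terms.

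Your high-level picture is right: the hypothesis together with Lemma \ref{con-same-pt} forces $d_X\bigl(x_0,[\al(-m),\al(n)]_X\bigr)\to\infty$, so the connecting quasigeodesics must leave $X_u$, and the desired statement is exactly the ``hairpin'' description of quasigeodesics inside a ladder over $[\al(-m),\al(n)]_{X_u}$. But the proposal essentially defers the entire content of the theorem to ``standard straightening machinery (the ladder/flow-space construction and Mitra's projection argument)'' without carrying it out, and you yourself flag the uniformity of $k$ as ``the main obstacle'' without resolving it. That is not a peripheral loose end --- establishing that the two arms are $k$-qi lifts for a $k$ \emph{independent of} $m,n$ is precisely what \cite{ps-kap} needs the semicontinuous ladder (not the flow spaces recalled in this paper) plus a flaring/exponential-dichotomy argument for, and nothing in your sketch supplies a substitute. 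Relatedly, the step where you ``choose $w_{m,n}$ minimally'' and then invoke $k$-flaring to bound the top by $C(R)$ is circular: the lifts $\gm_{-m},\gm_n$ and the apex $w$ are produced jointly by the decomposition of $\sigma_{m,n}$, so one cannot first extract the lifts, then independently optimize $w$ along $[u,\pi(\sigma_{m,n})]$, and still expect the hairpin structure (in particular the uniform quasigeodesicity of the concatenation through $X_w$) to survive. Also, if $w$ is genuinely the first vertex where the girth drops below $R$, the bound at $w$ is $C=R$ trivially and flaring plays no role there; flaring is needed elsewhere, to show the girth is coarsely monotone and to control $k$.

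The uniqueness of $\xi$ is also not established by your dichotomy. If the convex hull of $\{w_{m,n}\}$ had a vertex $v_0$ of infinite degree, Proposition \ref{bdry flow prop} (which is a statement about two boundary points in two fixed vertex spaces landing on the same point of $\pa X$) does not directly produce the contradiction you assert; you would still need to argue that the hairpins through infinitely many distinct edges $e_k$ at $v_0$ force $\pa i_{X_u,X}(\al(\pm\infty))$ to lie in $\Lambda_X(X_{e_kv_0})$ for infinitely many $k$ and that this is impossible, which requires a coboundedness/local-finiteness argument you have not given. The cleaner route (which is what \cite{ps-kap} uses, see Remark \ref{rmk-relation between flows} and the biinfinite ladder) is that the ladder over the whole line $\al$ has a well-defined projection to $T$ which is a ray $[u,\xi)$, and every apex $w_{m,n}$ lies on that ray by construction; this gives both the uniqueness of $\xi$ and the ``in particular'' clause at once, without any Arzel\`a--Ascoli extraction.
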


\begin{remark}\label{rmk-lifts in geo flow}
	Suppose we have the condition of Theorem \ref{thm-quasigeo-descrip}. Suppose that for all vertex $v$ in $[u,\xi)$, $\al_v$ is a geodesic line in $X_v$ such that $\al_v(\pm\infty)$ are boundary flows of $\al(\pm\infty)$ in $X_v$ (see Theorem \ref{thm-both have bdry flow}). Then we may assume that $\gm_{-m}(v'),\gm_n(v')\in\al_{v'}$ for all vertex $v'$ in $[u,w]$.
\end{remark}

The converse of Theorem \ref{thm-quasigeo-descrip} is also true which is as follows. In Proposition \ref{prop-con of geo des}, it follows from the the description of quasigeodesics (\cite[Chapter $7$]{ps-kap}, see also \cite[Section $3$]{pranab-mahan}) that the concatenation of paths $\gm_{-m}*[\gm_{-m}(w),\gm_n(w)]_{X_w}*\gm_n$ is a uniform quasigeodesic. Since these paths are going far and far from any base point (\cite{mitra-trees}), it follows that endpoints of these paths converge to the same boundary point.

\begin{prop}\label{prop-con of geo des}
Let $u\in V(T)$ and let $\al$ be a geodesic line in $X_u$. Fix $k\ge1$, $M>0$ depending on $k$ and $R\ge M$. Suppose that for all large $n,m\in\N$, we have $w\in V(T)$ and $k$-qi lifts $\gm_{-m}$, $\gm_n$ of $[u,w]$ with $\gm_{-m}(u)=\al(-m)$ and $\gm_n(u)=\al(n)$, satisfying the following. For all vertex $v$ in $[u,w]\setminus\{w\}$, we have $$d_{X_v}(\gm_{-m}(v),\gm_n(v))\ge R\text{ and }d_{X_{w}}(\gm_{-m}(w),\gm_n(w))\le C$$ for some fixed $C\ge0$.
	
	Then $\al$ is a leaf of the CT lamination $\L_{CT}(X_u,X)$.
\end{prop}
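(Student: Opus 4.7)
The plan is to show that the concatenated path $\sigma_{m,n} := \gm_{-m} * [\gm_{-m}(w),\gm_n(w)]_{X_w} * \gm_n$ is a uniform $K$-quasigeodesic in $X$ with $K$ independent of $m,n$, and that $d_X(\al(0),\sigma_{m,n}) \to \infty$ as $m+n \to \infty$. Given these, the stability of quasigeodesics (Lemma \ref{ml}) forces $[\al(-m),\al(n)]_X$ to lie in a uniform neighbourhood of $\sigma_{m,n}$, so $d_X(\al(0), [\al(-m),\al(n)]_X) \to \infty$, and Lemma \ref{con-same-pt} then yields $\pa i_{X_u,X}(\al(-\infty)) = \pa i_{X_u,X}(\al(\infty))$.

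The heart of the argument---and its main obstacle---is establishing that $\sigma_{m,n}$ is a uniform quasigeodesic. Write $d_v := d_{X_v}(\gm_{-m}(v),\gm_n(v))$, so that $d_u = m+n$, $d_v \ge R \ge M_k$ for every interior vertex $v$ of $[u,w]$, and $d_w \le C$. Applying $k$-flaring at successive interior vertices that lie at least $n_k$ away from both endpoints, and observing that $d_v$ must ultimately decrease from $m+n$ at $u$ down to $\le C$ at $w$ while staying above $R$ in between, yields $d_v \le d_{v-n_k}/\lm_k$ along each decreasing portion and hence
\[ d_T(u,w) \;\ge\; n_k \log_{\lm_k}\!\bigl((m+n)/C\bigr) \;\to\; \infty. \]
More substantively, the same flaring input certifies that no rival path in $X$ from $\al(-m)$ to $\al(n)$ can be much shorter than $\sigma_{m,n}$, so $\sigma_{m,n}$ is a uniform $K$-quasigeodesic in $X$. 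This is a variant of the Bestvina--Feighn ``hallway flare'' argument, and I expect it can be deduced by embedding $\sigma_{m,n}$ into a uniformly quasiconvex subset of $X$ via a flow-space construction (Definition \ref{defn-flow space} and Corollary \ref{flow-qc}) inside which $\sigma_{m,n}$ is a quasigeodesic by construction.

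Granted the quasigeodesic claim, I estimate $d_X(\al(0),\sigma_{m,n})$ from below. For any $v \in [u,w]$, the $k$-qi-lift property gives $d_X(\al(-m),\gm_{-m}(v)) \le k\,d_T(u,v) + k$ and $d_X(\al(n),\gm_n(v)) \le k\,d_T(u,v) + k$, while $1$-Lipschitzness of $\pi$ gives $d_X(\al(0),p) \ge d_T(u,v)$ for $p \in \{\gm_{-m}(v),\gm_n(v)\}$. Combining via the triangle inequality and minimizing the resulting maximum lower bound over $v \in [u,w]$, and separately using $d_X(\al(0),p) \ge d_T(u,w)$ for $p$ on the middle segment in $X_w$, one obtains
\[ d_X(\al(0),\sigma_{m,n}) \;\ge\; \min\!\Bigl( d_T(u,w),\ \tfrac{d_X(\al(0),\al(-m))-k}{k+1},\ \tfrac{d_X(\al(0),\al(n))-k}{k+1} \Bigr). \]
Since $X_u \hookrightarrow X$ is $\eta_0$-properly embedded, $d_X(\al(0),\al(\pm m)) \to \infty$; together with $d_T(u,w) \to \infty$ from the previous step, this yields $d_X(\al(0),\sigma_{m,n}) \to \infty$, which completes the proof along the lines of the first paragraph.
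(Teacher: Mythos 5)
Your high-level plan — reduce to showing that $\sigma_{m,n}:=\gm_{-m}*[\gm_{-m}(w),\gm_n(w)]_{X_w}*\gm_n$ is a uniform quasigeodesic that stays far from $\al(0)$, then apply stability and Lemma~\ref{con-same-pt} — is the right one, and the minimization estimate in your third paragraph is correct. But there are two concrete problems.

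First, the flaring computation you use to get $d_T(u,w)\to\infty$ has the inequality running the wrong way. Along a stretch where the widths $d_v$ decrease towards $w$, flaring gives $\lambda_k d_v\le d_{v-n_k}$, i.e.\ the widths \emph{collapse at least geometrically}; if anything this bounds $d_T(u,w)$ \emph{from above} by roughly $n_k\log_{\lambda_k}\!\bigl((m+n)/R\bigr)$, not from below. The conclusion you want is still true, but it does not come from flaring: since $\gm_{-m}$ and $\gm_n$ are $k$-qi lifts of $[u,w]$, the concatenated path gives $d_X(\al(-m),\al(n))\le 2k\,d_T(u,w)+2k+C$, whereas properness of $X_u\hookrightarrow X$ together with $d_{X_u}(\al(-m),\al(n))=m+n$ forces $d_X(\al(-m),\al(n))\to\infty$ (Corollary~\ref{cor-inverse of proper map}); rearranging gives $d_T(u,w)\to\infty$. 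You do not need flaring for this step at all, and the formula you wrote is not derivable from it.

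Second, and more seriously, the claim that $\sigma_{m,n}$ is a uniform quasigeodesic is the load-bearing step of the whole argument, and ``I expect it can be deduced'' is not a verification. This is precisely the content of the ladder description of quasigeodesics in trees of hyperbolic spaces from \cite{ps-kap} — the same machinery cited for Theorem~\ref{thm-quasigeo-descrip}: a concatenation of two $k$-qi lifts whose fiberwise widths stay $\ge M_k$ over interior vertices and drop below a fixed constant at $w$ is a uniform quasigeodesic in $X$. The hypothesis of the proposition hands you $\gm_{-m},\gm_n$ with exactly those width conditions, so the cited description applies directly; you should invoke it rather than promise a fresh derivation via the flow-space machinery of Definition~\ref{defn-flow space} and Corollary~\ref{flow-qc}. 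Once that is pinned down and the $d_T(u,w)\to\infty$ step is replaced by the properness argument above, the rest of your proof closes cleanly.
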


We end this subsection with the following result
which works as a motivation for the results proved in this section. Note that for any subtree $T'\sse T$, by Theorem \ref{thm-CT-ps-kap}, the inclusion $X_{T'}\map X$ admits a CT map. 
\begin{theorem}\textup{(\cite[Proposition $8.47$]{ps-kap})}\label{thm-unbdd imp not equal}
Suppose $T'\sse T$ is any subtree. Assume that $\al$ is a quasigeodesic line in $X_{T'}$ such that $\pi(\al)$ is unbounded. Then $\al$ is not a leaf of the CT lamination $\L_{CT}(X_{T'},X)$. 
\end{theorem}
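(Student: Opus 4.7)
I would argue Theorem \ref{thm-unbdd imp not equal} by contradiction. Suppose $\pa i_{X_{T'},X}(\al(-\infty))=\pa i_{X_{T'},X}(\al(\infty))=\eta\in\pa X$. Since $X_{T'}$ is hyperbolic (Theorem \ref{thm-nec suff of flaring}), by the stability of quasigeodesics (Lemma \ref{ml}) in $X_{T'}$ I may replace $\al$ by a Hausdorff-close genuine geodesic line with the same endpoints in $\pa X_{T'}$; so WLOG $\al$ is a geodesic line in $X_{T'}$. Fix $x_0=\al(0)$. The CT-map hypothesis gives $\LMX\al(-m)=\LMX\al(n)=\eta$, so Lemma \ref{con-same-pt} applied in $X$ yields
\[
d_X\bigl(x_0,\,[\al(-m),\al(n)]_X\bigr)\longrightarrow\infty\quad\text{as }m,n\to\infty.\qquad(\ast)
\]

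The strategy is to contradict $(\ast)$ by producing, for large $m$ and $n$, a subset $Q_{m,n}\sse X$ containing $x_0,\al(-m),\al(n)$ which is uniformly quasiconvex in $X$. Granted this, Theorem \ref{thm-mitra proj on stable-satura} yields a coarse-Lipschitz retraction $X\to Q_{m,n}$, which together with the stability of quasigeodesics in $X$ forces $[\al(-m),\al(n)]_X$ to pass within uniform distance of $x_0\in Q_{m,n}$, contradicting $(\ast)$. To build $Q_{m,n}$, I would use that $\pi(\al)$ is unbounded: the connected unbounded subset $\pi(\al)\sse T$ contains a geodesic ray $[u_0,\xi)\sse T$ with $\xi\in\pa T$, and after possibly reversing the parametrization a subsequence $\pi(\al(t_k))$ converges to $\xi$. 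Applying Proposition \ref{reduction1} (specialised to the trivial case $Y=X$, in which both the flaring and the projection hypothesis are automatic), I would replace $\al(-m)$ and $\al(n)$ by test sequences lying in edge spaces $X_{e_m u_m},\,X_{e_n u_n}$ along $\pi(\al)$ with the same $\LMX$-limit $\eta$. For these test sequences $Q_{m,n}$ is then constructed as in the proof of Theorem \ref{tool 1} with $Y=X$: along the tree geodesic $[\pi(\al(-m)),\pi(\al(n))]\sse T$ I place the edge spaces traversed by $\al$ together with the linking segments $[z_{i-1},z_i]_{X_{u_i}}$, and then flow the resulting $R$-flow-stable, $D$-saturated family into the branches of $T$ away from that geodesic using the $(R,D)$-flow construction of Definition \ref{defn-flow space}. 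Corollary \ref{flow-qc} then delivers uniform quasiconvexity of $Q_{m,n}$ in $X$.

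The main obstacle is verifying flow-stability of the chosen family of fiber subspaces, especially in the two configurations where the argument changes character: the case where $\pi(\al|_{(-\infty,0]})$ and $\pi(\al|_{[0,\infty)})$ both escape to $\pa T$ but in \emph{different} directions (so that the tree geodesic $[\pi(\al(-m)),\pi(\al(n))]$ contains a bounded `turning segment' through a fixed vertex), and the mixed case where one end of $\al$ has bounded $\pi$-image. The mixed case I would handle by first invoking Lemma \ref{bounded proj 2} to replace the bounded end by a sequence in a single fiber $X_v$, and then invoking Lemma \ref{lem-equ-im-bdry-flow} to force the corresponding boundary point of $X_v$ to boundary-flow along the ray $[u_0,\xi)$, thereby placing that end compatibly inside the flow spaces constituting $Q_{m,n}$. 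The `two-direction' case I would address by inserting at the turning vertex the required linking segment and invoking Lemma \ref{not-hv-pro-sub} to show that the relevant flow spaces in $X$ are \emph{not} cobounded there, so that a union-of-quasiconvex-sets argument (Lemma \ref{lem-cobdd}~\ref{lem-cobdd:union-qc}) preserves uniform quasiconvexity of $Q_{m,n}$ across the turning vertex.
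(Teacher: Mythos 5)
The paper does not actually prove Theorem \ref{thm-unbdd imp not equal}: it is quoted from \cite[Proposition 8.47]{ps-kap} with no in-paper argument, so there is no proof here against which to compare your proposal. Evaluated on its own, your overall shape — replace $\al$ by a geodesic, invoke Lemma \ref{con-same-pt} to get $d_X(x_0,[\al(-m),\al(n)]_X)\to\infty$, build a uniformly quasiconvex $Q_{m,n}$ by flowing fiberwise data along $\pi(\al)$, and derive a contradiction — is a reasonable one in the spirit of the ladder/flow-space machinery of \cite{ps-kap}, and specializing Proposition \ref{reduction1} and Theorem \ref{tool 1} to $Y=X$ is legitimate.

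There is, however, a genuine gap at the pivot. You claim that uniform quasiconvexity of $Q_{m,n}$ in $X$, together with $x_0,\al(-m),\al(n)\in Q_{m,n}$, forces $[\al(-m),\al(n)]_X$ to pass uniformly close to $x_0$, giving the contradiction. That implication is false as stated: a quasiconvex set containing the two endpoints and $x_0$ does not force the ambient geodesic anywhere near $x_0$ (the geodesic merely stays near $Q_{m,n}$, and $Q_{m,n}\cap X_{u_0}$ includes entire edge spaces and so can have unbounded diameter; the retraction of Theorem \ref{thm-mitra proj on stable-satura} only improves the conclusion to ``near $A_{u_0}$'', still not ``near $x_0$''). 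What you need, and what Corollary \ref{flow-qc} actually provides, is the \emph{two-sided} quasiconvexity of $Q_{m,n}$ in both $X_{T'}$ and $X$, after which the correct argument is a comparison of three paths: since $Q_{m,n}$ is quasiconvex in $X_{T'}$, the $X_{T'}$-geodesic $\al|_{[-m,n]}$ lies in a uniform neighbourhood of $Q_{m,n}$; by Lemma \ref{qi-emb-in-Y-X}, a path $\gamma$ in (a neighbourhood of) $Q_{m,n}$ joining $\al(-m)$ to $\al(n)$ is a uniform quasigeodesic in both $X_{T'}$ and $X$; hence $\gamma$ is Hausdorff-close both to $\al|_{[-m,n]}$ (in $X_{T'}$, by stability in $X_{T'}$) and to $[\al(-m),\al(n)]_X$ (in $X$, by stability in $X$), so $[\al(-m),\al(n)]_X$ stays uniformly near $x_0=\al(0)$. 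Without making this double quasiconvexity and geodesic-comparison explicit, the step you mark as ``granted this'' does not in fact deliver the contradiction. A secondary point: your appeal to Lemma \ref{lem-equ-im-bdry-flow} in the mixed case requires an $X$-geodesic ray $\al$ with $\pi(\al)\supset[u,\xi)$, but your $\al$ is a geodesic only in $X_{T'}$; you should first pass to an $X$-geodesic ray asymptotic to $\eta$ and verify (as in Theorem \ref{bdry of X}) that its $\pi$-image contains the right ray before invoking boundary flow.
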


\subsection{Main results on CT Lamination}\label{subsec-CT lamination}

Following are the main results of this section.

\begin{theorem}\label{thm-at least one contain ray}
Suppose $Y\sse X$ are hyperbolic spaces satisfying the assumptions of Theorem \ref{main thm}. Suppose $\al$ is a geodesic line in $Y$ and that $\al$ is a leaf of the CT lamination
$\L_{CT}(Y,X)$. Then $\pi (\al)$ contains no geodesic ray of $T$. 
\end{theorem}

\begin{theorem}\label{thm-both does not con geo ray}
Suppose $Y\sse X$ are hyperbolic spaces satisfying the assumptions of Theorem \ref{main thm}. Suppose $\al$ is a geodesic line in $Y$ and that $\al$ is a leaf of the CT lamination $\L_{CT}(Y,X)$. Then there is a vertex $u\in V(S)$ and a geodesic line $\gm:\R\map Y_u$ such that $\pa i_{Y_u,Y}(\gm(\pm\infty))=\al(\pm\infty)$, and exactly one of the following holds.  
\begin{enumerate}
	\item $\gamma$ is a leaf of the CT lamination $\L_{CT}(Y_u,X_u)$.
	
	\item $\gamma$ is not a leaf of the CT lamination $\L_{CT}(Y_u,X_u)$, and
	$$(\pa i_{Y_u,X_u}(\gm(-\infty)),\pa i_{Y_u,X_u}(\gm(\infty)))\in\Lm^{\xi}(X_u,X)$$for some $\xi\in\pa T\setminus\pa S$.
\end{enumerate} 
\end{theorem}

The proofs of these theorems are given in Subsections \ref{subsubsec-thm-at least} and \ref{subsubsec-thm-both}. The main results used in these proofs are Propositions \ref{prop-pi alpha contains geo ray} and \ref{lam-in-fib}. The proofs of Proposition \ref{prop-pi alpha contains geo ray} rely on several auxiliary results, which are developed gradually in the next subsection.

\subsubsection{\bf Leaves of $\bm{\L_{CT}(Y,X_S)}$ towards the proof of Theorem \ref{thm-at least one contain ray}}
We note that a major part of the proof of both the theorems mentioned above consists of some results on $\L_{CT}(Y,X_S)$. 
Therefore, we discuss those first.

\begin{convention}
We note that for an edge $e$ of $S$ incident on a vertex $u$, the spaces $Y_{eu}$ and $X_{eu}$ are $\lambda_0$-quasiconvex in $Y_u$ and $X_u$, respectively. We fix $$R=\bm{ R_{fc}}(\lm_0)\text{ and } D=\bm{D_{fc}}(\lm_0),$$ see Definition \ref{defn-flow space} and Remark \ref{rmk-about flow constants} for the constants. Throughout this section, we consider the flow spaces $\F l^Y_{R,D}(Y_{eu})$ and $\F l^X_{R,D}(X_{eu})$ with these constants $R$ and $D$.
\end{convention}

To understand the leaves of $\L_{CT}(Y,X_S)$ and to prove results similar to Theorem \ref{thm-unbdd imp not equal} one needs to relate geodesics of $Y$ to the geodesics of $X_S$. The following result which appears in earlier section (see Corollary \ref{flow-qc}, Lemma \ref{Y flow qc in X}) is extremely useful for that. We restate this for repeated use in this section.

\begin{prop}\label{prop-flow qc in Y and X}{\em ({\bf Quasiconvexity of $Y$-flow in $X_S$})}
	Let $e$ be an edge in $S$ incident on a vertex $u$ of $S$. We have a uniform constant $K_{\ref{prop-flow qc in Y and X}}\ge0$ such that $\F l^Y_{R,D}(Y_{eu})$ is $K_{\ref{prop-flow qc in Y and X}}$-quasiconvex in $Y$ as well in $X_S$.
\end{prop}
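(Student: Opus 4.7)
The statement is a compilation of two earlier results, so the plan is essentially to assemble existing ingredients rather than develop new machinery. The hypothesis fixes $R=R_{\ref{lem-cobdd}\ref{lem-cobdd:R-sep-D-cobdd}}(\dl_0,2\dl_0+2\lm_0)$ and $D=D_{\ref{lem-cobdd}\ref{lem-cobdd:R-sep-D-cobdd}}(\dl_0,2\dl_0+2\lm_0)$, and the set $Y_{eu}$ is $\lm_0$-quasiconvex in $Y_u$ by Convention \ref{con-trees of metric spaces}(4). So the singleton family $\{Y_{eu}\}$ satisfies the base case of Definition \ref{defn-flow space}.

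\medskip

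For quasiconvexity in $Y$: First recall that $Y$ is hyperbolic (Remark \ref{nece flar}). By Proposition \ref{consistent qc}, applied to the tree of spaces $\pi|_S:Y\map S$, the family $\F l^Y_{R,D}(Y_{eu})$ is uniformly $R'$-flow stable and uniformly $D'$-saturated in $Y$, with $R',D'$ depending only on $\dl_0,\lm_0,L_0,\eta_0$. Corollary \ref{flow-qc}(1) then gives a uniform quasiconvexity constant for $\F l^Y_{R,D}(Y_{eu})$ in $Y$.

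\medskip

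For quasiconvexity in $X_S$: This is where the projection hypothesis and Lemma \ref{Y flow qc in X} do the real work. Applying Lemma \ref{Y flow qc in X} with $A=Y_{eu}\subset Y_{eu}$ (which is $\lm_0$-quasiconvex in $Y_u$), we conclude that the same collection $\F l^Y_{R,D}(Y_{eu})$, viewed now as a family of subsets of the vertex spaces $X_v$ of $\pi:X\map T$, is $R''$-flow stable and $D''$-saturated as a collection in $X$, for uniform constants $R'',D''$. Since the collection is supported on vertices of $S$, it also constitutes an $R''$-flow stable, $D''$-saturated family in the (hyperbolic, by Theorem \ref{thm-nec suff of flaring}) subtree of spaces $X_S=\pi^{-1}(S)$. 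Corollary \ref{flow-qc}(2) then furnishes a uniform quasiconvexity constant for $\F l^Y_{R,D}(Y_{eu})$ in $X_S$.

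\medskip

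Taking $K_{\ref{prop-flow qc in Y and X}}$ to be the maximum of the two quasiconvexity constants obtained above yields the desired uniform bound. The main subtlety — and the only nontrivial input — is checking that Lemma \ref{Y flow qc in X} applies to the entire family $\F l^Y_{R,D}(Y_{eu})$, not just to $Y_{eu}$ itself; but this is precisely the content of that lemma (the projection hypothesis ensures flow-stability in $X$, and uniform qi embeddings on edge spaces ensure that each piece $Y_v\cap \F l^Y_{R,D}(Y_{eu})$ is uniformly quasiconvex in $X_v$). No further calculation is required.
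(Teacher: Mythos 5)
Your assembly is correct, and it essentially reconstructs the argument the paper leaves implicit when it says the proposition is ``obtained in the previous section'' and merely restates Corollary \ref{flow-qc} and Lemma \ref{Y flow qc in X}. For quasiconvexity in $Y$ you run Proposition \ref{consistent qc} and Corollary \ref{flow-qc} entirely inside the tree of spaces $\pi|_S:Y\to S$; for quasiconvexity in $X_S$ you invoke Lemma \ref{Y flow qc in X} and restrict the resulting Mitra retraction to $X_S$, which is hyperbolic by Theorem \ref{thm-nec suff of flaring}. That is exactly the intended route.

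One wording issue worth flagging, since the paper itself emphasizes it right before stating the proposition: when you say Lemma \ref{Y flow qc in X} yields a family that is ``$D''$-saturated as a collection in $X$,'' this is accurate only under the Step~1 reduction $S=T$ within which that lemma is stated. When $S\subsetneq T$, saturation in $X$ requires control over projections onto edge spaces $X_{e'v}$ for edges $e'$ leaving $S$, and the projection hypothesis gives no such control; this is precisely why ``$\F l^Y_{R,D}(Y_{eu})$ is not quasiconvex in $X$ but it is quasiconvex in $X_S$.'' What is actually true, and what you rightly use next, is that the family is flow stable and saturated as a collection in the tree of spaces $X_S\to S$: all vertices in its support lie in $S$, and every edge leaving the support but staying inside $S$ is handled by the saturation in $Y$ together with the projection hypothesis. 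Applying Theorem \ref{thm-mitra proj on stable-satura} and Lemma \ref{proj-on-qc}~\ref{proj-on-qc:lem-retraction imp qc} with $X_S$ in the role of the ambient space then gives the quasiconvexity constant in $X_S$, which is what Corollary \ref{flow-qc}(2) records. So your conclusion is sound; only the intermediate claim of saturation ``in $X$'' should be read as saturation in $X_S$.
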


Below we construct other quasiconvex subsets of $Y$ which are quasiconvex in $X_S$ also. For that first we need the following lemma.

\begin{lemma}\label{lem-X lift gives Y lift}
Suppose $u,w\in V(S)$ and $\beta:[u,w]\map X$ is a $k$-qi lift for some $k\ge1$.
Suppose $u=u_0, u_1,\cdots, u_n=w$ are the consecutive vertices on $[u,w]$
and $e_i$ is the edge connecting $u_{i-1}, u_{i}$, $1\leq i\leq n$. Then the
map $\alpha:[u,w]\map Y$ given by $\alpha(u_i)=P^{X_{u_i}}_{Y_{e_i u_i}}(\bt(u_i))$ for $1\le i\le n$ and $\al(u_0)=P^{X_{u_0}}_{Y_{e_1u_0}}(\bt(u_0))$ defines a $K_{\ref{lem-X lift gives Y lift}}$-qi lift in $Y$ where $K_{\ref{lem-X lift gives Y lift}}\ge1$ depends on $k$
and the structural constants of the tree of spaces $X$ and $Y$.
\end{lemma}

We will need the following lemma in the proof. Since it only utilizes the geometry of vertex spaces and the corresponding edge spaces, it can be formulated in the setting of general hyperbolic metric spaces. We restrict ourselves to this formulation as required.

\begin{lemma}\label{lem-required}
Let $K\ge0$. Let $e_1,e_2$ be two edges of $S$ incident on a vertex $u$. Let $x\in X_u$ be such that $x\in N^{X_u}_K(X_{e_iu})$ and $x_i=P^{X_u}_{Y_{e_iu}}(x)$, $i=1,2$. Then $d_{X_u}(x_1,x_2)\le K'$ for some $K'$ depending on $K$ and the other structural constants of the tree of spaces $X$ and $Y$.
\end{lemma}

\begin{proof}
If $d_{X_u}(x_1,x_2)\le K+2\dl$, then we are done, where $\dl$ is the hyperbolicity constant of $X_u$. Otherwise, by Lemma \ref{lem-qg}, we choose points $y_i\in[x,x_i]_{X_u}$ such that $K+2\dl\le d_{X_u}(y_1,y_2)\le K+5\dl$ and the arc length parametrization of $\gm=[x_1,y_1]_{X_u}\cup[y_1,y_2]_{X_u}\cup[y_2,x_2]_{X_u}$ is $K_{\ref{lem-qg}}(\dl,K)$-quasigeodesic. 

First, we prove a uniform bound on $d_{X_u}(y_2,x_2)$. A similar argument then yields the same bound on $d_{X_u}(x_1,y_1)$, which in turn implies a uniform bound on $d_{X_u}(x_1,x_2)$.

Let $P^{X_u}_{X_{e_2u}}(x_1)=s$ and $P^{X_u}_{Y_{e_2u}}(x_1)=t$. Now by Lemma \ref{lem-proj on pair qc}~\ref{lem-proj on pair qc:concatenation qg}, $[x_1,s]_{X_u}\cup[s,x_2]_{X_u}$ is a $(3+2\lm_0)$-quasigeodesic. Let $K_1=max\{K_{\ref{lem-qg}}(\dl,K),3+2\lm_0\}$. By the stability of quasigeodesic (Lemma \ref{ml}), there is $s'\in\gm$ such that $d_{X_u}(s,s')\le D$ where $D=2D_{\ref{ml}}(\dl,K_1)$. By the compatible fiberwise projection condition, we have $$d_{X_u}(s,t)\le R_0.$$Hence, $d_{X_u}(t,s')\le d_{X_u}(t,s)+d_{X_u}(s,s')\le D+R_0$. {\em Now we consider two cases to prove a uniform bound on $d_{X_u}(y_2,x_2)$, depending on the position of the point $s'$.}\smallskip

\noindent {\em Case 1}: Suppose $s'\in[x_1,y_1]_{X_u}\cup[y_1,y_2]_{X_u}$. Since $Y_{e_2u}$ is $\lm_0$-quasiconvex in $X_u$ and $x_2,t\in Y_{e_2u}$, hence, by the stability of quasigeodesic, we have $d_{X_u}(y_2,z)\le D_{\ref{ml}}(\dl,\lm_0)+D+R_0+\dl+\lm_0=D_1$ (say) for some $z\in Y_{e_2u}$. Since $x_2$ is a nearest point projection of $x$ on $Y_{e_2u}$ in $X_u$ and $y_2\in[x,x_2]_{X_u}$, so $d_{X_u}(y_2,x_2)\le d_{X_u}(y_2,z)\le D_1$.\smallskip

\noindent{\em Case 2}: Suppose $s'\in[y_2,x_2]_{X_u}$. Since $x_2$ is a nearest point projection of $x$ on $Y_{e_2u}$ in $X_u$ and $s'\in[x,x_2]_{X_u}$, $d_{X_u}(s',x_2)\le d_{X_u}(s',t)\le D+R_0$. As $d_{X_u}(s,s')\le D$, by the stability of quasigeodesic, we have $d_{X_u}(y_2,s'')\le D_{\ref{ml}}(\dl,K_1)+D+\dl=D_2$ (say) for some $s''\in[x_1,s]_{X_u}$. On the other hand, as $X_{e_2u}$ is $\lm_0$-quasiconvex in $X_u$ and $d_{X_u}(x,X_{e_2u})\le K$, we have $d_{X_u}(y_2,y'_2)\le K+\dl+\lm_0$ for some $y'_2\in X_{e_2u}$. So, by triangle inequality, $d_{X_u}(s'',y'_2)\le D_2+K+\dl+\lm_0=D_3$ (say).

Since $s$ is a nearest point projection of $x_1$ on $X_{e_2u}$ and $s''\in[x_1,s]_{X_u}$, so $d_{X_u}(s'',s)\le d_{X_u}(s'',y'_2)\le D_3$. Hence, by triangle inequality, $d_{X_u}(y_2,s')\le d_{X_u}(y_2,s'')+d_{X_u}(s'',s)+d_{X_u}(s,s')\le D_3+D_2+D$. Again, by triangle inequality, $d_{X_u}(y_2,x_2)\le d_{X_u}(y_2,s')+d_{X_u}(s',x_2)\le D_3+D_2+D+D+R_0=D_4$ (say). This completes Case $2$.

Coming back to the proof of lemma, we have $d_{X_u}(y_2,x_2)\le max\{D_1,D_4\}=D_5$ (say). By the exact same argument, we can conclude $d_{X_u}(y_1,x_1)\le D_5$. Therefore, $d_{X_u}(x_1,x_2)\le d_{X_u}(x_1,y_1)+d_{X_u}(y_1,y_2)+d_{X_u}(y_2,x_2)\le 2D_5+K+5\dl$. This completes the proof of Lemma \ref{lem-required}.
\end{proof}

{\bf\em Proof of Lemma \ref{lem-X lift gives Y lift}.} Fix $1\le i\le n-1$. Since $\bt$ is a $k$-qi embedding and vertex spaces are $\eta$-proper embedding in $X$, we take $p_i\in X_{e_iu_i}$, $q_i\in X_{e_{i+1}u_i}$ and $p_{i+1}\in X_{e_{i+1}u_{i+1}}$ such that $d_{X_{u_i}}(\bt(u_i),p_i)\le K$, $d_{X_{u_i}}(\bt(u_i),q_i)\le K$, $d_{X_{u_iu_{i+1}}}(q_i,p_{i+1})=1$ and $d_{X_{u_{i+1}}}(p_{i+1},\bt(u_{i+1}))\le K$ where $K=\eta(2k)$ and $X_{u_iu_{i+1}}=\pi^{-1}([u_i,u_{i+1}])$. To complete the proof, it is enough to prove $d_X(s_i,s_{i+1})$ is uniformly bounded where  $P^{X_{u_i}}_{Y_{e_iu_i}}(\bt(u_i))=s_i=\al(u_i)$, $P^{X_{u_{i+1}}}_{Y_{e_{i+1}u_{i+1}}}(\bt(u_{i+1}))=s_{i+1}=\al(u_{i+1})$.

By Lemma \ref{lem-required}, $d_{X_{u_i}}(s_i,s'_i)$ is uniformly bounded where $s'_i=P^{X_{u_i}}_{Y_{e_{i+1}u_i}}(\bt(u_i))$. Since a nearest point projection map on a quasiconvex subset in hyperbolic metric space is coarsely Lipschitz (Lemma {\ref{proj-on-qc}~\ref{proj-on-qc:1}}), $d_{X_{u_i}}(s'_i,q'_i)$ and $d_{X_{u_{i+1}}}(p'_{i+1},s_{i+1})$ is uniformly bounded where $q'_i=P^{X_{u_i}}_{Y_{e_{i+1}u_i}}(q_i)$ and $p'_{i+1}=P^{X_{u_{i+1}}}_{Y_{e_{i+1}u_{i+1}}}(p_{i+1})$. Hence, we only need to show a uniform bound on $d_{X_{u_iu_{i+1}}}(q'_i,p'_{i+1})$. However, this can be seen easily by restricting the analysis to $X_{u_iu_{i+1}}$, as follows.

Note that $X_{u_iu_{i+1}}$ is $\dl'_0$-hyperbolic (see Lemma \ref{com-two-hyp-sps}) and $Hd_{X_{u_iu_{i+1}}}(Y_{e_{i+1}u_i},Y_{e_{i+1}u_{i+1}})=1$. Using Lemmas \ref{com-two-hyp-sps} and \ref{qc morph}, we may assume that $Y_{e_{i+1}u_i}$ and $Y_{e_{i+1}u_{i+1}}$ are $\lm_1$-quasiconvex in $X_{u_iu_{i+1}}$ for some uniform constant $\lm_1\ge0$. Since a nearest point projection map on a quasiconvex subset is coarsely Lipschitz (Lemma {\ref{proj-on-qc}~\ref{proj-on-qc:1}}), and thus by Lemma \ref{lem-proj on pair qc}~\ref{lem-proj on pair qc:new proj lemma3}, $d_{X_{u_iu_{i+1}}}(q''_i,p''_{i+1})$ is uniformly bounded where $q''_i=P^{X_{u_iu_{i+1}}}_{Y_{e_{i+1}u_i}}(q_i)$, $p''_{i+1}=P^{X_{u_iu_{i+1}}}_{Y_{e_{i+1}u_{i+1}}}(p_{i+1})$. Since $X_{u_i}$, $X_{u_{i+1}}$ are uniformly qi embedding in $X_{u_iu_{i+1}}$ (Lemma \ref{com-two-hyp-sps}), by Lemma \ref{proj-in-diff-are-close}, $d_{X_{u_iu_{i+1}}}(q'_i,q''_i)$ and $d_{X_{u_iu_{i+1}}}(p'_{i+1},p''_{i+1})$ are uniformly bounded. By triangle inequality, we conclude that $d_{X_{u_iu_{i+1}}}(q'_i,p'_{i+1})$ is uniformly bounded as required. This completes the proof of Lemma \ref{lem-X lift gives Y lift}.\qed\smallskip

As a consequence of Lemma \ref{lem-X lift gives Y lift}, we have the following.

\begin{cor}\label{cor-union flow over ray qc}{\em ({\bf Construction of sets quasiconvex in $\bm{Y}$ and $\bm{X_S}$})}
We have a uniform constant $K_{\ref{cor-union flow over ray qc}}\ge0$ satisfying the following. Suppose $[u,\xi)$ is a geodesic ray in $S$. Let $e_n=[u_n,u_{n+1}]$ be edges on $[u,\xi)$ such that $u_1=u$ and $d_S(u,u_{n+1})=n$. Assume $\{y'_n\}\sse Y$ and $y_n\in Y_{u_n}$ such that the nearest point projection of $\pi(y'_n)$ onto $[u,\xi)$ is $u$, and $\lim^{X}_{n\map\infty}y_n,~\LMX y'_n$ exist in $\pa X$ and they are equal. Then there is $i_0$ depending on the sequences and other structural constants such that $$\bigcup_{n\ge i_0}\F l^Y_{R,D}(Y_{e_nu_n})$$ is $K_{\ref{cor-union flow over ray qc}}$-quasiconvex in $Y$ as well in $X_S$.
\end{cor}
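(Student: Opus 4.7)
The broad strategy is to build, for some $i_0 \ge 1$, a uniform quasigeodesic ray $\gm : [0,\infty) \to Y$ whose image is contained in $\bigcup_{n\ge i_0} \F l^Y_{R,D}(Y_{e_nu_n})$ and meets each $\F l^Y_{R,D}(Y_{e_nu_n})$ for $n \ge i_0$. Since every flow space $\F l^Y_{R,D}(Y_{e_nu_n})$ is uniformly quasiconvex in both $Y$ and $X_S$ by Proposition \ref{prop-flow qc in Y and X}, applying Lemma \ref{lem-union qc qg} with $V$ the image of $\gm$ and the flow spaces as the collection $\{U_\lambda\}$ will then give the desired uniform quasiconvexity of the union simultaneously in $Y$ and in $X_S$.

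To build the pieces of $\gm$, observe that $u_{n+1} \in \pi(\F l^X_{R,D}(X_{e_nu_n}))$ automatically, since $P^{X_{u_n}}_{X_{e_nu_n}}(X_{e_nu_n}) = X_{e_nu_n}$ has infinite diameter and so the flow of $X_{e_nu_n}$ trivially extends across $e_n$. Applying Proposition \ref{prop-X lift gives Y lift} with base $u_n$, edge $e_n$, and terminal vertex $u_{n+1}$, I obtain for each $n$ a $K_{\ref{prop-X lift gives Y lift}}$-qi lift $\gm_n : [u_n, u_{n+1}] \to Y$ contained in $\F l^Y_{R,D}(Y_{e_nu_n})$ with $\gm_n(u_{n+1}) \in Y_{e_nu_{n+1}}$ and $\gm_n(u_n)$ uniformly close to $Y_{e_nu_n}$.

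The remaining task is to concatenate the $\gm_n$ into a single uniform quasigeodesic; for this I need, for all $n \ge i_0$, to produce points $p_n\in Y_{e_nu_{n+1}}$ and $p'_{n+1}\in Y_{e_{n+1}u_{n+1}}$ at uniformly bounded $Y$-distance. To achieve this, I would apply Lemma \ref{not-hv-pro-sub} to $\{y_n\}, \{y'_n\}$ viewed in $X$: the hypotheses are met since $\LMX y_n = \LMX y'_n$, $\lim^T_{n\ri\infty}\pi(y_n) = \xi$, and each $\pi(y'_n)$ projects to $u$ on $[u,\xi)$. Shifting the basepoint to $u_n$ (the tail sequences continue to satisfy the hypotheses with respect to $[u_n,\xi)$), the conclusion yields a uniform $D'$ with $N^X_{D'}(\F l^X_{R,D}(X_{u_n})) \cap N^X_{D'}(\F l^X_{R,D}(X_{u_{n+1}})) \ne \emptyset$. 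Via Lemma \ref{lem-cobdd}~\ref{lem-cobdd:R-sep-D-cobdd} this translates into largeness of the projection $P^{X_{u_{n+1}}}_{X_{e_{n+1}u_{n+1}}}(X_{e_nu_{n+1}})$ and hence into a uniform upper bound on $d_{X_{u_{n+1}}}(X_{e_nu_{n+1}}, X_{e_{n+1}u_{n+1}})$. The projection hypothesis, together with the uniform qi embedding $Y_e\to X_e$ of edge spaces, then produces the required $p_n, p'_{n+1}$ at uniformly bounded $X$-distance, and the $\eta_0$-proper embedding $Y\to X$ upgrades this to a uniform $Y$-distance bound. Choosing $\gm_n(u_{n+1}) = p_n$ and $\gm_{n+1}(u_{n+1}) = p'_{n+1}$ and inserting connecting geodesic segments in $Y_{u_{n+1}}$ of bounded length then gives a $1$-Lipschitz ray $\gm$, which is a uniform quasigeodesic by Lemma \ref{lem-giving qg} (since each $\gm_n$ is a uniform qi lift and the gaps are uniformly bounded), meets each summand, and lies in the union.

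The main obstacle will be the transfer from the $X$-closeness of $X_{e_nu_{n+1}}$ and $X_{e_{n+1}u_{n+1}}$ in $X_{u_{n+1}}$ to the existence of suitably close points in the $Y$-edge spaces $Y_{e_nu_{n+1}}$ and $Y_{e_{n+1}u_{n+1}}$: the inclusion $Y_{u_{n+1}}\hookrightarrow X_{u_{n+1}}$ is only a CT-admitting proper embedding rather than a qi embedding, and the $Y$-edge spaces may in principle lie in ``thin'' parts of the ambient $X$-edge spaces, so closeness at some $X$-pair does not immediately furnish closeness at a $Y$-pair. The projection hypothesis, combined with the uniform qi embedding of edge spaces and the coarse intersection characterization of Lemma \ref{lem-proj iff coarse intersec}, is the crucial tool enabling this transfer.
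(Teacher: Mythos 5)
Your broad strategy — build a single uniform quasigeodesic ray in $Y$ meeting every $\F l^Y_{R,D}(Y_{e_nu_n})$ for $n\geq i_0$ and then invoke Lemma \ref{lem-union qc qg} together with Proposition \ref{prop-flow qc in Y and X} — is sound, and you are right that Lemma \ref{lem-union qc qg} is the correct closing tool. You have also correctly identified the main obstacle you state in your final paragraph: transferring $X$-closeness of ambient edge spaces to $Y$-closeness of the subspace edge spaces. However, the step you propose to overcome it does not work, and this is where the proof has a genuine gap.

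The problem is in the gluing step. You want points $p_n \in Y_{e_nu_{n+1}}$ and $p'_{n+1}\in Y_{e_{n+1}u_{n+1}}$ at uniformly bounded $Y$-distance, and to obtain them you apply Lemma \ref{not-hv-pro-sub} with basepoint shifted to $u_n$ and read off that $N^X_{D'}(\F l^X_{R,D}(X_{u_n}))\cap N^X_{D'}(\F l^X_{R,D}(X_{u_{n+1}}))\neq\emptyset$. But for \emph{adjacent} vertices this conclusion is vacuously true: $X_{u_n}$ and $X_{u_{n+1}}$ are already at distance at most one in $X$, so their flow spaces are trivially at bounded distance and trivially not cobounded, with no use of the CT hypothesis on $\{y_n\},\{y'_n\}$ at all. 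Consequently, the subsequent chain of deductions — largeness of $P^{X_{u_{n+1}}}_{X_{e_{n+1}u_{n+1}}}(X_{e_nu_{n+1}})$, hence a bound on $d_{X_{u_{n+1}}}(X_{e_nu_{n+1}}, X_{e_{n+1}u_{n+1}})$, hence via the projection hypothesis closeness of the $Y$-edge spaces — is not supported. Indeed, nothing in your argument rules out the consecutive $Y$-edge spaces $Y_{e_nu_{n+1}}$ and $Y_{e_{n+1}u_{n+1}}$ being far apart, and this \emph{does} happen for small $n$ in general (which is exactly why the statement only claims quasiconvexity of the union over $n\geq i_0$). Your construction, if it worked as written, would not distinguish small $n$ from large $n$ and would give the conclusion for $i_0=1$, which cannot be right.

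What is actually needed is a \emph{single long} $Y$-qi lift over a tail of $[u,\xi)$, not a concatenation of single-edge lifts glued by hand. Proposition \ref{prop-X lift gives Y lift} delivers such a lift over $[u',w]$ whenever $w$ lies in $\pi(\F l^X_{R,D}(X_{e'u'}))$ for a fixed starting edge $e'$ at $u'$, and Corollary \ref{cor-union flows qc} packages this. The content of the hypotheses on $\{y_n\},\{y'_n\}$ (fed through Lemma \ref{not-hv-pro-sub} applied to $u$ and a \emph{far} vertex $u_n$, not an adjacent one) is that the ambient flow spaces $\F l^X_{R,D}(X_{e_nu_n})$ all stay within bounded $X$-distance of $\F l^X_{R,D}(X_{e_1u})$. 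The paper then splits into two cases: either $\F l^X_{R,D}(X_{e_1u})$ already projects onto all of $[u,\xi)$, in which case Corollary \ref{cor-union flows qc} applies with $i_0=1$; or it stops at some $u_i$, in which case one re-roots at $u_{i_0}$ with $i_0 = i + D'$ and uses the bounded-distance statement to see that $\F l^X_{R,D}(X_{e_{i_0}u_{i_0}})$ meets $\F l^X_{R,D}(X_{e_nu_n})$ for all $n\geq i_0$, then applies Corollary \ref{cor-union flows qc} pairwise and finishes with Lemma \ref{lem-union qc}. Your proposal omits this case analysis and therefore never obtains the uniform bound that lets you glue the edgewise lifts into a quasigeodesic.
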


First, we prove the following general result, whose special case when $S'$ is a segment will be used in the proof of Corollary \ref{cor-union flow over ray qc}.
\begin{lemma}\label{lem-union flows qc}
We have a uniform constant $K_{\ref{lem-union flows qc}}\ge0$ satisfying the following. Let $e'$ be an edge of $S$ incident on a vertex $u$. Let $S'\sse \pi(\F l^X_{R,D}(X_{e'u}))\cap S$ be any subtree containing $u$. For an edge $e$ of $S$, let $t(e)$ denote the vertex on $e$ away from $u$. Then $\bigcup_{e\in E(S')}\F l^Y_{R,D}(Y_{e~t(e)})$ is 
$K_{\ref{lem-union flows qc}}$-quasiconvex in $Y$ as well in $X_S$.
\end{lemma}

\begin{proof}

We show that for any vertex $w$ of $S'$, the set $Z=\bigcup_{e\in E([u,w])}\F l^Y_{R,D}(Y_{e~t(e)})$ is uniformly quasiconvex in $Y$ as well in $X_S$. Then the result follows from Lemma \ref{lem-union qc qg} and Proposition \ref{prop-flow qc in Y and X}.

Now we have uniform constants $K_1,K_2\ge 1$ with the following. Fix $x\in\F l^X_{R,D}(X_{e'u})\cap X_w$. Lemma \ref{lem-lift in flow space} says that there is a $K_1$-qi lift in $X$ through $x$ over $[u,w]$ such that the image lies in $\F l^X_{R,D}(X_{e'u})$; and hence, by Lemma \ref{lem-X lift gives Y lift}, there is a $K_2$-qi lift, $\gm$ say, of $[u,w]$ in $Y$ such that $\gm(t(e))\in Y_{e~t(e)}$ for all edge $e$ of $[u,w]$. Note that $\gm$ is also a $K_2$-qi lift in $X_S$ (Lemma \ref{lem-X lift gives Y lift}), and thus by the stability of quasigeodesic $\gm$ is $K'$-quasiconvex in $Y$ as well in $X_S$ where $Y$ and $X_S$ are $\dl$-hyperbolic and $K'=D_{\ref{ml}}(\dl,K_{\ref{lem-X lift gives Y lift}})$. On the other hand, $\F l^Y_{R,D}(Y_{e~t(e)})$ is $K_{\ref{prop-flow qc in Y and X}}$-quasiconvex in $Y$ as well in $X_S$ by Proposition \ref{prop-flow qc in Y and X}. Hence by Lemma \ref{lem-union qc qg}, $Z$ is uniformly quasiconvex in $Y$ as well in $X_S$. This completes the proof.
\end{proof}

{\bf \em Proof of Corollary \ref{cor-union flow over ray qc}}:
By Lemma \ref{not-hv-pro-sub}, we have $d_X(\F l^X_{R,D}(X_{e_1u}),\F l^X_{R,D}(X_{e_nu_n}))\le D'$ for some uniform constant $D'$ and  for all $n\in\N$.

If $[u,\xi)\cap\pi(\F l^X_{R,D}(X_{e_1u}))=[u,u_i]$ for some $i$. Let $i_0=i+[D']$. Then from the above we have that for all large $n\ge i_0$, $X_{e_{i_0}u_{i_0}}\cap \F l^X_{R,D}(X_{e_nu_n})\ne\emptyset$. Otherwise, $[u,\xi)\sse\pi(\F l^X_{R,D}(X_{e_1u}))$ implies, in particular, $\F l^X_{R,D}(X_{e_{1}u})\cap X_{e_nu_n}\ne\emptyset$, and from the definition of flow space (Remark \ref{rmk-properties used} $(1)$), we have, in particular, $\F l^X_{R,D}(X_{e_{i_0},u_{i_0}})\cap X_{e_nu_n}\ne\emptyset$ for all $n\ge i_0$. For all $m\ge i_0$, let $$Z_m=\bigcup_{i_0\le n\le m}\F l^Y_{R,D}(Y_{e_nu_n}).$$Then by Lemma \ref{lem-union flows qc}, $Z_m$ is $K_{\ref{lem-union flows qc}}$-quasiconvex in $Y$ as well in $X_S$. Since $Z=\bigcup_{m\ge i_0}Z_m=\bigcup_{n\ge i_0}\F l^Y_{R,D}(Y_{e_nu_n})$ is an increasing union of uniform quasiconvex subsets in $Y$ as well in $X_S$, the result follows.\qed\smallskip



We are now ready to harvest the main result of this subsection.

{\bf Notation.} For a geodesic line $\al$, we denote the restrictions of $\al$ on $[0,\infty)$ and $(-\infty, 0]$ by $\al_+$ and $\al_-$ respectively.

\begin{theorem}\label{thm-harvest}
Let $K_{\ref{thm-harvest}}=K_{\ref{lem-nonconical in both}}$. Suppose $\al$ is a geodesic line in $Y$ such that $\al$ is a CT leaf in $\L_{CT}(Y,X)$.

\begin{enumerate}
\item Assume that $\pi(\al_+)$ contains a geodesic ray, while $\pi(\al_-)$ does not. Then $\al|_{[r,\infty)}$ is a $K_{\ref{thm-harvest}}$-quasigeodesic ray in $X_S$ for some large $r\ge0$.

\item Assume that both $\pi(\al_{\pm})$ contain geodesic rays. Then $\al|_{(-\infty,-r]}$ and $\al|_{[r,\infty)}$ are $K_{\ref{thm-harvest}}$-quasigeodesic rays in $X_S$ for some large $r\ge0$.

In particular, $\al$ is a quasigeodesic line in $X_S$.
\end{enumerate}
\end{theorem}

We need the following result in the proof of Theorem \ref{thm-harvest}.

\begin{lemma}\label{lem-nonconical in both}
We have a uniform constant $K_{\ref{lem-nonconical in both}}\ge1$ satisfying the following. Let $\al$ be geodesic ray in $Y$, and let $\{y_n\}\sse Y$ be such that the nearest point projection of $\pi(y_n)$ onto $\pi(\al)$ is $\pi(\al(0))$. Assume that $\pi(\al)$ contains a geodesic ray, and $\LMX y_n$ exists in $\pa X$ and $\LMX y_n=\pa i_{Y,X}(\al(\infty))$. Then $\al|_{[r,\infty)}$ is a $K_{\ref{lem-nonconical in both}}$-quasigeodesic in $X_S$ for some large $r\ge0$.
	
In particular, $\al$ is a quasigeodesic ray in $X_S$.
\end{lemma}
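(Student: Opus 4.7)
The plan is to construct a subset $Z \sse Y$ that is uniformly quasiconvex in both $Y$ and $X_S$, exhibit a tail of $\al$ in a uniform neighborhood of $Z$, and then invoke Lemma \ref{lem-same limits} to transport the fact that $\al$ is a $Y$-geodesic to a uniform quasigeodesic statement in $X_S$.

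First, since $\pi\circ\al$ is a continuous $1$-Lipschitz path in the tree $S$, the image $\pi(\al)$ is a subtree of $S$. I fix a geodesic ray $[u,\xi)\sse\pi(\al)$ and set $u':=P^S_{[u,\xi)}(\pi(\al(0)))$. Because nearest point projection in a tree factors through nested subtrees, and $P^S_{\pi(\al)}(\pi(y_n))=\pi(\al(0))$ by hypothesis, I obtain $P^S_{[u,\xi)}(\pi(y_n))=u'$ for every $n$. After replacing $u$ by $u'$, I may therefore assume that the nearest point projection of each $\pi(y_n)$ onto $[u,\xi)$ is the starting vertex $u$. Let $u=u_1,u_2,\ldots$ be the consecutive vertices on $[u,\xi)$ and $e_n=[u_n,u_{n+1}]$; since $\pi(\al)\supseteq[u,\xi)$ and $\pi\circ\al$ is continuous, I can pick $r_n\nearrow\infty$ with $\pi(\al(r_n))=u_n$, and set $z_n:=\al(r_n)\in Y_{u_n}$. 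Since the CT map $\pa i_{Y,X}$ exists, $\LMX z_n=\pa i_{Y,X}(\al(\infty))=\LMX y_n$. Now Corollary \ref{cor-union flow over ray qc} applies to the sequences $\{z_n\}$ and $\{y_n\}$ and produces an index $i_0$ such that
\begin{equation*}
Z:=\bigcup_{n\ge i_0}\F l^Y_{R,D}(Y_{e_nu_n})
\end{equation*}
is uniformly quasiconvex in both $Y$ and $X_S$.

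Second, I locate the tail of $\al$ inside a uniform neighborhood of $Z$. For every $n$, the continuous path $\al$ must cross the midpoint fiber of $e_n$ at some time $s_n\in[r_n,r_{n+1}]$, and via the incidence map $f_{e_n,u_n}$ the point $\al(s_n)$ is within distance $1$ of $Y_{e_nu_n}\sse Z$. Using that $Z$ is uniformly quasiconvex in $Y$ and $\al$ is a $Y$-geodesic, the segment $\al|_{[s_{i_0},s_n]}$ lies in $N^Y_{K_1}(Z)$ for a uniform constant $K_1$; letting $n\to\infty$ yields $\al|_{[r,\infty)}\sse N^Y_{K_1}(Z)$ with $r:=s_{i_0}$. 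By Lemma \ref{lem-same limits}, the set $N^Y_{K_1}(Z)$ equipped with its induced path metric is uniformly qi embedded in both $Y$ and $X_S$. Since $Y\hookrightarrow X_S$ is $1$-Lipschitz and $\al$ is unit-speed, one has the upper bound $d_{X_S}(\al(s),\al(t))\le t-s$, while the qi embedding into $X_S$ supplies a linear lower bound of the form $d_{X_S}(\al(s),\al(t))\ge\tfrac{1}{L}(t-s)-L$ for $s,t\ge r$. These together give the required uniform quasigeodesic constant on $\al|_{[r,\infty)}$, and the in-particular claim follows by tacking on the bounded initial segment $\al|_{[0,r]}$.

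The main obstacle I foresee is the first step---arranging the projection condition on $\{y_n\}$ demanded by Corollary \ref{cor-union flow over ray qc}, since the hypothesis only controls projection onto $\pi(\al)$, not onto the specific ray $[u,\xi)$. The subtree structure of $\pi(\al)$ together with the tree-theoretic factoring of nearest point projections through nested subtrees is what makes this reduction work. Everything thereafter is assembly of standard machinery: the flow-space quasiconvexity of Corollary \ref{cor-union flow over ray qc}, the qi embedding of neighborhoods of doubly-quasiconvex sets via Lemma \ref{lem-same limits}, and the Lipschitz-versus-qi-embedding comparison yielding the quasigeodesic constants.
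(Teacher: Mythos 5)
Your proposal is correct and follows the same strategy as the paper's proof: both apply Corollary \ref{cor-union flow over ray qc} (after observing that the hypothesis on the projection of $\pi(y_n)$ onto $\pi(\al)$ descends to the ray $[u,\xi)$ because $u\in[u,\xi)\subseteq\pi(\al)$ are nested subtrees of $S$), and both then use Lemma \ref{lem-same limits} to make a neighborhood of $Z$ uniformly qi embedded in $Y$ and $X_S$. The only small divergence is in the final step: the paper introduces a geodesic $\gm_n$ in $W=N^Y_{\bullet}(Z)$ joining $\al(r_{i_0})$ to $\al(r_n)$, uses stability of quasigeodesics to put it uniformly Hausdorff close to $\al|_{[r_{i_0},r_n]}$, and then invokes Lemma \ref{lem-giving qg}; you instead obtain the lower bound $d_{X_S}(\al(s),\al(t))\ge\tfrac{1}{L}(t-s)-L$ directly from the fact that the $W$-path metric dominates the $Y$-metric (so equals $|s-t|$ on $\al$) and that $W\hookrightarrow X_S$ is an $L$-qi embedding, paired with the trivial upper bound $d_{X_S}\le d_Y$. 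Your route is slightly more economical and avoids the intermediate auxiliary quasigeodesic, but it buys nothing structurally different; the constants are controlled by the same parameters. One minor remark: your detour through $u'=P^S_{[u,\xi)}(\pi(\al(0)))$ is unnecessary — taking $u=\pi(\al(0))$ from the start, the projection of $\pi(y_n)$ onto $[u,\xi)$ is already $u$ by the nesting argument you give.
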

\begin{proof}
	Suppose $\xi\in\pa\pi(\al)$, and so $[u,\xi)\sse\pi(\al)$ where $\al(0)=u$. Let $e_n=[u_n,u_{n+1}]$ be edges on $[u,\xi)$ such that $u_1=u$ and $d_S(u,u_{n+1})=n$. Suppose $\{r_n\}\sse\R_{\ge0}$ is an unbounded sequence such that $\al(r_n)\in Y_{e_nu_n}$. Note that $\LMX y_n=\pa i_{Y,X}(\al(\infty))=\LMX\al(r_n)$. Then by Corollary \ref{cor-union flow over ray qc}, $Z=\bigcup_{n\ge i_0}\F l^Y_{R,D}(Y_{e_nu_n})$ is $K_{\ref{cor-union flow over ray qc}}$-quasiconvex in $Y$ as well in $X_S$ for some $i_0\in\N$. 

Assume that $Y$, $X_S$ are $\dl$-hyperbolic, the inclusion $Y\map X_S$ is $\eta$-proper embedding. Thus by Lemma \ref{lem-same limits}, $W=N^Y_{K_{\ref{cor-union flow over ray qc}}+1}(Z)$ is $L$-qi embedding in $Y$ as well in $X_S$ where $L=L_{\ref{lem-same limits}}(\dl,K_{\ref{cor-union flow over ray qc}},\eta)$. For all $n\ge i_0$, if $\gm_n$ is a geodesic in $W$ joining $\al(r_{i_0})$ and $\al(r_n)$ then $\gm_n$ is an $L$-quasigeodesic in $X_S$, and hence, by the stability of quasigeodesic in $X_S$, we have $Hd_{X_S}(\gm_n,\al|_{[r_{i_0},r_n]})\le D_1$ for some constant $D_1$ depending on $\dl$ (hyperbolicity constant of $X_S$) and $L$. Note that $\al$ is $1$-Lipschitz in $X_S$ and an $\eta$-proper as the inclusion $Y\map X_S$ is $\eta$-proper. Thus by Lemma \ref{lem-giving qg}, $\al|_{[r_{i_0},r_n]}$ is a $K$-quasigeodesic in $X_S$ for some constant $K\ge1$ depending on $L$, $D_1$ and $\eta$. This is true for all $n\ge i_0$. Therefore, $\al|_{[r_{i_0},\infty)}$ is $K$-quasigeodesic in $X_S$. This completes the proof.
\end{proof}

{\bf \em Proof of Theorem \ref{thm-harvest}}:

$(1)$ Since $\pi(\al_-)$ does not contain a geodesic ray, we have a vertex $v$ of $S$ and an unbounded sequence $\{s_n\}\sse \R$ such that $\al(-s_n)\in Y_v$ (see Lemma \ref{lem-not geo ray imp seq in vertex}). By given condition we have $\LMX\al(-s_n)=\pa i_{Y,X}(\al(-\infty))=\pa i_{Y,X}(\al(\infty))$. Then $(1)$ follows from Lemma \ref{lem-nonconical in both}.

$(2)$ Suppose $\al(0)=u$, $\xi'\in\pa\pi(\al_-)$ and $\xi\in\pa\pi(\al_+)$. Let $e'_n=[u'_n,u'_{n+1}]$ and $e_n=[u_n,u_{n+1}]$ be successive edges on $[u,\xi')$ and $[u,\xi)$ respectively, where $u_1=u'_1=u$. Let $\{r_n\},\{s_n\}\sse\R_{\ge0}$ be unbounded sequences such that $\al(-s_n)\in Y_{e'_nu'_n},~\al(r_n)\in Y_{e_nu_n}$. Now we consider two cases depending on whether $\xi'=\xi$. Note that in Case $1$, we do not use the full hypothesis.

\noindent{\bf Case 1}: Suppose $\xi'=\xi$. Note that by Proposition \ref{prop-flow qc in Y and X}, $Z_n=\F l^Y_{R,D}(Y_{e_nu_n})$ is $K_{\ref{prop-flow qc in Y and X}}$-quasiconvex in $Y$ as well in $X_S$, and $\al(-s_n),\al(r_n)\in Y_{e_nu_n}\sse Z_n$. As explained in the proof of Lemma \ref{lem-nonconical in both}, it follows that $\al$ is $K_{\ref{lem-nonconical in both}}$-quasigeodesic in $X_S$.

\noindent{\bf Case 2}: Suppose $\xi'\ne\xi$. Let $v$ be the nearest point projection of $u$ on the geodesic line joining $\xi'$ and $\xi$. Let $d_T(u,v)=i$. Consider the sequences $\{\al(-s_n)\}_{n\ge i}$, $\{\al(r_n)\}_{n\ge i}$, and note that $\LMX\al(-s_n)=\pa i_{Y,X}(\al(-\infty))=\pa i_{Y,X}(\al(\infty))=\LMX\al(r_n)$. We apply Lemma \ref{lem-nonconical in both} twice to conclude the result.

This completes the proof of $(2)$.\qed\smallskip

We are now in a position to prove main ingredients used in the proof of the main theorems of this section. As a consequence of Theorem \ref{thm-harvest} we will prove the following. 

\begin{prop}\label{prop-pi alpha contains geo ray}
Suppose $\al$ is a geodesic line in $Y$ such that $\pi(\al)$ contains a geodesic ray. Then $\al$ is not a leaf of the CT lamination $\L_{CT}(Y,X_S)$. 
\end{prop}

\begin{proof}
On contrary, suppose that $\al$ is a leaf of the CT lamination $\L_{CT}(Y,X_S)$. Hence $\al$ is a leaf of the CT lamination $\L_{CT}(Y,X)$ as $\pa i_{Y,X}=\pa i_{X_S,X}\circ\pa i_{Y,X_S}$ (Lemma \ref{functo-ct-map}). We consider the following two cases. 

\noindent{\bf Case 1}: Suppose both $\pi(\al_{\pm})$ contain geodesic rays. Then by Theorem \ref{thm-harvest} $(2)$, $\al$ is a quasigeodesic line in $X_S$ -- which gives a contradiction to our assumption that $\al$ is a CT leaf in $\L_{CT}(Y,X_S)$. Hence the result follows in this case.

\noindent{\bf Case 2}: Without loss of generality, we assume that $\pi(\al_+)$ contains a geodesic ray, while $\pi(\al_-)$ does not. {\em Now we restrict our study on the subtree of spaces over $S$ only, i.e., on the inclusion $Y\map X_S$}. The idea is to construct a sequence $\{z_n\}\sse Y$ that creates a bridge showing that $\al(-\infty)=\al(\infty)$ in $Y$, contradicting our hypothesis.

Let $\xi\in\pa\pi(\al_+)$ and $[u,\xi)\sse \pi(\al_+)$. Since $\pi(\al_-)$ does not contain a geodesic ray, there is a vertex $u$ of $S$ and a sequence $\{y_n\}\sse Y_u$ such that $\lim^{Y_u}_{n\map\infty}y_n$ exists in $\pa Y_u$ and $\LMY y_n=\al(-\infty)$ (see Lemma \ref{lem-not geo ray imp seq in vertex}). Then by our main theorem (Theorem \ref{main thm}), we have $\lim^{X_S}_{n\map\infty} y_n=\pa i_{Y,X_S}(\al(\infty))$. 

Let $\bt$ be a geodesic ray in $Y_u$ such that $\lim^{Y_u}_{n\map\infty}y_n=\bt(\infty)$, and let $\gm$ be a geodesic ray in $X_u$ such that $\gm(\infty)=\pa i_{Y_u,X_u}(\bt(\infty))$. Note that $\pa i_{Y,X_S}(\al(\infty))=\lim^{X_S}_{n\map\infty} \gm(n)=\pa i_{X_u,X_S}(\gm(\infty))$ as the inclusions $Y_u\map X_u\map X_S$ and $Y_u\map Y\map X_S$ admit CT maps and that the CT map is functorial property (Lemma \ref{functo-ct-map}). Again, by 
Theorem \ref{thm-harvest} $(1)$, $\al|_{[r,\infty)}$ is $K_{\ref{thm-harvest}}$-quasigeodesic ray in $X_S$ for some large $r\ge0$. Since $\pi(\al_+)$ contains a geodesic ray then by Lemma \ref{lem-equ-im-bdry-flow}, $\gm(\infty)$ has boundary flow in $X_v$ for all vertex $v$ of $[u,\xi)$. In particular, $[u,\xi)\sse\pi(\F l^{X_S}_{R,D}(X_u))$; note that $\F l^{X_S}_{R,D}(X_u)$ is the restriction of $\F l^X_{R,D}(X_u)$ in $X_S$. Let $e_n=[u_n,u_{n+1}]$ be consecutive edges on $[u,\xi)$ such that $u_1=u$.
Then by Lemma \ref{lem-union flows qc}, $Z=\bigcup_{n\in\N}\F l^Y_{R,D}(Y_{e_nu_n})$ is $K_{\ref{lem-union flows qc}}$-quasiconvex in $Y$ as well in $X_S$.

Now we apply Lemma \ref{qua-im-qua} to $Y_u\sse X_u$ and get $Hd_{X_u}(\bt,\gm)<\infty$. On the other hand, $\bt\sse N^{Y_u}_{D'}(Y_{e_1u})$ for some $D'\ge0$. Since $Y_u\map Y$ admits the CT map, thus we get a sequence $\{z_n\}\sse Y_{e_1u}$ such that $d_{Y_u}(\bt(n),z_n)\le D'$, $\lim^{Y_u}_{n\map\infty}z_n$ exists in $\pa Y_u$ and $\LMY z_n=\al(-\infty)$. Since the inclusions $Y_u\map X_u\map X_S$ admit CT maps, we have $\lim^{X_S}_{n\map\infty}\bt(n)=\lim^{X_S}_{n\map\infty}\gm(n)=\pa i_{Y,X_S}(\al(\infty))$, and thus we get $\lim^{X_S}_{n\map\infty} z_n=\pa i_{Y,X_S}(\al(\infty))$.

Let $\{r_n\}\sse\R_{\ge0}$ be an unbounded sequence such that $\al(r_n)\in Y_{e_nu_n}$. Note that $\lim^{X_S}_{n\map\infty} z_n=\pa i_{Y,X_S}(\al(\infty))=\lim^{X_S}_{n\map\infty}\al(r_n)$ as $Y\map X_S$ admits the CT map and that the CT map is functorial (Lemma \ref{functo-ct-map}). Further, we have $z_n,\al(r_n)\in Z$ for all $n\in\N$. Hence by Lemma \ref{lem-same limits}, $\LMY z_n=\LMY\al(r_n)$. Therefore, $\al(-\infty)=\LMY z_n=\LMY\al(r_n)=\al(\infty)$ $-$ which is a contradiction to the assumption that $\al$ is a geodesic line in $Y$. This completes the proof.
\end{proof}

\subsubsection{\bf Proof of Theorem \ref{thm-at least one contain ray}}\label{subsubsec-thm-at least}
\begin{proof}
We prove this by contradiction. Suppose that $\pi(\alpha)$ contains a geodesic ray. Hence by Proposition \ref{prop-pi alpha contains geo ray}, we have $\partial i_{Y,X_S}(\alpha(-\infty))\ne \partial i_{Y,X_S}(\alpha(\infty))$. 

Again, by Theorem \ref{thm-harvest}, after reparametrization, if necessary, we assume that $\al|_{[r,\infty)}$ is a quasigeodesic in $X_S$ and $\pi(\al|_{[r,\infty)})$ contains a geodesic ray. Let $\bt$ be a geodesic line joining $\partial i_{Y,X_S}(\alpha(-\infty))$ and $\partial i_{Y,X_S}(\alpha(\infty))$  in $X_S$. Note that $\pi(\bt)$ (contains a geodesic ray, hence) is unbounded. Now as $\al$ is a CT leaf of $\L_{CT}(Y,X)$ and $\pa i_{Y,X}=\pa i_{X_S,X}\circ\pa i_{Y,X_S}$ (Lemma \ref{functo-ct-map}), $\pa i_{X_S,X}(\bt(-\infty))=\pa i_{X_S,X}(\bt(\infty))$. This contradicts to Theorem \ref{thm-unbdd imp not equal}.
\end{proof}	

\subsubsection{\bf Proof of Theorem \ref{thm-both does not con geo ray}}\label{subsubsec-thm-both}
We need the following result in the proof of Theorem \ref{thm-both does not con geo ray}. We mention that the proof of the following proposition does not require the study of quasiconvexity of flow spaces carried out above. It only requires the description of quasigeodesics in the ambient space corresponding to a CT leaf given in Theorem \ref{thm-quasigeo-descrip} and Proposition \ref{prop-con of geo des}.

\begin{prop}[\bf Relation between  $\bm{\L_{CT}(Y_u,Y)}$ and $\bm{\L_{CT}(X_u,X)}$]\label{lam-in-fib}
	Let $u\in V(S)$. Suppose $\al:\R\map Y_u$ is a geodesic line in $Y_u$ such that $\al$ is not a leaf of the CT lamination $\L_{CT}(Y_u,Y)$ and $\L_{CT}(Y_u,X_u)$. Assume that $(\pa i_{Y_u,Y}(\al(-\infty)),\pa i_{Y_u,Y}(\al(\infty)))\in\L_{CT}(Y,X)$. Then $$(\pa i_{Y_u,X_u}(\al(-\infty)),\pa i_{Y_u,X_u}(\al(\infty)))\in\Lm^{\xi}(X_u,X)$$for some $\xi\in\pa T\setminus\pa S.$
\end{prop}

\begin{proof}
	Let $\bt:\R \ri X_u$ be a geodesic line in $X_u$ such that $\bt(\pm\infty)=\pa i_{Y_u,X_u}(\al(\pm\infty))$ respectively. Since the CT map is a functorial property (Lemma \ref{functo-ct-map}), i.e. $\pa i_{X_u,X}\circ\pa i_{Y_u,X_u}=\pa i_{Y_u,X}=\pa i_{Y,X}\circ\pa i_{Y_u,Y}$, it follows from the given condition that $\bt$ is a leaf of the CT lamination $\L_{CT}(X_u,X)$. 
	Hence by Theorem \ref{thm-quasigeo-descrip}, we have $$(\pa i_{Y_u,X_u}(\al(-\infty)),\pa i_{Y_u,X_u}(\al(\infty)))=(\bt(-\infty),\bt(\infty))\in\Lm^{\xi}(X_u,X)$$for a unique $\xi\in\pa T$.\smallskip
	
	\noindent	{\em Claim}: $\xi\in\pa T\setminus\pa S$.\smallskip
	
	\noindent	{\em Proof of the claim}: On contrary, suppose that $\xi\in\pa S$. Note that by Theorem \ref{thm-both have bdry flow}, both of $\bt(\pm\infty)$ have boundary flows in $X_v$ for all vertex $v$ in $[u,\xi)$. Now by repeated application of Lemma \ref{cor-line-imp-line}, we conclude that both of $\al(\pm\infty)$ have boundary flows in $Y_v$ for all vertex $v$ in $[u,\xi)$. Moreover, suppose $\al_v$ and $\bt_v$ are geodesic lines in $Y_v$ and $X_v$ respectively such that $\al_v(\pm\infty)$ are boundary flows of $\al(\pm\infty)$ in $Y_v$ and $\bt_v(\pm\infty)$ are boundary flows of $\bt(\pm\infty)$ in $X_v$. Then $Hd_{X_v}(\al_v,\bt_v)\le D'$ for some uniform constant $D'\ge0$.
	
	Note that Theorem \ref{thm-quasigeo-descrip} provides a description of a uniform quasigeodesic joining $\bt(-m)$ and $\bt(n)$, where $m,n\in\N$. 
	The idea is to use this quasigeodesic to construct a uniform quasigeodesic joining $\al(-m')$ and $\al(n')$ of a similar form. 
	By Proposition \ref{prop-con of geo des}, this leads to a contradiction, since $\alpha$ is not a CT leaf of $\L_{CT}(Y_u,Y)$.

	Note that vertex spaces are $\eta_0$-proper embedding in the total space for both $Y$ and $X$. For the proper map $\eta_0:\R_{\ge0}\map\R_{\ge0}$, let $\eta'_0:\R_{\ge0}\map\R_{\ge0}$ be an associated proper map depending on $\eta$, as in Lemma \ref{cor-inverse of proper map}. We will denote the threshold constants appearing in Theorem \ref{thm-quasigeo-descrip} by $M^Y$ and $M^X$ respectively to distinguish between $Y$ and $X$. Fix constants $k\ge1$ and $R\ge M^X$ where $M^X$ depends on $k$ such that $$K=max\{\eta_0(k+2D'),1\}\text{ and }\eta'_0(R-2D')>M^Y$$ where $M^Y$ depends on $K$ and the constant $D'$ is defined above. 
	
	Then we have a fixed constant $C\ge0$ depending on $R$ with the following. For all large $m,n\in\N$, there exist $k$-qi lifts $\gm_{-m}$ and $\gm_n$ (through $\bt(-m)$ and $\bt(n)$ respectively) of $[u,w]$ in $X$ such that a uniform quasigeodesic in $X$ joining $\bt(-m)$ and $\bt(n)$ has the form$$\gm_{-m}*[\gm_{-m}(w),\gm_n(w)]_{X_w}*\gm_n$$where $w\in[u,\xi)$, with $$d_{X_v}(\gm_{-m}(v),\gm_n(v))\ge R$$ for all vertex $v$ in $[u,w]\setminus\{w\}$, and $$d_{X_w}(\gm_{-m}(w),\gm_n(w))\le C.$$We may assume that $\gm_{-m}(v),\gm_n(v)\in\bt_w$ for all vertex $v$ in $[u,w]$ (see Remark \ref{rmk-lifts in geo flow}). 
	
	Now, from $\gm_{-m}$ and $\gm_n$, we construct qi lifts $\gm'_{-m}$ and $\gm'_n$ of $[u,w]$ in $Y$, respectively, such that $\gm'_{-m}(v),\gm'_n(v)\in\al_v$ for all vertex $v$ in $[u,w]$. For a vertex $v$ in $[u,w]$, choose $x_v,y_v\in\al_v$ such that $d_{X_v}(x_v,\gm_{-m}(v))\le D'$ and $d_{X_v}(y_v,\gm_n(v))\le D'$. Now define $\gm'_{-m}(v):=x_v$ and $\gm'_n(v):=y_v$. It is easy to see that $\gm'_{-m}$ and $\gm'_n$ are $k'$-qi lifts of $[u,w]$ in $X$ where $k'=k+2D'$. Note that the inclusion $Y\map X$ is $\eta_0$-proper embedding. Hence $\gm'_{-m}$ and $\gm_n$ are $K$-qi lifts in $Y$ where $K=max\{\eta_0(k+2D'),1\}$. 
	
	Again, since $d_{X_w}(\gm_{-m}(w),\gm_n(w))\le C$, we have  $d_{X_w}(\gm'_{-m}(w),\gm'_n(w))\le C+2D'$, and hence, $d_{Y_w}(\gm'_{-m}(w),\gm'_n(w))\le \eta_0\circ\eta_0(C+2D')$. 
	
	Finally, for all vertex $v$ in $[u,w]\setminus\{w\}$, we have $d_{X_v}(\gm'_{-m}(v),\gm'_n(v))>R-2D'$ as $d_{X_v}(\gm_{-m}(v),\gm_n(v))>R$. Again $Y_v\map X_v$ is $\eta_0$-proper embedding, so $d_{Y_v}(\gm'_{-m}(v),\gm_n(v))\ge\eta'_0(R-2D')\ge M^Y$ (Lemma \ref{cor-inverse of proper map}). 
	
	Therefore, by Proposition \ref{prop-con of geo des}, we conclude that $\al$ is a leaf of the CT lamination $\L_{CT}(Y_u,Y)$. This is a contradiction to our assumption. This completes the proof.
\end{proof}

{\bf \em Proof of Theorem  \ref{thm-both does not con geo ray}}:
Since $\al$ is a leaf of the CT lamination $\L_{CT}(Y,X)$, by Theorem \ref{thm-at least one contain ray}, $\pi(\al)$ does not contain a geodesic ray. In other words, both of $\pi(\al_{\pm})$ do not contain geodesic rays. Thus we have vertices $v_1,v_2\in V(S)$ and $\{y_{n,j}\}\sse Y_{v_j}$ 
	such that $\lim^{Y_{v_j}}_{n\map\infty}y_{n,j}=\xi_j\in\pa Y_{v_j}$ where $j=1,2$, 
	and $\LMY y_{n,1}=\al(-\infty)$ and $\LMY y_{n,2}=\al(\infty)$ (see Lemma \ref{lem-not geo ray imp seq in vertex}). Let $\al_j:[0,\infty)\map Y_{v_j}$ 
	be a geodesic ray in $Y_{v_j}$ 
	such that $\al_j(\infty)=\xi_j$ where $j=1,2$. 
	
	Let $\eta_j=\pa i_{Y_{v_j},X_{v_j}}(\xi_j)$ where $j=1,2$. 
	Let $\bt_j:[0,\infty)\map X_{v_j}$ 
	be a geodesic ray in $X_{v_j}$ 
	such that $\bt_j(\infty)=\eta_j$ where $j=1,2$. 
	Note that $\pa i_{Y_{v_1},Y}(\xi_1)=\al(-\infty)$ and $\pa i_{Y_{v_2},Y}(\xi_2)=\al(\infty)$. Thus $\pa i_{X_{v_1},X}(\eta_1)=\pa i_{X_{v_2},X}(\eta_2)$ in $\pa X$ as $\al$ is a CT leaf in $\L_{CT}(Y,X)$, the inclusions $Y_{v_j}\map X_{v_j}\map X$ admit CT maps and that the CT map is a functorial property (Lemma \ref{functo-ct-map}). Now we consider the following cases.\smallskip
	
\noindent	{\bf Case 1}: Suppose $v_1=v_2=u$ (say). Let $\gm:\R\map Y_u$ be a geodesic line in $Y_u$ such that $\gm(-\infty)=\xi_1$ and $\gm(\infty)=\xi_2$. Then $\pa i_{Y_u,Y}(\gm(\pm\infty))=\al(\pm\infty)$.
	
If $\pa i_{Y_u,X_u}(\xi_1)=\pa i_{Y_u,X_u}(\xi_2)$, then $(1)$ holds.
	
Now suppose that $\pa i_{Y_u,X_u}(\xi_1)\ne\pa i_{Y_u,X_u}(\xi_2)$, i.e., $\eta_1\ne\eta_2$. Again we have $\pa i_{X_u,X}(\eta_1)=\pa i_{X_u,X}(\eta_2)$. In this case, result follows from Proposition \ref{lam-in-fib}.\smallskip

\noindent	{\bf Case 2}: Suppose $v_1\ne v_2$. Since $\pa i_{X_{v_1},X}(\eta_1)=\pa i_{X_{v_2},X}(\eta_2)$, by Proposition \ref{bdry flow prop}, there is a vertex $u$ in $[v_1,v_2]$ such that both $\eta_1$ and $\eta_2$ have boundary flows in $X_u$. We denote the boundary flow of $\eta_j$ in $\pa X_u$ by $\eta'_j$ where $j=1,2$.  Let $\bt'_j$ be a geodesic ray in $X_u$ such that $\bt'_j(\infty)=\eta'_j$. 

Applying Lemma \ref{qua-im-qua}, we will show that $\xi_j$ has boundary flow in $Y_u$ and that coincide with $\eta'_j$ in $X_u$. Then we apply Proposition \ref{lam-in-fib} to conclude the result. 

Let $e$ be an edge in $[v_1,v_2]$ incident on, say $v_2$. Since $\eta_2$ has boundary flow, we note that $\lim^{X_{v_2}}_{n\map\infty}\al_2(n)=\bt_2(\infty)\in\Lm_{X_{v_2}}(X_{ev_2})$.  Then by repeated application of Lemma \ref{qua-im-qua}, we note that $\xi_2$ has boundary flow in $Y_u$. Similarly, $\xi_1$ has boundary flow in $Y_u$. We denote their boundary flows in $Y_u$ by $\xi'_2$ and $\xi'_1$ respectively. Let $\al'_j:[0,\infty)\map Y_u$ be a geodesic ray in $Y_u$ such that $\al'_j(\infty)=\xi'_j$ where $j=1,2$. 

Now by Proposition \ref{bdry flow lemma} $(3)$, $Hd_X(\bt_j,\bt'_j)<\infty$, and $Hd_Y(\al_j,\al'_j)<\infty$ where $j=1,2$. Let $e_j$ be the edge in $[u,v_j]$ incident on $u$ where $j=1,2$. Then by repeated application of Lemma \ref{qua-im-qua}, we have $\xi'_j\in\Lambda_{Y_u}(Y_{e_ju})$ and $\eta'_j\in\Lambda_{X_u}(X_{e_ju})$ where $j=1,2$.\smallskip

\noindent	{\em Claim}: $\eta'_1\ne\eta'_2$. We will prove the claim by considering two cases. \smallskip

\noindent	{\em Case $2A$}: $u\in\{v_1,v_2\}$. Without loss of generality, we assume that $v_1=u$. We have $Hd_X(\bt_2,\bt'_2)<\infty$, and $Hd_Y(\al_2,\al'_2)<\infty$. Since by Lemma \ref{qua-im-qua}, $Hd_X(\al_2,\bt_2)<\infty$, and thus we have $Hd_X(\al'_2,\bt'_2)<\infty$.

Again $\al'_1(\infty)\ne\al'_2(\infty)$ in $\pa Y_u$ as $\pa i_{Y_{v_1},Y}(\al_1(\infty))\ne\pa i_{Y_{v_2},Y}(\al_2(\infty))$. Note that $\al'_2(\infty)\in\Lambda_{Y_u}(Y_{e_2u})$ and $\bt'_2(\infty)\in\Lambda_{X_u}(X_{e_2u})$. If $\al'_1(\infty)\notin\Lambda_{X_u}(X_{e_2u})$ then $\bt'_1(\infty)\ne\bt'_2(\infty)$. Otherwise, suppose that $\al'_1(\infty)\in\Lambda_{X_u}(X_{e_2u})$ and $\bt'_1(\infty)=\bt'_2(\infty)$, i.e., $Hd_X(\bt'_1,\bt'_2)<\infty$. Then by applying Lemma \ref{qua-im-qua} in $X_u$, we have $Hd_X(\al'_1,\bt'_1)<\infty$, and thus $Hd_X(\al'_1,\al'_2)<\infty$. Hence $Hd_{Y_u}(\al'_1,\al'_2)<\infty$ (as the inclusions $Y_u\map Y\map X$ are proper embeddings). This is not possible as $\al'_1(\infty)\ne\al'_2(\infty)$ in $\pa Y_u$. Therefore, in either case, we have $\bt'_1(\infty)\ne\bt'_2(\infty)$, i.e., $\eta'_1\ne\eta'_2$.\smallskip

\noindent	{\em Case $2B$}: $u\notin\{v_1,v_2\}$. By a similar argument as in Case $2A$, one may conclude that $\eta'_1\ne\eta'_2$.
	
Therefore, we have $(\xi'_1,\xi'_2)\notin\L_{CT}(Y_u,Y)$, $(\eta'_1,\eta'_2)\notin\L_{CT}(Y_u,X_u)$ and $\pa i_{X_u,X}(\eta'_1)=\pa i_{X_{v_1},X}(\eta_1)=\pa i_{X_{v_2},X}(\eta_2)=\pa i_{X_u,X}(\eta'_2)$. Hence, the result follows from Proposition \ref{lam-in-fib}.\qed

\section{Other applications, examples and related results}\label{sec-application}

In this section we discuss a number of applications of Theorem \ref{main thm} for graphs of hyperbolic groups.
For details on graphs of groups and the Bass--Serre theory one is referred to \cite{serre-trees}. Graphs of hyperbolic
groups was first considered by Bestvina and Feighn (see \cite{BF}) where trees of spaces were
also introduced. We first briefly recall some of these. Note that all graphs of groups considered
here are defined over finite graphs.

\begin{defn}
Suppose $\Y$ is a finite directed graph. A {\em graph of groups}  $(\G,\Y)$ over $\Y$
consists of the following data:
\begin{itemize}
\item For each vertex $v$ of $\Y$ there is a group $G_v$. (Such groups are called {\em vertex groups} of $(\G, \Y)$.)

\item For each undirected edge $e$ of $\Y$ there is a group $G_e$. (Such groups are referred to as {\em edge groups}.)

\item For any directed edge $e$ with end point $v$ there is an injective group homomorphism $G_e\map G_v$.
(These homomorphisms will be referred to as {\em incidence homomorphisms}.)
\end{itemize}
A graph of groups $(\G, \Y)$ is called a {\em graph of hyperbolic groups with qi embedded condition},
after \cite{BF}, if all the vertex groups and the edge groups are hyperbolic and the incidence homomorphisms are qi embeddings.
\end{defn}
Next we recall the definition of the fundamental group of a graph of (hyperbolic) groups and how one may obtain a
tree of spaces from a graph of groups.

\smallskip
\noindent{\bf Construction of trees of spaces from graphs of groups.}\\
Suppose we have a graph of hyperbolic groups $(\G, \Y)$ with qi embedded condition as above
with a choice of an orientation on $\Y$.
It is a standard fact that hyperbolic groups are finitely presented and thus one may choose, for each
vertex group $G_v$, a finite simplicial complex $Z_v$ and for each edge group $G_e$, a finite simplicial
complex $Z_e$ (such that both $Z_v$ and $Z_e$ have single $0$-cell) along with simplicial maps $\phi_{e,v}: Z_e\map Z_v$ which induces the given incidence homomorphism
$G_e\map G_v$ at the level of fundamental groups. Then from these one constructs a
{\em graph of space} (see \cite{scott-wall}, \cite{BF}) $g: Z\map \Y$ where
$$Z=\bigsqcup_{v\in V(\Y)} Z_v \sqcup\bigsqcup_{e\in E(\Y)} Z_e\times [0,1]/\sim $$
where one glues $Z_e\times \{0\}$ to $Z_u$ and $Z_e\times \{1\}$ to $Z_v$, if $e$ joins $u$ and $v$,
using the maps $\phi_{e,u}$ and $\phi_{e,v}$ respectively. The map $g:Z\map \Y$ is obtained by collapsing
$Z_e\times \{t\}$, $t\in (0,1)$ and $Z_v$'s to points. The fundamental group of $Z$ is called the {\bf fundamental
group of $(\G, \Y)$} which is known to be independent of the choices of spaces $Z_v$'s and $Z_e$'s
and the maps $\phi_{e,v}$'s. It is denoted by $\pi_1(\G,\Y)$.

Let $p:\tilde{Z}\map Z$ be the universal cover. If one collapses the
connected components of the inverse images of points of $\Y$ under $f=g\circ p:\tilde{Z}\map \Y$, say,
then obtains the {\em Bass--Serre tree} $\T$ of $(\G, \Y)$ such that the quotient map $\pi: \tilde{Z}\map \T$
is a $\pi_1(\G,\Y)$-equivariant map and is a tree of hyperbolic spaces.
It is easy to verify that the vertex spaces are uniformly quasiisometric to the vertex groups
and the edge spaces are uniformly quasiisometric to the edge groups and that the qi embedded condition
is satisfied. For {\em graph model} of this tree of spaces one is referred to \cite[Section 3]{ps-conical}.

\smallskip
\noindent{\bf Subgraphs  of subgroups of graphs of groups.}\\
There is a more general notion of morphisms of graphs of groups to which our result can be applied. But for the
ease of the exposition we shall discuss only subgraphs of subgroups of graphs of groups.

\begin{defn}\label{subgraph of gps}
Suppose $(\G, \Y)$ is a graph of hyperbolic groups. Then a subgraph of subgroups of
$(\G, \Y)$ consists of the following:
\begin{enumerate}
\item A connected subgraph $\Y'$ of $\Y$.
\item For each vertex $v$ of $\Y'$ there is subgroup $G'_v$ of $G_v$
and for each unoriented edge $e$ of $\Y'$ a subgroup $G'_e$ of $G_e$
such that restrictions of the incidence homomorphisms of $(\G, \Y)$
make this collection into a graph of groups $(\G',\Y')$.
\item The induced map at the level of Bass--Serre trees $\T'\map \T$ is injective.
\end{enumerate}
\end{defn}
An explanation of the last condition in Definition \ref{subgraph of gps} is in order.
Given $(\G, \Y)$ and $(\G', \Y')$ we may construct simplicial complexes as above along with
inclusion maps $Z'_v\map Z_v$ and $Z'_e\map Z_e$ for all vertices and edges of $\Y'$. (Note that the prime $'$ in superscript refers corresponding complexes or groups for $(\G',\Y')$.)
This results in an inclusion map $Z'\map Z$. {\em It is standard that condition $(3)$ of Definition \ref{subgraph of gps} gives an injective homomorphism at the level of fundamental group.} However, the converse is not true in general. Thus we have a subtree of subspaces
$\pi': \tilde{Z}'\map \T'$ of $\pi:\tilde{Z}\map \T$.

The following result of Bass is relevant and we include this for the sake completeness of the discussion.  

\begin{prop}\textup{(\cite[Propositions $2.7$, $2.15$]{bass-cov})}\label{prop-inter property}
Suppose $(\G',\Y')$ is a subgraph of subgroups of a graph of groups $(\G,\Y)$ as in Definition \ref{subgraph of gps} satisfying conditions $(1)$ and $(2)$ only. Then the natural homomorphism $\pi_1(\G',\Y')\map\pi_1(\G,\Y)$ is injective and condition $(3)$ holds if and only if for all edge $e$ of $\Y'$ incident on a vertex $u$ of $\Y'$, one has $G'_u\cap G_e=G'_e$; think of $G'_e<G_e< G_u$ via injective homomorphisms.
\end{prop}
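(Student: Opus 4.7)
The plan is to work with the Bass--Serre trees. Let $\T$ and $\T'$ denote the Bass--Serre trees of $(\G,\Y)$ and $(\G',\Y')$ respectively. The subgraph-of-subgroups data induces a natural simplicial map $f\colon\T'\to\T$ which is equivariant with respect to the group homomorphism $\phi\colon\pi_1(\G',\Y')\to\pi_1(\G,\Y)$. Condition $(3)$ of Definition \ref{subgraph of gps} is by definition the injectivity of $f$. Note that a simplicial map between trees is injective if and only if it is locally injective at each vertex (otherwise a geodesic in $\T'$ between two distinct preimages of a common point would map to a backtracking path in $\T$).

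First I would prove $(3)$ is equivalent to the intersection condition. At the vertex $\tilde u\in\T'$ stabilized by $G'_u$, the edges of $\T'$ incident to $\tilde u$ are parameterized by the disjoint union, over edges $e$ of $\Y'$ at $u$, of cosets in $G'_u/G'_e$; the analogous statement holds for the image vertex in $\T$ using $G_u/G_e$. Two distinct cosets $g_1G'_e,g_2G'_e\subset G'_u$ land in the same coset of $G_e$ in $G_u$ precisely when $g_1^{-1}g_2\in G'_u\cap G_e$. So $f$ is locally injective at $\tilde u$ if and only if $G'_u\cap G_e=G'_e$ for each such $e$; by equivariance, this local condition at one lift of $u$ is equivalent to local injectivity at every lift of $u$. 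Running over all vertices of $\Y'$ yields the equivalence $(3)\iff$ intersection condition.

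Next I would show that the intersection condition (equivalently, $(3)$) forces $\phi$ to be injective. Suppose $g\in\ker\phi$ with $g\ne 1$ and consider the action of $g$ on $\T'$. If $g$ fixes some vertex $y\in\T'$, then $g$ belongs to a $\pi_1(\G',\Y')$-conjugate of some $G'_v$, so up to conjugation $g\in G'_v$. The composition $G'_v\hookrightarrow G_v\hookrightarrow\pi_1(\G,\Y)$ is injective (the second map injective by standard Bass--Serre theory for $(\G,\Y)$), contradicting $\phi(g)=1$. Otherwise, $g$ acts hyperbolically on $\T'$ with no fixed vertex; but for every $x\in\T'$, $f(gx)=\phi(g)f(x)=f(x)$, and the already-established injectivity of $f$ forces $gx=x$ for all $x$, contradicting hyperbolicity.

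This closes the ``if'' direction. For the ``only if'' direction, $(3)$ alone gives the intersection condition via the second paragraph, so the injectivity of $\phi$ in the hypothesis is in fact redundant there. The main obstacle I anticipate is setting up the equivariant map $f$ carefully --- specifically, applying Bass--Serre theory to $(\G',\Y')$ to identify the stabilizers of vertices and edges of $\T'$ as conjugates of the $G'_v$'s and $G'_e$'s inside $\pi_1(\G',\Y')$, and checking that $f$ respects these stabilizer identifications under $\phi$. Once that bookkeeping is in place, the rest reduces to a clean combinatorial argument about cosets in the stars of vertices, as described above.
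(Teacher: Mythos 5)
Your proof is correct, and it is the standard Bass--Serre argument; the paper itself does not supply an independent proof but simply cites Bass's covering theory for graphs of groups, where the result is established by precisely this method (equivariant map of Bass--Serre trees, local injectivity reformulated via cosets in vertex groups, and the elliptic/hyperbolic dichotomy to pass from tree injectivity to injectivity of $\phi$). The only place to tighten the write-up is the sentence on when two distinct cosets of $G'_e$ land in the same coset of $G_e$: what one actually needs is that the containment $G'_e \subseteq G'_u \cap G_e$ is an equality, i.e., that no element of $(G'_u\cap G_e)\setminus G'_e$ exists, which is exactly the local injectivity statement you want.
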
 

We are now ready to state the group-theoretic applications of our main theorem. We denote a vertex (resp. an edge) of $\Y$ by a fraktur letter, namely $\mathfrak{u}$ (resp. $\mathfrak{e}$). Vertices (resp. edges) of the tree in the associated tree of spaces will be denoted by lowercase letters $u$ (resp. $e$).


\begin{theorem}\label{main-app-ct-gen}
	Suppose $(\G,\Y)$ is a graph of hyperbolic groups with the qi embedded condition such that the fundamental group $G=\pi_1(\G,\Y)$ is hyperbolic. Let $(\G',\Y')$ be a subgraph of subgroups over $\Y'$ of $(\G,\Y)$ as in Definition \ref{subgraph of gps}. We also assume the following.
	
	\begin{enumerate}
		\item For each vertex $\mathfrak{u}$ of $\Y'$, $G'_{\mathfrak{u}}$ is hyperbolic and the inclusion $G'_{\mathfrak{u}}\to G_{\mathfrak{u}}$ admits a CT map.
		
		\item Let $\mathfrak{e}$ be an edge of $\Y'$ incident on a vertex $\mathfrak{u}$ of $\Y'$. Then
		
		\begin{enumerate}
			
			\item the inclusion $G'_{\mathfrak{e}}\to G_{\mathfrak{e}}$ is a qi embedding, and
			
			\item (Compatible pairwise projection condition) there is a constant $R_0\ge0$ such that for all $g\in G'_{\mathfrak{u}}$, we have $$d_{G_{\mathfrak{u}}}(P^{G_{\mathfrak{u}}}_{G_{\mathfrak{e}}}(g),P^{G'_{\mathfrak{u}}}_{G'_{\mathfrak{e}}}(g))\le R_0$$ where $P^X_{U}:X\ri U$ denotes a nearest point projection map from a metric space $X$ onto a subset $U$.
		\end{enumerate} 
	
	\end{enumerate}
	Then the CT map $\pa \pi_1(\G',\Y')\ri \pa \pi_1(\G,\Y)$ exists.
\end{theorem}
Let $G'=\pi_1(\G',\Y')$.
We note that we can identify $G'$ with a subgroup of $G$. Hence, for the proof below, we shall pretend that $G'$ is a subgroup of $G$. Also, from the work of Bestvina and Feighn (\cite{BF-Adn}) and Gersten (\cite[Corollary 6.7]{gersten}) it immediately follows
that $\pi_1(\G',\Y')$ is hyperbolic. Hence, it makes to talk about
the CT map $\pa G'\map \pa G$.

\smallskip
\noindent{\bf \em Proof of Theorem \ref{main-app-ct-gen}}:  We recall that from the given data about $G, G'$ we have (i) a tree of hyperbolic spaces $\pi:X\map T$ and a subtree of hyperbolic subspaces $\pi'=\pi|_{Y}:Y\map S$ such that both satisfy the qi embedded condition. (ii) $G$ (resp. $G'$) acts on $X$ (resp. on $Y$) properly and cocompactly by isometries.  (iii) The inclusion $Y\map X$ is equivariant with respect to the inclusion $G'\map G$. (iv)
$X$ and $Y$ are both hyperbolic. 

By choosing a point $y\in Y\subset X$ and considering the corresponding orbit
maps $G\map X $ and $G'\map Y$, it is easy to see that 
 it is enough to prove that the inclusion $Y\map X$ admits a CT map,
 as the orbit maps, which are quasiisometries, induce homeomorphisms
 $\pa G\map \pa X$ and $\pa G' \map \pa Y$ respectively. To do so we will check that this pair of trees of spaces satisfy all the conditions of Theorem \ref{main thm}. It is a standard fact that finitely generated subgroup in a finitely generated group is properly embedded with respect to their finite generating sets. It follows that the inclusion $Y\map X$ is a proper embedding. On the other hand, for all edge $e$ of $S$ incident on a vertex $u$ of $S$, $\pi^{-1}(u)=X_u$ and $\pi'^{-1}(u)=Y_u$ are acted upon properly and cocompactly by the stabilizers of $u$ in $\pi_1(\G,\Y)$ and $\pi_1(\G',\Y')$ respectively. In fact, as mentioned above, $X_u$ and $Y_u$ are uniformly quasiisometric to corresponding vertex groups of $(\G,\Y)$ and $(\G',\Y')$. Thus by condition $(1)$ and $(2)(a)$, for any edge $e$ of $S$ incident on a vertex $u$ of $S$, the inclusion $Y_u\map X_u$ admits a CT map, and $Y_e\map X_e$ is a (uniform) qi embedding. Since $G'$ acts on $Y$ and $X$ by isometry, to check the compatible fiberwise projection condition in Theorem \ref{main thm}, it is enough to show that for any edge $\mathfrak{e}$ of $\Y'$ incident on a vertex $\mathfrak{u}$ and $h\in G'_{\mathfrak{u}}$, the quadruple $(G_{\mathfrak{u}},G'_{\mathfrak{u}},hG_{\mathfrak{e}},hG'_{\mathfrak{e}})$ satisfies the compatible projection condition (Definition \ref{defn-compatible proj}). However, it follows from the condition $(2) (b)$ as $G'_{\mathfrak{u}}$ acts on Cayley graph of $G_{\mathfrak{u}}$ by isometry. This completes the proof.\qed\smallskip

We next give a few applications of Theorem~\ref{main-app-ct-gen}. 

For instance, suppose that for every edge $\mathfrak{e}$ of $\Y'$, the edge group $G'_{\mathfrak{e}}$ is a finite-index subgroup of $G_{\mathfrak{e}}$ in Theorem \ref{main-app-ct-gen}. Then the compatible pairwise projection condition in $(2)(b)$ of Theorem \ref{main-app-ct-gen} follows from the mere fact that the inclusion $G'_{\mathfrak{u}}\to G_{\mathfrak{u}}$ admits a CT map (see Lemma \ref{lem-uniform Mitra imp proj con}). Consequently, we obtain the following.

\begin{theorem}\label{thm-app-ct-finite index}
Suppose $(\G,\Y)$ is a graph of hyperbolic groups with the qi embedded condition such that the fundamental group $\pi_1(\G,\Y)$ is hyperbolic. Let $(\G',\Y')$ be a subgraph of subgroups over $\Y'$ of $(\G,\Y)$ as in Definition \ref{subgraph of gps}. We also assume the following.

\begin{enumerate}
	\item For each vertex $\mathfrak{u}$ of $\Y'$, $G'_{\mathfrak{u}}$ is hyperbolic and the inclusion $G'_{\mathfrak{u}}\ri G_{\mathfrak{u}}$ admits a CT map.
	
	\item For each edge $\mathfrak{e}$ of $\Y'$, the inclusion $G'_{\mathfrak{e}}\to G_{\mathfrak{e}}$ is an isomorphism onto a finite index subgroup of $G_{\mathfrak{e}}$.
	
\end{enumerate}
Then $(\pi_1(\G',\Y')$ is hyperbolic and$)$ the $($injective$)$ homomorphism $\pi_1(\G',\Y')\ri\pi_1(\G,\Y)$ admits a CT map.
\end{theorem}

We end this section by recording two special cases of the above theorem.
\begin{cor}\label{cor-amal onto finite index}
Suppose $G_1$ and $G_2$ are hyperbolic groups with a common quasiconvex subgroup $H$ such that the free product with amalgamated
$G = G_1 *_H G_2$ is hyperbolic. Let $K_i < G_i$, $i=1,2$, be hyperbolic subgroups such that the inclusions
$K_i \to G_i$ admit CT maps. Let $H' < H$ be a finite-index subgroup satisfying
$K_i \cap H = H'$ for $i=1,2$, and set $K = K_1 *_{H'} K_2$.

Then $K$ is hyperbolic, and the inclusion $K \to G$ admits a CT map.
In particular, the same conclusion holds if $H < K_i$ for $i=1,2$, in which case $K = K_1 *_H K_2$.
\end{cor}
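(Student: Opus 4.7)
The plan is to derive this corollary as a direct application of Theorem \ref{thm-app-ct-finite index}. First I would set up the graph-of-groups picture: let $(\G,\Y)$ be the graph of groups over the graph $\Y$ with two vertices $v_1,v_2$ joined by a single edge $e$, with vertex groups $G_{v_i}=G_i$ ($i=1,2$) and edge group $G_e=H$, the incidence homomorphisms being the inclusions $H\hookrightarrow G_i$. Then $\pi_1(\G,\Y)\cong G_1*_H G_2=G$, and since $H$ is quasiconvex in both $G_1$ and $G_2$, the graph of groups $(\G,\Y)$ satisfies the qi embedded condition. Hyperbolicity of $G$ is part of the hypothesis.

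Next I would define $(\G',\Y')$ as the subgraph of subgroups with $\Y'=\Y$, vertex groups $G'_{v_i}=K_i$ and edge group $G'_e=H'$, where the incidence homomorphisms are restrictions of those of $(\G,\Y)$; this is well-defined because $H'<H<K_i$ by hypothesis. To verify this is a subgraph of subgroups in the sense of Definition \ref{subgraph of gps}, I would invoke Proposition \ref{prop-inter property}: it suffices to check that for the edge $e$ incident to $v_i$, one has $G'_{v_i}\cap G_e=G'_e$, i.e. $K_i\cap H=H'$, and this is precisely the assumption. Hence $(\G',\Y')$ is a genuine subgraph of subgroups and, in particular, the induced homomorphism $\pi_1(\G',\Y')\cong K_1*_{H'}K_2=K\to G$ is injective.

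Finally I would verify the two hypotheses of Theorem \ref{thm-app-ct-finite index}. Condition (1) asks that each $G'_{v_i}=K_i$ be hyperbolic and that the inclusion $K_i\hookrightarrow G_i$ admit a CT map; both of these are given. Condition (2) asks that each edge inclusion $G'_e\to G_e$, i.e. $H'\hookrightarrow H$, be an isomorphism onto a finite index subgroup; this is immediate from the hypothesis that $H'<H$ has finite index. Applying Theorem \ref{thm-app-ct-finite index} then gives that $\pi_1(\G',\Y')=K$ is hyperbolic and the inclusion $K\hookrightarrow G$ admits a CT map.

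There is no real obstacle here; the only point that requires any care is the Bass--Serre tree injectivity condition, which in this amalgamated product setting reduces cleanly to the intersection condition $K_i\cap H=H'$ via Proposition \ref{prop-inter property}. Everything else is a direct translation from the geometric main theorem to its graph-of-groups corollary.
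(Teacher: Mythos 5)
Your proof is correct and follows exactly the route the paper intends: Corollary \ref{cor-amal onto finite index} is stated there as a direct specialization of Theorem \ref{thm-app-ct-finite index} to the two-vertex, one-edge graph of groups, with Proposition \ref{prop-inter property} supplying the Bass--Serre tree injectivity from the intersection condition $K_i\cap H=H'$. The paper leaves the verification implicit; you have simply spelled it out.
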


\begin{cor}\label{cor-HNN ct}
Suppose $G$ is a hyperbolic group and $H<G$ is a quasiconvex subgroup. Assume that $\phi:H\map \phi(H)<G$ an isomorphism onto a subgroup $\phi(H)$. Let $K$ be a hyperbolic subgroup of $G$ such that the inclusion $K\map G$ admits a CT map and $H,\phi(H)<K$. Consider the HNN extensions $K*_{H=\phi(H)}=K'<G'=G*_{H=\phi(H)}$. Suppose that $G'$ is hyperbolic. 

Then $K'$ is hyperbolic and the inclusion $K'\map G'$ admits a CT map. 
\end{cor}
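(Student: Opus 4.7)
The plan is to realize the inclusion $K' \hri G'$ as coming from a subgraph of subgroups of a graph of hyperbolic groups and then invoke Theorem \ref{thm-app-ct-finite index}. Let $\Y$ be the graph with a single vertex $v$ and a single loop edge $e$. Define $(\G, \Y)$ by $G_v := G$, $G_e := H$, with incidence homomorphisms being the inclusion $H \hri G$ and the map $\phi : H \to G$; then $\pi_1(\G, \Y) = G' = G *_{H = \phi(H)}$. Define $(\G', \Y')$ over the same underlying graph $\Y' = \Y$ by $G'_v := K$, $G'_e := H$, with incidence homomorphisms obtained by restricting those of $(\G, \Y)$ to $H$; this is well-defined because both $H$ and $\phi(H)$ are contained in $K$, and by construction $\pi_1(\G', \Y') = K' = K *_{H = \phi(H)}$.

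Next I would verify the hypotheses of Theorem \ref{thm-app-ct-finite index}. The injectivity of the map induced on Bass--Serre trees follows from Proposition \ref{prop-inter property}: we need $K \cap H = H$ and $K \cap \phi(H) = \phi(H)$, both of which hold because $H, \phi(H) \subset K$. For the qi embedded condition of $(\G, \Y)$, quasiconvexity of $H$ in $G$ is given; for $\phi(H)$, I would use that $G'$ is hyperbolic together with the fact that the edge group $H$ is quasiconvex in the HNN extension $G'$ (a consequence of Bestvina--Feighn combined with the quasiconvexity of $H$ in $G$ and the qi embedding $G \hri G'$), so its conjugate $\phi(H) = tHt^{-1}$ is quasiconvex in $G'$, hence in $G$. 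For the qi embedded condition of $(\G', \Y')$, the composition $H \hri K \hri G$ is a qi embedding while $K \hri G$ is $1$-Lipschitz, so $H \hri K$ is itself a qi embedding; the same argument applies to $\phi(H) \hri K$.

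Condition $(1)$ of Theorem \ref{thm-app-ct-finite index} is exactly the hypothesis that $K \hri G$ admits a CT map, and condition $(2)$ is trivial because $G'_e = H = G_e$ makes the edge-group inclusion the identity, which is an isomorphism onto a finite-index (index one) subgroup. Thus Theorem \ref{thm-app-ct-finite index} applies, giving both hyperbolicity of $K'$ and the existence of a CT map $\pa K' \ri \pa G'$ for the inclusion $K' \hri G'$. The main delicate point is the quasiconvexity of $\phi(H)$ in $G$, which requires using the hyperbolicity of $G'$ in an essential way; once this is established, all remaining verifications are routine consequences of the hypotheses.
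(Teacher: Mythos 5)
Your construction of the one-vertex, one-loop graph of groups $(\G, \Y)$ with vertex group $G$, edge group $H$, and the two incidence maps being the inclusion and $\phi$, together with the subgraph $(\G', \Y')$ having vertex group $K$ and the same edge group $H$, is exactly the intended deduction of this corollary from Theorem \ref{thm-app-ct-finite index}. Your verification of the injectivity of the induced map on Bass--Serre trees via Proposition \ref{prop-inter property}, of condition $(1)$ (the CT map for $K \hri G$), and of condition $(2)$ (the edge-group inclusion being the identity, hence onto a finite-index subgroup), are all correct. The short argument ``$H \hri K \hri G$ is a qi embedding and $K \hri G$ is $1$-Lipschitz, hence $H \hri K$ is a qi embedding'' is also correct, and it gives the qi embedded condition for $(\G', \Y')$ once the one for $(\G, \Y)$ is in hand.

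The flaw is in your treatment of the point you yourself flag as delicate: the quasiconvexity of $\phi(H)$ in $G$. Your chain cites ``the qi embedding $G \hri G'$'' as an input, but that is precisely what is not automatic. Hyperbolicity of $G'$ together with quasiconvexity of $H$ in $G$ does not imply that $G$ is quasiconvex (equivalently, undistorted) in $G'$; vertex groups of hyperbolic graphs of hyperbolic groups are in general not quasiconvex in the fundamental group, the fiber subgroup of a hyperbolic mapping torus being the standard example of a nonquasiconvex vertex group. The argument is also circular: to place $G' = G *_{H = \phi(H)}$ inside the Bestvina--Feighn framework at all, one already needs both incidence homomorphisms $H \to G$ to be qi embeddings, i.e.\ one already needs $\phi(H)$ quasiconvex in $G$, before one can speak of $G$ as a vertex space qi embedded in anything. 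The correct reading of the corollary is therefore that quasiconvexity of $\phi(H)$ in $G$ is an implicit hypothesis --- it is part of saying that $(\G, \Y)$ is a graph of hyperbolic groups with the qi embedded condition, as demanded by Theorem \ref{thm-app-ct-finite index} --- not something to be derived from the hyperbolicity of $G'$. With that understood, the rest of your argument goes through and matches the paper's intended route.
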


\noindent{\bf Example.} Suppose $G_1<G_2$ hyperbolic groups where the inclusion $G_1\map G_2$
admits a CT map. Suppose $H<G_1$ is a free group such that $H$ is weakly malnormal and
quasiconvex in $G_2$. Suppose $\phi:H \map H$ is a hyperbolic automorphism. Let
$G=G_2 *_{H=\phi(H)}$ and $K=G_1 *_{H=\phi(H)}$ be the corresponding HNN extensions. Then $G$ is hyperbolic (\cite{BF}), and by Corollary \ref{cor-HNN ct}, $K$ is hyperbolic and the inclusion
$K\map G$ admits a CT map.

\subsection{A quasiconvex embedding theorem} As application of CT lamination studied in Section \ref{subsec-CT lamination} we prove the following.

{\em A subgroup $A$ of a group $B$ is said to be {\em weakly malnormal} if for all $g\in B\setminus A$, one has $A\cap gAg^{-1}$ is finite.}

\begin{theorem}\label{thm-qc}
Suppose $(\G,\Y)$ is a graph of hyperbolic groups with the qi embedded condition such that the fundamental group $\pi_1(\G,\Y)$ is hyperbolic. Assume that $(\G',\Y')$ is a subgraph of subgroups with the qi embedded condition. Moreover, suppose that:

\begin{enumerate}
	\item $\Y'=\Y$.
	
	\item For each vertex $\mathfrak{u}$ of $\Y'$, the vertex group $G'_{\mathfrak{u}}$ of $(\G',\Y')$ corresponding to $\mathfrak{u}$, is weakly malnormal and quasiconvex subgroup of $G_{\mathfrak{u}}$.
	
	\item For each edge $\mathfrak{e}$ of $\Y'$, we have $G'_{\mathfrak{e}}=G_{\mathfrak{e}}$.
\end{enumerate}
Then $\pi_1(\G',\Y')$ is quasiconvex in $\pi_1(\G,\Y)$.
\end{theorem}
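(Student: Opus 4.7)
The plan is to establish the CT map $\partial\pi_1(\G',\Y')\to\partial\pi_1(\G,\Y)$ and show it is injective; quasiconvexity then follows from Lemma \ref{lem-qc and inj ct}. Existence of the CT map is immediate from Theorem \ref{thm-main group finite index ct}: hypothesis (2) combined with Lemma \ref{lem-qc and inj ct} gives each vertex inclusion $G'_u\to G_u$ a (necessarily injective) CT map, while hypothesis (3) gives $G'_e=G_e$, the trivial finite-index case. Let $\pi:X\to T$ and $\pi:Y\to S$ be the induced Bass--Serre trees of spaces, with $S\hookrightarrow T$.

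For injectivity, I suppose for contradiction that $\alpha:\R\to Y$ is a geodesic line with $\partial i_{Y,X}(\alpha(-\infty))=\partial i_{Y,X}(\alpha(\infty))$. By Theorem \ref{thm-both does not con geo ray} there exist $u\in V(S)$ and a geodesic line $\gamma:\R\to Y_u$ with $\partial i_{Y_u,Y}(\gamma(\pm\infty))=\alpha(\pm\infty)$, satisfying either (1) $\partial i_{Y_u,X_u}(\gamma(-\infty))=\partial i_{Y_u,X_u}(\gamma(\infty))$, or (2) $(\bar\gamma_-,\bar\gamma_+)\in\Lm^\xi(X_u,X)$ for some $\xi\in\partial T\setminus\partial S$, where $\bar\gamma_\pm:=\partial i_{Y_u,X_u}(\gamma(\pm\infty))$. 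In Case (1), the quasiconvexity of $G'_u$ in $G_u$ makes $\partial i_{Y_u,X_u}$ injective (Lemma \ref{lem-qc and inj ct}), giving $\gamma(-\infty)=\gamma(\infty)$, which contradicts $\gamma$ being a line.

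For Case (2), $\bar\gamma_-\neq\bar\gamma_+$ by the same injectivity. Since $\xi\notin\partial S$, let $e_0=[v_0,w_0]$ be the first edge on $[u,\xi)$ with $e_0\notin E(S)$; then $v_0\in V(S)$. Theorem \ref{thm-both have bdry flow} produces boundary flows $\bar\gamma_\pm'\in\partial X_{v_0}$ along $[u,\xi)$, distinct by Lemma \ref{bdry flow lemma} and lying in $\Lambda_{X_{v_0}}(X_{e_0v_0})$ because they continue to flow across $e_0$. Using $Y_e=X_e$ for $e\in E(S)$ together with the quasiconvexity of $Y_u$ in $X_u$, one checks at each edge of $[u,v_0]$ that $\Lambda_{Y_u}(Y_{eu})$ corresponds bijectively to $\Lambda_{X_u}(X_{eu})\cap\Lambda_{X_u}(Y_u)$ under $\partial i_{Y_u,X_u}$, so $\gamma_\pm$ also boundary-flows in $Y$ along the sub-segment $[u,v_0]\subset S$ to points $\gamma_\pm'\in\partial Y_{v_0}$ with $\partial i_{Y_{v_0},X_{v_0}}(\gamma_\pm')=\bar\gamma_\pm'$. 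In particular $\bar\gamma_\pm'\in\Lambda_{X_{v_0}}(Y_{v_0})\cap\Lambda_{X_{v_0}}(X_{e_0v_0})$.

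The final contradiction comes from showing this intersection is empty. Identify $X_{v_0}$ with a Cayley graph of the conjugate of $G_v$ stabilising $v_0$ (where $v\in V(\Y)$ lifts to $v_0$), so that $Y_{v_0}$ corresponds to $G'_v$ and $X_{e_0v_0}$ to some coset $g_0G_e$. The combinatorial condition $e_0\notin E(S)$ translates algebraically to $g_0\notin G'_vG_e$, and in particular $g_0\notin G'_v$. Weak malnormality of $G'_v$ in $G_v$ then gives $G'_v\cap g_0G'_vg_0^{-1}$ finite; since $G_e=G'_e\subset G'_v$, the containment $g_0G_eg_0^{-1}\subset g_0G'_vg_0^{-1}$ yields $G'_v\cap g_0G_eg_0^{-1}$ finite. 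The Hruska--Wise lemma \cite[Lemma 4.5]{HW-bdd-pck} (cited in the discussion preceding Lemma \ref{lem-coarse int iff coarse sep}) then gives bounded coarse intersection of $Y_{v_0}$ and $X_{e_0v_0}$ in $X_{v_0}$; since two quasiconvex subsets of a hyperbolic space with bounded coarse intersection have disjoint limit sets (asymptotic quasigeodesic rays in the limit sets would force unbounded coarse intersection), this contradicts $\bar\gamma_\pm'$ lying in both. The main difficulty is Case (2), especially translating $e_0\notin E(S)$ to $g_0\notin G'_vG_e$ and noticing that hypothesis (3), $G'_e=G_e$, is precisely what lets weak malnormality of $G'_v$ control intersections with conjugates of edge groups.
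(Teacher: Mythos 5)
Your proposal is correct and follows essentially the same route as the paper: reduce to injectivity of the CT map via Lemma~\ref{lem-qc and inj ct}, invoke Theorem~\ref{thm-both does not con geo ray}, eliminate Case~(1) using quasiconvexity of $G'_{\mathfrak u}$ in $G_{\mathfrak u}$, and in Case~(2) use weak malnormality together with $G'_{\mathfrak e}=G_{\mathfrak e}<G'_{\mathfrak u}$ to show the limit sets whose overlap would be needed for a boundary flow exiting $S$ are disjoint. The only cosmetic differences are that you make explicit the step of flowing out to the first vertex $v_0$ where $[u,\xi)$ exits $S$ (the paper compresses this into the phrase ``no boundary flow in $(T\setminus S)\cup\{u\}$'') and that you reach the disjoint-limit-set conclusion via Hruska--Wise and bounded coarse intersection, whereas the paper cites \cite{short} and \cite{GMRS} for the same fact.
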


\begin{proof}
Suppose $\pi:X\map T$ is a tree of spaces corresponding to $(\G,\Y)$, and $\pi'=\pi|_Y:Y\map S$ is a subtree of subspaces corresponding to $(\G',\Y')$, as discussed in the proof of Theorem \ref{main-app-ct-gen}. By Theorem \ref{thm-app-ct-finite index}, the inclusion $Y\map X$ admits a CT map $\pa i_{Y,X}:\pa Y\map \pa X$. Note that $\pi_1(\G,\Y)$ (resp. $\pi_1(\G',\Y')$) acts on $X$ (resp. on $Y$) by properly and cocompactly via deck transformations. Thus, identifying $\pa Y$ and $\pa X$ with the boundaries of $\pi_1(\G',\Y')$ and $\pi_1(\G,\Y)$ respectively through orbit maps, it remains to prove that the CT map $\pa i_{Y,X}:\pa Y\map \pa X$ is injective (see Lemma \ref{lem-qc and inj ct}).

On contrary, assume that $\al:\R\map Y$ is a geodesic line in $Y$ such that $\pa i_{Y,X}(\al(-\infty))=\pa i_{Y,X}(\al(\infty))$. Then by Theorem \ref{thm-both does not con geo ray}, we have a vertex $u\in V(S)$ and a geodesic line $\gm:\R\map Y_u$ in $Y_u$ such that $\pa i_{Y_u,Y}(\gm(\pm\infty))=\al(\pm\infty)$. Moreover, $$(\pa i_{Y_u,X_u}(\gm(-\infty)),\pa i_{Y_u,X_u}(\gm(\infty)))\in\Lambda^{\xi}(X_u,X)\hspace{7mm} (*)$$ for some $\xi\in\pa T\setminus\pa S$ as $Y_u\map X_u$ is quasiconvex. Note that there is a vertex $\mathfrak{u}$ of $\Y'=\Y$ and $g\in\pi_1(\G',\Y')$ such that $Y_u=N^{Y_u}_1(gG'_{\mathfrak{u}}.y)$ and $X_u=N^{X_u}_1(gG_{\mathfrak{u}}.y)$ for a presumed fixed point $y\in Y$. (Here $N^{W}_1(p)$ denotes $1$-radius ball centered at $p\in W$ in a metric space $W$.) On the other hand, $G'_{\mathfrak{u}}$ is weakly malnormal in $G_{\mathfrak{u}}$, i.e., $G'_{\mathfrak{u}}\cap hG'_{\mathfrak{u}}h^{-1}$ is finite for any $h\in G_{\mathfrak{u}}\setminus G'_{\mathfrak{u}}$. Thus for any vertex $\mathfrak{u}$ of $\Y'=\Y$ and $h\in G_{\mathfrak{u}}\setminus G'_{\mathfrak{u}}$, we have $\Lambda_{G_{\mathfrak{u}}}(G'_{\mathfrak{u}})\cap\Lambda_{G_{\mathfrak{u}}}(hG'_{\mathfrak{u}})=\emptyset$ (see \cite[Proposition $3$]{short} and \cite[Lemma $2.6$]{GMRS}). Again for any edge $\mathfrak{e}$ incident on $\mathfrak{u}$, $G_{\mathfrak{e}}=G'_{\mathfrak{e}}< G'_{\mathfrak{u}}$ and so $\Lambda_{G_{\mathfrak{u}}}(hG_{\mathfrak{e}})\sse \Lambda_{G_{\mathfrak{u}}}(hG'_{\mathfrak{u}})$. Thus $\Lambda_{G_{\mathfrak{u}}}(hG_{\mathfrak{e}})\cap \Lambda_{G_{\mathfrak{u}}}(G'_{\mathfrak{u}})=\emptyset$. This says that the geodesic line $\gm$ has no boundary flow in $(T\setminus S)\cup\{u\}$. This is a contradiction to $(*)$. Hence, we are done.
\end{proof}

\noindent{\bf Example.} Suppose $H$ is a hyperbolic group. Let $F_1<H$ be a weakly malnormal quasiconvex free subgroup of rank at least $3$, and $F_2<F_1$ be malnormal in $F_1$. Thus $F_2$ is weakly malnormal in $H$. Consider the doubles $G=H*_{F_2}H$ and $G'=F_1*_{F_2}F_1$. Note that $G$ and $G'$ are hyperbolic groups (\cite{BF}, \cite[Theorem $2$]{KM98}). By Theorem \ref{thm-qc}, $G'$ is quasiconvex in $G$.

\begin{cor}\label{cor-Swarup's qsn}
Suppose $(\G,\Y)$ is a graph of hyperbolic groups with the qi embedded condition such that the fundamental group $\pi_1(\G,\Y)$ is hyperbolic. Assume that $(\G',\Y')$ is a subgraph of subgroups with the qi embedded condition. Moreover, suppose that:

\begin{enumerate}
	\item $\Y'=\Y$.
	
	\item For each vertex $\mathfrak{u}$ of $\Y'$, the vertex group $G'_{\mathfrak{u}}$, of $(\G',\Y')$ corresponding to $\mathfrak{u}$, is quasiconvex subgroup of $G_{\mathfrak{u}}$.
	
	\item For each edge $\mathfrak{e}$ of $\Y'$, we have $G'_{\mathfrak{e}}=G_{\mathfrak{e}}$.
\end{enumerate}
If $\pi_1(\G',\Y')$ is weakly malnormal in $\pi_1(\G,\Y)$ then $\pi_1(\G',\Y')$ is quasiconvex in $\pi_1(\G,\Y)$.
\end{cor}

The proof of Corollary \ref{cor-Swarup's qsn} follows from the fact that $G'_{\mathfrak{u}}$ is weakly malnormal in $G_{\mathfrak{u}}$ as $\pi_1(\G',\Y')$ is weakly malnormal in $\pi_1(\G,\Y)$, and that Theorem \ref{thm-qc}.

We recall that an infinite subgroup $H$ of a group $G$ is said to have \emph{height $n\in\N$} if there exist $n$ distinct cosets
$\{g_iH : 1 \le i \le n\}$ such that $\bigcap_{i=1}^{n} g_i H g_i^{-1}$
is infinite, and $n$ is maximal with this property. It was proved that quasiconvex subgroups of hyperbolic groups have finite height in \cite{GMRS}. Then G. A. Swarup asked whether the converse is true (see \cite[Question 1.8]{bestvinaprob}). We note that even if $H$ is of height $1$ in $G$, i.e. $H$ is weakly malnormal in $G$, Swarup’s question remains open. It is known only in certain special cases; see \cite{mitra-ht}, \cite[Corollary 4]{ilya-kap-qc-amal} and \cite[Section 6]{KM98}. As an application of Theorem \ref{thm-qc}, we obtain another instance in which Swarup’s question has an affirmative answer when $H$ is weakly malnormal
in the form of the above corollary.

\subsection{Nonnecessity of the compatible fiberwise projection condition}
In this section we show that the compatible fiberwise projection condition in Theorem \ref{main thm} is not necessary
for the existence of CT maps even in the presence of all the remaining hypotheses.
Lemma \ref{acyl-case} and Example \ref{acyl example} below verify this. We start
by recalling the following definition from \cite{ps-kap}.

	\begin{defn}\label{acyl}\textup{(\cite[Definition $2.50$]{ps-kap})}
		Suppose $k\ge1$. A tree of metric spaces $\pi:X\ri T$ is said to be $k$-acylindrical if there are constants $M_k\ge0, \tau_k\ge0$ depending on $k$ such that for every pair of $k$-qi lifts, $\gm_0$, $\gm_1$ say, of a geodesic segment $I\sse T$ of length $>\tau_k$ we have that for all vertices $v$ of $I$, $$d_{X_v}(\gm_0(v),\gm_1(v))\le M_k$$

		We say $\pi:X\ri T$ is acylindrical tree of metric spaces if it is $k$-acylindrical for some $k\ge1$.
	\end{defn}

We are thankful to Ravi Tomar for pointing out the following lemma.
The proof is immediate from the description of uniform quasigeodesics in
acylindrical tree of hyperbolic spaces with qi embedded condition as
described in \cite[Subsection $7.3$]{ps-kap}. Therefore, we skip writing
any proof for it.
\begin{lemma} \label{acyl-case}
Suppose $\pi'=\pi|_{Y}:Y\ri S$ is an induced subtree of hyperbolic spaces of an acylindrical tree
of hyperbolic spaces $\pi:X\ri T$. Moreover, suppose that both these trees of spaces
satisfy the qi embedded condition and that $X$, $Y$ are hyperbolic. Finally, for all vertex $u$ of $S$, the inclusion $Y_u\map X_u$ admits a CT map.
Then there is a CT map for the inclusion $i:Y\to X$.
\end{lemma}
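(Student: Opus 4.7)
My plan is to verify Mitra's criterion for the inclusion $i: Y \to X$ using the very rigid description of uniform quasigeodesics in acylindrical trees of hyperbolic spaces with the qi embedded condition, as developed in \cite[Subsection 7.3]{ps-kap}. Fix a base vertex $u \in V(S)$ and a point $y_0 \in Y_u$. By Lemma \ref{not-ct-not-comp} it suffices to show that if $\{y_n\},\{y'_n\}$ are test sequences in $Y$ with $\LMY y_n = \LMY y'_n \in \partial Y$, then $\LMX y_n = \LMX y'_n \in \partial X$. Consider a geodesic $\beta_n = [y_n, y'_n]_Y$. Since $Y$ is hyperbolic and both sequences converge to the same point of $\partial Y$, we have $d_Y(y_0, \beta_n) \to \infty$ by Lemma \ref{con-same-pt}. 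The goal is to promote this to $d_X(y_0, \gamma_n) \to \infty$ where $\gamma_n$ is any geodesic in $X$ joining $y_n$ to $y'_n$.

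First I would take the standard ``ladder'' decomposition of $\beta_n$ adapted to $\pi'|_Y$: $\beta_n$ is a uniform quasigeodesic in $Y$ built out of pieces $\sigma_{n,i}$ lying in vertex spaces $Y_{v_{n,i}}$ and connecting edge-space transitions. This is the standard output for quasigeodesics in hyperbolic trees of hyperbolic spaces; see \cite[Subsection 7.3]{ps-kap}. Using the hypothesis that each $Y_v \to X_v$ admits a CT map (hence, by Lemma \ref{necessity}, satisfies Mitra's criterion in $Y_v \to X_v$), I would replace each piece $\sigma_{n,i}$ by a uniform quasigeodesic $\sigma'_{n,i}$ in the corresponding $X_{v_{n,i}}$ with the same endpoints. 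The concatenation $\beta'_n$ of the $\sigma'_{n,i}$'s is then a path in $X$ with the same coarse ``tree skeleton'' as $\beta_n$, and its vertex-space pieces are uniform quasigeodesics in the respective $X_v$'s.

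Next I would invoke the acylindrical description of uniform quasigeodesics in $X$. In the acylindrical setting, the constants $M_k, \tau_k$ from Definition \ref{acyl} imply that any two distinct $k$-qi lifts of a segment in $T$ must diverge past $M_k$ after travelling for time $\tau_k$. This gives the following: a concatenation of vertex-space quasigeodesics together with edge-space transitions that projects onto a (quasi)geodesic path in $T$ is itself a uniform quasigeodesic in $X$, without needing the projection hypothesis. In particular $\beta'_n$ is a uniform quasigeodesic in $X$ with endpoints $y_n, y'_n$. By the stability of quasigeodesics (Lemma \ref{ml}), any geodesic $\gamma_n = [y_n, y'_n]_X$ stays within a uniform Hausdorff distance of $\beta'_n$.

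Finally, since the inclusion $Y \to X$ is $\eta_0$-properly embedded (Proposition \ref{pro-emb} applied to $Y = X_S$ viewed inside $X$, combined with $Y \to X_S$ being $\eta_0$-proper from Convention \ref{con-induced spaces}), and since the vertex-space CT maps are (uniformly) proper embeddings, the skeleton-preserving replacement $\beta_n \rightsquigarrow \beta'_n$ takes points far from $y_0$ in $Y$ to points far from $y_0$ in $X$ via a proper control function $\phi$. Thus $d_Y(y_0, \beta_n) \to \infty$ yields $d_X(y_0, \beta'_n) \to \infty$, hence $d_X(y_0, \gamma_n) \to \infty$. This is exactly Mitra's criterion for $Y \to X$, establishing the existence of the CT map. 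The main obstacle is Step 3: verifying that the concatenation $\beta'_n$ is genuinely a uniform quasigeodesic in $X$ without the projection hypothesis; this is precisely where acylindricity substitutes for the projection condition, via the quasigeodesic description in \cite[Subsection 7.3]{ps-kap}.
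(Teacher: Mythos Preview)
There is a genuine gap at Step~3. First, you have the acylindricity condition backwards: Definition~\ref{acyl} says that any two $k$-qi lifts over a segment of length $>\tau_k$ are \emph{within} $M_k$ of each other in every fiber, not that they diverge. The consequence that matters for \cite[Subsection~7.3]{ps-kap} is that vertex spaces are uniformly quasiconvex (hence qi embedded) in $X$; it does not say anything about arbitrary staircase paths being quasigeodesics.

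Second, and more seriously, your assertion that ``a concatenation of vertex-space quasigeodesics together with edge-space transitions that projects onto a geodesic path in $T$ is itself a uniform quasigeodesic in $X$'' is simply false. In the setting of Example~\ref{acyl example}, take $h\in F_3$ in the first copy of $G_1$, $h'\in F_3$ in the second, and choose the transition point in the edge space $\Z$ to be $t^{-N}$ for large $N$. The concatenation $[h,t^{-N}]_{G_1}*[t^{-N},h']_{G_1}$ has length growing linearly in $N$ while $d_X(h,h')$ is independent of $N$. Your transition points $a_i$ are constrained to lie in $Y_{e_iv_i}$, but without the projection hypothesis there is nothing forcing them to be close to the canonical crossing points $P^{X_{v_i}}_{X_{e_iv_i}}(\cdot)$ of the $X$-geodesic --- this discrepancy is precisely what the failure of the projection hypothesis records (cf.\ Corollary~\ref{new cor sec 6}). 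So the replacement $\beta_n\rightsquigarrow\beta'_n$ need not produce a uniform quasigeodesic in $X$, and the argument collapses.

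The approach the paper has in mind is different: a case analysis on whether $\{\pi(y_n)\}$ and $\{\pi(y'_n)\}$ are bounded in $T$. When both are bounded one reduces (via the analog of Lemma~\ref{bounded proj 2} and Proposition~\ref{finite-proj-case}) to sequences in single vertex spaces and invokes the vertex-level CT maps. When $\pi(y_n)\to\xi\in\partial T$, acylindricity forces all qi lifts of $[u,\xi)$ to be asymptotic, so $\LMX y_n$ (and likewise $\LMY y_n$) depends only on $\xi$; hence the two $X$-limits are forced to agree whenever the $Y$-limits do. This circumvents any direct comparison of $X$- and $Y$-geodesics.
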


\begin{cor}\label{new cor sec 6}
Suppose $G_1, G_2$ are two hyperbolic groups and $\lgl g_1\rgl< G_1$ and $\lgl g_2\rgl< G_2$ are two
infinite cyclic malnormal subgroups in them respectively. Let $H_1<G_1$ and $H_2<G_2$ be
infinite hyperbolic subgroups. Let $G$ be the amalgam of $G_1, G_2$ obtained using an isomorphism
of the cyclic subgroups $\lgl g_1\rgl$, $\lgl g_2\rgl$ and let  $H=H_1 * H_2$.
Now suppose that the following additional conditions are satisfied.
\begin{enumerate}
\item The CT map exists for the inclusion $H_i\map G_i$, $i=1,2$.
\item $H_i\cap \lgl g_i\rgl  =(1)$, but $g^{\pm \infty}_i\in \Lambda_{G_i}(H_i)$ for $i=1,2$.
\end{enumerate}

Then $G, H$ are hyperbolic groups, and the CT map for the inclusion $H\map G$ exists. 
However, the compatible fiberwise projection condition for the inclusion of the tree of spaces corresponding for
$H$ into the tree of spaces for $G$ fails to hold.
\end{cor}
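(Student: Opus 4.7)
The plan is to verify the four conclusions in turn: hyperbolicity of $G$, hyperbolicity of $H$, existence of the CT map, and failure of the projection hypothesis. First I would note that $G$ is hyperbolic by the Bestvina--Feighn combination theorem \cite{BF}, using that the amalgamating subgroups $\lgl g_i\rgl < G_i$ are malnormal and quasiconvex (so the annuli flaring condition is vacuously satisfied). The group $H = H_1 * H_2$ is a free product of hyperbolic groups, hence hyperbolic. To obtain the induced subtree of spaces structure $\pi': Y \map S$ of the Bass--Serre tree of spaces $\pi: X \map T$ of $G$, I would invoke Proposition \ref{prop-inter property}: the condition $H_i \cap \lgl g_i\rgl = (1)$ is precisely what is needed since the edge groups of $H_1 * H_2$ are trivial, so the Bass--Serre tree $S$ of $H$ embeds in $T$.

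Next I would deduce the existence of the CT map $\partial H \map \partial G$ from Lemma \ref{acyl-case}, since the projection hypothesis is unavailable. The main task is to verify the acylindricity of $\pi: X\map T$. This is standard: malnormality of $\lgl g_i\rgl$ in $G_i$ together with the fact that two $k$-qi lifts of a long enough segment of $T$ must stay close to conjugates of the cyclic edge group in each vertex space forces them to coincide coarsely. Thus $\pi: X\map T$ is acylindrical in the sense of Definition \ref{acyl}. The qi embedded condition for $\pi':Y\map S$ is trivial (edge spaces are points), $Y$ is hyperbolic by the above, and the vertex inclusions $H_i\map G_i$ admit CT maps by assumption. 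Hence Lemma \ref{acyl-case} yields the CT map $\partial i_{Y,X}: \partial Y\map \partial X$.

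Finally, I would exhibit the failure of the projection hypothesis. Let $u\in V(S)$ be a vertex with stabilizer (a conjugate of) $H_i$, and let $e$ be an edge of $S$ incident on $u$. The edge space $Y_{eu}$ is a single point (say the identity of $H_i$), so $P^{Y_u}_{Y_{eu}}(h) = 1$ for every $h\in H_i = Y_u$. On the other hand, $X_u$ is quasi-isometric to the Cayley graph of $G_i$ and $X_{eu}$ to the axis of $\lgl g_i\rgl$ in $G_i$. Since $g_i^{+\infty}\in\Lambda_{G_i}(H_i)$, there is a sequence $\{h_n\}\subset H_i$ with $\lim^{G_i} h_n = g_i^{+\infty}$. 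For any geodesic ray $\alpha$ in $G_i$ based at $1$ with $\alpha(\infty) = g_i^{+\infty}$, the nearest point projections $P^{X_u}_{X_{eu}}(h_n)$ lie uniformly close to $\alpha$ and tend to infinity along the axis (by the standard characterization of convergence in $\partial X_u$ via nearest point projections onto a quasiconvex subset). Consequently $d_{X_u}(P^{X_u}_{X_{eu}}(h_n), P^{Y_u}_{Y_{eu}}(h_n)) = d_{X_u}(P^{X_u}_{X_{eu}}(h_n), 1) \to \infty$, which contradicts the existence of any bound $R_0$ as required in the projection hypothesis.

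The only real obstacle is the acylindricity verification in the second paragraph; everything else is either built directly out of the hypotheses or is a boundary--projection computation in a single hyperbolic vertex space. I would write that verification carefully, noting that the use of \emph{cyclic} malnormal edge groups is essential for the standard acylindricity argument to work in this generality.
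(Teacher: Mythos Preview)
Your proposal is correct and follows essentially the same route as the paper's proof: establish acylindricity of the tree of spaces for $G$ (the paper simply cites \cite[Corollary~2.56]{ps-kap} for $3$-acylindricity rather than verifying it from scratch), apply Lemma~\ref{acyl-case} to obtain the CT map, and then show the projection hypothesis fails by exactly the argument you give---the edge spaces of $Y$ are points while the projections $P^{G_i}_{\langle g_i\rangle}(h_n)$ run off to infinity along the axis because $g_i^{+\infty}\in\Lambda_{G_i}(H_i)$. The paper phrases this last step via Lemma~\ref{lem-proj on pair qc}\ref{lem-proj on pair qc:concatenation qg} (the concatenation $[h_n,g_1^{m(n)}]\cup[g_1^{m(n)},1]$ is a uniform quasigeodesic), which is the same content as your ``standard characterization of convergence via nearest point projections.''
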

\begin{proof}
Note that $G={G_1}_{\lgl g_1\rgl=\lgl g_2\rgl}G_2$. As constructed above, we will have a tree of spaces $\pi:X\map T$ corresponding to ${G_1}_{\lgl g_1\rgl=\lgl g_2\rgl}G_2$ and a subtree of subspaces $\pi'=\pi|_{Y}:Y\map S$ in it corresponding to $H_1*H_2$. It is proved in \cite[Corollary $2.56$]{ps-kap} that $\pi:X\map T$ is $3$-acylindrical. Hence $X$ is hyperbolic, equivalently, $G$ is hyperbolic (\cite{BF}, \cite[Theorem $2$]{KM98}) whereas $H$ is also hyperbolic. By Lemma \ref{acyl-case}, the inclusion $Y\map X$, equivalently, $H\map G$ admits a CT map.

Now we show that the compatible fiberwise projection condition fails in this case. Note that edge spaces of $Y$ are single points. For an edge $e$ in $S$ incident on a vertex $u$, let $Y_{eu}=\{p\}$ denote the image of the edge space of $Y$ corresponding to $e$ in the vertex space $\pi'^{-1}(u)=Y_u$. Then $P^{Y_u}_{Y_{eu}}(y)=p$ for all $y\in Y_u$.

Note that we have fixed word metrics on $G_i$ and $H_i$. By the correspondence between the vertex spaces and the orbit maps, as explained in the proof of Theorem \ref{thm-qc}, it is enough to show that $\{P^{G_1}_{\lgl g_1\rgl}(H_1)\}$ has unbounded diameter in $G_1$ where $P^{G_1}_{\lgl g_1\rgl}$ denotes a nearest point projection map from $G_1$ onto $\lgl g_1\rgl$ with respect to the above metric. By the condition $(2)$, we have $\{h_n\}\sse H_1$ such that $\lim^{G_1}_{n\map\infty}h_n=g^{\infty}_1$. Let $P^{G_1}_{\lgl g_1\rgl}(h_n)=g^{m(n)}_1$ for some $m(n)\in\Z$. Since the orbit of $\lgl g_1\rgl$ in $G_1$ is quasiconvex, $[h_n,g^{m(n)}_1]_{G_1}\cup[g^{m(n)}_1,1]_{G_1}$ are uniform quasigeodesics in $G_1$ (see Lemma \ref{lem-proj on pair qc}~\ref{lem-proj on pair qc:concatenation qg}), and that converges to $g^{\infty}_1$. Thus $\lim_{n\map\infty}m(n)=\infty$. Hence $P^{G_1}_{\lgl g_1\rgl}(\{h_n\})$ has unbounded diameter in $G_1$. This completes the proof.
\end{proof}

\begin{example}\label{acyl example}
Suppose $G_1= \mathbb F_3\rtimes \Z$ is a hyperbolic, free-(on three generators)-by-cyclic group
and let $t$ be a generator of the copy of $\Z$. Let $G$ be the double of $G_1$ taken along $\Z$.
Let $H_1=\mathbb F_3\subset  G_1$. Then all the hypotheses of Corollary \ref{new cor sec 6}
are satisfied. So the CT map exists for the inclusion $H_1* H_1\map G_1 *_{\Z} G_1$ but
the compatible fiberwise projection condition for the inclusion of the corresponding trees of spaces fails to hold.
\end{example}

\subsection{Nonexistence of CT maps}

In this section, we illustrate with an example that in our main theorem, if the compatible fiberwise projection condition is dropped, then the CT map may fail to exist even when all the remaining hypotheses are satisfied. Building on the idea of this example, more general results describing obstructions to the existence of CT maps are obtained in the paper \cite{HMS-landing} of
the authors jointly with Mahan Mj. Accordingly, this example also appears in \cite[Example $1.2$]{HMS-landing}.
Our purpose here is to show the absence of the compatible fiberwise projection condition.

\begin{example}\label{counterexample}
Consider a hyperbolic group $G$ of the form $G=N\rtimes Q$, where $N$ is
	either the fundamental group of a closed orientable surface of genus at least $2$ or a finitely generated
	free group of rank at least $3$, and $Q$ is a finitely generated free group of rank at least $3$.
	Examples of this sort are well-known; e.g. see \textup{\cite{farb-mosher}, \cite{BFH-lam}}. It is easy to see
	that $Q$ is a malnormal quasiconvex subgroup of $G$ and $N$ is a nonquasiconvex subgroup of $G$ with
	$\Lambda_G(Q)=\partial G$. Let $\phi:Q\map Q$ be a hyperbolic automorphism. Consider the HNN extension $G'=G*_{Q=\phi(Q)}$ with stable letter $t$. Note that $K'=\lgl N,t\rgl\simeq N*\lgl t\rgl$. Then the inclusion $K'\map G'$ does not admit a CT map \textup{(\cite[Example $1.2$]{HMS-landing})}. Moreover, note that $\Lambda_G(Q)\sse\Lambda_G(N)$. This says for any infinite order element $q\in Q$, $q^{\pm\infty}\in\Lambda_G(N)$. Then the similar proof as in Corollary \ref{new cor sec 6} will show that the induced subtree of spaces in the ambient tree of spaces fails to have the compatible fiberwise projection condition.
\end{example}

\bibliography{Ubib}
\bibliographystyle{amsalpha}
\end{document}